\documentclass{amsart}


\usepackage{amsfonts,amsthm,amssymb}
\usepackage[pdftex]{graphicx}
\usepackage{graphics}
\usepackage[matrix,arrow]{xy}
\usepackage[active]{srcltx}
\usepackage[colorlinks=true]{hyperref}


\usepackage[usenames, dvipsnames]{xcolor}

\setcounter{tocdepth}{1}

\newcommand{\C}{\mathbb{C}}
\newcommand{\R}{\mathbb{R}}
\newcommand{\Z}{\mathbb{Z}}
\newcommand{\Q}{\mathbb{Q}}
\newcommand{\N}{\mathbb{N}}
\newcommand{\D}{\mathbb D}
\newcommand{\M}{\mathcal{M}}
\newcommand{\J}{\mathcal{J}}

\newcommand{\F}{\mathcal{F}}
\newcommand{\T}{\mathcal{T}}

\newcommand{\W}{\mathcal W}
\renewcommand{\P}{\mathcal{P}}
\renewcommand{\sl}{{\rm sl}}

\newcommand{\wind}{{\rm wind}}

\newcommand{\mon}{{\rm mon}}

\newcommand{\util}{{\widetilde u}}
\newcommand{\vtil}{{\widetilde v}}
\newcommand{\wtil}{{\widetilde w}}
\newcommand{\jtil}{\widetilde J}

\newcommand{\Mob}{\text{M\"ob}}
\newcommand{\dcal}{\mathcal{D}}
\newcommand{\ncal}{\mathcal{N}}

\theoremstyle{plain}
\newtheorem{theorem}{Theorem}[section]
\newtheorem{proposition}[theorem]{Proposition}
\newtheorem{lemma}[theorem]{Lemma}
\newtheorem{corollary}[theorem]{Corollary}

\theoremstyle{definition}
\newtheorem{definition}[theorem]{Definition}

\theoremstyle{remark}
\newtheorem{remark}[theorem]{Remark}

\title[Dynamical characterization of lens spaces]{A dynamical characterization of \\ universally tight lens spaces}
\author{Umberto L. Hryniewicz}
\address[Umberto L. Hryniewicz]{Universidade Federal do Rio de Janeiro -- Departamento de Matem\'atica Aplicada, Av.\ Athos da Silveira Ramos 149, Rio de Janeiro RJ, Brazil 21941-909.}
\email{umberto@labma.ufrj.br}
\author{Joan E. Licata}
\address[Joan E. Licata]{Mathematical Sciences Institute, The Australian National University ACT 2000 Australia.}
\email{joan.licata@anu.edu.au}
\author{Pedro A. S. Salom\~ao}
\address[Pedro A. S. Salom\~ao]{Universidade de S\~ao Paulo,  Instituto de Matem\'atica e Estat\'istica -- Departamento de Matem\'atica, Rua do Mat\~ao, 1010 - Cidade Universit\'aria - S\~ao Paulo SP, Brazil 05508-090.}
\email{psalomao@ime.usp.br}

\begin{document}

\begin{abstract}
We give necessary and sufficient conditions for a closed connected co-orientable contact $3$-manifold $(M,\xi)$ to be a standard lens space based on assumptions on the Reeb flow associated to a defining contact form. Our methods also provide rational global surfaces of section for nondegenerate Reeb flows on $(L(p,q),\xi_{\rm std})$ with prescribed binding orbits.
\end{abstract}

\maketitle

\tableofcontents

\section{Introduction and main results}

In order to study relationships between dynamics and topology, one might ask to what extent dynamical properties of a vector field  on a closed connected oriented manifold determine the diffeomorphism type of the manifold. To give a simple and well-known instance of this phenomenon, surfaces are characterized by algebraically counting zeros of a vector field with isolated zeros: the Euler characteristic determines the closed connected oriented $2$-manifolds. In dimension~$3$ this question becomes more interesting, as counting zeros falls short from being enough.

Introducing extra geometric structure is a common procedure when one wishes to state and prove a characterization theorem, in the hope that the extra structure provides extra tools. We propose to see a $3$-manifold as a contact-type energy level of a Hamiltonian system with two degrees of freedom. In doing so we will end up looking for a classification theorem not only for its diffeomorphism type, but also for the induced contact structure up to contactomorphism. The tools that become available with this particular viewpoint come from the theory of pseudo-holomorphic curves in symplectizations introduced by Hofer~\cite{93}.

Our goal is to characterize universally tight lens spaces in terms of their Reeb dynamics, via the following connection with Hamiltonian systems. If $(W,\omega)$ is a symplectic $4$-manifold then a hypersurface $M \subset W$ is of \textit{contact-type} if $\omega$ has a primitive $\alpha$ near $M$ such that $\lambda := \iota^*\alpha$ is a contact form. Here $\iota : M \to W$ denotes the inclusion. The contact structure $\xi = \ker \lambda$ is then determined up to isotopy. Since $M$ is co-orientable, there exists a Hamiltonian $H$ defined near $M$ realizing $M$ as a regular energy level. The Hamiltonian vector field $X_H$ is defined by $i_{X_H}\omega = -dH$ and is tangent to $M$ since $dH$ vanishes on $TM$. The defining equations for the Reeb vector field $R$ imply that $X_H$ and $R$ have the same trajectories, i.e., the Reeb dynamics reparametrizes the Hamiltonian dynamics on~$M$. Conversely, suppose that we are given a contact form $\lambda$ on the $3$-manifold $M$. The symplectization $(\R\times M,d(e^a\lambda))$ provides an exact symplectic $4$-manifold where $M \hookrightarrow \{0\}\times M$ embeds with contact-type; here $a$ denotes the $\R$-coordinate. The Hamiltonian vector field of $H = a$ coincides with the Reeb vector field on $M$, and the induced contact structure is $\xi = \ker \lambda$. Our plan is to view the pair $(M,\lambda)$ as an energy level of a special kind in order to characterize $(M,\xi)$ in terms of the associated Reeb dynamics. In this paper we use the methods from~\cite{char1,char2} and~\cite{hryn,HS} to understand universally tight lens spaces from this point of view.

This kind of problem was first studied by Hofer, Wysocki and Zehnder in~\cite{char1} where the tight $3$-sphere was dynamically characterized. Let us recall a statement which is contained in~\cite{char1,char2} postponing precise definitions to later sections.

\begin{theorem}[Hofer, Wysocki and Zehnder]\label{theo_char1}
A closed connected contact $3$-man\-ifold $(M,\xi)$ is the tight $3$-sphere if, and only if, $\xi=\ker\lambda$ for a nondegenerate dynamically convex contact form $\lambda$ admitting a closed Reeb orbit which is unknotted, has self-linking number $-1$ and Conley-Zehnder index $3$.
\end{theorem}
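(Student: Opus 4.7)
The ``only if'' direction is standard: the round sphere $S^3\subset\C^2$ carries the contact form $\lambda_0=\frac{1}{2}\sum_{i=1}^2(x_i\,dy_i-y_i\,dx_i)$, whose Reeb flow is the Hopf flow. Each Hopf fiber is an unknotted periodic orbit with $\sl=-1$ and Conley--Zehnder index $3$, and a direct computation of asymptotic operators shows that $\lambda_0$ is dynamically convex. For the converse, my plan is to produce a disk-like global surface of section for the Reeb flow bounded by the distinguished orbit $P$, and to use the resulting open book decomposition to identify $(M,\xi)$ with the tight $3$-sphere.

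The construction proceeds via pseudo-holomorphic curves in the symplectization $(\R\times M,\,d(e^a\lambda))$ equipped with a generic $\R$-invariant, $d\lambda$-compatible almost complex structure $J$. First I would produce an embedded finite-energy $J$-holomorphic plane $\util:\C\to\R\times M$ asymptotic to $P$ at infinity by a Bishop-type filling argument: after linearizing the Cauchy--Riemann equation in a Martinet tube around $P$, one builds a $1$-parameter family of small embedded holomorphic half-planes and extends it maximally. The numerical data $\sl(P)=-1$ and Conley--Zehnder index $3$ imply that such planes have Fredholm index $2$, so the connected component $\M$ of the moduli space containing this plane is, after quotienting by $\R$-translations, a smooth $1$-dimensional manifold; automatic transversality in this low-index setting guarantees that all of its members are embedded and, by positivity of intersection in dimension four, mutually disjoint off their common asymptote.

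The heart of the argument is the proof that $\M/\R$ is compact and that its evaluation map exhausts $M\setminus P$. Here dynamical convexity, which demands that every contractible Reeb orbit has Conley--Zehnder index $\ge 3$, is essential: combined with nondegeneracy, it rules out all non-trivial SFT buildings in the compactification. Any bubble or breaking component would be asymptotic to a contractible Reeb orbit, hence of high index, and the arithmetic of Conley--Zehnder indices across the levels of a broken configuration is incompatible with total index $2$. Thus the only boundary of $\M/\R$ would have to be a single embedded plane asymptotic again to $P$, so $\M/\R$ is a closed $1$-manifold. The projections of these planes to $M$ then foliate $M\setminus P$ by embedded open disks, which together with $P$ form a supporting open book decomposition of $(M,\xi)$ with binding $P$ and disk pages.

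Finally, an open book with a single unknotted binding and disk pages has monodromy forced to be isotopic to the identity; such an open book occurs only on $S^3$, and the uniquely supported contact structure with disk pages is the standard tight one, yielding $(M,\xi)\cong(S^3,\xi_{\rm std})$. The principal obstacle throughout is the compactness step: while dynamical convexity is a very clean hypothesis, verifying that it truly rules out every non-trivial configuration in the SFT compactification, including multiply covered planes and cylinders over short orbits, requires delicate asymptotic analysis and the sharp index estimates developed in \cite{char1,char2}. A secondary technical point is to certify that the family extends across all limits rather than terminating prematurely along a partially asymptotic curve, which again reduces to an index computation controlled by dynamical convexity.
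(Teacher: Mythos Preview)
Your overall architecture matches the paper's (and Hofer--Wysocki--Zehnder's): produce a fast finite-energy plane asymptotic to $P$, use Fredholm theory and compactness to propagate it into a one-parameter family of embedded planes whose $M$-projections foliate $M\setminus P$, and then read off the open book to identify $(M,\xi)$ with the tight $S^3$. So the strategy is right.

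However, your description of how the first plane is obtained is not how the Bishop argument works, and this is more than cosmetic. The Bishop family does not come from ``linearizing the Cauchy--Riemann equation in a Martinet tube around $P$'' or from ``small embedded holomorphic half-planes''. The construction starts from a \emph{spanning disk} $F_0$ for $P$: tightness together with $\sl(P)=-1$ allow one (via Giroux elimination) to arrange that the characteristic foliation of $F_0$ has a single positive nicely elliptic singular point $e$ in the interior. The Bishop family is then a one-parameter family of embedded $\jtil$-holomorphic \emph{disks} in $\R\times M$ with boundary on $\{0\}\times(F_0\setminus\{e\})$, emanating from the constant disk at $(0,e)$ and winding once around $e$. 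The fast plane appears only as the \emph{limit} of this family as the boundaries of the disks are pushed toward $\partial F_0=P$; the analysis of that limit is precisely where dynamical convexity enters to exclude bubbling (in the paper this is the content of Proposition~\ref{main_prop_compactness} and Proposition~\ref{prop_limit_bishop_disks}). Without this mechanism you have no way to produce the initial plane, so as written your existence step has a genuine gap.

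Two minor points. First, the round $\lambda_0$ is not nondegenerate (all Hopf fibers have the same period), so for the ``only if'' direction you must either pass to an irrational ellipsoid or perturb $\lambda_0$ while checking that the hypotheses persist. Second, your closing identification of $S^3$ via ``monodromy forced to be the identity'' is fine, but the paper's route (Proposition~\ref{proposition_classification}) is slightly different: a Reeb vector field positively tangent to the binding and transverse to the pages means the open book \emph{supports} $\xi$, and a supported rational open book with disk pages forces $\xi$ to be universally tight, hence standard.
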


Theorem~\ref{theo_char1} is a first major step in understanding the relations between the topology of tight contact $3$-manifolds and their associated Reeb dynamics. It is clear now that dynamical convexity is only a particular instance of a more general assumption which allows for dynamically characterizing tight contact $3$-manifolds.

The proof makes use of disk-filling methods similar to those used by Hofer~\cite{93} to confirm the $3$-dimensional overtwisted Weinstein conjecture. The assumptions in Theorem~\ref{theo_char1} (e.g., nondegeneracy of the contact form, dynamical convexity, Conley-Zehnder index $3$) 
are restrictive. In~\cite{hryn} the assumption on the Conley-Zehnder index of the binding orbit is removed, allowing for applications to Finsler geodesic flows on $S^2$ as in~\cite{global}, where it is proved that pinching conditions on the flag curvatures exclude certain types of closed geodesics. Below we will discuss more dynamical applications of the method.


\subsection{Lens spaces}
Equip $\C^2$ with complex coordinates $(z,w)$ and consider the 3-sphere $S^3 = \{ (z,w) \in \C^2 \mid |z|^2+|w|^2=1 \}.$ The Liouville form
\begin{equation}\label{liouville_form}
\lambda_0 = \frac{1}{4i} (\bar zdz - zd\bar z + \bar wdw - wd\bar w)
\end{equation}
is a primitive of the standard symplectic structure of $\C^2$ and restricts to a contact form on $S^3$, defining the so-called standard contact structure
\begin{equation}
\xi_{\rm std} = \ker \lambda_0|_{S^3}.
\end{equation}
Given relatively prime integers $p \geq q \geq 1$, there is a free action of $\Z_p := \Z/p\Z$ on $(S^3,\xi_{\rm std})$ generated by the contactomorphism
\begin{equation}\label{generator}
(z,w) \mapsto (e^{i2\pi/p}z,e^{i2\pi q/p}w).
\end{equation}
The orbit space of this action is the lens space
\begin{equation}
L(p,q) := S^3 / \Z_p
\end{equation}
and the standard contact structure on $S^3$ descends to a contact structure on $L(p,q)$ still denoted by $\xi_{\rm std}$ and called standard. The case $p=q=1$ is not ruled out and, according to our conventions, $L(1,1)=S^3$. We always consider $L(p,q)$ oriented by $\xi_{\rm std}$, and $\xi_{\rm std}$ is its unique positive universally tight contact structure. The $\Z_p$-action on $S^3$ preserves $\lambda_0$, which descends to a defining contact form for $\xi_{\rm std}$ on $L(p,q)$. To keep a familiar nontrivial case in mind, $L(2,1)$ is diffeomorphic to the unit tangent bundle of any metric on $S^2$, and the Reeb flow of $\lambda_0$ is a constant reparametrization of the geodesic flow of the standard Riemannian metric of constant curvature~$+1$.

Lens spaces were introduced in 1908 by Tietze~\cite{tietze} and provided the first examples of $3$-manifolds not characterized by the fundamental group. Tietze showed that the fundamental group of $L(p,q)$ is $\Z_p$, and he suspected that $L(5,1)$ and $L(5,2)$ were not homeomorphic. This fact was confirmed by Alexander in 1919, even though these manifolds have the same homology groups. In fact, the homology groups of $L(p,q)$ are independent of $q$:
\[
H_*(L(p,q),\Z) \simeq \left\{ \begin{aligned} & \Z, & *=0,3 \\ & 0, & *=2 \\ & \Z_p, & *=1 \end{aligned} \right.
\]
In 1935 Reidemeister~\cite{reid} showed that $L(p,q_1)$ is $L(p,q_2)$ if, and only if, $q_1 \equiv \pm q_2^{\pm1} \mod p$, up to piecewise linear homeomorphism. Only in 1960 with the work of Brody~\cite{brody}  was it proved that this condition classifies lens spaces up to homeomorphism. It is interesting that $L(p,q_1)$ and $L(p,q_2)$ are homotopy equivalent if, and only if, $q_1 \equiv \pm k^2q_2 \ {\rm mod} \ p$ for some $k\in\Z$ and, consequently, in general $L(p,q)$ is not determined by its homotopy type. For an account of this beautiful piece of the history of topology we refer to Dieudonn\'e~\cite{dieudonne}.

\subsection{Preliminary notions}

Here $M$ denotes a closed connected oriented smooth $3$-manifold, $\mathbb D$ denotes the closed unit disk in the complex plane with its standard orientation, and $\partial\mathbb D$ is always oriented counter-clockwise.

\begin{definition}\label{p_unknot_def} 
Let $K\subset M$ be a knot. Given an integer $p\geq 1$, a \textit{$p$-disk} for $K$ is an immersion $u:\D \to M$ such that $u|_{\D\setminus \partial\D}$ is an embedding into $M\setminus K$ and $u|_{\partial \mathbb{D}}$ is a $p:1$ covering map of $K$. When $K$ admits a $p$-disk then $K$ is said to be \textit{$p$-unknotted} and is called an \textit{order $p$ rational unknot}. We call $u$ an \textit{oriented $p$-disk} for $K$ if, in addition, $K$ is oriented and $u|_{\partial \mathbb{D}}$ is orientation-preserving.
\end{definition}

Later we will define a $\Z_p$-valued invariant associated to any order $p$ rational unknot $K$ inside an oriented $3$-manifold, called its {\it monodromy} and denoted by ${\rm mon}(K)$, see section~\ref{section_topology} and also Figure~1.

Let $\lambda$ be a defining contact form for a positive contact structure $\xi$ on $M$. By a \textit{closed Reeb orbit} we mean a pair $P = (x,T)$, where $x:\R\to M$ is a trajectory of the associated Reeb flow with period $T>0$. The set of closed Reeb orbits for $\lambda$ is denoted by $\P(\lambda)$. We call $P$ \textit{prime} if $T$ is the minimal positive period of $x$, and \textit{contractible} if the loop
\begin{equation}\label{map_x_T}
x_T : t \in \R/\Z \mapsto x(Tt) \in M
\end{equation} 
is contractible. If $m\geq 1$ is an integer then the $m$-th iterate of $P$ will be denoted by $P^m:=(x,mT)$. When $A$ is a subset of $M$ we write $P\subset A$ to indicate that the geometric image of $P$ is a subset of $A$. We may abuse the notation and identify a knot tangent to the Reeb vector field of $\lambda$ with the prime closed Reeb orbit it determines. We shall use the notation
\begin{equation}\label{integral_notation}
\int_P\lambda := \int_{\R/\Z} (x_T)^*\lambda = T.
\end{equation}

When $K\subset (M,\xi)$ is an order $p$ rational unknot transverse to $\xi$ then there is a well-known contact invariant $\sl(K)\in{\Q}$ called the {\it {rational} self-linking number}, see \S~\ref{contact_geometry_background} or \cite{BE} for a definition. This number is computed using an auxiliary $p$-disk for $K$, and if $c_1(\xi)$ vanishes on $\pi_2(M)$ then it is independent of the choice of a $p$-disk.

The contact structure $\xi$ becomes a symplectic vector bundle with the bilinear form $d\lambda$, and when the associated first Chern class $c_1(\xi)$ vanishes on $\pi_2(M)$   the Conley-Zehnder index $\mu_{CZ}(P) \in \Z$ and the transverse rotation number $\rho(P) \in\R$ of a contractible closed Reeb orbit $P$ are well-defined. These are invariants of the linearized Reeb flow along $P$ and it is possible to check that
\begin{equation}\label{mu_rho_rel}
\mu_{CZ}(P)\geq 3 \ \Leftrightarrow \ \rho(P) > 1
\end{equation}
and if $\lambda$ is nondegenerate then $\mu_{CZ}(P)=2 \Leftrightarrow \rho(P)=1$, see \S~\ref{ssec_CZ_dim_3} for the definitions.

\begin{figure}\label{fig1}
\includegraphics[width=350\unitlength]{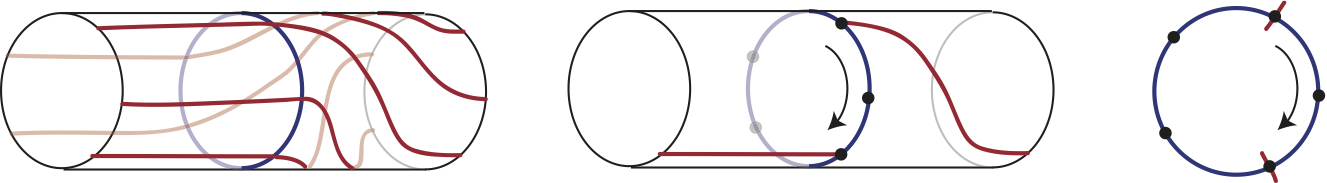}
\caption{On a tubular neighborhood of $K$, a $p$-disk intersects a meridian { $m$ in $p$ points}.  Two intersection points which are adjacent {along the boundary of the $p$-disk} will be separated by $\mon (K)$ points on the meridian.  See above for $p=5$ and monodromy $2$.}
\end{figure}

\subsection{Characterizing lens spaces}

Contact $3$-manifolds are always oriented by the contact structure. We state here our first main result.

\begin{theorem}\label{main2}
Let $(M,\xi)$ be a closed connected tight contact $3$-manifold satisfying $c_1(\xi)|_{\pi_2(M)}=0$, and let $p$ be a positive integer. Then $(M,\xi)$ is contactomorphic to $(L(p,k),\xi_{\rm std})$ for some $k$ if, and only if, $\xi=\ker\lambda$ for a contact form $\lambda$ admitting a prime closed Reeb orbit $K$ such that
\begin{itemize}
\item[i)] $K$ is $p$-unknotted, $\mu_{CZ}(K^p) \geq 3$, $\sl(K) = {\frac{-1}{p}}$, and
\item[ii)] no contractible closed Reeb orbit $P \subset M\setminus K$ with $\rho(P)=1$ is contractible in $M\setminus K$.
\end{itemize}
Moreover, if $\lambda,K$ are as above and $K$ has monodromy $-q$ then $(M,\xi)$ is contactomorphic to $(L(p,q),\xi_{\rm std})$.
\end{theorem}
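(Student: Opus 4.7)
The necessity direction I would handle by direct verification on the standard model. The form $\lambda_0$ of~\eqref{liouville_form} is $\Z_p$-invariant under the generator \eqref{generator}, so it descends to a contact form $\bar\lambda_0$ defining $\xi_{\rm std}$ on $L(p,q)$. Take $K$ to be the image of $\{w=0\}\cap S^3$ in the quotient; the Hopf disk $\{w=0,\,|z|\le 1\}\subset S^3$ descends to a $p$-disk for $K$, so $K$ is $p$-unknotted. A direct computation in the canonical trivialization of $\xi_{\rm std}$ pulled back to this $p$-disk yields $\sl(K)=-p$, $\mu_{CZ}(K^p)=3$, and $\mon(K)=-q$. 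Condition~(ii) is automatic because the Reeb flow of $\bar\lambda_0$ is periodic and every contractible closed Reeb orbit is an iterate of a Hopf fibre whose transverse rotation number can be read off by hand.

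For sufficiency the plan is to follow the pseudo-holomorphic disk-filling strategy of~\cite{char1,char2,hryn,HS} and produce a rational open book decomposition of $M$ with binding $K$ and pages that are $p$-disks for $K$. After a standard approximation argument replacing $\lambda$ by a nondegenerate form preserving~(i) and~(ii), I would choose a $d\lambda$-compatible $\jtil$ on $\xi$ and let $J$ be its cylindrical lift to $\R\times M$. Consider the moduli space $\M$ of finite-energy $J$-holomorphic maps $\util:\D\setminus\{0\}\to \R\times M$ with boundary on $\{0\}\times M$ projecting to $p$-disks for $K$ and puncture asymptotic to the contractible iterate $K^p$. A Fredholm and Maslov computation using $\sl(K)=-p$ and $\mu_{CZ}(K^p)\ge 3$ gives $\dim(\M/\Mob)=2$ after quotienting by the $\R$-action and the reparametrization group, and the usual automatic transversality in dimension four yields that elements of $\M$ project to embedded $p$-disks in $M\setminus K$.

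The main step is the compactness analysis for this moduli space. SFT compactness produces curve buildings whose components are either further disks asymptotic to $K^p$ or finite-energy planes asymptotic to closed Reeb orbits in $M\setminus K$; index and relative adjunction estimates show that the only dangerous limit is a plane asymptotic to a contractible Reeb orbit $P\subset M\setminus K$ with $\rho(P)=1$ which is contractible in $M\setminus K$. Hypothesis~(ii) excludes precisely that limit, so $\M/\Mob$ is compact up to the expected boundary break at $K$, and the projections of its elements give a smooth foliation of $M\setminus K$ by embedded $p$-disks. This is exactly a rational open book decomposition of $M$ with binding $K$ and disk pages, which forces $M$ to be a lens space $L(p,k)$; tracing the boundary return across adjacent pages identifies $k\equiv -\mon(K)\pmod p$, so the hypothesis $\mon(K)=-q$ yields $M\cong L(p,q)$, and tightness together with $c_1(\xi)|_{\pi_2(M)}=0$ forces $\xi$ to be the standard contact structure via the classification of tight contact structures on lens spaces.

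The principal obstacle is the compactness step: one must show that~(ii), as stated, is exactly the dynamical hypothesis which rules out every problematic breaking of the disk family, and one must then verify that the limit foliation covers all of $M\setminus K$. The former requires a careful case analysis of the possible SFT buildings using the Maslov inequality~\eqref{mu_rho_rel}; the latter combines intersection-theoretic arguments with the openness and closedness of the set of points reached by the family, essentially as in~\cite{hryn,HS}. Controlling the monodromy invariant in terms of the combinatorics of adjacent pages is the remaining subtlety, but it is a direct computation once the rational open book is in hand.
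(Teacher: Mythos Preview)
Your high-level strategy matches the paper's: necessity by direct computation on $\lambda_0$, sufficiency by producing a rational open book via pseudo-holomorphic curves and then classifying. However, the central moduli problem you set up is not well-posed, and you omit the mechanism that makes it nonempty.

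The boundary condition ``$\{0\}\times M$'' is a real hypersurface, not a totally real submanifold, so it does not define a Fredholm boundary value problem for $\jtil$-holomorphic curves. The paper uses two distinct moduli problems in sequence. First, a Bishop family of $\jtil$-holomorphic \emph{disks} $\util:\D\to\R\times M$ with boundary on the totally real surface $\{0\}\times F_0$, where $F_0=u_0(\D\setminus\partial\D)$ is the interior of a carefully prepared $p$-disk for $K$ whose characteristic foliation has a single positive nicely elliptic singularity $e$ (this uses tightness and $\sl(K)=-p$). The family is seeded by explicit small disks near $e$, which is how nonemptiness is obtained. Second, this Bishop family is noncompact; the bubbling analysis at its end (using hypothesis~(ii) to exclude planes asymptotic to index-$2$ orbits in $M\setminus K$, and Lemma~\ref{lemma_no_p'_disk} to exclude lower iterates of $K$) produces an embedded fast finite-energy \emph{plane} $\vtil:\C\to\R\times M$ asymptotic to $K^p$. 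It is the moduli space of these fast planes---no boundary, one positive puncture---that has the correct Fredholm theory and automatic transversality, and whose $1$-parameter family (after dividing by the $\R$-action) foliates $M\setminus K$ to give the open book. Your proposal collapses these two steps into a single moduli space of punctured disks with a hypersurface boundary condition and gives no argument for its nonemptiness; the Bishop seeding at $e$ is the essential input here, not a detail.

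A smaller gap: appealing to ``the classification of tight contact structures on lens spaces'' together with $c_1(\xi)|_{\pi_2}=0$ does not pin down $\xi_{\rm std}$, since $L(p,q)$ typically carries several tight contact structures with vanishing $c_1$. The paper instead observes that the Reeb vector field of the (possibly perturbed) form is positively transverse to the pages, so the rational open book \emph{supports} $\xi$; then the result of Baker--Etnyre--Van Horn-Morris shows $\xi$ is universally tight, which on $L(p,q)$ singles out $\xi_{\rm std}$ up to contactomorphism.
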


Note that there are no genericity assumptions on $\lambda$. Later we will be able to say much more about Reeb dynamics of the contact form $\lambda$ admitting an orbit $K$ with the above properties.

Sufficiency in Theorem~\ref{main2} relies on the notion of a rational open book decomposition with disk-like pages, which we now recall.

\begin{definition}
Let $K$ be an oriented knot in $M$. A \it{rational}  \textit{open book decomposition with binding $K$ and disk-like pages of order $p$} is a pair $(K,\pi)$ such that $\pi : M\setminus K \to S^1$ is a smooth fibration, and the closure of each fiber $\pi^{-1}(t)$ is the image of an oriented $p$-disk for $K$.
\end{definition}

As discussed in \S~\ref{sec:monodromy}, a manifold admitting such a decomposition is necessarily a lens space. This definition represents a special case of the rational open book decompositions introduced in~\cite{BEVHM}. The key step in proving sufficiency is provided by the following proposition.

\begin{proposition}\label{proposition_global_sections}
Let $\lambda$ be a defining contact form for a tight contact structure $\xi$ on the closed connected 3-manifold $M$. Suppose that $c_1(\xi)|_{\pi_2(M)}=0$ and that there exists an order $p$ rational unknot $K$ that is tangent to the Reeb vector field of $\lambda$ and has self-linking number $ {\frac{-1}{p}}$. Consider the set $\P^* \subset \P(\lambda)$ of closed Reeb orbits in $M\setminus K$ which are contractible in $M$ and have transverse rotation number equal to~$1$. If $\mu_{CZ}(K^p) \geq 3$ and no element of $\P^*$ is contractible in $M\setminus K$, then $K$ is the binding of a  {rational} open book decomposition $(K,\pi)$ with disk-like pages of order $p$, and there exists some defining contact form $\lambda'$ for $\xi$ such that the Reeb vector field of $\lambda'$ is transverse to the pages and tangent to $K$.
\end{proposition}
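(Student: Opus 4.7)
The plan is to adapt the pseudoholomorphic filling strategy of Hofer-Wysocki-Zehnder \cite{char1,char2} and Hryniewicz \cite{hryn} from genuine unknotted bindings to $p$-unknotted bindings. The central object is a moduli space $\M$ of somewhere injective finite-energy $\jtil$-holomorphic planes $\util:\C\to\R\times M$ in the symplectization $(\R\times M,d(e^a\lambda))$, asymptotic at the positive puncture to the contractible Reeb orbit $K^p$, where $\jtil$ is a $\lambda$-compatible almost complex structure. Since $\util$ is asymptotic to $K^p$, the projection $\pi_M\circ\util$ closes up to an oriented $p$-disk for $K$, and a proper $2$-parameter family of disjointly embedded such planes will descend to the desired open book.

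The first task is to produce a seed plane $\util_0\in\M$. Starting from an auxiliary $p$-disk for $K$, the hypothesis $\sl(K)=-p$ delivers the correct writhe of the transverse boundary structure so that a Bishop-type filling can be initiated, and a standard compactness argument identifies its limit as a finite-energy plane asymptotic to $K^p$. The assumption $\mu_{CZ}(K^p)\geq 3$ makes the Fredholm index of such a plane at least $2$, which combined with automatic transversality in dimension four renders $\util_0$ regular. After quotienting the connected component of $\util_0$ in $\M$ by the $\R$-action on the symplectization and by the reparametrization group of $\C$ fixing $\infty$, one obtains a smooth $2$-manifold $\M_0^*$ of embedded planes.

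The main obstacle is to prove that $\M_0^*$ is proper, with its SFT-compactification adding only the trivial cylinder over $K^p$. For a degenerating sequence $\util_n\in\M_0^*$, each prohibited limit must be ruled out. Sphere and disk bubbles are forbidden by exactness of $d(e^a\lambda)$ together with tightness of $\xi$. A broken building whose top-level plane is asymptotic to some contractible orbit $P\neq K^p$ must, by the index formula and \eqref{mu_rho_rel}, satisfy $\rho(P)=1$; but then $P$ arises as the asymptotic limit of planes disjoint from $\R\times K$, hence lies in $M\setminus K$ and is contractible there, directly violating hypothesis~(ii). Finally, the limit cannot intersect $\R\times K$: this is excluded by Siefring's intersection theory applied to $\util_n$ against the trivial cylinder $\R\times K$, using the sharp asymptotic winding bounds furnished by $\sl(K)=-p$ and $\mu_{CZ}(K^p)\geq 3$. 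Because no nondegeneracy hypothesis is in force, one ultimately approximates $\lambda$ by nondegenerate defining forms $\lambda_n\to\lambda$, runs the argument for each, and passes to a limit; hypothesis~(ii) is preserved under such approximations.

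Once properness is established, standard arguments from \cite{char1,hryn} yield that the projections $\pi_M\circ\util$ foliate $M\setminus K$ by open disks and assemble into a smooth fibration $\pi:M\setminus K\to S^1$ whose fibers close up to images of oriented $p$-disks for $K$; this is the sought open book decomposition $(K,\pi)$. To produce $\lambda'$, the $d\lambda$-positivity along each $\jtil$-holomorphic plane guarantees that $R_\lambda$ is already positively transverse to the interior of every page, so a Thurston-Winkelnkemper-style interpolation near $K$ yields a defining contact form $\lambda'$ for $\xi$ whose Reeb vector field is tangent to $K$ and positively transverse to every page of $(K,\pi)$.
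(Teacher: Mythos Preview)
Your outline tracks the paper's overall strategy (Bishop filling from a $p$-disk, then a moduli space of planes asymptotic to $K^p$, then compactness to foliate $M\setminus K$), but two genuine gaps would prevent it from going through as written.

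First, you never isolate the class of \emph{fast} planes, i.e.\ planes with $\wind_\pi=0$. In the paper this restriction is not cosmetic: it is what forces the plane to be an embedding with image disjoint from $K$ (Theorem~\ref{thm_fast_embedded}), it is what makes any two planes in the family either coincide or be disjoint (Lemma~\ref{lemma_dichotomy_fast_planes}), and it is what allows a weighted Fredholm set-up with index exactly $2$ and automatic transversality (Theorem~\ref{thm_fred_theory}). Without it the unweighted index is $\mu_{CZ}(K^p)-1$, which can be arbitrarily large, and neither automatic transversality nor the embedding/disjointness controls from~\cite{props2} apply. Your sentence ``$\mu_{CZ}(K^p)\geq 3$ makes the Fredholm index at least $2$, which combined with automatic transversality renders $\util_0$ regular'' is therefore not justified as stated.

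Second, your construction of $\lambda'$ is off. You approximate $\lambda$ by nondegenerate $\lambda_n$, say you ``pass to a limit,'' then assert that $R_\lambda$ is already transverse to the pages and propose a Thurston--Winkelnkemper interpolation near $K$. The paper does none of this. The holomorphic planes are built for $\lambda_n$, not for $\lambda$; their projections are transverse to $R_{\lambda_n}$ (because $\wind_\pi=0$), and the resulting open book is adapted to $\lambda_n$ in the sense of Definition~\ref{def_ob_adapted}. One then simply sets $\lambda'=\lambda_n$ for large $n$; no limit of open books is taken and no interpolation is needed. Relatedly, your claim that ``hypothesis~(ii) is preserved under such approximations'' hides a nontrivial argument (Lemma~\ref{seqlambdak2}): a sequence $P_j\in\P^*_{n_j}$ contractible in $M\setminus K$ with bounded action could limit onto some $K^{p_0}$, and ruling this out uses both $\mu_{CZ}(K^p)\geq 3$ and the fact (Lemma~\ref{lemma_no_p'_disk}) that $K^{p_0}$ is non-contractible for $p_0<p$.
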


Proposition~\ref{proposition_global_sections} follows directly from Propositon~\ref{proposition_global_sections2} and Remark~\ref{seqlambdak} below. The second step for sufficiency in Theorem~\ref{main2} is provided by the following proposition.

\begin{proposition}\label{proposition_classification}
Let $p\geq q\geq 1$ be relatively prime numbers. Suppose that $(M,\xi)$ is a closed connected co-orientable contact $3$-manifold that admits  {a rational} open book decomposition $(K,\pi)$ with disk-like pages of order $p$. If $\mon(K)=-q$ and the Reeb vector field of some defining contact form for $\xi$ is positively tangent to $K$ and positively transverse to the interior of the pages of $(K,\pi)$, then $(M,\xi)$ is contactomorphic to $(L(p,q), \xi_{\rm std})$.  
\end{proposition}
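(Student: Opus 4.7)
The plan is to pass to the $p$-fold cyclic cover determined by the open book, where the binding becomes an honest unknot and the pages become honest disks, identify this cover contactomorphically with $(S^3,\xi_{\rm std})$, and descend to the quotient after checking that the deck transformation group corresponds to the standard $\Z_p$-action whose quotient realizes $(L(p,q),\xi_{\rm std})$.

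First I would construct the branched cover. Composing $\pi:M\setminus K\to S^1$ with the $p$-fold self-cover $S^1\to S^1$ defines a $p$-fold cyclic cover of $M\setminus K$, which extends uniquely to a cyclic $p$-fold branched cover $\widetilde M\to M$ with branch locus $K$; write $\widetilde K\subset\widetilde M$ for the preimage of $K$, a connected knot on which the deck group $\Z_p$ acts trivially while acting freely on the complement. The induced map $\widetilde\pi:\widetilde M\setminus\widetilde K\to S^1$ endows $\widetilde M$ with an honest open book $(\widetilde K,\widetilde\pi)$ whose pages are disks. Since the only closed connected orientable $3$-manifold supporting an open book with disk pages is $S^3$, one has $\widetilde M\cong S^3$, with $(\widetilde K,\widetilde\pi)$ identified with the standard genus-zero open book on $S^3$.

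Second, the lifted contact form $\widetilde\lambda$ on $S^3$ has Reeb flow tangent to $\widetilde K$ and positively transverse to the interiors of the disk pages, so $\ker\widetilde\lambda$ is supported in the Giroux sense by the standard open book. Convexity of the set of contact forms adapted to a given open book, combined with the Thurston--Winkelnkemper construction on $S^3$, yields an isotopy of supported contact structures from $\ker\widetilde\lambda$ to $\xi_{\rm std}$; Gray stability then provides a contactomorphism $\Phi:(S^3,\ker\widetilde\lambda)\to(S^3,\xi_{\rm std})$ preserving the open book. The deck generator $\varphi$ is an orientation-preserving contactomorphism of $(S^3,\ker\widetilde\lambda)$ of order $p$ preserving the open book and cyclically shifting pages by $t\mapsto t+1/p$.

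Finally, one identifies the conjugated action $\Phi\varphi\Phi^{-1}$ on $(S^3,\xi_{\rm std})$ with the standard generator $(z,w)\mapsto(e^{2\pi i/p}z,e^{2\pi iq/p}w)$ of the $\Z_p$-action defining $L(p,q)$. The monodromy invariant $\mon(K)=-q$ is designed exactly to control the rotation of the deck action transverse to $\widetilde K$: one passage through the $S^1$ factor rotates a meridional disk through an angle of $2\pi q/p$ relative to the page framing, matching the standard generator. Up to further composing $\Phi$ with an open-book-preserving contactomorphism of $(S^3,\xi_{\rm std})$, the two $\Z_p$-actions coincide, and descent to the quotients produces the desired contactomorphism $(M,\xi)\to(L(p,q),\xi_{\rm std})$.

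The main obstacle will be this last step. Translating the topological invariant $\mon(K)=-q$ into a precise statement about the rotation of $\varphi$ along $\widetilde K$, and then showing that the conjugated deck group and the standard $\Z_p$-subgroup are conjugate inside the group of open-book-preserving contactomorphisms of $(S^3,\xi_{\rm std})$ via an equivariant isotopy, requires careful bookkeeping of framings and orientation conventions; this is also where the role of the sign in $\mon(K)=-q$, as opposed to $+q$, becomes critical.
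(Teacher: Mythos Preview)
Your route is genuinely different from the paper's. The paper proceeds in two short strokes: topologically, $M\setminus K$ is the mapping torus of a disk, hence a solid torus, so $M$ has a genus-one Heegaard splitting and is a lens space; the gluing is read off from $\mon(K)=-q$ via Lemmas~\ref{lemma_top} and~\ref{lem:lpqmonodromy}, giving $M\cong L(p,q)$. For the contact structure, the Reeb hypothesis says precisely that the rational open book supports $\xi$, and the paper cites Baker--Etnyre--Van Horn-Morris~\cite{BEVHM} to conclude that $\xi$ is universally tight; uniqueness of the positive universally tight structure on $L(p,q)$ then finishes. Your covering argument is more self-contained in spirit, but it trades one external citation for a delicate equivariance problem.

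There is a concrete error in your description of the cover. A meridian of $K$ meets each page in $p$ points (the page boundary is a $(p,-q)$-curve on $\partial N(K)$), so under $\pi_*$ the meridian maps to $\pm p\in\Z$ and hence to $0\in\Z_p$. Your cover is therefore \emph{unbranched}: it extends to an honest free $p$-fold cover $\widetilde M\to M$, with $\widetilde K\to K$ a $p$-fold cover of circles and the deck group acting \emph{freely} on $\widetilde K$ by rotation through $1/p$, not trivially. The conclusion $\widetilde M\cong S^3$ survives this correction, and indeed the free action on $\widetilde K$ is exactly what matches the standard $\Z_p$-action, which rotates both Hopf fibers nontrivially.

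The gap you flag yourself is real. Promoting a contactomorphism $\Phi$ that preserves the open book to one conjugating the deck action to the standard linear $\Z_p$-action requires an equivariant Gray argument: you need the isotopy of supported contact structures to be $\Z_p$-invariant, which in turn asks for a $\Z_p$-invariant path of adapted contact forms (obtainable by averaging, but this must be said and checked). Once you have established that the universal cover carries the tight structure, a quicker finish is to observe that $\xi$ is then universally tight by construction and invoke the classification of such structures on $L(p,q)$ --- which is exactly the paper's endgame.
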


The existence of  {a rational} open book decomposition with disk-like pages of order $p$ and binding $K$ on $M$ is essentially telling us that $M$ is obtained by doing Dehn surgery with coefficient $-p/q$ on a Hopf fiber {;} see Figure~2.

\begin{figure}\label{fig2}
\includegraphics[width=200\unitlength]{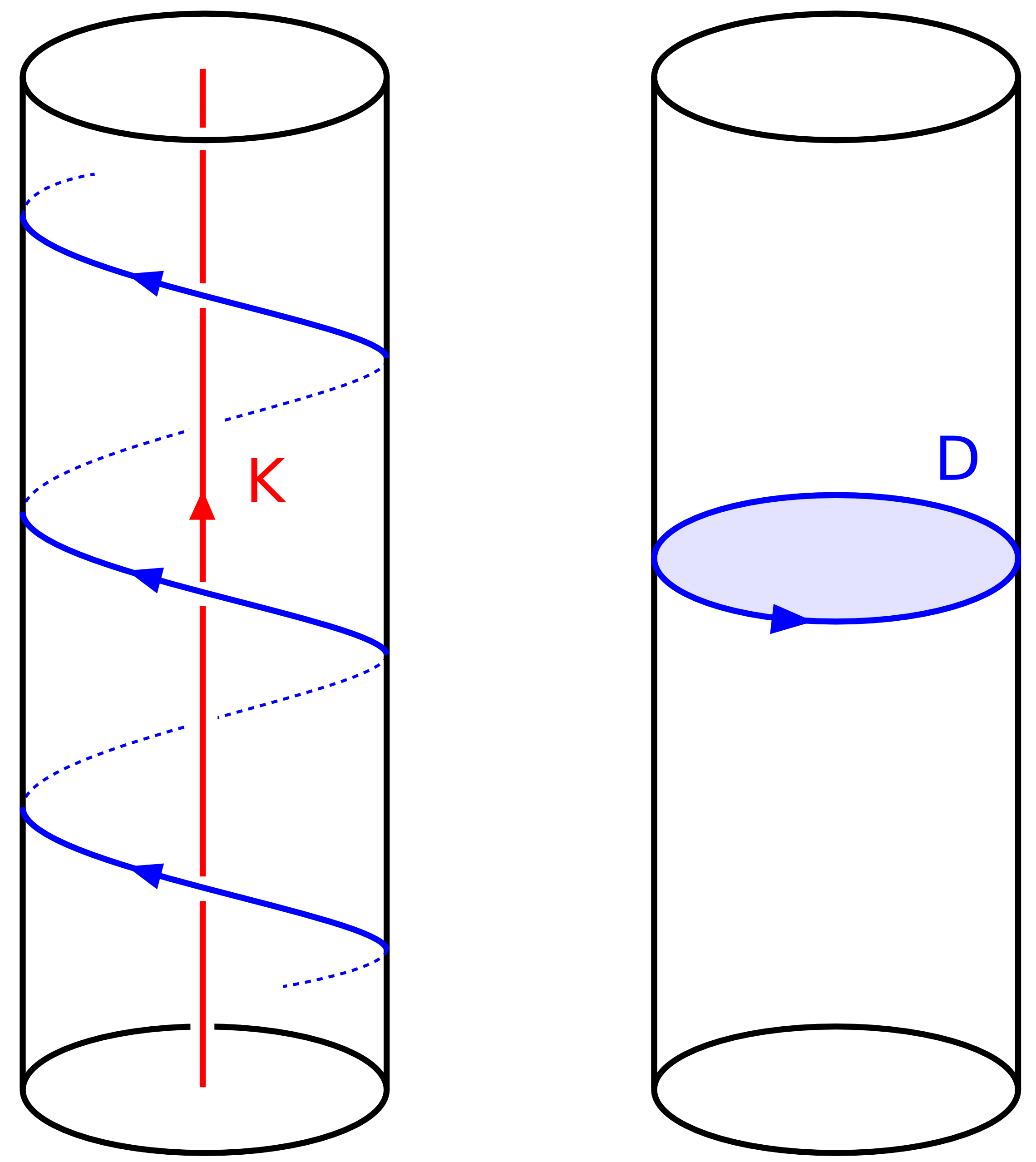}
\caption{$L(p,q)$ is obtained by Dehn surgery with coefficient $-p/q$ on a Hopf fiber.   {This induces a genus $1$ Heegaard splitting of $L(p,q)$, and the two solid tori are shown in the figure.  The boundary of the meridional disc on the right glues to the curve on the boundary of the left-hand solid torus. As the radius of the  torus on the left converges to $0$, the meridional disk $D$ in the other solid torus} converges to a $p$-disk for $K$ which is a page of the rational open book decomposition.}
\end{figure}

Combining Propositions~\ref{proposition_global_sections} and~\ref{proposition_classification}, sufficiency in Theorem~\ref{main2} is proved. Before addressing necessity we recall an important definition.

\begin{definition}[Hofer, Wysocki and Zehnder]
A defining contact form $\lambda$ for $\xi$ is \textit{dynamically convex} if $c_1(\xi)$ vanishes on $\pi_2(M)$ and every contractible closed Reeb orbit $P$ satisfies $\mu_{CZ}(P)\geq3$.
\end{definition}

The simplest example of a dynamically convex contact form is the standard Liouville form $\lambda_0$~\eqref{liouville_form} on $S^3$: the prime closed Reeb orbits are the Hopf fibers and they all have Conley-Zehnder index $3$. Fixing $p>q\geq1$ with $\gcd(p,q)=1$  {$\lambda_0$} obviously descends to a dynamically convex contact form on $L(p,q)$. The Hopf fiber $\widetilde K = S^1\times\{0\}$ descends to a transverse order $p$ rational unknot $K \subset L(p,q)$ with self-linking number $ {\frac{-1}{p}}$ and monodromy $-q$, see Lemma~\ref{lem:lpqsl}. This proves necessity in Theorem~\ref{main2}.

It is important to note that the existence of a defining dynamically convex contact form for $\xi$ already imposes contact-topological restrictions on $(M,\xi)$, as the following result from~\cite{char2} shows; see \S~\ref{contact_geometry_background} for the definition of tight contact structures.

\begin{theorem}[Hofer, Wysocki and Zehnder]\label{thm_dyn_conv_top}
If $\lambda$ is a dynamically convex contact form on a closed 3-manifold $M$, then $\ker\lambda$ is tight and $\pi_2(M)$ vanishes.
\end{theorem}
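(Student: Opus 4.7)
The plan is to argue by contradiction in both cases, in each instance producing a contractible closed Reeb orbit $P$ with $\mu_{CZ}(P)\le 2$, which is directly excluded by dynamical convexity.

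For tightness, I would follow the disk-filling strategy of Hofer~\cite{93}, refined by the asymptotic index analysis of~\cite{char1,char2}. Suppose for contradiction that $\ker\lambda$ is overtwisted, and pick an overtwisted disk $D\subset M$. Choose an $\R$-invariant admissible almost complex structure $\tilde J$ on the symplectization $(\R\times M, d(e^a\lambda))$ compatible with $d\lambda$ on $\xi$. Starting at the unique elliptic singularity of the characteristic foliation of $D$, initiate a Bishop family of $\tilde J$-holomorphic disks with boundary on $\{0\}\times D$. Because of the overtwisted nature of $D$ the family cannot close up into a compact moduli, so Hofer's bubbling-off analysis of the degeneration yields a non-constant finite-energy plane $\tilde u:\C\to\R\times M$ asymptotic to a contractible closed Reeb orbit $P$. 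The refined asymptotic analysis of Hofer-Wysocki-Zehnder reads $\mu_{CZ}(P)$ off the winding numbers of $\tilde u$ along its end, forcing $\mu_{CZ}(P)\le 2$ and contradicting dynamical convexity.

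For $\pi_2(M)=0$, assume some class is nontrivial and represent it by an embedded essential $S^2\subset M$ via the sphere theorem. Perturb so that the characteristic foliation on $S^2$ is Morse-Smale with only elliptic and hyperbolic tangencies; tightness, established in the first step, guarantees a usable elliptic seed. Start a Bishop family of $\tilde J$-holomorphic disks with boundary on $\{0\}\times S^2$ emanating from this seed. Were the family to close up, the swept-out region would be an embedded $3$-ball in $\R\times M$ bounded by $\{0\}\times S^2$; via the homotopy equivalence $\R\times M\simeq M$ this would make the sphere null-homotopic in $M$, contradicting its non-triviality in $\pi_2(M)$. Hence the family must degenerate, and the same bubbling and asymptotic analysis furnish a finite-energy plane asymptotic to a contractible closed Reeb orbit $P$ with $\mu_{CZ}(P)\le 2$, again contradicting dynamical convexity.

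The principal obstacle lies in the second step: controlling the bubbling-off tree to extract exactly one finite-energy plane, and certifying that the moduli-theoretic obstruction to closure is genuine rather than removable by perturbation, demand the full force of the compactness and intersection theory developed in~\cite{char1,char2}. One must also check carefully that the Bishop family in the $S^2$ case is not absorbed by a second elliptic tangency before any bubbling can occur; verifying this may need either a generic choice of $\tilde J$ or a careful analysis of the singularities of the characteristic foliation together with the homological nontriviality of the sphere class. Granted these points, dynamical convexity supplies the required contradiction in both cases.
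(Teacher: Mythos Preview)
The paper does not supply its own proof of this theorem; it is quoted from~\cite{char2}. Your outline follows the Hofer--Wysocki--Zehnder strategy and is essentially correct in spirit, but one step is not as automatic as you suggest.

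The sentence ``the refined asymptotic analysis of Hofer--Wysocki--Zehnder reads $\mu_{CZ}(P)$ off the winding numbers of $\tilde u$ along its end, forcing $\mu_{CZ}(P)\le 2$'' hides the real work. A finite-energy plane by itself only gives $\wind_\infty\ge 1$, hence $\mu_{CZ}(P)\ge 2$, which is the \emph{wrong} direction for a contradiction with dynamical convexity. The upper bound $\mu_{CZ}(P)\le 2$ does not come from asymptotics of a single plane; it comes from the Fredholm index of the limiting \emph{punctured disk} with boundary on $\{0\}\times D$ (respectively $\{0\}\times S^2$). For generic $J$ the index of such a somewhere-injective curve must be non-negative, and the index formula then forces every negative asymptotic orbit to have $\mu_{CZ}=2$; this is exactly the mechanism behind Proposition~\ref{prop_generic_J} in this paper. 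Only after that does the bubbling-off tree deliver a plane asymptotic to an orbit with $\mu_{CZ}=2$. You should make this step explicit rather than fold it into ``asymptotic analysis''.

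For the $\pi_2$ part, your concern about the family being ``absorbed by a second elliptic tangency'' is well placed: tightness together with $\chi(S^2)=2$ lets one arrange the characteristic foliation on the essential sphere to have exactly two positive elliptic points and no other singularities (via Giroux elimination), so if no bubbling occurs the Bishop disks do sweep from one to the other and fill a ball, contradicting essentiality. Once that is set up, the same Fredholm-genericity argument as above produces the orbit with $\mu_{CZ}=2$ when bubbling does occur. Finally, note that~\cite{char2} also handles degenerate $\lambda$ by approximating with nondegenerate forms and passing to the limit; your outline implicitly assumes nondegeneracy.
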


As an immediate consequence we obtain

\begin{corollary}\label{main}
Fix $p\in \mathbb{Z}^+$.  A closed connected contact $3$-manifold $(M,\xi)$ is contactomorphic to $(L(p,k),\xi_{\rm std})$ for some $k$ if, and only if, $\xi = \ker \lambda$ for a dynamically convex contact form $\lambda$ admitting a prime closed Reeb orbit $K$ which is $p$-unknotted and has self-linking number $ {\frac{-1}{p}}$. Moreover, if $\lambda,K$ are as above and $K$ has monodromy $-q$ then $(M,\xi)$ is contactomorphic to $(L(p,q),\xi_{\rm std})$.
\end{corollary}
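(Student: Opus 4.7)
The plan is to derive the corollary as a direct consequence of Theorem~\ref{main2} by showing that dynamical convexity, together with Theorem~\ref{thm_dyn_conv_top}, automatically upgrades the hypotheses of the corollary into those of Theorem~\ref{main2}. Once this is in place, the contactomorphism with $(L(p,k),\xi_{\mathrm{std}})$, and the monodromy refinement identifying $k=q$, carry over verbatim.

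For sufficiency, suppose $\lambda$ is dynamically convex on $M$ and $K$ is a $p$-unknotted prime closed Reeb orbit with $\sl(K)=-p$. First, Theorem~\ref{thm_dyn_conv_top} yields that $\xi=\ker\lambda$ is tight and $\pi_2(M)=0$; in particular, the hypothesis $c_1(\xi)|_{\pi_2(M)}=0$ of Theorem~\ref{main2} is trivially satisfied. Next, to verify condition~(i), I note that Definition~\ref{p_unknot_def} provides a $p$-disk $u:\D\to M$ for $K$; its boundary is a $p{:}1$ cover of $K$, which is exactly a disk filling for the loop $x_{pT}$ parametrizing the iterate $K^p$. Thus $K^p$ is contractible, and dynamical convexity yields $\mu_{CZ}(K^p)\geq 3$. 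For condition~(ii), every contractible closed Reeb orbit $P$ satisfies $\mu_{CZ}(P)\geq 3$ by dynamical convexity, which by~\eqref{mu_rho_rel} forces $\rho(P)>1$; hence the set $\P^*$ appearing in the hypothesis of Theorem~\ref{main2} is empty, and condition~(ii) holds vacuously. Applying Theorem~\ref{main2} now delivers the desired contactomorphism $(M,\xi)\cong(L(p,k),\xi_{\mathrm{std}})$, with $k=q$ whenever $\mon(K)=-q$.

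Necessity is essentially recorded in the paragraph immediately preceding the corollary. On the standard model $(L(p,q),\xi_{\mathrm{std}})$, the Liouville form $\lambda_0$ descends to a defining contact form whose Reeb trajectories are the descended Hopf fibers and whose prime orbits all have Conley--Zehnder index $3$, so this form is dynamically convex. A descended Hopf fiber is then a prime closed Reeb orbit, and Lemma~\ref{lem:lpqsl} identifies it as a $p$-unknotted orbit with self-linking number $-p$ and monodromy $-q$, supplying the required $K$.

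No serious obstacle arises in the corollary itself: all the analytical work is packaged inside Theorem~\ref{main2} and Theorem~\ref{thm_dyn_conv_top}. The only conceptual point is the observation that dynamical convexity simultaneously subsumes the Conley--Zehnder bound on $K^p$ (via the fact that a $p$-disk makes $K^p$ contractible) and the transverse-rotation-number condition~(ii) (vacuously, via~\eqref{mu_rho_rel}), which is what allows one to eliminate hypothesis~(ii) from the statement.
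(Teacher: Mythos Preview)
Your proof is correct and follows essentially the same approach as the paper's: invoke Theorem~\ref{thm_dyn_conv_top} to obtain tightness and $\pi_2(M)=0$, use~\eqref{mu_rho_rel} to see that dynamical convexity empties the set $\P^*$ so that condition~(ii) of Theorem~\ref{main2} is vacuous, and then apply Theorem~\ref{main2}. Your version is in fact slightly more explicit than the paper's, since you spell out why $\mu_{CZ}(K^p)\geq 3$ follows (the $p$-disk exhibits $K^p$ as contractible, so dynamical convexity applies), a point the paper's terse proof leaves implicit.
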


\begin{proof}
Necessity follows as in Theorem~\ref{main2}. In view of Theorem~\ref{thm_dyn_conv_top}, $\xi$ is tight and $\pi_2(M)=0$ when $\xi=\ker\lambda$ for some dynamically convex contact form~$\lambda$. From~\eqref{mu_rho_rel} no contractible closed Reeb orbit $P$ satisfies $\rho(P)=1$. Sufficiency follows from Theorem~\ref{main2}.
\end{proof}

Another dynamical characterization is given by Hutchings and Taubes~\cite{HT}. They prove that nondegenerate $3$-dimensional Reeb flows with precisely two closed Reeb orbits only exist in lens spaces.

\subsection{Global surfaces of section on lens spaces}

From a dynamical point of view we are specially interested in  {(rational)} open book decompositions where all pages are global surfaces of section for a Reeb flow given {\it a priori}. The inherent analytical difficulties now come from the fact that, in this context, one is not allowed to change the contact form since the whole point is to study dynamics.

\begin{definition}
Assume $\lambda$ is a contact form on $M$ with Reeb vector field $R$. A \textit{disk-like global surface of section of order $p\geq 1$} for the Reeb dynamics is a $p$-disk $u:\mathbb D\to M$ such that $K = u(\partial \D)$ is tangent to $R$, $u(\D\setminus \partial \D)$ is transverse to $R$, and for every Reeb trajectory $x(t)$ in $M\setminus K$ one finds $t^\pm_n \to \pm\infty$ such that $x(t^\pm_n) \in u(\D\setminus \partial \D)$, $\forall n$.
\end{definition}

The importance of finding global surfaces of section for flows without rest points goes back to Poincar\'e and his studies of the circular planar restricted $3$-body problem (CPR3BP). When they exist one can study the flow on the $3$-dimensional energy level via the associated return map, thus allowing $2$-dimensional dynamical methods to come into play.

Global surfaces of section may exist organized in the form of pages of a  {rational} open book decomposition, as in the following definition.

\begin{definition}\label{def_ob_adapted}
Let $\lambda$ be a defining contact form for $(L(p,q),\xi_{\rm std})$ and let $K\subset L(p,q)$ be a knot. A  {rational} open book decomposition $(K,\pi)$ of $L(p,q)$ with disk-like pages of order $p$ and binding $K$ is   {{\it adapted}} to $\lambda$ if $K$ is a closed Reeb orbit and the pages are disk-like global surfaces of section of order $p$ for the Reeb flow. 
\end{definition}

When the defining contact form $\lambda$ for $(L(p,q),\xi_{\rm std})$ is assumed to be nondegenerate then we can identify the precise conditions for a given closed Reeb orbit to bound a disk-like global surface of section of order $p$. This is made precise in our next main result.

\begin{theorem}\label{main3}
Let $\lambda$ be a nondegenerate defining contact form for $(L(p,q),\xi_{\rm std})$, and let $K\subset L(p,q)$ be a prime closed Reeb orbit. Let $\P^*$ denote the set of closed   {Reeb} orbits $P'\subset L(p,q)\setminus K$ which are contractible in $L(p,q)$ and satisfy $\rho(P')=1$. Then the following assertions are equivalent:
\begin{itemize}
\item[i)] $K$ bounds a disk-like global surface of section of order $p$ for the Reeb flow.
\item[ii)] $K$ is the binding of   {a rational} open book decomposition $(K,\pi)$ with disk-like pages of order $p$ adapted to $\lambda$.
\item[iii)] $K$ is $p$-unknotted, $\sl(K)=  {\frac{-1}{p}}$, $\mu_{CZ}(K^p)\geq 3$ and no orbit in $\P^*$ is contractible in $L(p,q)\setminus K$.
\end{itemize}
\end{theorem}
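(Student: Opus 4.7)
The plan is to establish the cycle of implications (ii) $\Rightarrow$ (i) $\Rightarrow$ (iii) $\Rightarrow$ (ii). The first implication is essentially tautological: any page of an open book decomposition adapted to $\lambda$ is, by Definition~\ref{def_ob_adapted}, a disk-like global surface of section of order $p$ bounded by $K$, so (ii) immediately gives (i).

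For (i) $\Rightarrow$ (iii), I would proceed with four separate arguments. The knot $K$ is $p$-unknotted directly by the definition of a disk-like global surface of section of order $p$. To rule out that any orbit in $\P^*$ is contractible in $L(p,q)\setminus K$, I would argue more generally that no closed Reeb orbit $P\subset L(p,q)\setminus K$ can be contractible in the complement of $K$: since the Reeb field $R$ is positively transverse to the interior of the page $u(\D)$ and $P$ meets $u(\D)$ in forward time by the surface-of-section property, every geometric intersection of $P$ with $u(\D)$ is a positive crossing, yielding a strictly positive algebraic intersection number. This algebraic intersection number records the linking number of $P$ with $K$ in $L(p,q)$, and that linking number must vanish when $P$ is contractible in $L(p,q)\setminus K$, a contradiction. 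The equality $\sl(K)=-p$ is a local computation at the binding: the tangent plane to $u(\D)$ along $\partial\D$ descends to a nonvanishing section of $\xi|_K$ (since $R$ is tangent to $K$), and pushing $K$ off in this direction and comparing with the intersection pattern with $u$ (taking into account the $p:1$ cover $u|_{\partial\D}$) yields $\sl(K)=-p$. Finally, $\mu_{CZ}(K^p)\geq 3$ follows from the behavior of the Poincar\'e first return map on $\D\setminus\partial\D$: in a suitable frame along $K^p$ induced by the page, orbits near the binding must advance forward through the page in bounded time, which translates into the strict inequality $\rho(K^p)>1$, equivalent by~\eqref{mu_rho_rel} to $\mu_{CZ}(K^p)\geq 3$.

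For (iii) $\Rightarrow$ (ii) I would apply Proposition~\ref{proposition_global_sections}, noting that $(L(p,q),\xi_{\rm std})$ is tight and that $\pi_2(L(p,q))=0$ ensures the vanishing condition on $c_1(\xi_{\rm std})$. This furnishes an open book decomposition $(K,\pi)$ of $L(p,q)$ with disk-like pages of order $p$ and binding $K$, together with some auxiliary contact form $\lambda'$ for $\xi_{\rm std}$ whose Reeb field is transverse to the pages and tangent to $K$. The remaining step is to promote this to an open book adapted to the original $\lambda$, by showing that the pages may be taken transverse to $R_\lambda$. In the nondegenerate setting this should follow by invoking Proposition~\ref{proposition_global_sections2} directly: the pages are realized as projections to $L(p,q)$ of finite-energy pseudo-holomorphic curves in the symplectization $(\R\times L(p,q),d(e^a\lambda))$ with respect to an almost complex structure compatible with $\lambda$, and such projections are automatically positively transverse to $R_\lambda$. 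Together with the tangency of $R_\lambda$ to $K$, this gives the disk-like global surface of section property for every page, completing (ii).

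The principal obstacle lies in this last upgrade inside (iii) $\Rightarrow$ (ii): Proposition~\ref{proposition_global_sections} alone only records transversality with respect to some $\lambda'$, and extracting transversality with respect to $\lambda$ requires entering the proof of Proposition~\ref{proposition_global_sections2} and verifying that the holomorphic curves producing the pages can be set up directly with respect to $\lambda$ rather than obtained as limits of nondegenerate approximations as in Remark~\ref{seqlambdak}. This is exactly where the nondegeneracy hypothesis in Theorem~\ref{main3} is used in an essential way, since it renders the perturbation step unnecessary and preserves the Reeb vector field of interest. A secondary but nontrivial point is the careful bookkeeping in (i) $\Rightarrow$ (iii) of the framing induced by the $p$-disk versus the canonical framings used to define $\sl(K)$ and $\mu_{CZ}(K^p)$, where the order $p$ of the disk and the distinction between $K$ and its $p$-th iterate must be tracked consistently.
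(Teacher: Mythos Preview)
Your cycle (ii)$\Rightarrow$(i)$\Rightarrow$(iii)$\Rightarrow$(ii) and the content of each implication match the paper's proof. Two points can be sharpened. For (iii)$\Rightarrow$(ii) you take a detour through Proposition~\ref{proposition_global_sections}, obtain an open book adapted only to some auxiliary $\lambda'$, and then argue separately that the pages are transverse to $R_\lambda$; the paper bypasses this entirely by applying Proposition~\ref{proposition_global_sections2} with $f_n\equiv 1$ (permissible since $\lambda$ is nondegenerate), which directly yields an open book adapted to $\lambda_n=\lambda$. For $\mu_{CZ}(K^p)\geq 3$ in (i)$\Rightarrow$(iii), your return-time heuristic is correct in spirit but the paper separates the argument into two steps: positivity of $d\lambda$ on the page (via Stokes and transversality) forces $\mu_{CZ}(K^p)\geq 2$, and then the boundary case $\mu_{CZ}(K^p)=2$ is excluded by observing that $K$ would be hyperbolic with stable manifold not winding relative to the page, producing trajectories that never return.
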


\begin{proof}
${\rm iii) \Rightarrow ii)}$ follows from~Proposition~\ref{proposition_global_sections2} taking $f_n\equiv1$, $\forall n$. ${\rm ii) \Rightarrow i)}$ is obvious from Definition~\ref{def_ob_adapted} since the pages are global surfaces of section of order $p$. To prove ${\rm i) \Rightarrow iii)}$ note that the existence of a $p$-disk $\mathcal D$ for $K$ transverse to the Reeb vector field immediately implies $\sl(K)=  {\frac{-1}{p}}$. If $\mathcal D$ is a global surface of section then the linking number between $K$ and any other closed Reeb orbit is positive, in particular no orbit of $\P^*$ is contractible in $L(p,q)\setminus K$. The Reeb vector field along $K$ orients ${\rm int} (\mathcal D)$. By Stokes  {'s} theorem, the integral of $d\lambda$ over ${\rm int}(\mathcal D)$ is positive. Since the Reeb vector field is transverse to ${\rm int}(\mathcal D)$ we conclude that $d\lambda$ is positive over ${\rm int}(\mathcal D)$. The geometric description of the Conley-Zehnder index explained in \S~\ref{sssec_geom_defn} tells us that if $\mu_{CZ}(K^p)\leq 1$ then $d\lambda|_{T\mathcal D}$ is negative near $\partial\mathcal D$. Thus $\mu_{CZ}(K^p)\geq 2$. If $\mu_{CZ}(K^p)=2$ then $p\in\{1,2\}$ and $K$ is hyperbolic. Its stable manifold does not wind with respect to $\mathcal D$, and we would find trajectories that never hit $\mathcal D$ in the future, a contradiction. Thus $\mu_{CZ}(K^p)\geq 3$.
\end{proof}

The case $p=1$ was treated in~\cite{HS}. The reader will notice that one could lift the data in Theorem~\ref{main3} to the universal covering $(S^3,\xi_{\rm std})$ and apply the result from~\cite{HS} to obtain disk-like global surfaces of section bounded by the lift of $K$. However, such sections do not necessarily descend to sections on $L(p,q)$.

As explained by Hofer, Wysocki and Zehnder in~\cite{convex}, the existence of a disk-like global section has deep dynamical consequences such as the existence of two or infinitely many close Reeb orbits. In fact, Brouwer's translation theorem provides a fixed point of the return map which corresponds to a second closed Reeb orbit $K'$, geometrically distinct from the binding $K$. Then a result of Franks~\cite{franks} for area-preserving maps of the annulus implies infinitely many closed Reeb orbits if a third closed Reeb orbit exists.  We remark that one could also address the question of existence of infinitely many closed Reeb orbits assuming only the existence of the Hopf link $K \cup K'$. In fact,  if a non-resonance condition on their rotation numbers is satisfied, then one obtains infinitely many closed Reeb orbits characterized by their linking numbers with $K$ and $K'$.  This is proved in~\cite{HMS} with no genericity assumptions on the Reeb flow.

In~\cite{affhk} Albers, Fish, Frauenfelder, Hofer and van Koert use results in Symplectic Dynamics~\cite{BH} to study the CPR3BP. There are two primaries, the Sun with mass $m_S$ and the Earth with mass $m_E$, and a massless satellite. The relative position of the Earth with respect to the Sun is assumed to describe a circular trajectory, and the satellite moves in the same plane as the primaries. When the mass ratio $\mu = m_S/(m_S+m_E)$ equals $1$ the satellite moves around the Sun as in Kepler's problem. For $\mu\in(0,1)$ the Earth comes into play and, after choosing a suitable rotating system of coordinates where Sun and Earth remain at rest on a given axis, the Hamiltonian describing the system has five critical points $L_1,\dots,L_5$ with critical values $h_1,\dots,h_5$ ordered monotonically. The energy levels below $h_1$ have $3$ connected components. After regularizing collisions, two of them, denoted by $\Sigma_{\mu,c}^S$ and $\Sigma_{\mu,c}^E$, become diffeomorphic to $\R P^3 = L(2,1)$ and correspond to Hill's regions around the Sun and the Earth. Here $c<h_1$ refers to the value of the energy. The third is non-compact. Loosely speaking, it is proved in~\cite{affhk} that for $\mu\sim 1$ the Hamiltonian flow on $\Sigma_{\mu,c}^E$ is a dynamically convex Reeb flow on $(L(2,1),\xi_{\rm std})$. As a consequence, the following remarkable result from~\cite{convex} applies to the flow lifted to the universal covering $\widetilde \Sigma_{\mu,c}^E$.

\begin{theorem}[Hofer, Wysocki and Zehnder \cite{convex}]\label{thm_convexo}
The Reeb dynamics associated to any dynamically convex contact form on $S^3$ admits a disk-like global surface of section.
\end{theorem}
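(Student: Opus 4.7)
The plan is to reduce to Theorem~\ref{main3} with $p=q=1$, under the identification $L(1,1)=(S^3,\xi_{\rm std})$. By Theorem~\ref{thm_dyn_conv_top} the contact structure $\xi=\ker\lambda$ is tight, and Eliashberg's classification of tight contact structures on $S^3$ identifies it with $\xi_{\rm std}$. Dynamical convexity forces $\mu_{CZ}(P)\geq 3$ for every contractible closed Reeb orbit $P$, hence by~\eqref{mu_rho_rel} every such $P$ satisfies $\rho(P)>1$; in particular the exceptional set $\P^*$ of rotation-number-$1$ orbits in Theorem~\ref{main3} is empty. It therefore suffices to produce a prime closed Reeb orbit $K$ of $\lambda$ that is unknotted with $\sl(K)=-1$, since the Conley--Zehnder bound $\mu_{CZ}(K)\geq 3$ is then automatic.

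First I would treat the case in which $\lambda$ is nondegenerate. The orbit $K$ is constructed via Hofer's pseudoholomorphic method~\cite{93}. Fix a $d\lambda$-compatible almost complex structure on $\xi$ and extend it to the cylindrical structure $\jtil$ on the symplectization $(\R\times S^3,d(e^a\lambda))$. A Bishop-type family of small $\jtil$-holomorphic disks attached to a totally real $2$-disk in $\{0\}\times S^3$ cannot close up, and SFT compactness together with dynamical convexity---which excludes sphere bubbles and breakings whose asymptotic limits have Conley--Zehnder index $\leq 2$---degenerates the family to a finite energy plane $\util:\C\to\R\times S^3$ asymptotic to a prime closed Reeb orbit $K$ with $\mu_{CZ}(K)=3$. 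Asymptotic analysis of $\util$ at infinity shows that the projection $\pi_{S^3}\circ\util$ extends to an embedded disk in $S^3$ whose boundary parametrizes $K$ once, so $K$ is unknotted, and the winding computation identifies $\sl(K)=-1$. Theorem~\ref{main3} then produces the desired disk-like global surface of section for $\lambda$.

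For a general, possibly degenerate, dynamically convex $\lambda$ I would approximate by $\lambda_n=f_n\lambda$ with $f_n\to 1$ in $C^\infty$, each $\lambda_n$ nondegenerate and still dynamically convex. The previous paragraph supplies for every $n$ an unknotted Reeb orbit $K_n$ of $\lambda_n$ with $\sl(K_n)=-1$, $\mu_{CZ}(K_n)=3$, together with an associated finite-energy plane $\util_n$. Uniform bounds on periods of short orbits and SFT compactness extract a subsequential limit orbit $K$ of $\lambda$ and a limiting finite-energy plane $\util$; the dynamical convexity of $\lambda$ is precisely what rules out the limit splitting into a nontrivial holomorphic building. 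The limit plane certifies that $K$ is unknotted with $\sl(K)=-1$ and $\mu_{CZ}(K)\geq 3$, so Proposition~\ref{proposition_global_sections2} together with Remark~\ref{seqlambdak} delivers the disk-like global surface of section for $\lambda$.

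The principal obstacle is the existence step for nondegenerate $\lambda$: one must identify the limit of the Bishop family as a single embedded finite-energy plane with Conley--Zehnder index exactly $3$ and compute its self-linking number, and dynamical convexity enters precisely to rule out the bubbling degenerations that would yield extraneous low-index asymptotic orbits. The compactness argument in the degenerate case is the second delicate point, again depending on dynamical convexity to bound rotation numbers uniformly away from~$1$ and thereby prevent the binding from decomposing in the limit.
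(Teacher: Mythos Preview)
The paper does not prove Theorem~\ref{thm_convexo}; it is quoted from~\cite{convex} as a result of Hofer, Wysocki and Zehnder, so there is no proof in the paper to compare against. That said, your attempt to rederive it from the machinery of this paper is worth commenting on.

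For nondegenerate $\lambda$ your route is essentially the original one from~\cite{convex,char1}: the Bishop family argument you sketch is precisely the HWZ construction that produces an embedded fast finite-energy plane asymptotic to an unknotted orbit $K$ with $\sl(K)=-1$ and $\mu_{CZ}(K)=3$. Once such a $K$ is in hand, invoking Theorem~\ref{main3} with $p=1$ is legitimate (its proof via Proposition~\ref{proposition_global_sections2} with $f_n\equiv 1$ does not depend on~\cite{convex}), so this part is fine, though it does not shortcut the hard analysis of~\cite{convex}; it simply repackages it.

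The degenerate case has a genuine gap. Proposition~\ref{proposition_global_sections2} together with Remark~\ref{seqlambdak} does \emph{not} deliver a global surface of section for $\lambda$; it delivers one for each nondegenerate perturbation $\lambda_n$. Passing these sections to the limit $\lambda$ is exactly the delicate step, and the paper explicitly states (in the paragraph following Theorem~\ref{main3}) that this is deferred to~\cite{HS3BP}. Your sketch of extracting a limiting plane $\util$ and orbit $K$ is plausible but incomplete: even granting a limit $K$ with the right invariants, you still only get open books adapted to $\lambda_n$, not to $\lambda$, from the results available here. A secondary issue is the claim that the $\lambda_n$ can be chosen dynamically convex; this is true but requires the specific approximation argument from~\cite{convex}, not a generic perturbation.
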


Theorem~\ref{thm_convexo} provides a disk-like global surface of section $\mathcal D$ on $\widetilde \Sigma_{\mu,c}^E$. However, $\mathcal D$ may not necessarily project onto a global surface of section on $\Sigma_{\mu,c}^E$, and it is not known precisely which orbits can $\partial\mathcal D$ be. 

In~\cite{HS3BP} the problem of constructing disk-like global surfaces of section directly inside $\Sigma_{\mu,c}^E$ and $\Sigma_{\mu,c}^S$ with prescribed boundary orbits is studied. For any energy value $c$ below $h_1$ and mass ratio $\mu\sim 1$, the results from~\cite{affk,affhk} are used to find disk-like global surfaces of section of order $2$ inside $\Sigma_{\mu,c}^E$ or $\Sigma_{\mu,c}^S$ bounded by a prescribed $2$-unknotted periodic orbit $P$ with self-linking number   {$\frac{-1}{2}$}. No assumption on the Conley-Zehnder index of $P$ is made. It is expected that particular periodic orbits of physical interest satisfy these assumptions.

Hamiltonian systems coming from   {p}hysics need not satisfy genericity assumptions. Hence Theorem~\ref{main3} may not be useful to the study of CPR3BP. In~\cite{HS3BP} it will be proved that the analysis performed here is still sufficient for constructing global surfaces of section even when the contact form is degenerate. As a preliminary and crucial step in this direction we collect here the following technical proposition.

\begin{proposition}\label{thm_global_sections}
Let $\lambda$ be a defining contact form for $(L(p,q),\xi_{\rm std})$, and let $K\subset L(p,q)$ be a $p$-unknotted prime closed Reeb orbit satisfying $\sl(K)=  {\frac{-1}{p}}$ and $\mu_{CZ}(K^p)\geq 3$. Consider $\P^*\subset\P(\lambda)$ the set of closed orbits $P'\subset L(p,q)\setminus K$ which are contractible in $L(p,q)$ and satisfy $\rho(P')=1$. Consider also a $p$-disk $u_0$ for $K$ which is special robust for $(\lambda,K)$. Let $e$ be the (unique) singular point of the characteristic foliation of $u_0$ and fix $V$ a small open neighborhood of $e$. Suppose that every orbit $P'\in\P^*$ satisfies
\begin{center}
$P'$ is not contractible in $L(p,q)\setminus K\ \ \ $ or $\ \ \ \int_{P'}\lambda> 1+\int_\D |u_0^*d\lambda|$.
\end{center}
Then for every sequence of smooth functions $f_n:L(p,q)\to(0,+\infty)$ satisfying $f_n|_K\equiv1$, $df_n|_K\equiv0$, $f_n|_V\equiv 1$, $f_n\to 1$ in $C^\infty$ and such that $\lambda_n:=f_n\lambda$ is nondegenerate $\forall n$, one finds $n_0$ such that $\forall n\geq n_0$ there exists   {a rational} open book decomposition $(K,\pi_n)$ with disk-like pages of order $p$ adapted to~$\lambda_n$.
\end{proposition}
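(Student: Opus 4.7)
The strategy is to reduce to the nondegenerate case by verifying that, for $n$ large, the contact form $\lambda_n$ (together with the same $p$-disk $u_0$ and orbit $K$) satisfies the hypotheses of Proposition~\ref{proposition_global_sections2}, and then invoke that proposition directly to produce the adapted open book $(K,\pi_n)$.

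The conditions concerning $K$ and the disk $u_0$ transfer almost verbatim. Since $f_n|_K\equiv 1$ and $df_n|_K\equiv 0$, the Reeb vector field of $\lambda_n$ along $K$ and its linearization on $\xi_{\rm std}|_K$ coincide with those of $\lambda$; hence $K$ remains a prime closed $\lambda_n$-Reeb orbit with $\mu_{CZ}(K^p)\geq 3$. The properties of being $p$-unknotted and having $\sl(K)=-p$ depend only on $K$ as a transverse knot in $(L(p,q),\xi_{\rm std})$ and are therefore preserved. Because $f_n\equiv 1$ on $V$, the characteristic foliation of $u_0$ near $e$ induced by $\lambda_n$ coincides with that induced by $\lambda$, so $u_0$ remains a special robust $p$-disk for $(\lambda_n,K)$ with the same singular point. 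Normalizing so that $\int_K\lambda=1$, the $C^\infty$-convergence $\lambda_n\to\lambda$ yields $\int_K\lambda_n\to 1$ and $\int_\D|u_0^*d\lambda_n|\to\int_\D|u_0^*d\lambda|$.

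The remaining hypothesis to check for $\lambda_n$ is that no closed Reeb orbit $P'_n\subset L(p,q)\setminus K$ with $\rho(P'_n)=1$ is contractible in $L(p,q)\setminus K$. Argue by contradiction: suppose after extracting a subsequence such orbits $P'_n=(x_n,T_n)$ exist. Inspection of the proof of Proposition~\ref{proposition_global_sections2} shows that the obstructing orbits arise as asymptotic limits of a Bishop family of $\widetilde J$-holomorphic disks with boundary on $K$ initialized at $u_0$; their $\lambda_n$-action is therefore bounded above by $\int_K\lambda_n+\int_\D|u_0^*d\lambda_n|$. So one may assume $T_n\leq 1+\int_\D|u_0^*d\lambda|+\varepsilon$ for any fixed $\varepsilon>0$ and $n\geq n_0(\varepsilon)$. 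Arzel\`a--Ascoli then extracts a $C^\infty$ limit $x_n\to x$ where $P=(x,T)$ is a closed $\lambda$-Reeb orbit with $T=\lim T_n>0$. Either $P$ lies in $L(p,q)\setminus K$ or $x$ covers $K$; the latter is excluded because $\rho(P'_n)=1$ together with $\rho(K^m)\to m\rho(K)\neq 1$ for every $m\geq 1$ (using $\rho(K^p)>1$ and the fact that $P'_n$ contractible in $L(p,q)\setminus K$ means zero linking number with $K$, incompatible with $P'_n$ approaching a nontrivial cover of $K$). Continuity of the linearized flow gives $\rho(P)=1$, and preservation of the homotopy class under $C^0$-convergence gives that $P$ is contractible in $L(p,q)\setminus K$. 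Thus $P\in\P^*$ contradicts the standing hypothesis, which would force $T=\int_P\lambda>1+\int_\D|u_0^*d\lambda|$, violating the action bound for $\varepsilon$ sufficiently small.

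The main technical obstacle is extracting the precise action bound $1+\int_\D|u_0^*d\lambda|$ from the construction of Proposition~\ref{proposition_global_sections2}. This requires that the energy identity for the Bishop family be uniform in the defining contact form as $\lambda_n\to\lambda$ and relies on the stability of the initial family near $e$, which is guaranteed by $f_n|_V\equiv 1$. A secondary technicality is ruling out convergence of $P'_n$ to iterates of $K$; this uses the topological constraint imposed by contractibility in $L(p,q)\setminus K$ together with the strict inequality $\rho(K^p)>1$ coming from $\mu_{CZ}(K^p)\geq 3$.
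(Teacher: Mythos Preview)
You have misread Proposition~\ref{proposition_global_sections2}. Its hypotheses are stated for the possibly degenerate form $\lambda$ (the set $\P^*$ there consists of closed $\lambda$-Reeb orbits, and the action threshold is $C(\lambda,K,u_0)=1+\int_\D|u_0^*d\lambda|$), while its conclusion already produces the open book adapted to each nondegenerate $\lambda_n$ for $n$ large. Proposition~\ref{thm_global_sections} is therefore nothing but the specialization of Proposition~\ref{proposition_global_sections2} to $(M,\xi)=(L(p,q),\xi_{\rm std})$: one only needs to observe that $\xi_{\rm std}$ is tight and that $c_1(\xi_{\rm std})|_{\pi_2}=0$ (indeed $\pi_2(L(p,q))=0$). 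No transfer of hypotheses to $\lambda_n$ is required on your part.

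What you have sketched is, in effect, a reproduction of the internal proof of Proposition~\ref{proposition_global_sections2} rather than a reduction to it. The limiting argument you outline (extract $P'_n\to P$ with $\rho(P)=1$, rule out covers of $K$, contradict the $\lambda$-hypothesis) is precisely Lemma~\ref{seqlambdak2}, and your ``inspection'' of the Bishop family to obtain an action bound is the content of Proposition~\ref{prop_existence_fast} together with~\eqref{crucial_energy_estimate}. Even on its own terms your sketch has gaps: the action bound you quote is $\int_K\lambda_n+\int_\D|u_0^*d\lambda_n|$, whereas the correct bound coming from the Bishop family is $C_0=\int_\D|(u_0')^*d\lambda_n|\leq C(\lambda,K,u_0)$ after perturbing $u_0$ to a $p$-disk $u_0'$ that is \emph{special} for $(\lambda_n,K)$ via Proposition~\ref{prop_perturb_special_robust}; and the exclusion of $P'_n\to K^{p_0}$ in the paper is handled by combining $\rho(K^{p_0})=1$ with $\rho(K^p)>1$ to force $p_0<p$, and then invoking Lemma~\ref{lemma_no_p'_disk}, not by a linking-number argument.
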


See Definition~\ref{def_special_robust} for the notion of special robust $p$-disks for $(\lambda,K)$. \\

\noindent {\it Acknowledgements.} The results of this paper originated when the authors were at IAS for the thematic year on Symplectic Dynamics. We would like to thank Professor Helmut Hofer for creating such a stimulating academic environment. UH was partially supported by CNPq grant 309983/2012-6 and by NSF grant DMS-0635607. JL was partially supported by NSF grant DMS-1105312. PS was partially supported by CNPq grant 303651/2010-5 and by FAPESP grant 2011/16265-8.

\section{Preliminaries}\label{sec_preliminaries}

In this section we recall some  definitions from contact geometry and discuss the Conley-Zehnder index and the basics from pseudo-holomorphic curve theory in symplectizations. 

\subsection{Contact geometry background}\label{contact_geometry_background}

Recall that a \textit{contact form} on a $3$-manifold $M$ is a $1$-form $\lambda$ such that $\lambda \wedge d\lambda$ never vanishes.  A \textit{contact structure} $\xi$ is a $2$-plane distribution locally expressed as the kernel of locally defined contact forms. These local contact forms $\lambda$ determine volume forms $\lambda \wedge d\lambda\neq 0$ which induce an orientation of $M$, called the orientation \textit{induced by $\xi$}. When $M$ is already oriented, $\xi$ is said to be \textit{positive} when it induces the orientation of $M$.  If $\xi$ is co-orientable, then there is a globally defined contact form $\lambda$ such that $\xi = \ker \lambda$, and conversely. We refer to such $\lambda$ as a \textit{defining contact form} for $\xi$. The \textit{Reeb vector field} $R$ associated to $\lambda$ is uniquely determined by
\begin{equation}\label{eqns_Reeb_vector}
i_Rd\lambda = 0, \ \ \ i_R\lambda = 1.
\end{equation}
The first equation determines $R$ up to multiplication by a non-vanishing function since $\ker d\lambda$ is $1$-dimensional. The second equation is a normalization condition.

An \textit{overtwisted disk} is an embedded disk $D$ such that $T\partial D \subset \xi$ and $T_pD \neq \xi_p \ \forall p\in\partial D$. A contact structure $\xi$ is \textit{overtwisted} if it admits an overtwisted disk, and it is \textit{tight} otherwise. We abuse the terminology and call $\lambda$ tight when $\ker\lambda$ is tight. The contact manifold $(M,\xi)$ is called \textit{universally tight} when the lift of $\xi$ to the universal covering of $M$ is tight. It is well-known that $\xi_{\rm std}$ is the only universally tight positive contact structure on $L(p,q)$, up to contactomorphism.


Now assume $\xi$ is a positive co-orientable contact structure on $M$ and let $K$ be a  knot transverse to $\xi$ which is $p$-unknotted. Let $u$ be a $p$-disk for $K$ and let $Z$ be a smooth non-vanishing section $Z$ of $u^*\xi$. For $\epsilon>0$ small consider the loop $Z_\epsilon:\R/\Z\to M\setminus K$, $Z_\epsilon(t) := \exp(\epsilon Z(e^{i2\pi t}))$ where $\exp$ is any exponential map.

\begin{definition}\label{sl_number_def}
The   {rational} \textit{self-linking number} $\sl(K,u) \in \Q$ is 
\[
\sl(K,u) =   {\frac{1}{p^2}} \#(Z_\epsilon \cap u) \ \ \ \text{(oriented intersection number)}.
\]
 
\end{definition}
This value is independent of the choices of $\epsilon$ small, $\exp$ and $Z$.
Setting $\bar u(z) := u(\bar z)$ we have $\sl(K,u)=\sl(K,\bar u)$, and if $c_1(\xi)$ vanishes on $\pi_2(M)$, then $\sl(K,u)$ does not depend on $u$.  Setting $p=1$ recovers the ordinary self-linking number of a null-homologous knot; we will apply the term ``self-linking number" to both the classical and rational cases.

\begin{remark}
A more general definition of the rational self-linking number of rationally null-homologous transverse knots is introduced in  \cite{BE}.  \end{remark}

\subsection{The Conley-Zehnder index in dimension $3$}\label{ssec_CZ_dim_3}

We recall here two definitions of the Conley-Zehnder index in dimension $3$. The first is given in terms of spectral properties of asymptotic operators following~\cite{props2}, the second is more geometric and is taken from the appendix of~\cite{fols}.

\subsubsection{An analytical definition of the index}\label{sssec_analytical_defn}

Let $S:\R/\Z \to \R^{2\times 2}$ be a smooth map satisfying $S(t)^T=S(t) \ \forall t$. We freely identify $\R^2 \simeq \C$, $\R^{2\times 2} \simeq \mathcal L_\R(\C)$ and consider the unbounded self-adjoint operator
\begin{equation}\label{operator_normal_form}
L_S = -i \partial_t - S(t)
\end{equation}
on $L^2(\R/\Z,\C)$ equipped with the Hilbert space structure induced by the standard Euclidean inner-product on $\C$.

In~\cite{props2} a number of properties of $L_S$ are studied. Its spectrum $\sigma(L_S)$ consists of a discrete sequence of real eigenvalues accumulating at $\pm\infty$. Any non-trivial vector $e:\R/\Z \to \C$ in the eigenspace associated to some $\nu \in \sigma(L_S)$ never vanishes and has a well-defined winding number
\begin{equation}
\wind(e(t)) = \frac{1}{2\pi}(\theta(1)-\theta(0)) \in \Z
\end{equation}
where $\theta : [0,1] \to \R$ is some continuous function satisfying $e(t) \in \R^+ e^{i\theta(t)}$. It turns out that $\wind(e(t))$ does not depend on the choice of $e(t) \in \ker (L_S-\nu I)$ and we denote it by $\wind(\nu)$, the winding of the eigenvalue $\nu$. If $\nu_1\leq\nu_2$ then $\wind(\nu_1)\leq\wind(\nu_2)$, and for every $k\in\Z$ there are precisely two eigenvalues (multiplicities counted) with winding $k$. Eigenvectors associated to distinct eigenvalues with the same winding are pointwise linearly independent.

We denote by $Sp(2)$ the group of symplectic $2\times2$ matrices and consider the set $\Sigma$ of smooth maps $\varphi : \R \to Sp(2)$ satisfying
\begin{equation}\label{periodicity_path}
\varphi(t+1) = \varphi(t)\varphi(1) \ \forall t \ \ \text{and} \ \ \varphi(0) = I.
\end{equation}
Then $S(t) = -i\dot\varphi(t)\varphi(t)^{-1}$ defines a smooth $1$-periodic matrix-valued function satisfying $S(t)^T=S(t) \ \forall t$, and there is an unbounded self-adjoint operator $L_S$ as in~\eqref{operator_normal_form}. Following~\cite{props2} we consider the extremal eigenvalues
\begin{equation}
\begin{array}{ccc}
\nu^{\geq0} = \min \{ \nu \in \sigma(L_S) \mid \nu\geq0 \} & & \nu^{<0} = \max \{ \nu \in \sigma(L_S) \mid \nu<0 \},
\end{array}
\end{equation}
the associated extremal winding numbers
\begin{equation}
\begin{array}{ccc}
\wind^{\geq0}(L_S) = \wind(\nu^{\geq0}) & & \wind^{<0}(L_S) = \wind(\nu^{<0}),
\end{array}
\end{equation}
and the parity
\begin{equation}
p(L_S) = \wind(\nu^{\geq0}) - \wind(\nu^{<0}) \in \{0,1\}.
\end{equation}
The Conley-Zehnder index of the path $\varphi(t)$ is defined by
\begin{equation}
\mu_{CZ}(\varphi) = 2\wind^{<0}(L_S) + p(L_S).
\end{equation}

Consider the set of nondegenerate paths 
\[
\Sigma' = \{ \varphi \in \Sigma \mid \det (\varphi(1)-I) \not= 0\}.
\]
The Conley-Zehnder index satisfies the following axioms:
\begin{itemize}
\item ({\bf Homotopy}) If $\{\varphi_s\}_{s\in[0,1]}$ is a continuous homotopy of paths in $\Sigma'$ then $\mu_{CZ}(\varphi_1)=\mu_{CZ}(\varphi_0)$.
\item ({\bf Maslov index}) If $\varphi \in \Sigma$ and $\psi:\R/\Z\to Sp(2)$ is a smooth loop satisfying $\psi(0)=I$ then $\mu_{CZ}(\psi\varphi) = \mu_{CZ}(\varphi) + 2\text{Maslov}(\psi)$, where $\text{Maslov}(\psi)$ is the usual Maslov index of $\psi$.
\item ({\bf Inverse}) $\mu_{CZ}(\varphi^{-1}) = - \mu_{CZ}(\varphi) \ \forall \varphi \in \Sigma'$.
\item ({\bf Normalization}) The path $t \mapsto \begin{pmatrix} \cos \pi t & -\sin \pi t \\ \sin \pi t & \cos \pi t \end{pmatrix}$ has index equal to $1$.
\end{itemize}

\subsubsection{A geometric definition of the index}\label{sssec_geom_defn}

The index can be alternatively given the following geometric description. To every closed interval $J$ of length strictly less than $1/2$ satisfying $\partial J \cap \Z = \emptyset$ we associate an integer as follows.  Let $k$ range over $\mathbb{Z}$ and define
\[
\tilde\mu(J) = \left\{ \begin{aligned} & 2k \ \text{if} \ k\in J \\ & 2k+1 \ \text{if} \ J \subset (k,k+1). \end{aligned} \right.
\]
The function $\tilde\mu$ can be extended to the set of all closed  intervals of length strictly less than $1/2$ by
\[
\tilde\mu(J) = \lim_{\epsilon\to0^+} \tilde\mu(J-\epsilon).
\]
Given $\varphi\in\Sigma$, consider the map $\Delta_\varphi : \C\setminus\{0\} \to \R$ defined by
\begin{equation}\label{map_Delta_twist}
\Delta_\varphi(\zeta) = \frac{1}{2\pi} (\theta(1)-\theta(0))
\end{equation}
where $\theta:\R\to\R$ is a continuous function satisfying $\varphi(t)\zeta\in\R^+e^{i\theta(t)}$. The image $I_\varphi = \Delta_\varphi(\C\setminus\{0\})$ is a closed interval of length strictly less than $1/2$. It turns out that
\begin{equation}
\mu_{CZ}(\varphi) = \tilde\mu(I_\varphi).
\end{equation}

\subsubsection{Relative winding numbers}\label{sssec_rel_winding}

Let $\Pi:E\to\R/\Z$ be an oriented vector bundle satisfying ${\rm rank}_\R(E)=2$. We denote by $\Omega_E^+$ the set of homotopy classes of oriented trivializations of $E$.

Any non-vanishing section $t\in\R/\Z \mapsto Z(t) \in \Pi^{-1}(t)$ can be completed to a positive frame $\{Z,Z'\}$ of $E$, that is, there exists another non-vanishing section $Z'$ such that $\{Z(t),Z'(t)\}$ is an oriented basis of $\Pi^{-1}(t)$, $\forall t$. The frame $\{Z,Z'\}$ determines an oriented trivialization and its homotopy class $\beta \in \Omega_E^+$ depends only on $Z$ up to homotopy through non-vanishing sections; it is called the homotopy class induced by $Z$.

If $W$ is another non-vanishing section then $W(t) = a(t)Z(t) + b(t)Z'(t)$ for unique smooth functions $a,b$ and we write
\[
\wind(W,Z) = \frac{1}{2\pi}(\theta(1)-\theta(0)) \in \Z
\]
where $\theta:[0,1]\to\R$ is a smooth map satisfying $a(t)+ib(t) \in \R^+e^{i\theta(t)}$. This integer depends only on the homotopy classes of non-vanishing sections of $Z$ and~$W$. Denoting by $\beta' \in \Omega^+_E$ the homotopy class of oriented trivializations induced by $W$ we may also write $\wind(\beta',\beta)$ or $\wind(W,\beta)$ to denote $\wind(W,Z)$.

\subsubsection{The Conley-Zehnder index of closed Reeb orbits}\label{sssec_CZ_orbits}

Let $P=(x,T)$ be a closed orbit of the Reeb flow $\phi_t$ associated to the contact form $\lambda$ on $M$. The associated contact structure $\xi = \ker\lambda$ is preserved by $\phi_t$. 

Recall the map $x_T$~\eqref{map_x_T}. The bundle $(x_T)^*\xi \to \R/\Z$ is always assumed to be oriented by $d\lambda$. A $d\lambda$-symplectic trivialization $\Psi:(x_T)^*\xi\to\R/\Z\times\R^2$ representing some given class $\beta \in \Omega^+_{(x_T)^*\xi}$ allows us to define a path $\varphi \in \Sigma$ of symplectic $2\times2$ matrices by
\begin{equation}\label{rep_linear_flow}
\varphi(t) = \Psi_t \circ d\phi_{Tt} \circ (\Psi_0)^{-1}
\end{equation}
where $\Psi_t$ is the restriction of $\Psi$ to fiber over $t$. The Conley-Zehnder index of the pair $(P,\beta)$ is the integer
\begin{equation}\label{def_CZ_geom_orbits}
\mu_{CZ}(P,\beta) = \mu_{CZ}(\varphi).
\end{equation}
In view of the axioms described in \S~\ref{sssec_analytical_defn},  $\mu_{CZ}(P,\beta)$ does not depend on the choice of symplectic trivialization $\Psi$ in the class $\beta$. Moreover,
\[
\mu_{CZ}(P,\beta) = \mu_{CZ}(P,\beta') + 2\wind(\beta',\beta).
\]

If $P$ is contractible in $M$ and $c_1(\xi)$ vanishes on $\pi_2(M)$, then there exists a special class $\beta_{\rm disk} \in \Omega^+_{(x_T)^*\xi}$ induced by any capping disk. Indeed, if $f:\D\to M$ is a smooth map satisfying $f(e^{i2\pi t})=x(Tt)$ then $f^*\xi$ is a trivial symplectic vector bundle, and the restriction of a symplectic trivialization of $f^*\xi$ to $\partial \D$ singles out the homotopy class $\beta_{\rm disk}$, which is independent of $f$ when $c_1(\xi)$ vanishes on $\pi_2(M)$. We may write 
\begin{equation}
\mu_{CZ}(P) = \mu_{CZ}(P,\beta_{\rm disk}) \ \ \text{ when $P$ is contractible and } c_1(\xi)|_{\pi_2(M)}=0.
\end{equation}

\subsubsection{Transverse rotation number}\label{sssec_transv_rot_number}

With $P=(x,T)$, $\lambda$, $\phi_t$ and $\xi$ as above, consider a $d\lambda$-symplectic trivialization $\Psi$ of $(x_T)^*\xi$. Then $\varphi:\R\to Sp(2)$ given by~\eqref{rep_linear_flow} satisfies~\eqref{periodicity_path}. Consider the unique continuous function $\theta : \R\times\R \to \R$ satisfying $\varphi(t)e^{i2\pi s} \in \R^+e^{i\theta(t,s)}$, $\theta(0,s)=2\pi s$. Setting $f(s)=\theta(1,s)/2\pi$,  $f$ then satisfies $f(s+1)=f(s)+1$ and $f(s)-s = \Delta_\varphi(e^{i2\pi s})$, where $\Delta_\varphi$ is the map~\eqref{map_Delta_twist}. Denoting by $\beta \in \Omega^+_{(x_T)^*\xi}$ the homotopy class of $\Psi$ we consider rotation number
\begin{equation}
\rho(P,\beta) = \lim_{k\to+\infty} \frac{f^k(s)}{k}.
\end{equation}
It is not hard to show that
\[
\rho(P,\beta) = \lim_{k\to+\infty} \frac{\mu_{CZ}(P^k,\beta^{(k)})}{2k}
\]
where $P^k$ and $\beta^{(k)}$ denote the $k$-th iterates of $P$ and $\beta$, respectively. If $P$ is contractible and $c_1(\xi)$ vanishes on $\pi_2(M)$ then the class $\beta_{\rm disk}$ explained above is well-defined and we write $\rho(P) = \rho(P,\beta_{\rm disk})$.

  {Note that the transverse rotation number determines the Conley-Zehnder index, see for instance~\cite{hutchings}.}

\subsubsection{Asymptotic operators}\label{sssec_asymp_ops}

Here we still consider a closed orbit $P=(x,T)$ of the Reeb flow $\phi_t$ associated to $\lambda$ as in \S~\ref{sssec_CZ_orbits}. A $d\lambda$-compatible complex structure $J$ on $(x_T)^*\xi$ determines the $L^2$-inner product of two sections $Z,W$ of $(x_T)^*\xi$
\[
\int_{\R/\Z} d\lambda_{x_T(t)}(Z(t),J_t \cdot W(t)) dt,
\]
 where $J_t$ is the restriction of $J$ to the fiber over $t$. The corresponding Hilbert space of square-integrable sections is denoted by $L^2((x_T)^*\xi)$. With the help of a symmetric connection $\nabla$ on $TM$ one defines the so-called asymptotic operator at $P$ as the unbounded self-adjoint operator
\begin{equation}\label{defn_asymp_operator}
A_P : \eta \mapsto -J(\nabla_t\eta - T\nabla_\eta R)
\end{equation}
on $L^2((x_T)^*\xi)$. Here $R$ is the Reeb vector field and $\nabla_t$ is covariant differentiation along the curve $x_T$. Note that $A_P$ does not depend on the choice of $\nabla$.

Let $\beta \in \Omega^+_{(x_T)^*\xi}$ be fixed. With the help of a $d\lambda$-symplectic trivialization of $(x_T)^*\xi$ in the class $\beta$ the operator $A_P$ is represented as $-J(t)\partial_t-S(t)$, for smooth maps $t \in \R/\Z \mapsto J(t),S(t) \in \mathcal L_\R(\C)$, $J(t)^2=-I$ and $\det J(t)=1$, $\forall t$. If the trivialization is $(d\lambda,J)$-unitary, which means that it is simultaneously $d\lambda$-symplectic and  $J$-complex, then $J(t) \equiv i$ and $S(t)^T=S(t) \ \forall t$, i.e., $A_P$ takes the form $L_S$~\eqref{operator_normal_form} in this frame. Hence $A_P$ has all the spectral properties discussed in~\ref{sssec_analytical_defn}. In particular, given $\nu \in\sigma(A_P) = \sigma(L_S)$ an eigensection $\zeta \in \ker(A_P-\nu I)$ is represented in this trivialization as an eigenvector $e(t) \in \ker(L_S-\nu I)$ and we can consider the winding number
\begin{equation}\label{spectral_winding}
\wind(\zeta,\beta) := \wind(e).
\end{equation}
Analogously as before we define
\begin{equation}\label{extremal_windings}
\begin{aligned}
& \wind(\nu,\beta) := \wind(\zeta,\beta) \ \text{for any} \ \zeta \in \ker(A_P-\nu I), \\
& \wind^{<0}(A_P,\beta) := \wind^{<0}(L_S), \\
& \wind^{\geq0}(A_P,\beta) := \wind^{\geq0}(L_S).
\end{aligned}
\end{equation}
These winding numbers clearly do not depend on the choice of $(d\lambda,J)$-unitary trivialization representing $\beta$. The parity
\begin{equation}\label{parity}
p(A_P) = \wind^{\geq0}(A_P,\beta) - \wind^{<0}(A_P,\beta)
\end{equation}
does not depend on $\beta$. According to our definitions we have
\[
\mu_{CZ}(P,\beta) = 2\wind^{<0}(A_P,\beta) + p(A_P).
\]

  {
\begin{remark}
A proof that the above formula indeed gives the Conley-Zehnder index~\eqref{def_CZ_geom_orbits} can be found in~\cite[Section~2.1]{hryn2}.
\end{remark}
}

\subsection{Pseudo-holomorphic curves in symplectizations}

Pseudo-holomorphic curve techniques were introduced in symplectic geometry by Gromov~\cite{gromov}. These techniques were used to study Reeb flows by Hofer~\cite{93} who confirmed the three-dimensional Weinstein conjecture in numerous cases. We recall here the basic notions of this theory, and refer the reader to~\cite{abbas,AH} for introductory expositions.

Fix a closed co-orientable contact $3$-manifold $(M,\xi)$ and a defining contact form~$\lambda$. The associated Reeb vector field is denoted by $R$ and the projection onto $\xi$ along the Reeb direction is denoted by
\begin{equation}\label{proj_along_Reeb}
\pi:TM \to \xi.
\end{equation}
The symplectization $(\R\times M,d(e^a\lambda))$ has a natural $\R$-action by symplectic dilations given by translations of the $\R$-coordinate $a$.

\subsubsection{Cylindrical almost complex structures}

Let $\J_+(\xi)$ denote the set of complex structures on $\xi$ which are $d\lambda$-compatible. As is well-known, this is a contractible space when equipped with the $C^\infty_{\rm loc}$-topology.

Following Hofer~\cite{93}, for a given $J \in \J_+(\xi)$ we consider the $\R$-invariant almost complex structure $\jtil$ on $\R\times M$ defined by
\begin{equation}\label{cyl_acs}
\begin{array}{ccc}
\jtil \cdot \partial_a = R, & & \jtil|_\xi \equiv J.
\end{array}
\end{equation}
Here we see $R$ and $\xi$ as $\R$-invariant objects on $\R\times M$. It is easy to check that $\jtil$ is $d(e^a\lambda)$-compatible.

\subsubsection{Finite-energy surfaces}\label{sssec_fe_surfaces}

Let $(S,j)$ be a compact Riemann surface and $\Gamma \subset S \setminus\partial S$ be a finite set. Equipping $\R\times M$ with an almost complex structure $\jtil$ as in~\eqref{cyl_acs}, we call a non-constant map $\util : S\setminus\Gamma \to \R\times M$ a finite-energy surface if it is $\jtil$-holomorphic,  i.e., it satisfies the Cauchy-Riemann equation
\[
\bar\partial_{\jtil} (\util) = \frac{1}{2} (d\util + \jtil(\util) \circ d\util \circ j) = 0
\]
and has finite Hofer energy 
\begin{equation}\label{energy}
E(\util) = \sup_{\phi\in\Lambda} \int_{S\setminus\Gamma} \util^*d(\phi\lambda) < \infty.
\end{equation}
Here $\Lambda$ denotes the collection of smooth maps $\phi:\R\to[0,1]$ satisfying $\phi'\geq0$. In the integrand above we see $\phi$ as a function on $\R\times M$ depending only on the $\R$-coordinate and $\lambda$ as an $\R$-invariant $1$-form.

The elements of $\Gamma$ are called \textit{punctures}. Let us denote by $\util=(a,u)$ the components of the map $\util$. A point $z\in\Gamma$ is a \textit{positive} puncture if $a(\zeta) \to +\infty$ as $\zeta \to z$, it is a \textit{negative} puncture if $a(\zeta) \to -\infty$ as $\zeta \to z$, and it is called \textit{removable} if $a$ is bounded near $z$. It is a non-trivial fact that every puncture is either positive, negative or removable, see~\cite{93}. Moreover, as the terminology suggests, $\util$ can be smoothly continued across a removable puncture.

It follows from Stokes'  {s} theorem that there exists always at least one positive puncture. More precisely, if a map $\util$ as above satisfies $\bar\partial_{\jtil}(\util)=0$, $E(\util)<\infty$ and has no positive punctures then $E(\util)=0$ and therefore $\util$ is constant.

\subsubsection{Asymptotic behavior}

Let $\util = (a,u) : (S\setminus\Gamma,j) \to (\R\times M,\jtil)$ be a finite-energy surface as described above, and fix a non-removable puncture $z \in \Gamma$. Associated to a holomorphic embedding $\varphi:(\D,0)\to(S,z)$ we have positive cylindrical coordinates $(s,t) \in [0,+\infty)\times\R/\Z$ at $z$ defined by $(s,t) \simeq \varphi(e^{-2\pi(s+it)})$, and negative cylindrical coordinates $(s,t) \in (-\infty,0]\times\R/\Z$ at $z$ defined by $(s,t) \simeq \varphi(e^{2\pi(s+it)})$. The sign of the non-removable puncture $z$ is $\epsilon_z=+1$ if $z$ is positive, or $\epsilon_z=-1$ if $z$ is negative. The following important statement gives the connection between finite-energy surfaces and periodic orbits of $R$.

\begin{theorem}[Hofer~\cite{93}]\label{thm_93}
Let $(s,t)$ be positive cylindrical coordinates at $z$. Then for every sequence $s_n \to +\infty$ there exists a subsequence $s_{n_k}$, a closed Reeb orbit $P=(x,T)$ and $c\in\R$ such that $u(s_{n_k},t) \to x(\epsilon_z Tt+c)$ in $C^\infty(\R/\Z,M)$ as $k\to\infty$.
\end{theorem}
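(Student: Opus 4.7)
The plan is to work in positive cylindrical coordinates at $z$, where $\util = (a,u) : [0,+\infty) \times \R/\Z \to \R\times M$ satisfies $a \to +\infty$ (say $\epsilon_z = +1$; the negative case is analogous after reversing orientation of $S^1$). The strategy has three ingredients: (a) a uniform gradient bound for $u$ on the cylindrical end, (b) identification of a limiting period $T$ via Stokes and the Hofer energy, and (c) extraction of a smooth subsequential limit and verification that it is a Reeb trajectory by passing to the limit in the Cauchy--Riemann equation.

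\textbf{Step 1: Gradient bound.} The key analytic step is to show that $\sup_{t} |du(s,t)|_g$ is bounded uniformly as $s\to+\infty$, with respect to a fixed Riemannian metric $g$ on $M$ and the cylindrical metric on the domain. I would argue by contradiction and bubble off. If there exists $(s_n,t_n)$ with $s_n\to+\infty$ and $R_n := |du(s_n,t_n)| \to +\infty$, I rescale $\vtil_n(\zeta) = (a(z_n + \zeta/R_n) - a(z_n), u(z_n + \zeta/R_n))$, which is still $\jtil$-holomorphic because $\jtil$ is $\R$-invariant. After re-centering via the Hofer lemma to ensure $|d\vtil_n(0)| = 1$ is a near-maximum on a ball of radius growing to $\infty$, I extract a $C^\infty_{\mathrm{loc}}$ limit $\vtil : \C \to \R\times M$ which is non-constant, finite-energy, and $\jtil$-holomorphic. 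The puncture at $\infty$ cannot be removable (otherwise $\vtil$ extends to a non-constant $\jtil$-holomorphic sphere in an exact symplectic manifold, which would have zero symplectic area, a contradiction), so $\vtil$ carries a definite amount of $d\lambda$-contact area $\int_\C \vtil^*d\lambda > 0$. On the other hand, this captured area is bounded above by $\int_{B_n} \util^*d\lambda$ where $B_n$ is a shrinking neighborhood of $z_n$, and this quantity tends to $0$ because $\int \util^*d\lambda < \infty$ and $z_n$ escapes to the puncture. This contradicts the positivity of the bubble's area and yields the gradient bound.

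\textbf{Step 2: Period and asymptotic mass.} With the gradient bound in hand, define the \emph{mass function} $m(s) := \int_{\{s\}\times \R/\Z} u^*\lambda$. Since $\bar\partial_{\jtil}(\util) = 0$, a direct computation gives $\partial_s m(s) = \int_{\{s\}\times \R/\Z} u^*d\lambda \geq 0$, so $m$ is nondecreasing. By Stokes applied on $[0,s]\times\R/\Z$, $m$ is also bounded above by the Hofer energy plus $m(0)$, hence converges to some $T \in [0,+\infty)$. That $T>0$ follows from the non-removability of $z$: if $T=0$, then $\int_{[s,\infty)\times\R/\Z} u^*d\lambda \to 0$ together with the gradient bound would force $u$ to remain in a small neighborhood of some point and $a$ to stay bounded, contradicting $a \to +\infty$ (which itself follows from positivity of the puncture via standard monotonicity of $a(s)$ at a positive puncture, using $\int a_s \, dt = m(s) \to T$ combined with Cauchy--Riemann).

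\textbf{Step 3: Subsequential limit is a Reeb orbit.} Given $s_n \to +\infty$, define the loops $\gamma_n(t) := u(s_n,t)$. By Step~1 these are equicontinuous, so by Arzel\`a--Ascoli some subsequence $\gamma_{n_k}$ converges in $C^0(\R/\Z,M)$. Differentiating once more and using the gradient bound together with elliptic bootstrapping on strips $[s_{n_k}-1, s_{n_k}+1]\times \R/\Z$ (which works because the translated maps $\util(\cdot + s_{n_k},\cdot) - (a(s_{n_k}),0)$ have uniformly bounded energy and gradient, hence $C^\infty_{\mathrm{loc}}$ bounds by the $\jtil$-elliptic equation), one upgrades this to $C^\infty(\R/\Z,M)$ convergence. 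The limit $\gamma$ satisfies the following: splitting the Cauchy--Riemann equation as $a_s = \lambda(\partial_t u)$, $a_t = -\lambda(\partial_s u)$, and $\pi \partial_s u + J \pi \partial_t u = 0$, the nondecreasing $m(s)$ converging to $T$ forces $\int_{[s_{n_k}, s_{n_k}+1]\times\R/\Z} u^*d\lambda \to 0$, which combined with $d\lambda$-compatibility of $J$ yields $\pi \partial_t u(s_{n_k},\cdot) \to 0$ in $L^2$, and then in $C^0$ by elliptic regularity. Hence $\partial_t \gamma$ is everywhere parallel to $R(\gamma)$, with $\int_0^1 \lambda(\partial_t\gamma)\, dt = T$, so $\gamma(t) = x(Tt + c)$ for some closed Reeb orbit $P=(x,T)$ and constant $c$.

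\textbf{Main obstacle.} The hard step is Step~1: establishing the uniform gradient bound via bubbling. The delicate point is excluding a bubbled-off $\jtil$-holomorphic sphere (which uses exactness of the symplectization) and ensuring the bubbled plane carries a definite amount of contact area which can be accounted against the finite Hofer energy. Steps~2 and~3 are then essentially bookkeeping combined with standard $\bar\partial$-elliptic regularity.
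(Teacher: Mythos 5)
The paper itself gives no proof of this statement (it is imported from Hofer's 1993 paper), and your three steps --- the gradient bound obtained by bubbling off a non-constant finite-energy plane whose positive $d\lambda$-area is incompatible with the vanishing of $\int u^*d\lambda$ near the puncture, the monotone convergence of the mass $m(s)=\int_{\{s\}\times\R/\Z}u^*\lambda$ to a period $T>0$, and Arzel\`a--Ascoli plus elliptic bootstrapping on translated strips --- constitute exactly the standard argument from that reference. One point to tighten at the end of Step 3: ``$\partial_t\gamma$ parallel to $R(\gamma)$ with $\int_0^1\lambda(\partial_t\gamma)\,dt=T$'' only yields a reparametrized $T$-periodic orbit, whereas the theorem asserts the affine parametrization $x(\epsilon_z Tt+c)$; to obtain this, pass to the $C^\infty_{\rm loc}$ limit $\util_\infty=(a_\infty,u_\infty)$ of the translated maps on the whole cylinder $\R\times\R/\Z$ (your strip bounds give this by a diagonal argument), observe that $\int u_\infty^*d\lambda=0$ forces $u_\infty(s,t)=x(h(s,t))$ with $a_\infty+ih$ holomorphic of bounded gradient and $h(s,t+1)=h(s,t)+T$, and conclude $h(s,t)=Tt+c$ by Liouville.
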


\begin{definition}[Martinet tubes]\label{def_Martinet}
Let $P = (x,T) \in \P(\lambda)$ be fixed, and consider the minimal positive period $T_{\rm min}$ of $x$. A \textit{Martinet tube} at $P$ is a pair $(U,\Phi)$ consisting of an open neighborhood $U \subset M$ of $x(\R)$ and a diffeomorphism $$ \Phi:U\to\R/\Z\times B \ \ \ \text{($B \subset \R^2$ is an open ball centered at the origin)} $$ such that $\Phi(x(T_{\rm min}t)) = (t,0,0) \ \forall t\in\R/\Z$, and $$\Phi_*\lambda = f(d\theta + x_1dx_2)$$ for some smooth function $f:\R/\Z\times B \to \R^+$ satisfying $f|_{\R/\Z\times \{(0,0)\}} \equiv T_{\rm min}$ and $df|_{\R/\Z\times \{(0,0)\}} \equiv 0$. Here $(\theta,x_1,x_2)$ are coordinates on $\R/\Z\times B$.
\end{definition}

\begin{theorem}[Hofer, Wysocki and Zehnder~\cite{props1}]\label{thm_precise_asymptotics}
Assume that $\lambda$ is nondegenerate. Let $(s,t)$ be positive or negative cylindrical coordinates at the non-removable puncture $z$ if $z$ is a positive or a negative puncture, respectively. Then there exists $P = (x,T) \in \P(\lambda)$ and $c\in\R$ such that the loops $t\in\R/\Z \mapsto u(s,t)$ converge to $t\in\R/\Z\mapsto x(Tt+c)$ in $C^\infty$ as $\epsilon_zs\to+\infty$. After rotating the cylindrical coordinates we can assume that $c=0$. Moreover, either $u(s,t) \in x(\R) \ \forall(s,t)$ when $|s|$ is large enough, or there exists an eigenvector $\eta:\R/\Z\to(x_T)^*\xi$ of $A_P$~\eqref{defn_asymp_operator} associated to an eigenvalue $\nu$ satisfying $\epsilon_z\nu<0$, and some $r>0$ for which the following holds. Consider a Martinet tube $(U,\Phi)$ at $P$, so that for $|s|\gg1$ we have well-defined components $\Phi\circ u(s,t) = (\theta(s,t),z(s,t)) \in \R/\Z\times \R^2$. Then the lift $\widetilde\theta(s,t):\R\times\R\to\R$ of $\theta(s,t)$ uniquely determined by $\widetilde\theta(s,0) \to 0$ as $|s|\to\infty$ satisfies
\[
\lim_{\epsilon_z s\to+\infty} e^{r|s|}|D^\alpha[\widetilde\theta(s,t)-kt]| = 0 \ \forall D^\alpha = \partial_s^{\alpha_1}\partial_t^{\alpha_2}
\]
where $k = T/T_{\rm min} \in \Z^+$ is the multiplicity of $P$ and $T_{\rm min}$ is its minimal positive period. The $\R$-component satisfies
\[
\lim_{\epsilon_z s\to+\infty} e^{r|s|}|D^\alpha[a(s,t)-Ts-a_0]| = 0 \ \forall D^\alpha
\]
for some $a_0\in\R$. If we use the coordinates to represent $\eta(t)$ as a smooth (non-vanishing) function $e(t):\R/\Z\to\R^2$ then
\[
z(s,t) = e^{\int_{s_0}^s h(\tau)d\tau}(e(t)+R(s,t))
\]
when $|s|>|s_0|$, for some fixed $s_0$ such that $\epsilon_zs_0\gg1$ and functions $h,R$ satisfying
\[
\lim_{\epsilon_zs\to+\infty} D^j[h(s)-\nu] = 0 \ \forall j \ \ \ \ \ \lim_{\epsilon_zs\to+\infty} D^\alpha R(s,t) = 0 \ \forall \alpha.
\]
\end{theorem}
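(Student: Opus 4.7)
The plan is to combine Hofer's qualitative convergence theorem (Theorem~\ref{thm_93}) with the nondegeneracy hypothesis to first promote subsequential convergence of the loops $u(s,\cdot)$ to \emph{uniform} $C^\infty$ convergence, and then to derive exponential decay by linearizing the Cauchy-Riemann equation near the orbit cylinder and exploiting the spectral theory of the asymptotic operator $A_P$ from~\ref{sssec_asymp_ops}. Write $\util=(a,u)$ and assume for concreteness that $z$ is a positive puncture with positive cylindrical coordinates $(s,t) \in [0,\infty)\times \R/\Z$; the negative case is analogous.

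First, I would show that $u(s,\cdot)$ converges uniformly in $C^\infty(\R/\Z,M)$ to a loop of the form $t\mapsto x(Tt+c)$ for a single orbit $P=(x,T)$. By Theorem~\ref{thm_93}, subsequential limits exist, and nondegeneracy implies that the action spectrum $\{\int_{P'}\lambda : P'\in\P(\lambda)\}$ is discrete and that closed orbits are isolated in $M$. Standard bubbling-off arguments (gradient-bound plus no bubbling away from the ends, by finiteness of the Hofer energy~\eqref{energy}) then upgrade the subsequential convergence to full convergence along all $s\to\infty$, and the period $T$ and orbit $P$ are uniquely determined. One may then rotate the cylindrical coordinates to set $c=0$. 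Also, ${a(s,t)-Ts}$ is bounded; that it converges to some constant $a_0$ will follow from the decay estimates below combined with $\partial_s a = \lambda(\partial_t u)$ and $\int_{\R/\Z}\lambda(\partial_t u)dt\to T$.

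Next, fix a Martinet tube $(U,\Phi)$ at $P$ as in Definition~\ref{def_Martinet}, so for $s$ large enough we have components $\Phi\circ u(s,t)=(\theta(s,t),z(s,t))$ with $z(s,t)\to 0$. Write $\widetilde\theta(s,t)=kt+\tilde\theta(s,t)$ with $\tilde\theta(s,\cdot)\to 0$, and $a(s,t)=Ts+a_0+\tilde a(s,t)$. In these coordinates the $\jtil$-holomorphic equation for $\util$ becomes a quasilinear elliptic system for $(\tilde a,\tilde\theta, z)$. A direct computation, using that $\Phi_*\lambda=f(d\theta+x_1dx_2)$ with $f|_{P}\equiv T_{\min}$ and $df|_P=0$, shows that the equation for $z$ has the form
\[
\partial_s z + J_0(t)\,\partial_t z + S_0(t)\,z = r(s,t),
\]
where $-J_0(t)\partial_t - S_0(t)$ represents the asymptotic operator $A_P$ in a $(d\lambda,J)$-unitary trivialization of $(x_T)^*\xi$ carried over from $\Phi$, and $r$ is a quadratically small remainder in $z$ and its derivatives. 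The Cauchy-Riemann equations for $(\tilde a,\tilde\theta)$ are likewise asymptotically those of $\bar\partial$ on the half-cylinder up to terms controlled by $z$.

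The technical core, and the step I expect to be the main obstacle, is the exponential decay argument, carried out as in~\cite{props1}. Set $g(s)=\tfrac12\int_{\R/\Z}|z(s,t)|^2dt$. Differentiating twice in $s$ and using the equation for $z$, one obtains a differential inequality of the form $g''(s)\geq \gamma^2 g(s)$ for large $s$, where $\gamma$ is bounded below by the spectral gap of $A_P$ around $0$; here one uses the self-adjointness of $A_P$ and the fact that, by nondegeneracy, $0\notin \sigma(A_P)$. Since $g\to 0$, Gronwall-type convex comparison forces $g(s)\leq C e^{-2\gamma s}$, and bootstrapping with elliptic regularity yields exponential decay in all $C^k$-norms. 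To identify the leading behavior, one considers the normalized sections $\zeta_s(t) = z(s,t)/\|z(s,\cdot)\|_{L^2}$ and the quotient $h(s) = \partial_s \log \|z(s,\cdot)\|_{L^2}$. Using the equation for $z$ one shows that the $\omega$-limit of $\zeta_s$ in $L^2$ consists of eigensections of $A_P$, and that $h(s)$ converges to the corresponding eigenvalue $\nu<0$. A further argument (linear ODE analysis in the spectral decomposition of $A_P$) shows that the limit is a single eigenpair $(\nu,e(t))$, giving the representation $z(s,t)=e^{\int_{s_0}^s h(\tau)d\tau}(e(t)+R(s,t))$ with $R\to 0$. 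The dichotomy ``$u(s,t)\subset x(\R)$ for large $s$ or $\eta\neq 0$'' is then a unique continuation statement: if $z$ vanishes on some half-cylinder $\{s\geq s_1\}$ then by unique continuation for the linear Cauchy-Riemann operator $\partial_s+J_0\partial_t+S_0$ (Aronszajn's theorem applied to the equation for $z$), it vanishes on the whole half-cylinder where the tube is defined. Finally, the decay of $\tilde a$ and $\tilde\theta$ with the same rate $e^{-r|s|}$ follows by substituting the exponential decay of $z$ into their Cauchy-Riemann equations and integrating; the constants $a_0$ and the vanishing of the asymptotic $t$-shift in $\widetilde\theta$ come out of the integration.
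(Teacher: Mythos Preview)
The paper does not prove this theorem; it is quoted verbatim from Hofer--Wysocki--Zehnder~\cite{props1} as a black box and used throughout without argument. Your sketch is a faithful outline of the original proof in~\cite{props1}: upgrade Theorem~\ref{thm_93} to full $C^\infty$ convergence via nondegeneracy, pass to Martinet-tube coordinates where the Cauchy--Riemann equation for the transverse component $z(s,t)$ becomes $\partial_s z + J_0\partial_t z + S_0 z = r$ with $r$ quadratically small, derive $g''\geq \gamma^2 g$ for $g(s)=\tfrac12\|z(s,\cdot)\|_{L^2}^2$ using the spectral gap at $0$ of $A_P$, and then read off the asymptotic eigenvector from the normalized flow $\zeta_s=z(s,\cdot)/\|z(s,\cdot)\|_{L^2}$. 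There is nothing to compare on the paper's side, and your outline is consistent with the source.
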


\subsubsection{Algebraic invariants and fast planes}\label{sec_alg_invs}

We recall some algebraic invariants introduced in~\cite{props2}. Let $\util = (a,u) : (S\setminus\Gamma,j) \to (\R\times M,\jtil)$ be a finite-energy surface as described in \S~\ref{sssec_fe_surfaces}, and assume that $\Gamma$ consists of non-removable punctures. The vector bundle ${\rm Hom}_\C(T(S\setminus \Gamma),u^*\xi)$ admits a complex structure induced by $J$, and the section $\pi \circ du$ satisfies 
\begin{equation*}
J \circ \pi \circ du = \pi \circ du \circ j.
\end{equation*}
Therefore, either $\pi \circ du$ vanishes identically or its zeros are isolated. Moreover, if we assume that $\lambda$ is nondegenerate then the asymptotic behavior described in Theorem~\ref{thm_precise_asymptotics} implies that $\pi \circ du$ does not vanish near $\Gamma$ if it does not vanish identically. For this discussion we assume $\lambda$ is nondegenerate.

\begin{definition}[Hofer, Wysocki and Zehnder~\cite{props2}]\label{defn_wind_pi}
If $\pi\circ du$ does not vanish identically then we set \begin{equation}
\wind_\pi(u) = \text{algebraic count of zeros of } \pi\circ du.
\end{equation}
\end{definition}

It follows from the equation satisfied by $\pi\circ du$ that every zero counts positively, so that $\wind_\pi(u)\geq 0$ with equality if, and only if, there are no zeros.

Now consider a non-vanishing section $\sigma$ of $u^*\xi$; such a section always exists. For every $z \in \Gamma$ we fix positive cylindrical coordinates $(s,t)$ at $z$ and set
\[
\wind_\infty(\util,z,\sigma) = \lim_{s\to+\infty} \wind(t \mapsto \pi \cdot \partial_su(s,\epsilon_z t), t \mapsto \sigma(s,\epsilon_z t)) \in \Z
\]
where $\epsilon_z=+1$ or $\epsilon_z=-1$ if $z$ is a positive or a negative puncture, respectively. The bundle $u(s,\cdot)^*\xi$ is oriented by $d\lambda$, and the relative winding number  is defined as in \S~\ref{sssec_rel_winding}. This limit is well-defined since, by Theorem~\ref{thm_precise_asymptotics}, $\pi \cdot \partial_su(s,t)$ does not vanish when $s\gg1$.

  {
\begin{remark}\label{rmk_wind_infty}
Let $P=(x,T)$ be the asymptotic limit of $\util$ at a puncture $z$. Even though $\sigma(s,\epsilon_z t)$ may not have any limit as $s\to+\infty$, it still induces a homotopy class $\beta_{\sigma,z}$ of oriented trivializations of $(x_T)^*\xi$. Then one can show that $\wind_\infty(\util,z,\sigma)$ is precisely the winding of the asymptotic eigenvector given by Theorem~\ref{thm_precise_asymptotics}, taken with respect to $\beta_{\sigma,z}$. In particular, if $z$ is a positive puncture then $\wind_\infty(\util,z,\sigma) \leq \wind^{<0}(A_P,\beta_{\sigma,z})$, and if $z$ is a negative puncture then $\wind_\infty(\util,z,\sigma) \geq \wind^{\geq0}(A_P,\beta_{\sigma,z})$.
\end{remark}
}

\begin{definition}[Hofer, Wysocki and Zehnder~\cite{props2}]\label{defn_wind_infty}
Assuming that $\pi\circ du$ does not vanish identically, we set
\begin{equation}
\wind_\infty(\util) = \sum_{z\in\Gamma^+} \wind_\infty(\util,z,\sigma) - \sum_{z\in\Gamma^-} \wind_\infty(\util,z,\sigma)
\end{equation}
where $\Gamma^+,\Gamma^-\subset\Gamma$ are the sets of positive and negative punctures, respectively. 
\end{definition}

Using standard degree theory one shows that $\wind_\infty(\util)$ does not depend on the choice of $\sigma$, in fact, we have the following statement.

\begin{lemma}[Hofer, Wysocki and Zehnder~\cite{props2}]\label{lemma_wind_relation}
The invariants $\wind_\pi(\util)$ and $\wind_\infty(\util)$ satisfy $\wind_\pi(\util) = \wind_\infty(\util) - \chi(S) + \#\Gamma$ when they are defined.
\end{lemma}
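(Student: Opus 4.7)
I would regard $F := \pi \circ du$ as a section of the complex line bundle $L := {\rm Hom}_\C(T\dot S, u^*\xi)$ over $\dot S := S\setminus\Gamma$, where $T\dot S$ carries the complex structure $j$ and $u^*\xi$ carries $J$. Under the standing hypothesis $\pi\circ du \not\equiv 0$, the Cauchy--Riemann type equation that $F$ satisfies forces its zeros to be isolated and count positively, so the proof reduces to a zero-counting / degree calculation on a compact exhaustion. Pick a non-vanishing section $\sigma$ of $u^*\xi$ over $\dot S$; such $\sigma$ exists because $u^*\xi$ is a complex line bundle over an open surface, hence trivial. At each puncture $z \in \Gamma$ fix positive cylindrical coordinates $(s,t)$, and for $s_0$ sufficiently large set $\bar S := \dot S \setminus \bigcup_{z \in \Gamma}\{s > s_0\}$. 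Then $\bar S$ is a compact oriented surface with boundary $\partial \bar S = \bigsqcup_{z \in \Gamma}\gamma_z$, where $\gamma_z(t) = (s_0,t)$ agrees with the boundary-of-$\bar S$ orientation (the outward normal is $+\partial_s$), and $\chi(\bar S) = \chi(S) - \#\Gamma$. By Theorem~\ref{thm_precise_asymptotics} and the standing nondegeneracy hypothesis, for $s_0$ large the section $\pi\cdot\partial_s u$ is nowhere vanishing on the cylindrical ends, so every zero of $F$ lies in $\bar S$ and $\wind_\pi(\util) = \#_{\rm alg}\{F=0 \text{ in }\bar S\}$.

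Near each $\gamma_z$, the pair $(\partial_s, \sigma)$ gives a local frame $\ell$ of $L$ defined by $\ell(\partial_s) = \sigma$. Writing $\pi\cdot \partial_s u = g_z\,\sigma$ for a non-vanishing $\C$-valued function $g_z$ on the collar, one has $F = g_z\,\ell$ and hence $\wind(F,\ell)|_{\gamma_z} = w_z := \wind(g_z)|_{\gamma_z}$. Comparing with Definition~\ref{defn_wind_infty}, the reparametrization $t \mapsto \epsilon_z t$ reverses the loop precisely when $\epsilon_z = -1$, so $\wind_\infty(\util,z,\sigma) = \epsilon_z\, w_z$; summing over positive and negative punctures with the signs in Definition~\ref{defn_wind_infty} gives $\wind_\infty(\util) = \sum_{z\in\Gamma} w_z$.

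Standard degree theory for sections of a complex line bundle over a compact oriented surface with boundary yields $\wind_\pi(\util) = \sum_z w_z + c_1(L,\ell|_{\partial \bar S})$, where the second term is the relative first Chern number (the algebraic count of zeros of any extension of $\ell|_{\partial \bar S}$ to a section of $L$ over $\bar S$). Using $L \cong T^*\dot S \otimes_\C u^*\xi$ and additivity of $c_1$ under tensor products of complex line bundles, $c_1(L,\ell|_{\partial \bar S}) = -c_1(T\dot S,\partial_s|_{\partial \bar S}) + c_1(u^*\xi,\sigma|_{\partial \bar S})$. The second summand vanishes because $\sigma$ is non-vanishing on all of $\bar S$. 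For the first, $\partial_s$ is outward-pointing along $\partial \bar S$, so Poincar\'e--Hopf for compact surfaces with boundary gives $c_1(T\dot S,\partial_s|_{\partial \bar S}) = \chi(\bar S) = \chi(S) - \#\Gamma$. Assembling everything: $\wind_\pi(\util) = \wind_\infty(\util) - \chi(S) + \#\Gamma$, as desired.

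The main bookkeeping hurdle is the sign conventions: confirming that in positive cylindrical coordinates $\partial_s$ is the outward normal at each boundary circle (the puncture corresponds to $s = +\infty$, so $\bar S$ sits in $\{s \le s_0\}$), that $\partial_t$ gives the boundary-of-$\bar S$ orientation on $\gamma_z$, and that the factor $\epsilon_z$ in the definition of $\wind_\infty$ precisely compensates for the direction-of-orbit flip at negative punctures. Beyond these orientation checks, the argument is essentially Poincar\'e--Hopf combined with the additivity of the first Chern class under tensor products of complex line bundles.
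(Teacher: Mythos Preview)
Your proof is correct and is essentially the standard argument from~\cite{props2}: the paper itself does not supply a proof of this lemma but merely cites that reference, and the approach there is exactly what you outline---view $\pi\circ du$ as a section of the complex line bundle $T^*\dot S\otimes_\C u^*\xi$, cut off cylindrical ends to get a compact surface with boundary, and count zeros via relative first Chern numbers together with Poincar\'e--Hopf. Your handling of the orientation and $\epsilon_z$ bookkeeping is accurate.
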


\begin{definition}[Fast planes]
A   {\textit{fast plane}} is a finite-energy plane $\util$ satisfying $\wind_\pi(\util)=0$.
\end{definition}

This notion of fast planes differs from the one discussed in~\cite{hryn}, in particular, we allow asymptotic orbits which are not prime.

\section{Topological lemmas and constructions}\label{section_topology}

Here we describe some topological constructions which are important to our arguments. In particular, we prove Lemma~\ref{lemma_embedded_strips} and its Corollary~\ref{cor_special_spanning_disk} as initial steps in the proof of our main results.

\subsection{Monodromy}\label{top_lemma_section}\label{sec:monodromy}

We introduce the notion of monodromy to describe the twisting of a multisection of a transverse plane bundle over a closed curve.  Such sections arise naturally in the presence of closed orbits and  $p$-disks, and we discuss these occurrences after presenting the general definition.

Consider a knot $K$ inside an oriented $3$-manifold $M$, and let $v: S^1 \simeq \R/\Z \rightarrow K$ be a degree $p$ covering map. The orientations of $K$ induced by $v$ and of $TM|_K$  together induce an orientation of the normal bundle $\nu K := TM|_K\slash TK$. We view $K$ as the image of an embedding $v_1 : S^1\rightarrow M$, which can be arranged to satisfy $v(t)=v_1(pt)$. Choose an oriented trivialization $\Psi : v_1^*(\nu K) \to S^1\times \C$ and write $\Psi_t : v_1^*(\nu K)|_t = \nu K|_{v_1(t)} \to \{t\}\times \C \simeq \C$ for the restriction of $\Psi$ to the fiber over $t\in S^1$. Any such $\Psi$ determines an oriented trivialization $\Psi^{(p)}: v^*(\nu K) \rightarrow  S^1\times \C$  defined by requiring $\Psi^{(p)}_t=\Psi_{pt}$. To any non-vanishing section $Z$ of $v^*(\nu K)$ we may associate a number in $\mathbb{Z}\slash p\mathbb{Z}$ as follows.

\begin{definition}\label{def:monodromy}
The smooth non-vanishing section $Z:S^1\rightarrow v^*(\nu K)$ is said to have \textit{monodromy} $q\in \{0,\dots,p-1\}$ if $\wind(t \mapsto \zeta(t)) \in q + p\Z$, where the smooth map $\zeta:S^1 \to \C\setminus \{0\}$ is defined by $\Psi^{(p)}_t \cdot Z(t) = \zeta(t)$.
\end{definition}

\begin{lemma}
The monodromy of a section is independent of the choice of oriented trivialization $\Psi$ used in the definition.
\end{lemma}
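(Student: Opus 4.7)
The plan is to compare two oriented trivializations of $v_1^*(\nu K)$ and track how the induced representations of the section differ when pulled back along the degree-$p$ covering. Let $\Psi, \Psi'$ be two oriented trivializations of $v_1^*(\nu K)$. Since $\nu K$ is an oriented rank-2 real bundle, the composition $\Psi' \circ \Psi^{-1}$ is represented fiberwise by an orientation-preserving linear isomorphism $g(t) \in GL^+(2,\R)$, yielding a smooth loop $g:S^1 \to GL^+(2,\R)$. Pulling back by the covering $v(t) = v_1(pt)$, the induced trivializations $\Psi^{(p)}, (\Psi')^{(p)}$ of $v^*(\nu K)$ satisfy $(\Psi')^{(p)}_t = \Psi'_{pt} = g(pt) \circ \Psi_{pt} = g(pt) \circ \Psi^{(p)}_t$, so that the associated coordinate representations $\zeta(t) = \Psi^{(p)}_t \cdot Z(t)$ and $\zeta'(t) = (\Psi')^{(p)}_t \cdot Z(t)$ of a non-vanishing section $Z$ are related by $\zeta'(t) = g(pt)\cdot \zeta(t)$.

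Next I would invoke the standard fact that $GL^+(2,\R)$ deformation retracts onto $SO(2)\simeq S^1$, so its fundamental group is $\Z$ and any loop $g:S^1\to GL^+(2,\R)$ has a well-defined integer winding number $k=\deg(g)$. The precomposition $t\mapsto g(pt)$ is the $p$-fold iterate of $g$ in the loop space, hence has winding $pk$.

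To relate these to windings of $\zeta,\zeta'$ in $\C^*$, I would use the standard additivity of winding under a linear action: choosing any smooth path $t\mapsto g_s(t)$ of loops in $GL^+(2,\R)$ from the constant identity loop (at $s=0$) to $t\mapsto g(pt)$ (at $s=1$), the homotopy $\zeta_s(t) := g_s(t)\cdot \zeta(t)$ stays in $\C\setminus\{0\}$ (since $g_s(t)$ is invertible and $\zeta(t)\neq 0$), so $\wind(\zeta') = \wind(\zeta_1)$ decomposes into the winding picked up by the path of loops plus $\wind(\zeta_0) = \wind(\zeta)$; the path contribution is exactly $pk$ by the discussion above. Therefore
\[
\wind(\zeta') = \wind(\zeta) + pk,
\]
so $\wind(\zeta')\equiv \wind(\zeta)\pmod p$, proving that the class in $\Z/p\Z$ appearing in Definition~\ref{def:monodromy} does not depend on the choice of $\Psi$.

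The only subtle point is the $p$-fold multiplication phenomenon: a change of trivialization upstairs on $v^*(\nu K)$ alone could shift the winding by any integer, but here the trivializations are required to descend to $v_1^*(\nu K)$, and that constraint is exactly what forces the shift to be a multiple of $p$. I expect no genuine obstacles — the argument is essentially bookkeeping in $\pi_1(GL^+(2,\R))=\Z$ together with the observation $\deg(g\circ (\cdot p)) = p\cdot \deg(g)$.
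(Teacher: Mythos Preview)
Your proposal is correct and follows essentially the same approach as the paper: both compare two trivializations of $v_1^*(\nu K)$, observe that the transition loop has some winding number $k$, note that after pulling back along the degree-$p$ cover this becomes $pk$, and conclude that the two coordinate representations of $Z$ have windings differing by $pk$. The paper is slightly terser, using directly the additivity $\wind(\Phi_t^{(p)}\circ(\Psi_t^{(p)})^{-1}\circ\Psi_t^{(p)}\cdot Z(t)) = \wind(\Psi_t^{(p)}\cdot Z(t)) + pk$, whereas you route the same fact through $\pi_1(GL^+(2,\R))$ and an explicit homotopy; but these are the same argument.
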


\begin{proof}
Suppose $\Psi$ and $\Phi$ are oriented trivializations of $v_1^*(\nu K)$. Then there is a unique $k\in\Z$ such that $\wind(t \mapsto \Phi_t \circ \Psi^{-1}_t \cdot u) = k$, $\forall u\in\C\setminus\{0\}$.  In particular we get $\wind(t \mapsto \Phi^{(p)}_t \circ (\Psi^{(p)}_t)^{-1} \cdot u) = pk$, $\forall u\in \C\setminus \{0\}$. Hence
\[
\begin{aligned}
\wind(\Phi_t^{(p)} \cdot Z(t)) & = \wind ( \Phi_t^{(p)}\circ (\Psi_t^{(p)})^{-1} \circ \Psi_t^{(p)} \cdot Z(t)) \\
& = \wind(\Psi_t^{(p)} \cdot Z(t)) + pk.
\end{aligned}
\]

\end{proof}

Suppose now that $K$ is an order $p$ rational unknot. Then a $p$-disk $u:\mathbb{D}\rightarrow M$ for $K$ induces a section with monodromy; the map $v$ above is the restriction of $u$ to $\partial \mathbb{D}$,   {and} the bundle $(u|_{\partial\D})^*\nu K \to \partial\D$ admits a non-vanishing section defined by 
\begin{equation}\label{section_normal_disk}
z \in \partial\D \mapsto [\partial_r u(z)] \in \nu K|_{u(z)},
\end{equation}
where $r$ denotes the radial coordinate on the punctured disk $\D\setminus\{0\}$.

\begin{definition}
We define the   {\textit{monodromy}} of the order $p$ rational unknot $K$ as the element of $\Z\slash p\Z$ determined by the monodromy of the section~\eqref{section_normal_disk}. \end{definition}

It is easy to see that the $p$-disk $z\mapsto u(\bar z)$ yields the same value of the monodromy.

At first one might imagine that the monodromy depends on the choice of  $p$-disk, but the next lemma shows that this is not the case.

\begin{lemma}\label{lemma_top}
Let $p$ be a positive integer and $K$ be a knot inside the oriented $3$-manifold $M$ admitting some $p$-disk. The monodromy of $K$ is independent of the choice of $p$-disk and has a multiplicative inverse in $\Z_p$. Moreover, if $p'\neq p$, $p'\geq 1$ then $K$ does not admit a $p'$-disk.
\end{lemma}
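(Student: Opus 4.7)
The plan is to reinterpret the monodromy as a homology class on the boundary torus $T := \partial N$ of a closed tubular neighborhood $N$ of $K$, and then to deduce all three conclusions simultaneously from the classical ``half lives, half dies'' principle applied to $X := \overline{M\setminus N}$.

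Using the trivialization $\Psi$ from Definition~\ref{def:monodromy}, I would identify $N \cong S^1\times D^2$ and fix the resulting basis $(\ell,m)$ of $H_1(T)\simeq\Z^2$, where $\ell$ is the $\Psi$-longitude of $K$ and $m$ is the meridian. Given a $p$-disk $u$ for $K$, I would first use $u^{-1}(K)=\partial\D$ and standard transversality to shrink $N$ so that $u^{-1}(N)$ is a single annular collar $\{r\leq|z|\leq 1\}$ of $\partial\D$ with $u(\{|z|=r\})\subset T$. Because $u|_{\D\setminus\partial\D}$ is an embedding, the curve $\alpha := u(\{|z|=r\})$ is a simple closed curve on $T$. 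A first-order expansion of the form $u((1-\epsilon)e^{i2\pi t})\approx v(t)-\epsilon Z(t)$, written in the coordinates coming from $\Psi$, then gives
\[
[\alpha] = p\ell + wm \in H_1(T),
\]
where $w=\wind\bigl(t\mapsto\Psi^{(p)}_tZ(t)\bigr)$. Since $[\alpha]$ is represented by a simple closed curve and $p\geq 1$, this class is primitive, hence $\gcd(p,w)=1$, already proving that the monodromy is a unit in $\Z_p$.

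The disk $u|_{\{|z|\leq r\}}$ is an embedded $2$-disk in $X$ bounded by $\alpha$, so $[\alpha]$ lies in $G:=\ker\bigl(i_\ast\colon H_1(T)\to H_1(X)\bigr)$. Since $X$ is a compact oriented $3$-manifold with $\partial X\cong T^2$, the half-lives-half-dies theorem gives $\dim_\Q\ker(H_1(T;\Q)\to H_1(X;\Q))=1$; a standard argument clearing denominators and absorbing torsion in $H_1(X;\Z)$ shows that $G\otimes_\Z\Q$ coincides with this rational kernel, so $G\subset\Z^2$ has rank at most $1$. The existence of the $p$-disk forces $G\neq 0$, hence $G=\Z\cdot\gamma_0$ for a primitive generator which, up to sign, I can write as $\gamma_0=p_0\ell+w_0 m$ with $p_0\geq 1$. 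Any primitive element of $G$ then equals $\pm\gamma_0$, and $p\geq 1$ pins down $(p,w)=(p_0,w_0)$.

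This single identity yields all three conclusions: both $p$ and $w\bmod p$ depend only on $K$, giving independence of the $p$-disk; $\gcd(p,w)=1$ is the invertibility statement; and any $p'\geq 1$ admitting a $p'$-disk produces another primitive class in $G$, forcing $p'=p_0=p$. The only mildly technical steps are (i) the transversality arrangement that places $u^{-1}(N)$ inside a collar of $\partial\D$, which is routine once one uses $u^{-1}(K)=\partial\D$, and (ii) verifying that $G\otimes_\Z\Q$ really equals the rational kernel of $i_\ast$; the conceptual heart --- half-lives-half-dies combined with the fact that essential embedded simple closed curves on $T^2$ have primitive homology --- is otherwise classical, so I do not anticipate any serious obstacle.
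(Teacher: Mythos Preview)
Your proof is correct and lands on the same facts as the paper, but the key step is packaged differently. Both arguments pass to the boundary torus $T=\partial N$ and observe that the trace of a $p$-disk is a \emph{simple} closed curve, hence its class $p\ell+w m$ is primitive and $\gcd(p,w)=1$. For uniqueness, the paper takes a second disk $u'$ with trace $\gamma'$ of slope $q'/p'$ and computes the intersection pairing $H_1(M\setminus K)\otimes H_2(M,K)\to\Z$ directly: since $\gamma$ bounds the embedded sub-disk of $u$ inside $M\setminus K$, one has $[\gamma]=0$ there and hence $0=[\gamma]\cdot[u']=\pm(p'q-pq')$, forcing equal slopes. You instead invoke half-lives-half-dies to conclude that $G=\ker\bigl(H_1(T;\Z)\to H_1(X;\Z)\bigr)$ has rank one, so any two primitive elements of $G$ coincide up to sign. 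These are two sides of the same Poincar\'e--Lefschetz duality (the rational kernel is Lagrangian for the intersection form on $T$), so neither is conceptually deeper; the paper's computation is a bit more hands-on and self-contained, and works verbatim without assuming $M$ closed, whereas your argument uses compactness of $X$ to apply half-lives-half-dies. That hypothesis holds in every application in the paper, so this is only a cosmetic difference. Your remark that only the inclusion $G\otimes\Q\subset\ker(i_*\otimes\Q)$ is needed for $\operatorname{rank} G\le 1$ is worth making explicit, since that inclusion is immediate while the reverse one requires the torsion-absorption step you allude to.
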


  {To prove Lemma~\ref{lemma_top}, we will consider a curve $\gamma$ in a neighborhood of $K$ which is constructed as a push-off of $K$ along the section~\eqref{section_normal_disk}; this allows us to compute the monodromy of $K$ as an intersection number between $\gamma$ and a $p$-disk.}


\begin{proof}[Proof of Lemma~\ref{lemma_top}]
Let $u:\mathbb{D}\rightarrow M$ be a $p$-disk for $K$. Consider a regular neighborhood of $K$ equipped with coordinates $(\theta,x+iy) \in \R/\Z \times \C$  such that $d\theta \wedge dx \wedge dy > 0$, $K=\R/\Z\times \{0\}$ and $u(e^{i2\pi t}) = (pt,0)$. For $r$ close enough to~$1$, $u$ maps $re^{i\phi}$ into this neighborhood and we can write components $u(re^{i2\pi t}) = (u^1(re^{i2\pi t}),u^2(re^{i2\pi t}))$ with respect to this coordinate system.

For $\epsilon$ small consider $\ncal := \{(\theta,x+iy) \in \R/\Z \times \C \mid \rho\leq \epsilon\}$ where $\rho^2=x^2+y^2$, and write $\dcal$ for the embedded open $2$-disk $u(\mathbb{D}\setminus\partial\D)$. Then $\dcal$ intersects $\partial\ncal$ transversally since $u$ is an immersion and $\epsilon$ is small. Next, consider the intersection $\partial \mathcal{N}\cap \mathcal{D}$.  For $r$ sufficiently close to $1$, this is a connected curve, as  $u^{-1}(\partial \mathcal{N}\cap \mathcal{D})$ is connected and parallel to $\partial \D$ in $\D$.   {We view $\partial \mathcal{N}\cap \mathcal{D}$ as a curve on $\partial \mathcal{N}$ and we denote this simple closed curve by  $\gamma$.}   

Free homotopy classes of closed curves in $\R/\Z \times (\C\setminus\{0\})$ can be identified with $\Z\times\Z$ via 
\[ 
(a,b) \mapsto [t\in\R/\Z \mapsto (at,e^{i2\pi bt})], 
\] 
and we refer to the unreduced fraction $\frac{b}{a}$ as the \textit{slope} of such a curve. 
Then, perhaps after rotating our coordinate system, we can assume that the slope of the curve $\dcal\cap\ncal$ is $\frac{q}{p}$, where $q \in \{0,\dots,p-1\}$ is the monodromy of $K$ computed with respect to the disk $u$, as explained above. Since $\gamma=\dcal\cap \partial \ncal$ is connected, we conclude that $q$ and $p$ are relatively prime. In particular, $q$ has a multiplicative inverse in $\Z_p$.

Let $u'$ be a $p'$-disk for $K$. As before, there is no loss of generality to assume that $\dcal' = u'(\D\setminus\partial\D)$ intersects $\partial\ncal$ transversally and $\gamma' = \dcal' \cap \partial\ncal$ is a connected curve with slope $\frac{q'}{p'}$ for some $q'\in\Z$. As before $p'$ and $q'$ must be relatively prime. We now use the intersection pairing $H_1(M\setminus K) \otimes H_2(M,K) \rightarrow \mathbb{Z}$ to show that $u$ and $u'$ have the same slope.  
We evaluate $[\gamma] \cdot [u']$ in two ways. First, observe that the algebraic,  i.e., signed, intersection number between $\gamma$ and $u'$ is the same as the algebraic intersection number of $\gamma$ and $\gamma'$ on $\partial \mathcal{N}$. Standard results on torus knots imply that $ \gamma\cdot\gamma'=\pm|p'q-pq'|$. On the other hand, $[\gamma]=0\in H_1(M\setminus K)$, since $\gamma$ bounds a subdisc of $\dcal$.  It follows that the intersection pairing is $0$, so $\gamma$ and $\gamma'$ are parallel.  Thus the slope $\frac{q}{p}=\frac{q'}{p'}$ is uniquely determined by the rational unknot. In particular, $p=p'$ and $q=q'$.
\end{proof}

From now on we will denote the monodromy of an order $p$ rational unknot $K$ by
\begin{equation}\label{mon_notation}
\mon(K) \in \Z\slash p\Z.
\end{equation}

Let $\Pi:S^3\rightarrow L(p,q) = S^3/\Z_p$ be the quotient map. 

\begin{lemma}\label{lem:lpqmonodromy} Let $K:=\Pi\{ (e^{i\theta_1},0)\in S^3 \}$.  Then $\mon (K)=-q$.  
\end{lemma}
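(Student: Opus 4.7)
The plan is to construct an explicit $p$-disk for $K$ coming from an embedded disk in $S^3$ spanning the Hopf fiber $\widetilde K=\{(e^{i\theta_1},0)\}$, and then to evaluate Definition~\ref{def:monodromy} by computing the winding of the natural section in a carefully chosen oriented trivialization of $\nu K$ along $K$. Throughout, let $\sigma\colon(z,w)\mapsto(e^{i2\pi/p}z,e^{i2\pi q/p}w)$ denote the generator of the $\Z_p$-action.

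First I would take $\widetilde u\colon\D\to S^3$ defined by $\widetilde u(\rho e^{i\phi})=(\sin(\pi\rho/2)\,e^{i\phi},\cos(\pi\rho/2))$, which is a smooth embedding with $\widetilde u(\partial\D)=\widetilde K$. For any $k\not\equiv 0\pmod p$ the $w$-coordinate of $\sigma^k\cdot\widetilde u(\rho e^{i\phi})$ is $e^{i2\pi kq/p}\cos(\pi\rho/2)$; since $\gcd(p,q)=1$ this is a nonnegative real only when $\cos(\pi\rho/2)=0$, i.e.\ on $\partial\D$. The $\Z_p$-translates of $\widetilde u(\D\setminus\partial\D)$ are therefore pairwise disjoint, so $u:=\Pi\circ\widetilde u$ is a $p$-disk for $K$ with $u|_{\partial\D}(e^{i\phi})=\Pi(e^{i\phi},0)$.

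Next I would set $v_1(s):=\Pi(e^{i2\pi s/p},0)$, which is an embedding of $S^1$ onto $K$ with $v(t):=u(e^{i2\pi t})=v_1(pt)$. To trivialize $v_1^*(\nu K)$, I use at each $s$ the $w$-frame $\{(0,1),(0,i)\}$ via the lift $(e^{i2\pi s/p},0)$, obtaining a local trivialization defined by $\tilde\Psi_s(d\Pi_{(e^{i2\pi s/p},0)}(0,w))=w$. From the chain rule $d\Pi_{\sigma(z,w)}\circ d\sigma=d\Pi_{(z,w)}$ and $d\sigma(0,Y)=(0,e^{i2\pi q/p}Y)$ one obtains $\tilde\Psi_1=e^{i2\pi q/p}\tilde\Psi_0$, so the phase-corrected family $\Psi_s:=e^{-i2\pi qs/p}\tilde\Psi_s$ is a globally defined oriented trivialization of $v_1^*(\nu K)$. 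A direct computation then shows $\Psi^{(p)}_t(d\Pi_{(e^{i2\pi t},0)}(0,X))=e^{-i2\pi qt}X$ for all $X\in\C$.

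Finally, the explicit formula for $\widetilde u$ gives $\partial_\rho u(e^{i\phi})=d\Pi(0,-\pi/2)$, so the natural section $Z(t)=[\partial_r u(e^{i2\pi t})]$ satisfies $\Psi^{(p)}_t\cdot Z(t)=-\tfrac{\pi}{2}e^{-i2\pi qt}$, which has winding number $-q$. By Definition~\ref{def:monodromy}, $\mon(K)\equiv -q\pmod p$, as claimed. The most delicate point, and where sign errors would most easily creep in, is the derivation of $\tilde\Psi_1=e^{i2\pi q/p}\tilde\Psi_0$ and the choice of correction phase; this should be verified directly from the chain rule above.
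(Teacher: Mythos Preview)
Your proof is correct and takes a genuinely different route from the paper. The paper argues geometrically via the genus-one Heegaard splitting of $L(p,q)$: it identifies the image of a meridional disk $\D_1\subset H_1$ coned over $K$ as a $p$-disk, then reads the monodromy as the \emph{slope} of $\partial\D_1$ on the torus $\partial H_2$ using an explicit fundamental domain for the $\Z_p$-action, obtaining slope $-q$. Your approach instead works straight from Definition~\ref{def:monodromy}: you write down an explicit $p$-disk $u=\Pi\circ\widetilde u$, build a global oriented trivialization $\Psi$ of $v_1^*(\nu K)$ by phase-correcting the frame coming from the $w$-coordinate of the lift, and compute the winding of $\Psi^{(p)}_t\cdot[\partial_r u(e^{i2\pi t})]$ directly. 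Your method is more computational and self-contained; the paper's approach has the advantage that the Heegaard-splitting picture it sets up is immediately reused in the proof of Proposition~\ref{proposition_classification} (identifying a neighborhood of a $p$-disk with $L(p,q)\setminus B^3$). The sign check you flag---that $\tilde\Psi_1=e^{i2\pi q/p}\tilde\Psi_0$---is indeed the key point, and your derivation from $d\Pi_{\sigma(z,w)}\circ d\sigma=d\Pi_{(z,w)}$ together with $d\sigma(0,Y)=(0,e^{i2\pi q/p}Y)$ is correct.
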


\begin{proof}[Proof of Lemma~\ref{lem:lpqmonodromy}] We exploit the presentation of the monodromy as a slope which was introduced in the preceding proof of Lemma~\ref{lemma_top}.  

Consider the decomposition of $S^3=\{ (r_1e^{i\theta_1}, r_2e^{i\theta_2}) \ | \ r_1^2+r_2^2=1\}$ into the solid tori $\widehat{H_1}$ and $\widehat{H_2}$ defined by the inequalities $r_i\leq \frac{1}{\sqrt{2}}$.  Within $\widehat{H_1}$, let $\widehat{\D_1}$ denote the $p$-tuple of meridional disks defined by setting $\theta_2=\frac{2\pi k}{p}$, for $k=0,1, \dots, p-1$. Similarly, let $\widehat{\D_2}$ denote the $p$-tuple of disks in $\widehat{H_2}$ defined by setting $\theta_1=\frac{2\pi k}{p}$, for $k=0,1, \dots, p-1$. Observe that $\partial \widehat{\D_1} \cap \partial \widehat{\D_2}$ consists of  $p^2$ points on $\partial \widehat {H_1}=\partial \widehat{H_2}$. 
 
In $L(p,q)$,  $\Pi(\widehat{H_i}, \widehat{\D_i})$  is  a solid torus $H_i$ with a preferred meridional disk $\D_i$; this recovers the familiar fact that lens spaces admit genus one Heegaard splittings.    The disks $\D_1$ and $\D_2$ intersect in $p$ points, and $K$ intersects $\D_2$ once transversely.  It follows that the cone of $\D_1$ over $K$ is a  $p$-disk for $K$ in $L(p,q)$.

We can read off the monodromy of $K$ from the  slope of $\partial \D_1$ on $\partial H_2$.  First, observe that  $\{ (\frac{1}{\sqrt{2}}e^{i\theta_1},\frac{1}{\sqrt{2}}e^{i\theta_2}) : (\theta_1,\theta_2)\in[0, \frac{2\pi}{p}) \times [0,2\pi) \} \subset \partial \widehat{H_2}$ is a fundamental domain for the $\mathbb{Z}\slash p\mathbb{Z}$ action, so we may identify its image under $\Pi$ with $\partial H_2$.  The intersection of $\partial \widehat{\D_1}$ with this rectangle consists of $p$  horizontal segments.  In the quotient, the edges of the rectangle are identified via 
\[ (t, 0) \sim (t,2\pi) \ \ \  and  \ \ \ (0,t) \sim \big(\frac{2\pi}{p},  {t+} \frac{2q\pi}{p}\big).\]

After rescaling the $\theta_1$ dimension to $2\pi$, we see that $\partial \D_1$ is a connected curve with slope $-q$ on $\partial H_2$, as desired.  
\end{proof}

\begin{remark}
An analogous argument shows that for $K'= \Pi\{ (0, e^{i\theta_2})\in S^3 \}$, $\mon(K')=-q'$ for $qq'\equiv 1 \mod p$.  As noted in the introduction, $L(p,q)$ is homeomorphic to $L(p,q')$.  
\end{remark}

Before continuing, we remark on some immediate consequences of the Heegaard splitting $H_1\cup H_2$  constructed above.  Namely, if $K\subset M$ is an order $p$ rational unknot with $\mon(K)=-q$, then $M=L(p,q)\# M'$.  This is a standard fact in three-manifold topology, but we recall the proof here as part of our proof of  Proposition~\ref{proposition_classification}. 

\begin{proof}[Proof of Proposition~\ref{proposition_classification}] 
In the notation introduced above,  the complement of $\D_1$ in $H_1$ is a three-ball, so it follows that the lens space is  determined  up to homeomorphism by $H_2\cup \D_1$. Thus, a regular neighborhood of any $p$-disk in a three-manifold is $L(p,r)\setminus B^3$ for some $r$.  Combining Lemmas~\ref{lemma_top} and \ref{lem:lpqmonodromy}, we see immediately that $L(p,r)$ is homeomorphic to $L(p,q)$.  

Furthermore, we see that any manifold admitting   {a rational} open book decomposition with disk-like pages is in fact a lens space.  Given such a decomposition $(K, \pi)$, $M\setminus K$ is the mapping torus of a disk and is therefore a solid torus.  Letting $\mathcal{N}(K)$ denote a regular neighborhood of $K$, we have the genus one Heegaard splitting   $M=\mathcal{N}(K) \cup \big( (M\setminus K) \cap (M \setminus \mathcal{N}(K))\big)$, so $M$ is a lens space.  

 It remains to show that if the Reeb vector field of a defining contact form $\lambda$ is positively tangent to $K$ and positively transverse to the interior of the pages, then  $(M, \xi)$ is contactomorphic to $(L(p,q), \xi_\text{std})$.  However, this is equivalent to the statement that the rational open book supports the contact structure $\xi$, and Baker-Etnyre-Van Horn-Morris  \cite{BEVHM} have shown that this implies $\xi$ is universally tight.   
\end{proof}

\begin{lemma}\label{lemma_no_p'_disk}
Let  $K$ be an order $p$ rational unknot. If $1\leq p'<p$ then the $p'$-th iterate of $K$ is not contractible in $M$.
\end{lemma}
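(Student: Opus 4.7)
The plan is to reduce the statement to an $H_1$ computation, exploiting the connect-sum decomposition set up in the proof of Proposition~\ref{proposition_classification}. That proof shows that for any $p$-disk $u$ for $K$, a regular neighborhood $N\subset M$ of $u(\D)$ is homeomorphic to $L(p,r)\setminus B^3$ for some $r$, and by Lemma~\ref{lem:lpqmonodromy} we may take $r\equiv\mon(K)\pmod p$. In particular $\partial N$ is a $2$-sphere, and within the lens space piece $K$ is identified with the core of the solid torus factor of a genus-one Heegaard splitting.

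First I would cap both sides of $\partial N$ with $3$-balls: filling $N$ yields $L(p,\mon(K))$, and filling $M_0':=M\setminus\mathring{N}$ yields a closed oriented $3$-manifold $M'$. Thus $M$ is homeomorphic to the connected sum $L(p,\mon(K))\,\#\,M'$, and the usual Mayer--Vietoris argument for connected sums gives
\[
H_1(M;\Z)\;\cong\;H_1(L(p,\mon(K));\Z)\oplus H_1(M';\Z)\;\cong\;\Z_p\oplus H_1(M';\Z).
\]

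Next I would identify $[K]$ under this isomorphism. Since $K$ is the core of the solid torus factor in the Heegaard splitting of the lens space summand, it represents a generator of $H_1(L(p,\mon(K));\Z)=\Z_p$; under the direct sum decomposition above, $[K]$ corresponds to $(1,0)$, an element of order exactly $p$.

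Consequently, whenever $1\leq p'<p$ we have $p'[K]\neq 0$ in $H_1(M;\Z)$, so the loop $K^{p'}$ is not null-homologous and in particular not contractible. The only geometrically nontrivial ingredient is the connect-sum decomposition, but this has already been carried out in the proof of Proposition~\ref{proposition_classification}; the remainder is elementary algebraic topology.
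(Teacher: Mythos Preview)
Your proof is correct and follows essentially the same route as the paper: both use the connected-sum decomposition $M \cong L(p,q)\#M'$ coming from a regular neighborhood of the $p$-disk, then observe that $K$ (as a Heegaard core) has order $p$ in the lens-space summand. The only difference is that the paper works with $\pi_1$ and van Kampen's theorem (so $\pi_1(M)\cong\pi_1(L(p,q))*\pi_1(M')$), whereas you abelianize and use $H_1$; since the conclusion is about contractibility, your homological version is perfectly adequate and arguably slightly cleaner.
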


\begin{proof}
Let $u:\D \rightarrow M$ be a $p$-disk for $K$.  As noted above, a regular neighborhood of $K\cup u(\D)$ is the complement of a ball in $L(p,q)$ for some $q$, where $K$ is the core of a solid torus in a genus one Heegaard splitting for the lens space.  The cores of these solid tori have order $p$ in $\pi_1(L(p,q))$.  Since $M=L(p,q)\#M'$ for some three-manifold $M'$, it follows that $\pi_1(M)=\pi_1(L(p,q))*\pi_1(M')$.  Thus $K$ has order $p$ in $\pi_1(M)$.  A disk for the $p'$-th iterate of $K$ would provide a homotopy from $K^{p'}$ to a point, so no such disk can exist for $p'<p$.
\end{proof}

\subsection{Embedded strips and special spanning $p$-disks}\label{ssec_transv_strips}

For the present discussion we fix a contact form $\lambda$ on the $3$-manifold $M$, which defines a contact structure $\xi = \ker\lambda$ and a Reeb vector field $R$. Let us consider a closed orbit $P=(x,T)$ of the Reeb flow. Write $T=pT_{\rm min}$ where $p \in \Z^+$ and $T_{\rm min}$ is smallest positive period of $x$. As described in the previous section, an oriented trivialization $\Psi$ of $(x_{T_{\rm min}})^*\xi$ induces an oriented trivialization $\Psi^{(p)}$ of $(x_T)^*\xi$, called the $p$-th iterate of $\Psi$.  Here $x_T$ is the map~\eqref{map_x_T}.

The following statement will be used later in the construction of certain special pseudo-holomorphic curves. We denote by $$ \pi : TM \to \xi $$ the projection onto $\xi$ along $R$.

\begin{lemma}\label{lemma_embedded_strips}
Let $Z$ be a non-vanishing section of $(x_T)^*\xi$ with monodromy $q \in \{0,\dots,p-1\}$ satisfying $\gcd(q,p)=1$, and let $\beta$ be the homotopy class of oriented trivializations of $((x_T)^*\xi,d\lambda)$ induced by $Z$. Fix some $d\lambda$-compatible complex structure $J$ on $\xi$ and let $A_P$ be the corresponding asymptotic operator at $P$. If $\exists\nu \in \sigma(A_P) \setminus \{0\}$ with $\wind(\nu,\beta) = 0$ then one finds $0<r_0<1$ and an immersion $$ \psi:(r_0,1]\times\R/\Z \to M $$ satisfying
\begin{itemize}
\item $\psi(1,t) = x_T(t)$ $\forall t$, and $\psi|_{(r_0,1)\times\R/\Z}$ is an embedding,
\item $\wind(t\mapsto \pi \cdot \partial_r\psi(1,t),t\mapsto Z(t))=0$, and
\item For every sequence $\lambda_k = h_k\lambda$ of defining contact forms for $(M,\xi)$ such that $h_k\to1$ in $C^\infty$ as $k\to\infty$, $h_k|_{x(\R)}\equiv1$ and $dh_k|_{x(\R)}\equiv0$ $\forall k$, the Reeb vector field $R_k$ of $\lambda_k$ is transverse to $\psi((r_0,1)\times\R/\Z)$ for all $k$ large enough.
\end{itemize}
\end{lemma}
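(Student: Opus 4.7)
The plan is to realize $\psi$ explicitly in a Martinet chart near $K = x(\R)$, using the given eigensection as the radial direction, and to observe that the Reeb vector field is transverse to the resulting strip precisely because of the hypothesis $\nu \neq 0$.

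First I would fix a Martinet tube $(U,\Phi)$ at the primitive orbit $K$ with minimal period $T_{\min}$, giving coordinates $(\theta,x_1,x_2) \in \R/\Z \times B$ in which $\lambda = f(d\theta + x_1 dx_2)$, with $f \equiv T_{\min}$ and $df \equiv 0$ on $K$. A Taylor expansion yields
\[
R \;=\; \tfrac{1}{T_{\min}}\partial_\theta \;+\; \tfrac{1}{T_{\min}^2}\bigl(f_{x_2}\partial_{x_1} - f_{x_1}\partial_{x_2}\bigr) \;+\; O(|x|^2),
\]
and a matching computation identifies the primitive asymptotic operator in the Martinet trivialization as $-i\partial_\theta - S_{\min}(\theta)$ with $S_{\min}(\theta) = -H(\theta)/T_{\min}$, where $H(\theta) := \mathrm{Hess}\,f|_{x=0}(\theta)$. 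Hence the $p$-fold iterate takes the form $A_P = -i\partial_t - S(t)$ with $S(t) = -pH(pt)/T_{\min}$, in particular $1/p$-periodic.

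Next, the hypothesis provides $\zeta$ with $A_P\zeta = \nu\zeta$, $\nu \neq 0$, and $\wind(\zeta,\beta)=0$; spectral theory of $A_P$ forces $\zeta$ to be non-vanishing. Writing $\zeta(t) = \zeta_1(t)\partial_{x_1} + \zeta_2(t)\partial_{x_2}$ along $x_T(t) = (pt \bmod 1,0,0)$, the winding $w$ of $\zeta$ in the Martinet trivialization equals that of $Z$, so $w \equiv q \pmod p$ and $\gcd(w,p)=1$. I set
\[
\psi(r,t) \;=\; \Phi^{-1}\bigl(pt \bmod 1,\ (1-r)\zeta_1(t),\ (1-r)\zeta_2(t)\bigr),
\]
for $(r,t) \in (r_0,1]\times\R/\Z$ with $r_0<1$ chosen so that the image lies in $U$. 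The conditions $\psi(1,t)=x_T(t)$, the immersion property, and $\wind(\pi\partial_r\psi(1,\cdot),Z)=0$ (since $\pi\partial_r\psi(1,t)=-\zeta(t)$ lies in $\beta$) are immediate. For embeddedness on $(r_0,1)\times\R/\Z$, an equality $\psi(r,t)=\psi(r,t')$ with $r<1$ and $t\ne t'$ forces $t'=t+k/p$ for some $k\in\{1,\dots,p-1\}$ and $\zeta(t)=\zeta(t+k/p)$; by uniqueness of the linear ODE $\dot\zeta = i(\nu+S)\zeta$ combined with the $1/p$-periodicity of $S$, this single equality propagates to genuine $k/p$-periodicity of $\zeta$. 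Then the total winding $w$ over $[0,1]$ must be divisible by $p/\gcd(k,p)$, contradicting $\gcd(w,p)=1$ unless $p\mid k$, ruling out $k\in\{1,\dots,p-1\}$.

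The technical heart is the transversality assertion. Solving the linear system $R \in \mathrm{span}(\partial_r\psi,\partial_t\psi)$ in the Martinet chart and collecting leading order in $(1-r)$, the tangency condition reads
\[
\mathrm{Im}(\bar\zeta\,\dot\zeta)(t) \;=\; -\tfrac{p}{T_{\min}}\,\langle \zeta(t),\ H(pt)\,\zeta(t)\rangle.
\]
On the other hand $\dot\zeta = i(\nu+S)\zeta$ gives $\mathrm{Im}(\bar\zeta\dot\zeta) = \nu|\zeta|^2 + \langle\zeta,S\zeta\rangle$, and the substitution $S(t) = -pH(pt)/T_{\min}$ collapses the tangency locus precisely to the scalar equation $\nu|\zeta|^2 = 0$, which never holds since $\nu \neq 0$ and $\zeta$ is non-vanishing. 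Consequently $R$ is transverse to $\psi((r_0,1)\times\R/\Z)$ after shrinking $r_0$ toward $1$. Transversality is an open $C^0$-condition, and for $\lambda_k = h_k\lambda$ with $h_k|_K \equiv 1$, $dh_k|_K \equiv 0$, $h_k\to 1$ in $C^\infty$, the Reeb vector fields $R_k$ converge to $R$ in $C^\infty$ over the closure of $\psi([r_0,1]\times\R/\Z)$, so transversality persists for $R_k$ once $k$ is large enough. The main obstacle is the precise identification of the matrix $S(t)$ of the asymptotic operator with the Hessian of $f$ at $K$: this is what produces the exact cancellation reducing the tangency condition to the single equation $\nu|\zeta|^2 = 0$.
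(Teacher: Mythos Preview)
Your construction of $\psi$ via the eigensection in a Martinet chart, the winding identity, and the embeddedness argument are essentially the same as the paper's (a minor imprecision: from $\psi(r,t)=\psi(r',t')$ you only get $\zeta(t+k/p)=c\,\zeta(t)$ for some $c>0$, not equality, but your divisibility argument still goes through). The transversality computation for the unperturbed $R$, reducing the tangency locus to $\nu|\zeta|^2=0$, is also correct and matches the paper's determinant calculation in the case $S_k=S$.

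The gap is in the last step. Your sentence ``transversality is an open $C^0$-condition, and $R_k\to R$ in $C^\infty$ over the closure, so transversality persists'' does not prove what is claimed. On the boundary $r=1$ the tangent plane of the strip is spanned by $\partial_t\psi(1,t)=T\,R$ and $\partial_r\psi(1,t)=-\zeta(t)$, so $R$ lies \emph{in} that plane and the determinant $D(r,t):=\det(\partial_r\psi,\partial_t\psi,R\circ\psi)$ vanishes at $r=1$; indeed $D(r,t)=-(1-r)\nu|\zeta(t)|^2+O((1-r)^2)$. Since the transversality degenerates at the boundary of the compact set, openness in $C^0$ gives nothing: an arbitrary $C^0$-small perturbation of $R$ can flip the sign of $D_k$ on a thin strip $\{1-\epsilon_k<r<1\}$ with $\epsilon_k\not\to 0$, and you have produced no argument excluding this. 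The paper avoids this by carrying out the expansion directly with $R_k$: it shows
\[
\det(\partial_r\psi,\partial_t\psi,R_k\circ\psi)=(1-r)\bigl(\det(n,i(S_k-S)n)-\nu|n|^2\bigr)+O((1-r)^2),
\]
where $S_k$ is the symmetric matrix built from the linearized $\lambda_k$-flow along $K$, and then uses $S_k\to S$ together with a $k$-uniform bound on the $O((1-r)^2)$ term to fix a single $r_0$ valid for all large $k$. Your argument can be repaired along these lines: the hypotheses $h_k|_K\equiv 1$, $dh_k|_K\equiv 0$ force $R_k=R$ on $K$, hence $D_k(1,\cdot)=0$ as well, and $R_k\to R$ in $C^\infty$ gives $\partial_r D_k(1,\cdot)\to \nu|\zeta|^2\neq 0$ with uniform second-order remainder; but this is a first-order (not $C^0$) argument and must be stated as such.
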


\begin{proof}
Without loss of generality we assume that the minimal positive period $T_{\rm min}$ of $x$ is equal to $1$, so that $T = p$. By our assumptions on the functions $h_k$, it follows that $\lambda_k$, $d\lambda_k$ and $R_k$ agree with $\lambda$, $d\lambda$ and $R$ over $x(\R)$, respectively, $\forall k$. Therefore $t \in \R \mapsto x(t) \in M$ is a trajectory for the flow of $R_k$ with minimal positive period~$1$,~$\forall k$.

Consider a tubular neighborhood $U \subset M$ of $x(\R)$ and a diffeomorphism $U \simeq \R/\Z \times B$, where $B \subset \R^2$ is an open ball centered at the origin. Equipping $\R/\Z \times B$ with coordinates $(\theta,x_1,x_2)$, we can construct this diffeomorphism in such a way that $x(T_{\rm min}t) = x(t) \simeq (t,0,0)$ $\forall t\in\R/\Z$, $\lambda|_{\R/\Z\times\{(0,0)\}} \simeq d\theta$, $d\lambda|_{\R/\Z\times\{(0,0)\}} \simeq dx_1\wedge dx_2$ and $J \cdot \partial_{x_1} = \partial_{x_2}$ along $\R/\Z\times\{(0,0)\}$. Hence $R|_{\R/\Z\times\{(0,0)\}} \simeq \partial_\theta$, $\lambda_k|_{\R/\Z\times\{(0,0)\}} \simeq d\theta$, $d\lambda_k|_{\R/\Z\times\{(0,0)\}} \simeq dx_1\wedge dx_2$ and ${R_k}|_{\R/\Z\times\{(0,0)\}} \simeq \partial_\theta$ for every $k$. Moreover, $\{\partial_{x_1}|_{(t,0,0)},\partial_{x_2}|_{(t,0,0)}\}$ is a unitary frame of $(x_{T_{\rm min}})^*\xi$ inducing a trivialization $\Psi$ of $(x_{T_{\rm min}})^*\xi$ by 
\[
\Psi_t \cdot (a\partial_{x_1}|_{(t,0,0)} + b\partial_{x_2}|_{(t,0,0)}) = (a,b).
\]

Let us denote by $\phi^k_t$ the flow of $R_k$ and by $\phi_t$ the flow of $R$. Since Reeb flows preserve corresponding contact forms, the transverse linearized flows give paths of symplectic maps $d\phi^k_{t},d\phi_t:\xi|_{x(0)} \to \xi|_{x(t)}$, represented by the smooth paths $\varphi_k,\varphi : \R \to Sp(2)$
\[
\begin{array}{cc}
\varphi_k(t) = \Psi_t \circ d\phi^k_t|_{x(0)} \circ (\Psi_0)^{-1}, & \varphi(t) = \Psi_t \circ d\phi_t|_{x(0)} \circ (\Psi_0)^{-1}.
\end{array}
\]
We have $\varphi,\varphi_k \in \Sigma$, see \S~\ref{sssec_analytical_defn}. Hence 
\[
\begin{array}{cc}
\varphi_k^{(p)}(t) := \varphi_k(pt), & \varphi^{(p)}(t) := \varphi(pt)
\end{array}
\]
also belong to $\Sigma$ and, in our coordinates, the total linearized flows along the orbit $P$ are represented by a smooth paths of $3\times3$ matrices written in blocks as
\begin{equation}\label{rep_lin_flow}
\begin{aligned}
& d\phi^k_{Tt}|_{x(0)} = d\phi^k_{pt}|_{x(0)} \simeq \begin{pmatrix} 1 & 0 \\ 0 & \varphi_k^{(p)}(t) \end{pmatrix}, \\
& d\phi_{Tt}|_{x(0)} = d\phi_{pt}|_{x(0)} \simeq \begin{pmatrix} 1 & 0 \\ 0 & \varphi^{(p)}(t) \end{pmatrix}.
\end{aligned}
\end{equation}
Let $S_k,S : \R/\Z \to \R^{2\times 2} \simeq \mathcal L_\R(\C)$ be smooth paths of symmetric matrices
\[
\begin{aligned}
& S = -i\frac{d\varphi^{(p)}}{dt}(\varphi^{(p)})^{-1} = -ip\dot\varphi(pt)\varphi^{-1}(pt), \\
& S_k = -i\frac{d\varphi_k^{(p)}}{dt}(\varphi_k^{(p)})^{-1} = -ip\dot\varphi_k(pt)\varphi^{-1}_k(pt).
\end{aligned}
\]
Then the asymptotic operator $A_P$~\eqref{defn_asymp_operator} associated to the orbit $P$ and the contact form $\lambda$ is represented in the $p$-th iterate $\Psi^{(p)}$ of $\Psi$ as the operator $L_S = -i\partial_t-S$.

Let $\zeta$ be an eigenvector in $\ker (A_P-\nu I)$, where $\nu \in \sigma(A_P)\setminus\{0\} = \sigma(L_S)\setminus\{0\}$ satisfies $\wind(\nu,\beta) = 0$. In the trivialization $\Psi^{(p)}$ it gets represented as an element $t \in \R/\Z \mapsto n(t) = (n_1(t),n_2(t)) \in \R^2$ of $\ker(L_S-\nu I)$ with coordinates determined by $$ \zeta(t) = n_1(t) \partial_{x_1}|_{(pt,0,0)} + n_2(t) \partial_{x_2}|_{(pt,0,0)}. $$ Finally we define
\begin{equation}\label{map_psi}
\psi(r,t) = (pt,(1-r)n(t)) \in \R/\Z \times B
\end{equation}
for $r \in (r_0,1]$ and $t\in\R/\Z$, where $r_0<1$ is close to $1$. We will now verify that $\psi$ satisfies the required properties.

The frame $\{\partial_\theta,\partial_{x_1},\partial_{x_2}\}$ trivializes $T(\R/\Z\times B)$ and represents the vector fields $R = R(\theta,x_1,x_2)$, $R_k = R_k(\theta,x_1,x_2)$ as vectors in $\R^3$. Also, the flows $\phi_t,\phi^k_t$ get represented in these coordinates for an arbitrarily large time on a small neighborhood of $\R/\Z\times\{(0,0)\}$. In view of~\eqref{rep_lin_flow} we can evaluate identities 
\[
\frac{d}{dt}d\phi_t = (DR \circ \phi_t)d\phi_t, \ \ \ \frac{d}{dt}d\phi^k_t = (DR_k \circ \phi^k_t)d\phi^k_t
\]
at the point $(0,0,0)$ and at time $t=\theta$ ($T_{\rm min}=1$) to get
\begin{equation}
\begin{aligned}
& DR(\theta,0,0) = \begin{pmatrix} 0 & 0 \\ 0 & \dot\varphi(\theta)\varphi^{-1}(\theta) \end{pmatrix} \\
& DR_k(\theta,0,0) = \begin{pmatrix} 0 & 0 \\ 0 & \dot\varphi_k(\theta)\varphi_k^{-1}(\theta) \end{pmatrix}
\end{aligned}
\end{equation}
in blocks. Note that $\dot\varphi\varphi^{-1}$, $\dot\varphi_k\varphi_k^{-1}$ are $1$-periodic. Using Taylor's expansion
\begin{equation}\label{taylor_Reeb}
\begin{aligned}
& R_k(\psi(r,t)) \\
& = R_k(pt,0,0) + (r-1)DR_k(pt,0,0) \cdot \partial_r\psi(1,t) + O(|1-r|^2) \\
& = \begin{pmatrix} 1 \\ 0 \end{pmatrix} + (1-r) \begin{pmatrix} 0 & 0 \\ 0 & i\frac{1}{p}S_k(t) \end{pmatrix} \begin{pmatrix} 0 \\ n(t) \end{pmatrix} + O(|1-r|^2) \\
& = \begin{pmatrix} 1 \\ (1-r)i\frac{1}{p}S_k(t)n(t) \end{pmatrix} + O(|1-r|^2).
\end{aligned}
\end{equation}
Since $\lambda_k \to \lambda$ in $C^\infty$, we have that $S_k \to S$ in $C^\infty$ and that the absolute value of the terms $O(|1-r|^2)$ in the above expressions are bounded by $C|1-r|^2$, for some $C>0$ independent of $k$. Therefore we can compute
\begin{equation}\label{est_twist}
\begin{aligned}
& \det(\partial_r\psi(r,t),\partial_t\psi(r,t),R_k(\psi(r,t)) \\
& = \det \begin{pmatrix} 0 & p & 1 \\ -n_1 & (1-r)\dot n_1 & (1-r)\frac{1}{p}(iS_kn)_1 \\ -n_2 & (1-r)\dot n_2 & (1-r)\frac{1}{p}(iS_kn)_2 \end{pmatrix} + O(|1-r|^2) \\
& = (1-r)\det(n,iS_kn) - (1-r)\det(n,\dot n) + O(|1-r|^2) \\
& = (1-r)\det(n,iS_kn-\dot n) + O(|1-r|^2) \\
& = (1-r)(\det(n,-i\nu n) + \det(n,i[S_k-S]n)) + O(|1-r|^2) \\
& = (1-r)(\det(n,i[S_k-S]n) - \nu|n|^2) + O(|1-r|^2).
\end{aligned}
\end{equation}
In the second line we wrote $(iS_kn)_1,(iS_kn)_2$ for the components of $iS_kn \in \R^2$ and $\det$ is the usual determinant of $3\times3$ matrices. In the first, third, fourth, fifth and sixth lines $\det$ denotes the determinant as a multilinear function. In the fifth line we used that $n \in \ker (L_S - \nu)$, which gives 
\[
iSn-\dot n = -i\nu n \Rightarrow iS_kn-\dot n = i(S_k-S)n-i\nu n.
\]
Since $n(t)$ never vanishes and $\nu\neq0$ and 
\[
|\det(n,i(S_k-S)n)| \leq \|S_k-S\||n|^2
\]
we get from~\eqref{est_twist} that $\{\partial_r\psi(r,t),\partial_t\psi(r,t),R_k(\psi(r,t)\}$ is linearly independent for every $(r,t) \in (r_0,1)\times \R/\Z$ when we set $r_0$ close enough to $1$ and large $k$. In other words, $R_k$ is transverse to the immersed strip $\psi((r_0,1)\times\R/\Z)$ when $k$ is large enough.

Let us represent $Z(t)$ by $\vec Z(t) = (Z_1(t),Z_2(t))$ with coordinates $$ Z(t) = Z_1(t) \partial_{x_1}|_{(pt,0,0)} + Z_2(t) \partial_{x_2}|_{(pt,0,0)} $$ and write $W(t) = \partial_{x_1}|_{(pt,0,0)}$. Since $Z$ has monodromy $q$ there exists $m \in \Z$ such that $\wind(Z,W) = \wind(\vec Z) = q + pm$. We claim that $\wind(n) = q+pm$. In fact,
\[
\begin{aligned}
0 = \wind(\nu,\beta) & = \wind(\zeta,Z) \\
& = \wind(\zeta,W) - \wind(Z,W) \\
& = \wind(n) - q - pm.
\end{aligned}
\]
It follows that $\wind(t\mapsto\partial_r\psi(1,t),Z(t))=0$.

Let $(r_1,t_1),(r_2,t_2) \in (r_0,1) \times \R/\Z$ satisfy $\psi(r_1,t_1) = \psi(r_2,t_2)$. Assume, by contradiction, that $t_1,t_2$ are distinct points of $\R/\Z$. Perhaps after interchanging $t_1$ and $t_2$ we can represent $t_1,t_2$ as numbers $0\leq t_1<t_2<1$. Since $pt_1 \in pt_2 + \Z$ we must have $t_2 = t_1+k/p$ for some $k \in \{1,\dots,p-1\}$. Moreover, there exists $c>0$ such that $n(t_2) = cn(t_1)$. These facts follow from the expression~\eqref{map_psi} for~$\psi$. We can see $n(t)$ as a $1$-periodic $\R^2$-valued function defined on the entire real line. Let $\vartheta : \R \to \R$ be a smooth function satisfying $n(t) \in \R^+ e^{i\vartheta(t)} \ \forall t\in\R$. Then $\vartheta(t+1) = \vartheta(t) + 2\pi(q + pm) \ \forall t\in\R$. Note that $n$ solves a linear ODE with coefficients which are $1/p$-periodic. Then $n(t_2 = t_1 + k/p) = cn(t_1)$ implies that $n(t+k/p) = cn(t) \ \forall t\in\R$. Then there is a well-defined integer $l \in \Z$ given by
\[
l = \frac{\vartheta(t+k/p)-\vartheta(t)}{2\pi}, \ \ \forall t\in\R.
\]
We obtain
\[
\begin{aligned}
k(q+pm) & = \frac{1}{2\pi}(\vartheta(t+k) - \vartheta(t)) \\
& = \frac{1}{2\pi} \sum_{j=0}^{p-1} \vartheta \left(t+\frac{k(j+1)}{p}\right) - \vartheta\left(t+\frac{kj}{p}\right) \\
& = pl.
\end{aligned}
\]
This implies that $kq = p(l-km)$, proving that $p$ divides $kq$. Since $q$ and $p$ are relatively prime, $p$ must divide $k$ which is impossible. This contradiction shows that $t_1 \equiv t_2 \ {\rm mod} \ \Z$, forcing $r_2 = r_1$. Therefore $\psi|_{(r_0,1)\times\R/\Z}$ is an embedding.
\end{proof}

In the statement below, $r$ denotes the radial coordinate on $\D\setminus \{0\}$.

\begin{corollary}\label{cor_special_spanning_disk}
Orient the knot $x(\R)$ by $\lambda$ and assume that it admits an oriented $p$-disk $u : \D \to M$. Consider the non-vanishing section $Z(t) = \pi \cdot \partial_ru(e^{i2\pi t})$ of $(x_T)^*\xi$ and denote by $\beta_u \in \Omega^+_{(x_T)^*\xi}$ the homotopy class induced by~$Z$. If $\rho(P,\beta_u) \neq 0$ then $u$ can be slightly modified in the $C^0$-topology near $\partial\D$ into a new $p$-disk $u'$ for $x(\R)$ satisfying the following property: $\exists \ \epsilon>0$ such that for every sequence of smooth functions $h_k : M \to (0,+\infty)$ satisfying $h_k \to 1 \ \text{in} \ C^\infty_{\rm loc}$, $h_k|_{x(\R)}\equiv1$ and $dh_k|_{x(\R)} \equiv 0 \ \forall k$, one can find $k_0$ such that 
\[
1-\epsilon < |z| < 1,\ k\geq k_0 \Rightarrow R_k|_{u'(z)} \not\in du'_z(T_z\D).
\]
Here we denoted by $R_k$ the Reeb vector field associated to the contact form $h_k\lambda$.
\end{corollary}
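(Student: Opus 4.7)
The plan is to deduce the statement from Lemma~\ref{lemma_embedded_strips}. Since $u$ is an oriented $p$-disk for $K:=x(\R)$, Lemma~\ref{lemma_top} ensures that the monodromy $q\in\{0,\ldots,p-1\}$ of $K$ satisfies $\gcd(q,p)=1$. Under the natural identification $\xi|_K\simeq \nu K$, valid because $K$ is tangent to the Reeb vector field, the section $Z(t)=\pi\cdot\partial_r u(e^{i2\pi t})$ of $(x_T)^*\xi$ reduces modulo $TK$ to the section $[\partial_r u]$ of $\nu K$ used in Definition~\ref{def:monodromy}; hence $Z$ has monodromy $q$. Thus the topological input to Lemma~\ref{lemma_embedded_strips} is already in place for $Z$, and the only remaining hypothesis to verify is the existence of some $\nu\in\sigma(A_P)\setminus\{0\}$ with $\wind(\nu,\beta_u)=0$.

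I would produce such a $\nu$ from the hypothesis $\rho(P,\beta_u)\neq 0$ as follows. Each integer winding class contains exactly two eigenvalues of $A_P$ counted with multiplicity, so winding-$0$ eigenvalues always exist; the only way for all of them to vanish is for $0$ to be an eigenvalue of $A_P$ of geometric multiplicity $2$, equivalently for the linearized Poincar\'e return map of $P$ to equal the identity. In that case every $0$-eigensection of $A_P$ is represented in a trivialization of class $\beta_u$ by a closed loop $t\mapsto \Phi(t)v$, where $\Phi$ denotes the linearized Reeb flow and $v\in\R^2\setminus\{0\}$ is constant; the winding of such a loop is independent of $v$ and equals the integer $\rho(P,\beta_u)$. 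Under the hypothesis $\rho(P,\beta_u)\neq 0$ this winding is nonzero, so the winding-$0$ eigensections of $A_P$ cannot be $0$-eigensections, yielding the required $\nu$.

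Applying Lemma~\ref{lemma_embedded_strips} to $(Z,\nu)$ produces an immersion $\psi:(r_0,1]\times\R/\Z\to M$ with $\psi(1,t)=x_T(t)$, restricting to an embedding on $(r_0,1)\times\R/\Z$, whose radial derivative on $\{r=1\}$ lies in the class $\beta_u$, and to which $R_k$ is transverse on $(r_0,1)\times\R/\Z$ for every $k$ large enough in the given sequence. To manufacture $u'$, pick $r_0<r_1<r_2<1$ with $r_1,r_2$ close to $1$ and a thin tubular neighborhood $U$ of $x(\R)$ so that both $u(\{|z|\geq r_1\})$ and $\psi([r_1,1]\times\R/\Z)$ lie in $U$. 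Writing both maps in coordinates adapted to $U$ and interpolating in the transverse coordinates with a smooth cutoff in the radial variable, define $u'$ to coincide with $u$ on $\{|z|\leq r_1\}$, with $\psi(|z|,\arg z/(2\pi))$ on $\{|z|\geq r_2\}$, and with the interpolation on the annulus in between. For $r_1,r_2$ close enough to $1$ and $U$ thin enough, $u'$ is again an oriented $p$-disk for $K$ which is $C^0$-close to $u$; setting $\epsilon:=1-r_2$, for $|z|\in(1-\epsilon,1)$ the plane $du'_z(T_z\D)$ is tangent to the embedded surface $\psi((r_0,1)\times\R/\Z)$, which by Lemma~\ref{lemma_embedded_strips} is transverse to $R_k$ for all $k\geq k_0$, giving the desired non-tangency.

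The main obstacle is the spectral step, which must accommodate both degenerate and nondegenerate $P$; the delicate case is the fully degenerate one where the linearized return map is the identity, and here the integrality of $\rho(P,\beta_u)$ is what converts $\rho(P,\beta_u)\neq 0$ into a nonzero winding. The subsequent patching is essentially routine in a tubular chart, thanks to the fact that both $u(re^{i2\pi t})$ and $\psi(r,t)$ collapse onto $x_T(t)$ as $r\to 1$.
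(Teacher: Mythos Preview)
Your argument follows the same strategy as the paper's: verify the hypotheses of Lemma~\ref{lemma_embedded_strips} for the section $Z$, produce the strip $\psi$, then patch it onto $u$ near the boundary. For the spectral step the paper argues more directly, simply observing that $\rho(P,\beta_u)>0$ forces all winding-$0$ eigenvalues of $A_P$ to be strictly negative (and analogously for $\rho<0$). Your contradiction argument through the fully degenerate case is also correct but can be shortened: whenever $0\in\sigma(A_P)$ the linearized return map $d\phi_T|_\xi$ has a fixed ray, and one gets $\rho(P,\beta_u)=\wind(0,\beta_u)\in\Z$ regardless of multiplicity, so $\rho\neq 0$ already pushes the winding-$0$ eigenvalues off zero without singling out the identity case.

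Where your write-up is thinner than the paper's is the patching. You assert that the interpolated map $u'$ is again a $p$-disk, but do not verify that $u'|_{\D\setminus\partial\D}$ is an embedding into $M\setminus K$: the interpolation region could in principle hit $K$, intersect $u(\{|z|<r_1\})$, or identify distinct parameters $t$ and $t+k/p$. The paper closes this by invoking Dehn's lemma in the cut-and-paste: after removing a neighborhood of $\partial u$ and gluing in an annulus to reach $\psi(\{r_2\}\times\R/\Z)$, one has a possibly only immersed disk bounded by that embedded circle, which Dehn's lemma replaces by an embedded one before re-attaching the outer strip $\psi([r_2,1]\times\R/\Z)$. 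You should either invoke Dehn's lemma as well, or supply the fiberwise argument (using that the transverse coordinates of $u$ and $\psi$ near the boundary have equal winding, which is exactly what the zero-winding clause in Lemma~\ref{lemma_embedded_strips} buys you) showing that your interpolation is embedded and avoids $K$.
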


\begin{proof}
Note that if $\rho((x,T),\beta_u)>0$ then the eigenvalues of the asymptotic operator $A_{(x,T)}$ with zero winding with respect to $\beta_u$ are strictly negative, and if $\rho((x,T),\beta_u)<0$ then the eigenvalues of $A_{(x,T)}$ with zero winding with respect to $\beta_u$ are strictly positive. Thus we can apply Lemma~\ref{lemma_embedded_strips} with $Z(t)$ to obtain a special embedded strip $\psi(r,t)$, $(r,t)\in(1-\delta,1]\times \R/\Z$. Since $\pi \cdot \partial_r\psi(1,t)$ does not wind with respect to $Z$ we can cut out a neighborhood of the boundary of $u$ and patch what is left with the strip $\psi$ to obtain the desried $p$-disk $u'$. In this cut-and-paste process we make use of Dehn's lemma.
\end{proof}

\subsection{Self-linking number in terms of winding numbers}\label{sec:selflink}

 Let $(M,\xi)$ be a co-oriented contact $3$-manifold.

\begin{lemma}\label{lemma_self_link_props}
Let the knot $K \subset M$ be $p$-unknotted, transverse to $\xi$, and oriented by the co-orientation of $\xi$. Let $u:\D\to M$ be an oriented $p$-disk for $K$, $Z$ be a non-vanishing section of $u^*\xi$, and $N$ be a non-vanishing section of $u^*\xi|_{\partial \D}$ satisfying $N(z) \in du_z(T_z\D) \ \forall z\in\partial \D$. Then $\sl(K,u) =   {\frac{1}{p}}\ \wind(Z|_{\partial\D},N)$. 
\end{lemma}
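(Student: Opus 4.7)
The plan is to set up local coordinates near $K$ in which both sides of the identity reduce to winding numbers of explicit $\C^*$-valued functions on $\R/\Z$, and then directly compare them.

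First, choose a tubular neighborhood $U$ of $K$ with coordinates $(\theta,x_1,x_2)\in\R/\Z\times B$ ($B\subset\R^2$ a small ball around $0$) so that $K=\R/\Z\times\{0\}$ is oriented by $\partial_\theta$ (the co-orientation of $\xi$) and the frame $\partial_{x_1},\partial_{x_2}$ trivializes $\xi|_K$ positively with respect to the $d\lambda$-orientation. This can be arranged, e.g., by taking a Martinet tube (Definition~\ref{def_Martinet}) for any defining contact form having $K$ as a closed Reeb orbit. Set $F_i(t):=\partial_{x_i}|_{u(e^{i2\pi t})}$; this is a positive frame of $(u|_{\partial\D})^*\xi$ against which windings will be measured. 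Near $\partial\D$, parametrize $u((1-s)e^{i2\pi t})=(\theta(t,s),w(t,s))$ with $\theta(t,0)=pt$ and $w(t,0)=0$, and define $\gamma(t):=\partial_s w(t,0)\in\C^*$; non-vanishing of $\gamma$ follows from $u$ being an immersion together with $\partial_t u(t,0)=(p,0)$ spanning the Reeb direction.

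Next, compute the right-hand side. Let $z:\R/\Z\to\C^*$ represent $Z|_{\partial\D}$ in the frame $(F_1,F_2)$. Because $N(t)$ spans the line $du_z(T_z\D)\cap\xi$, and the latter is generated by $\pi\cdot\partial_r u|_{e^{i2\pi t}}$, which equals $-\gamma(t)$ in the $F$-frame, we have $N=-c\gamma$ for some real non-vanishing $c:\R/\Z\to\R^*$ of constant sign. Therefore $\wind(N,F_1)=\wind(\gamma)$, and
\[
\wind(Z|_{\partial\D},N)=\wind(Z|_{\partial\D},F_1)-\wind(N,F_1)=\wind(z)-\wind(\gamma).
\]

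For the left-hand side, compute $\sl(K,u)=\#(Z_\epsilon\cap u)$ explicitly. Choosing an exponential compatible with the coordinates, $Z_\epsilon(t)\approx(pt,\epsilon z(t))$ for $\epsilon$ small, while the image of $u$ near $\partial\D$ is parametrized to leading order by $(t_2,s)\mapsto(pt_2,s\gamma(t_2))$. Setting these equal forces $pt_2\equiv pt_1\pmod 1$, hence $t_2=t_1+j/p$ for some $j\in\{0,\dots,p-1\}$, and $\epsilon z(t_1)=s\gamma(t_1+j/p)$ with $s>0$. Equivalently $f_j(t_1):=z(t_1)/\gamma(t_1+j/p)\in\R^+$. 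A short sign computation against the ambient orientation $d\theta\wedge dx_1\wedge dx_2$ shows that at each crossing the oriented intersection sign equals the sign of the derivative $\tfrac{d}{dt_1}\arg f_j(t_1)$; thus the $j$-th sheet contributes $\wind(f_j)=\wind(z)-\wind(\gamma)$ intersections, and summing over $j$,
\[
\sl(K,u)=\sum_{j=0}^{p-1}\bigl(\wind(z)-\wind(\gamma)\bigr)=p\bigl(\wind(z)-\wind(\gamma)\bigr)=p\cdot\wind(Z|_{\partial\D},N),
\]
as desired.

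The main obstacle is verifying that no intersections of $Z_\epsilon$ with $u$ occur outside the local model near $\partial\D$ — this follows from $Z_\epsilon$ lying in an arbitrarily small neighborhood of $K$ for $\epsilon$ small while $u$ is embedded on $\D\setminus\partial\D$ — and carefully bookkeeping sign conventions so that the oriented intersection sign in $M$ matches the sign of the $\R^+$-crossings of the functions $f_j$. Once these technicalities are handled, the identity drops out from the two parallel computations above.
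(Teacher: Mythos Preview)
Your argument is correct and follows essentially the same route as the paper's proof: set up a tubular coordinate model near $K$, express both the self-linking number and the winding $\wind(Z|_{\partial\D},N)$ in those coordinates, and compare. The only cosmetic difference is that the paper first normalizes $u$ and $Z$ near $\partial\D$ so that $\gamma(t)=e^{i2\pi q t}$ and $z(t)=e^{i2\pi(q+n)t}$ and then counts the intersections as an algebraic intersection of curves on the torus $\R/\Z\times\R/\Z$, whereas you keep $\gamma$ and $z$ general and obtain the same count $p(\wind(z)-\wind(\gamma))$ by summing the windings of the ratio functions $f_j$; the two computations are equivalent.
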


Winding numbers are computed with respect to the orientation of $\xi$ induced by its co-orientation.

\begin{proof}
Set $n=\wind(Z|_{\partial\D},N)$ and consider coordinates $(\theta,w_1+iw_2) \in \R/\Z \times B$ on a small tubular neighborhood of $K$, where $B\subset\C$ is a small open ball centered at the origin, such that $K \simeq \R/\Z\times 0$, $\xi|_x \simeq 0\times \C \ \forall x\in K$, $d\theta\wedge dw_1\wedge dw_2$ induces the same orientation as $\xi$, and  $u(e^{i2\pi t})\simeq (pt,0)$.

Rotating the coordinate system along $w_1+iw_2$ and deforming $u$ we may assume that $u(re^{i2\pi t})\simeq (pt,(1-r)e^{i2\pi qt})$ where $q\in\{0,\dots,p-1\}$ represents $\mon(K)$ and $|1-r|$ is small. There is no loss of generality to assume that $Z(t) \simeq (0,e^{i2\pi (q+n)t})$ over the point $(pt,0)$. Exponentiating a small multiple of this vector we obtain a loop homotopic to $\gamma(t)=(pt,\epsilon e^{i2\pi (q+n)t})$ on $M\setminus K$. Thus $\sl(K,u)$ is   {$\frac{1}{p^2}$ times} the oriented intersection number of $\gamma$ and $u$, which is equal to $$ \sl(K,u) =   {\frac{\pm1}{p^2}} \# \left\{ (t_0,t_1) \in \R/\Z \times\R/\Z \left| \begin{aligned} pt_0=pt_1 &\mod 1 \\ (q+n)t_0=qt_1 &\mod 1 \end{aligned} \right. \right\}. $$ The first equation cuts out $p$ disjoint circles inside $\R/\Z \times\R/\Z$ homotopic to the diagonal. The second equation cuts out $d:=\gcd(q+n,q)=\gcd(n,q)$ disjoint circles, all of which intersect the diagonal $(q+n)/d-q/d=n/d$ times. We get $\sl(K,u)=  {\frac{1}{p^2}}p d \frac{n}{d}=  {\frac{n}{p}}$.
\end{proof}

An immediate consequence of Lemma~\ref{lemma_self_link_props} is the following statement.

\begin{corollary}\label{cor_rel_windings}
Let $\lambda$ be a defining contact form for $\xi$, and let $x$ be a periodic Reeb trajectory with minimal period $T_{\rm min}>0$ such that $x(\R)$ is an order $p$ rational unknot. Orienting $x(\R)$ by $\lambda$, consider an oriented $p$-disk $u$ for $x(\R)$. Setting $T=pT_{\rm min}$ there is no loss of generality to assume that $u(e^{i2\pi t}) = x_T(t)$. Let $\beta_u \in \Omega^+_{(x_T)^*\xi}$ be induced by a non-vanishing section $N$ satisfying $$ N(t) \in du_{e^{i2\pi t}}(T_{e^{i2\pi t}}\D) \cap \xi|_{u(e^{i2\pi t})}, \ \ \forall t\in \R/\Z $$ and let $\beta_{\rm disk} \in \Omega^+_{(x_T)^*\xi}$ be induced by a trivialization of $u^*\xi$. If $\sl(x(\R),u)=  {\frac{-1}{p}}$ then $\wind(\beta_u,\beta_{\rm disk})=+1$. In particular we get $\rho((x,T),\beta_u)+1=\rho((x,T),\beta_{\rm disk})$.
\end{corollary}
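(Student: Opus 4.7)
The plan is to combine Lemma~\ref{lemma_self_link_props} with the standard transformation rule for the Conley-Zehnder index under a change of trivialization, interpreting the hypothesis $\sl(x(\R),u)=-p$ directly as a relative winding number.

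First I would use that $u^*\xi \to \D$ is a trivial symplectic bundle (because $\D$ is contractible) to produce a globally defined non-vanishing section $Z$ of $u^*\xi$. By the definition of $\beta_{\rm disk}$ recalled in \S~\ref{sssec_CZ_orbits}, the boundary restriction $Z|_{\partial\D}$ induces precisely the class $\beta_{\rm disk}$, while $N$ induces $\beta_u$ by hypothesis. Applying Lemma~\ref{lemma_self_link_props} with these choices gives
$$-p \;=\; \sl(x(\R),u) \;=\; p\,\wind(Z|_{\partial\D},N),$$
so $\wind(Z|_{\partial\D},N)=-1$. Since relative winding numbers of non-vanishing sections of an oriented rank-$2$ bundle over a circle satisfy $\wind(W,Z)=-\wind(Z,W)$ (an immediate consequence of the definition in \S~\ref{sssec_rel_winding}), this yields
$$\wind(\beta_u,\beta_{\rm disk}) \;=\; \wind(N,Z|_{\partial\D}) \;=\; +1,$$
establishing the first claim.

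To pass to rotation numbers I would apply the Maslov transformation formula from \S~\ref{sssec_CZ_orbits} to each iterate of $P=(x,T)$:
$$\mu_{CZ}(P^k,\beta_u^{(k)}) \;=\; \mu_{CZ}(P^k,\beta_{\rm disk}^{(k)}) + 2\,\wind(\beta_{\rm disk}^{(k)},\beta_u^{(k)}).$$
Because the $k$-th iterated trivialization $\Psi^{(k)}$ traverses the basic loop $k$ times (compare \S~\ref{sssec_analytical_defn}), relative windings multiply by $k$ under iteration, so $\wind(\beta_{\rm disk}^{(k)},\beta_u^{(k)})=-k$. Dividing the displayed identity by $2k$ and taking $k\to\infty$, using the definition $\rho(P,\beta)=\lim_k \mu_{CZ}(P^k,\beta^{(k)})/(2k)$ from \S~\ref{sssec_transv_rot_number}, produces $\rho((x,T),\beta_u)+1=\rho((x,T),\beta_{\rm disk})$, as desired.

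There is no substantive obstacle here; the entire argument is a bookkeeping exercise. The only care required lies in correctly ordering the arguments of the relative winding number (so that the sign of $-1$ in Lemma~\ref{lemma_self_link_props} is flipped to $+1$ when reading off $\wind(\beta_u,\beta_{\rm disk})$) and in confirming the linearity of relative windings under iteration.
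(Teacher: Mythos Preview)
Your argument is correct and is exactly what the paper intends: it states the corollary as an immediate consequence of Lemma~\ref{lemma_self_link_props} without spelling out the details, and your computation $\sl=-p \Rightarrow \wind(Z|_{\partial\D},N)=-1 \Rightarrow \wind(\beta_u,\beta_{\rm disk})=+1$, followed by the iteration/limit argument for rotation numbers via the transformation rule in \S\ref{sssec_CZ_orbits}--\S\ref{sssec_transv_rot_number}, is precisely that intended unpacking.
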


We conclude this section with an application of the results above to the specific case of computing the self-linking number of an order $p$ rational unknot in $L(p,q)$.  As in Section~\ref{sec:monodromy}, let $\widetilde{K}=\{ (z,w) \in S^3\ | \ |z|=1\}$ and denote the image of $\widetilde{K}$ in $L(p,q)$ by $K$.  Recall that we let $\lambda_0$ denote both the Liouville form  on $S^3$ and its image in $L(p,q)$; we note for later use that  $\lambda_0$ is dynamically convex in $L(p,q)$.

\begin{lemma}\label{lem:lpqsl} The knot $K$ is tangent to the Reeb vector field of $\lambda_0$ and has self-linking number $  {\frac{-1}{p}}$.
\end{lemma}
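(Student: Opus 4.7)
My plan is to verify the two assertions separately, lifting everything to the universal cover $\Pi:S^3\to L(p,q)$ and exploiting the classical value $\sl(\widetilde K)=-1$ for a Hopf fiber in $(S^3,\xi_{\rm std})$. For the tangency assertion, a direct calculation shows that the Reeb vector field of $(S^3,\lambda_0)$ is the Hopf generator $R=2(x_1\partial_{y_1}-y_1\partial_{x_1}+x_2\partial_{y_2}-y_2\partial_{x_2})$, which along $\widetilde K=\{(e^{i\theta},0)\}$ reduces to $2\partial_\theta$ and is therefore tangent to $\widetilde K$. Since $\lambda_0$ is $\Z_p$-invariant, $R$ descends to the Reeb vector field of $\lambda_0$ on $L(p,q)$ and remains tangent to $K=\Pi(\widetilde K)$.

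For the self-linking I plan to apply Lemma~\ref{lemma_self_link_props}. First I construct an explicit $p$-disk: let $\widetilde u:\D\to S^3$ be defined by $\widetilde u(re^{i\theta}):=(\sin(r\pi/2)e^{i\theta},\cos(r\pi/2))$, a smooth embedded $1$-disk for $\widetilde K$, and set $u:=\Pi\circ\widetilde u$. On $\D\setminus\partial\D$ the $w$-coordinate of $\widetilde u(\zeta)$ is a positive real, whereas the nontrivial $\Z_p$-translates multiply $w$ by $e^{2\pi ikq/p}$, never a positive real for $k\not\equiv0\pmod p$ since $\gcd(q,p)=1$; hence $u$ is injective there. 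On $\partial\D$ the map covers $K$ as a $p$-fold cover, so $u$ is a $p$-disk for $K$. Next, take the nowhere-zero section $\widetilde Z(\zeta):=(-\bar w,\bar z)|_{\widetilde u(\zeta)}$ of $\widetilde u^*\xi_{S^3}$; it lies in $\xi_{S^3}|_{(z,w)}=\{(a,b):\bar z a+\bar w b=0\}$ since $\bar z\cdot(-\bar w)+\bar w\cdot\bar z=0$. Along $\partial\D$ this restricts to $\widetilde Z(e^{i\theta})=(0,e^{-i\theta})$, while $\widetilde N(\theta):=\partial_r\widetilde u|_{r=1}=(0,-\pi/2)$ is a nowhere-zero section of $\widetilde u^*\xi_{S^3}|_{\partial\D}$ tangent to $\widetilde u$. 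Hence $\wind(\widetilde Z|_{\partial\D},\widetilde N)=-1$, which via Lemma~\ref{lemma_self_link_props} is consistent with the classical value $\sl(\widetilde K,\widetilde u)=1\cdot(-1)=-1$.

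Finally I transfer the computation downstairs. Since $d\Pi$ is a fiberwise isomorphism $\xi_{S^3}|_{\widetilde u(\zeta)}\to\xi_{L(p,q)}|_{u(\zeta)}$ at every $\zeta\in\D$ (no $\Z_p$-equivariance of sections is required, as $d\Pi$ is being applied only along the image of $\widetilde u$), the pushforwards $Z:=d\Pi\circ\widetilde Z$ and $N:=d\Pi\circ\widetilde N$ are nowhere-zero sections of $u^*\xi_{L(p,q)}$ and $u^*\xi_{L(p,q)}|_{\partial\D}$ respectively, with $N$ tangent to $u$. Relative winding is preserved under a fiberwise bundle isomorphism, so $\wind(Z|_{\partial\D},N)=-1$, and Lemma~\ref{lemma_self_link_props} yields $\sl(K,u)=p\cdot(-1)=-p$. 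The main conceptual point, and the only step where bookkeeping is slightly delicate, is that the factor $p$ in Lemma~\ref{lemma_self_link_props} precisely encodes the $p$-fold covering $\widetilde K\to K$, converting the upstairs self-linking $-1$ into the downstairs value $-p$.
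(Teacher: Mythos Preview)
Your proof is correct and follows essentially the same approach as the paper: you build the $p$-disk $u=\Pi\circ\widetilde u$ from the standard embedded disk for the Hopf fiber, push forward the global section $(-\bar w,\bar z)$ of $\xi_{\rm std}$ on $S^3$ via $d\Pi$, compute the relative winding against the radial direction to get $-1$, and conclude via Lemma~\ref{lemma_self_link_props}. Your argument is slightly more detailed than the paper's in two places---you verify explicitly that $u$ is injective on the open disk (using that the $w$-coordinate is a positive real) and you spell out the Reeb tangency computation---but the underlying strategy and the key computation are the same.
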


\begin{proof}
We begin by introducing some notation that will allow us to explicitly parameterize a $p$-disk for $K$.   Define the map $P : \C^2 \setminus 0 \to S^3$ by
\begin{equation}
(z,w) \mapsto \frac{(z,w)}{\sqrt{|z|^2 + |w|^2}}
\end{equation}
and once again denote the quotient projection by $\Pi:S^3 \to L(p,q).$

One sees easily that $\widetilde{K}$ is unknotted and has self-linking number $-1$ since it bounds the embedded disk $D_1=\overline{P(\C\times\{1\})}$.  The disk $D_1$ is the image of the map $\widetilde{u}$ defined by
\[
re^{i\theta} \in \D \mapsto (f(r)e^{i\theta},\sqrt{1-f(r)^2}) \in S^3
\]
where $f:[0,1]\to\R$ is smooth, $f'>0$, $f(r)=r$ near $0$, $f(r) = \cos \frac{\pi}{2}(1-r)$ near $1$. Away from its boundary, $D_1$ is transverse to the Reeb vector field of $\lambda_0$, and $\widetilde{u}$ is a disk-like global surface of section with respect to the standard Reeb flow on $S^3$.

We construct a $p$-disk for $K$ whose image is $\Pi(D_1\setminus\partial D_1)$. We can define such a map $u : \D \to L(p,q)$ by
\[
 re^{i\theta} \mapsto \Pi(f(r)e^{i\theta},\sqrt{1-f(r)^2}).
\]
 Orienting $K$ by the Reeb vector field $R_0$ of $\lambda_0$ on $L(p,q)$, this $p$-disk becomes oriented as in Definition~\ref{p_unknot_def}. Moreover, $u$ is a disk-like global surface of section of order $p$ for the flow of $R_0$.

To compute the self-linking number of $K$, note that 
\[
(z,w) \in S^3 \mapsto W(z,w) := (-\bar w,\bar z) \in \xi_{\rm std}|_{(z,w)}
\]
is a global non-vanishing section of the standard contact structure of $S^3$. Since $\Pi \circ \widetilde u = u$ we have that $z\in \D \mapsto X(z) := d\Pi \cdot W(\widetilde u(z))$ is a smooth non-vanishing section of $u^*\xi_{\rm std}$. We have
\[
\begin{aligned}
& \wind(t \in \R/\Z \mapsto X(e^{i2\pi t}), t \in \R/\Z \mapsto \partial_ru(e^{i2\pi t})) \\
& = \wind(t \in \R/\Z \mapsto e^{-i2\pi t},\ \text{constant vector}) = -1.
\end{aligned}
\]
By Lemma~\ref{lemma_self_link_props} we obtain $\sl(K_1,u) =   {\frac{-1}{p}}$. 
\end{proof}

\begin{remark}
Reversing the roles of the two factors of $\mathbb{C}^2$, let $K'$ denote the knot $P(S^3\cap(\{0\}\times \mathbb{C}))$. The reader may similarly compute $ \sl(K') =   {\frac{-1}{p}}$.
\end{remark}

\section{Bubbling-off analysis}\label{section_bubb_off_analysis}

In this section $(M,\xi)$ is a closed co-oriented contact $3$-manifold such that $c_1(\xi)$ vanishes on $\pi_2(M)$. We fix a nondegenerate contact form $\lambda$ which defines $\xi$ and induces the given co-orientation. We will call a number $T>0$ a period if there exists a $T$-periodic closed Reeb orbit.

\subsection{Germinating sequences}

Fix $C>0$ and choose a number $\sigma(C)$ such that
\begin{equation}\label{eqsigC} 
0< \sigma(C) < \inf \{T',\ |T'-T''|\} 
\end{equation} 
where the infimum is taken over all periods $T',T''$ satisfying $\max\{T',T''\}\leq C$ and $T' \neq T''$. The choice of $\sigma(C)$ is always possible since $\lambda$ is assumed to be nondegenerate. Throughout we fix $J \in \J_+(\xi)$ arbitrarily and consider sequences of $\jtil$-holomorphic maps $$ \vtil_n=(b_n,v_n):B_{R_n}(0) \subset \C \to \R \times M $$ with $R_n \to \infty$, satisfying 
\begin{equation}\label{ineq1} 
E(\vtil_n)\leq C,   \hspace{1cm}  \int_{B_{R_n}(0) \setminus \D} v_n^* d\lambda \leq \sigma(C),
\end{equation} 
for all  $n$, and 
\begin{equation}\label{ineq2} 
\{b_n(2)\} \mbox{ is uniformly bounded.} 
\end{equation}
Such a sequence $\vtil_n$ of $\jtil$-holomorphic maps satisfying \eqref{ineq1} and \eqref{ineq2} will be referred to as a {\bf germinating sequence}.

\begin{proposition}\label{proplimit}
Let $\vtil_n$ be a germinating sequence. Then there exists a finite set $\Gamma \subset \D$, a $\jtil$-holomorphic map $\vtil=(b,v):\C \setminus \Gamma  \to \R \times M$ and a subsequence of $\vtil_n$, still denoted by $\vtil_n$, such that $\vtil_n \to \vtil$ in $C^\infty_{\rm loc}(\C \setminus \Gamma, \R \times M)$. Moreover, $E(\vtil) \leq C$. \end{proposition}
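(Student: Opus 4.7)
The plan is to run a standard bubbling-off analysis for $\jtil$-holomorphic curves in symplectizations, exploiting the choice~\eqref{eqsigC} of $\sigma(C)$ to confine the bubbling set $\Gamma$ to $\D$.

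First I would analyze gradient blow-up of $\vtil_n=(b_n,v_n)$. Measuring $|d\vtil_n|$ with respect to a fixed $\R$-invariant Riemannian metric on $\R\times M$, let $\Gamma_0\subset\C$ denote the set of points $z_\infty$ for which there exists $z_n\to z_\infty$ with $|d\vtil_n(z_n)|\to\infty$. At any such $z_\infty$, a Hofer-style rescaling (shift $z_n$ slightly via Ekeland--Hofer's lemma so that $|d\vtil_n|$ is controlled by $2|d\vtil_n(z_n)|$ on a neighborhood with radius going to infinity after rescaling, translate in the $\R$-direction so the value at the origin is $0$, and rescale the domain by $|d\vtil_n(z_n)|$) yields, after extracting a subsequence, a non-constant $\jtil$-holomorphic bubble $\wtil:\C\to\R\times M$ with $E(\wtil)\leq C$. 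The puncture at infinity is non-removable, since otherwise $\wtil$ would extend to a $\jtil$-holomorphic sphere in the symplectization and would be forced to be constant by exactness of $d(e^a\lambda)$. By the discussion in \S~\ref{sssec_fe_surfaces} the puncture must be positive, so Theorem~\ref{thm_93} associates to it a closed Reeb orbit of period $T\leq C$, and $\int_\C \wtil^* d\lambda = T$.

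Next I would localize $\Gamma_0$ inside $\D$. Transferring the rescaled integrals back, a standard $\epsilon$-regularity argument gives, for every open neighborhood $U$ of $z_\infty$,
\[
\liminf_{n\to\infty}\int_U v_n^*d\lambda\geq \int_\C \wtil^*d\lambda = T,
\]
and by~\eqref{eqsigC} the period $T$ strictly exceeds $\sigma(C)$. If $z_\infty$ were to lie in $\C\setminus\overline\D$, one could pick $U$ compactly contained in $\C\setminus\overline\D$; since $R_n\to\infty$, eventually $U\subset B_{R_n}(0)\setminus\D$, contradicting $\int_{B_{R_n}(0)\setminus\D}v_n^*d\lambda\leq\sigma(C)$. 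Hence $\Gamma_0\subset\D$. The same inequality shows that distinct points of $\Gamma_0$ absorb disjoint chunks of $d\lambda$-energy of size at least $\sigma(C)$, so $\#\Gamma_0\leq C/\sigma(C)<\infty$. Set $\Gamma=\Gamma_0$.

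Finally I would extract the limit. On every compact subset $K\subset\C\setminus\Gamma$ one has uniform gradient bounds on $|d\vtil_n|$ for $n$ large. The $v_n$-components are automatically $C^0$-bounded since $M$ is compact; the $b_n$-components are $C^0$-bounded on $K$ by integrating $|db_n|$ along paths inside a connected compact set containing both $K$ and the point $2$, starting from the value $b_n(2)$ which is bounded by~\eqref{ineq2}. Elliptic bootstrapping applied to the Cauchy--Riemann equation upgrades this to uniform $C^\infty$-bounds on $K$, and an Ascoli--Arzel\`a diagonal argument produces a subsequence converging in $C^\infty_{\rm loc}(\C\setminus\Gamma,\R\times M)$ to a $\jtil$-holomorphic map $\vtil=(b,v)$. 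Lower semicontinuity of the Hofer energy (Fatou applied to $\vtil_n^*d(\phi\lambda)$ for each $\phi\in\Lambda$) then gives $E(\vtil)\leq C$. The main obstacle is the localization of $\Gamma$ to $\D$, which hinges on comparing the $d\lambda$-energy captured by each putative bubble against the small annular budget $\sigma(C)$, using that~\eqref{eqsigC} makes every period $\leq C$ strictly exceed $\sigma(C)$.
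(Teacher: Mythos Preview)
Your argument is correct and follows essentially the same approach as the paper: both use Hofer's rescaling to show each blow-up point absorbs at least a period's worth of $d\lambda$-energy, combine this with the annular budget $\sigma(C)$ from~\eqref{ineq1} to confine $\Gamma$ to $\D$ and bound its cardinality, and then extract the limit via elliptic bootstrapping and Fatou. The only organizational difference is that the paper proceeds iteratively---picking one blow-up point, passing to a subsequence, and repeating---whereas you treat all of $\Gamma_0$ at once; your version therefore implicitly requires a diagonal-subsequence step to ensure the energy-concentration lower bounds hold simultaneously at several points, but this is routine.
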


\begin{proof}
Let $\Gamma_0\subset \C$ be the set of points $z\in \C$ such that there exists a subsequence $\vtil_{n_j}$ and points $\zeta_j \in B_{R_{n_j}}(0)$ satisfying $\zeta_j \to z$ and $|d \vtil_{n_j}(\zeta_j)| \to \infty$ as $j\to\infty$.  If $\Gamma_0 = \emptyset$ then from usual elliptic estimates  {, see~\cite[chapter~4]{mcdsal},} and~\eqref{ineq2} we get $C^\infty_{\text{loc}}$-bounds for $\vtil_n$ and find a $\jtil$-holomorphic map $\vtil:\C \to \R \times M$ so that, up to a subsequence, $\vtil_n \to \vtil$ in $C^\infty_{\text{loc}}(\C,\R \times M)$. In this case $\Gamma = \emptyset$.

By results from~\cite{93}, if $\Gamma_0 \neq \emptyset$ and $z_0\in \Gamma_0$ then we find a period $T_0>0$ and sequences $r_j \to 0^+$, $n_j\to\infty$ such that $$ \int_{B_{r_j}(z_0)} v_{n_j}^* d \lambda \to T_0 \ \mbox{ as } \ j\to \infty. $$ We consider $\vtil_{n_j}$ as the new sequence $\vtil_n$. Now let $\Gamma_1\subset \C \setminus \{z_0\}$ be the set of points $z_1 \neq z_0$ so that there exists a subsequence $\vtil_{n_j}$ and points $\zeta_j \in B_{R_{n_j}}(0)$ satisfying $\zeta_j \to z_1$ and $|d \vtil_{n_j}(\zeta_j)| \to \infty$. As before, if $\Gamma_1 = \emptyset$ then we get $C^\infty_{\text{loc}}$-bounds in $\C \setminus \{z_0\}$ and find a $\jtil$-holomorphic map $\vtil:\C \setminus \{z_0\} \to \R \times M$ so that, up to extraction of a subsequence, $\vtil_n \to \vtil$ in $C^\infty_{\text{loc}}(\C \setminus \{z_0\}, \R \times M)$. Then we set $\Gamma =\Gamma_0=\{z_0\}$. If $\Gamma_1 \neq \emptyset$ and $z_1 \in\Gamma_1$ we find a period $T_1>0$ and sequences $r_j \to 0^+$, $n_j\to\infty$ such that $$ \int_{B_{r_j}(z_1)} v_{n_j}^* d \lambda \to T_1 \ \mbox{ as } \ j\to \infty. $$ Considering $\vtil_{n_j}$ as the new sequence $\vtil_n$, we define $\Gamma_2 \subset \C \setminus \{z_0,z_1\}$ in the same way and proceed as before. 

Since each $z_i\in \Gamma_i\setminus\Gamma_{i-1}$ takes away a period from the $d\lambda$-energy, we get from~\eqref{ineq1} that $\Gamma_i \subset \D$, $\forall i$, and there exists $i_0$ such that $\Gamma_{i_0} \neq \emptyset$ and $\Gamma_{i_0+1} = \emptyset$. We end up with a finite set  $\Gamma = \{z_0,\ldots, z_{i_0}\}\subset \D$ and a $\jtil$-holomorphic map $\vtil: \C \setminus \Gamma \to \R \times M$ such that, up to a subsequence, $\vtil_n \to \vtil$ in $C^\infty_{\text{loc}}(\C \setminus \Gamma, \R \times M)$.

The inequality $E(\vtil) \leq C$ follows from~\eqref{ineq1} and Fatou's Lemma.
\end{proof}

\begin{definition}
A $\jtil$-holomorphic map $\vtil:\C \setminus \Gamma \to \R \times M$ as in Proposition \ref{proplimit} is called a {\bf limit of the germinating sequence $\vtil_n$}.
\end{definition}

  {It follows from the asymptotic behavior at a negative puncture} that if $\Gamma \neq \emptyset$ then $\vtil$ is non-constant. In this case all of its punctures $z\in \Gamma$ are negative and $\infty$ is a positive puncture.

\subsection{Soft rescaling near a negative puncture}\label{secsoft}

Let $\vtil=(b,v):\C \setminus \Gamma \to \R \times M$ be a non-constant limit of the germinating sequence $\vtil_n=(b_n,v_n)$ as in Proposition~\ref{proplimit}. Assume that $\Gamma \neq \emptyset$ and let $z \in \Gamma$. We already know that $z$ is necessarily a negative puncture of $\vtil$. We define its mass by 
\begin{equation}\label{massa} 
m(z)=\lim_{\epsilon \to 0^+} \int_{\partial B_\epsilon(z)} v^* \lambda=T_z>\sigma(C)>0,
\end{equation} 
where $\partial B_\epsilon(z)$ is oriented counter-clockwise and $T_z$ is the period of the asymptotic limit $P_z$ of $\vtil$ at $z$. Fix $\epsilon>0$ small enough so that 
\begin{equation}\label{sig2} 
0< \int_{\partial B_\epsilon(z)} v^* \lambda- m(z)\leq \frac{\sigma(C)}{2}.
\end{equation} 
Next we choose sequences $z_n \in B_\epsilon(z)$ and $0<\delta_n<\epsilon$, $\forall n$, so that 
\begin{eqnarray} 
\label{eqcond1} 
b_n(z_n) \leq b_n(\zeta),\ \forall \zeta \in B_\epsilon(z) \mbox{ and } \\ 
\label{eqcond2} 
\int_{B_\epsilon(z) \setminus B_{\delta_n}(z_n)} v_n^* d \lambda = \sigma(C). \end{eqnarray}
Since $z$ is a negative puncture,  \eqref{eqcond1} implies that $z_n \to z$. Hence the existence of $\delta_n$ as in~\eqref{eqcond2} follows from~\eqref{massa}. We claim that $\delta_n \to 0$. Otherwise, up to a subsequence, we may assume $\delta_n \to \delta' >0$ and choose $0<\epsilon'<\delta'$. From~\eqref{sig2}, we get the contradiction  $$\frac{\sigma(C)}{2} \geq \lim_{n \to \infty} \int_{B_\epsilon(z) \setminus B_{\epsilon'}(z)} v_n^* d\lambda \geq \lim_{n \to \infty} \int_{B_\epsilon(z) \setminus B_{\delta_n}(z_n)}
v_n^* d\lambda = \sigma(C).$$

Now take any sequence $R_n \to +\infty$ satisfying 
\begin{equation}\label{soft_sequences}
\delta_nR_n < \epsilon/2
\end{equation}
and define the sequence of $\jtil$-holomorphic maps 
\begin{equation}\label{eqwn} 
\wtil_n(\zeta) = (b_n(z_n + \delta_n \zeta)-b_n(z_n + 2 \delta_n), v_n(z_n + \delta_n \zeta)), \ \forall \zeta \in B_{R_n}(0).
\end{equation}

\begin{proposition}\label{proplimit2} 
The sequence $\wtil_n = (d_n,w_n)$ in~\eqref{eqwn} is a germinating sequence. Moreover, if $\wtil = (d,w)$ is a limit of $\wtil_n$ then $\wtil$ is non-constant, $E(\wtil) \leq C$ and the asymptotic limit $P_\infty$ of $\wtil$ at $\infty$ coincides with the asymptotic limit $P_z$ of $\vtil$ at the negative puncture $z\in \Gamma$. 
\end{proposition}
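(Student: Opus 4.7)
My plan unfolds in three stages: verify that $\wtil_n$ is a germinating sequence so that Proposition~\ref{proplimit} produces a limit, show non-constancy of any limit $\wtil$ while pinning down the period of its asymptotic orbit at $\infty$, and finally upgrade this to equality of orbits, which I expect to be the main obstacle.

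Stage 1 is largely routine. Hofer energy is invariant under $\R$-translations and conformal reparametrizations, so $E(\wtil_n)=E(\vtil_n|_{B_{\delta_n R_n}(z_n)})\leq C$. The change of variables $\zeta\mapsto z_n+\delta_n\zeta$, combined with \eqref{soft_sequences} and $z_n\to z$ (so that $B_{\delta_n R_n}(z_n)\subset B_\epsilon(z)$ for large $n$) and \eqref{eqcond2}, gives
\[
\int_{B_{R_n}(0)\setminus \D} w_n^*d\lambda=\int_{B_{\delta_n R_n}(z_n)\setminus B_{\delta_n}(z_n)}v_n^*d\lambda\leq\int_{B_\epsilon(z)\setminus B_{\delta_n}(z_n)}v_n^*d\lambda=\sigma(C),
\]
and $d_n(2)=0$ by construction. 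Proposition~\ref{proplimit} then yields a limit $\wtil=(d,w)$ on $\C\setminus\Gamma'$ with $E(\wtil)\leq C$ by Fatou.

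For Stage 2, fix $R\geq 1$ with $\{|\zeta|=R\}\cap\Gamma'=\emptyset$ and use $C^\infty_{\rm loc}$-convergence, Stokes' theorem, $v_n^*d\lambda\geq 0$, \eqref{eqcond2} and \eqref{sig2} to obtain
\[
\int_{|\zeta|=R}w^*\lambda=\lim_n\int_{B_{\delta_n R}(z_n)}v_n^*d\lambda\geq\lim_n\int_{B_{\delta_n}(z_n)}v_n^*d\lambda=\int_{\partial B_\epsilon(z)}v^*\lambda-\sigma(C)>m(z)-\sigma(C)>0,
\]
the last inequality using $\sigma(C)<m(z)=T_z$ from \eqref{eqsigC}. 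This makes $\wtil$ non-constant, forcing $\infty$ to be the unique positive puncture with asymptotic orbit $P_\infty=(x_\infty,T_\infty)$. Set $a(R):=\int_{|\zeta|=R}w^*\lambda$; this is non-decreasing in $R$ (since $w^*d\lambda\geq 0$) and $a(R)\to T_\infty$ as $R\to\infty$. The lower bound above gives $T_\infty>m(z)-\sigma(C)$. For the matching upper bound, fix $\rho>0$; for $n$ large enough $B_{\delta_n R}(z_n)\subset B_\rho(z)$, hence
\[
\int_{B_{\delta_n R}(z_n)}v_n^*d\lambda\leq\int_{B_\rho(z)}v_n^*d\lambda=\int_{\partial B_\rho(z)}v_n^*\lambda\to\int_{\partial B_\rho(z)}v^*\lambda,
\]
and letting $\rho\to 0^+$ yields $T_\infty\leq m(z)$. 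Since $T_\infty\leq E(\wtil)\leq C$ is a period and $\sigma(C)$ separates distinct periods $\leq C$ by \eqref{eqsigC}, we conclude $T_\infty=T_z$.

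The remaining hard step is to upgrade equality of periods to equality of orbits. I would examine the annuli $A_{n,R_0,\epsilon}:=B_\epsilon(z)\setminus B_{\delta_n R_0}(z_n)$; by Stokes,
\[
\int_{A_{n,R_0,\epsilon}}v_n^*d\lambda=\int_{\partial B_\epsilon(z)}v_n^*\lambda-\int_{\partial B_{\delta_n R_0}(z_n)}v_n^*\lambda\longrightarrow\int_{\partial B_\epsilon(z)}v^*\lambda-a(R_0)
\]
as $n\to\infty$, a quantity tending to $0$ as $\epsilon\to 0^+$ and $R_0\to+\infty$ since both terms approach $T_z$. The outer boundary loops $C^\infty$-approximate parametrizations of $x_z$ by the asymptotic behavior of $\vtil$ at the negative puncture $z$ (Theorem~\ref{thm_93}), while the inner boundary loops coincide with $w_n(R_0 e^{i2\pi t})$ and therefore $C^\infty$-approximate parametrizations of $x_\infty$. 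A standard small-energy-annulus argument in symplectic field theory then forces these two loops to parametrize the same closed Reeb orbit in $M$, giving $P_\infty=P_z$.
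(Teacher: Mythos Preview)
Your proposal is correct and follows essentially the same strategy as the paper. The paper makes your ``standard small-energy-annulus argument'' precise via Lemma~\ref{lemlong} (quoted from \cite{fols}), applied to the cylinders $\vtil_n|_{B_{\epsilon_0}(z_n)\setminus B_{R_0\delta_n}(z_n)}$: their $d\lambda$-area is $\leq\sigma(C)$ by \eqref{eqcond2}, the lower $\lambda$-bound \eqref{eqe} holds, and the lemma forces all intermediate loops into a fixed loop-space neighborhood $\W$, so the components $\W_z$ and $\W_\infty$ containing the boundary loops must coincide. Your intermediate step pinning down $T_\infty=T_z$ via the two-sided estimate is a pleasant addition but is not needed once Lemma~\ref{lemlong} is invoked directly.
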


\begin{proof}
From~\eqref{eqcond2} and~\eqref{soft_sequences} we see that $$ \int_{B_{R_n}(0) \setminus \D} w_n^* d\lambda \leq \sigma(C), \forall n.$$ Moreover $E(\widetilde w_n) \leq E(\vtil_n) \leq C$ by construction. Since $w_n(2)\in \{0\} \times M$, $\tilde w_n$ is a germinating sequence.

Let $\widetilde w:\C \setminus \Gamma' \to \R \times M$ be a limit of $\widetilde w_n$ as in Proposition \ref{proplimit}, where $\Gamma' \subset \D$ is finite. If $\Gamma' \neq \emptyset$ then $\widetilde w$ is non-constant. If $\Gamma' = \emptyset$ we see from \eqref{massa} and \eqref{eqcond2} that $$\int_\D w^* d\lambda = \lim_{n \to \infty} \int_{B_\epsilon(z)} v_n^* d\lambda - \int_{B_\epsilon(z) \setminus B_{\delta_n}(z_n)} v_n^* d\lambda \geq T_z - \sigma(C) >0,$$ thus $\widetilde w$ is non-constant as well. From Fatou's Lemma we get $0<E(\wtil)\leq C$. This also implies that the periods of the asymptotic limits of $\widetilde w$ are bounded by $C$.

Identifying $S^1 = \R/\Z$, let $\W \subset C^\infty(S^1,M)$ be an open neighborhood of the set of periodic orbits $P=(x,T)$, with $T\leq C$, viewed as maps $x_T:S^1 \to M$ defined as in~\eqref{map_x_T}. We assume that $\W$ is $S^1$-invariant, meaning that $y_c( \cdot):=y( \cdot +c) \in \W \Leftrightarrow y(\cdot) \in \W$, $\forall c\in S^1$. We choose $\W$ small enough so that each of its connected components contains at most one  periodic orbit modulo $S^1$-reparametrizations. This is always possible since $\lambda$ is non-degenerate. Let $\W_\infty,\W_z\subset \W$ be the components containing $P_\infty,P_z$, respectively.

Since $\vtil_n \to \vtil$ we can choose $0<\epsilon_0<\epsilon$ small enough so that if $0<\rho\leq \epsilon_0$ is fixed then
\begin{equation}\label{eqviz}
\text{the loop} \ t\in\R/\Z \mapsto v_n(z_n+\rho e^{i2\pi t}) \ \text{belongs to} \ \W_z, \ \text{for} \ n \ \text{large.}
\end{equation}
Since $\wtil_n \to \wtil$, we can choose $R_0>1$ large enough so that if $R\geq R_0$ fixed then
\begin{equation}\label{eqR0}
\text{the loop} \ t\in\R/\Z \mapsto v_n(z_n+R\delta_n e^{i2\pi t}) \ \text{belongs to} \ \W_\infty, \ \text{for} \ n \ \text{large.}
\end{equation}
By~\eqref{sig2} and~\eqref{eqcond2} we can also choose $e>0$ small enough so that
\begin{equation}\label{eqe}
0<e < \int_{\partial B_{\delta_n R_0}(z_n)} v_n^* \lambda, \ \forall n.
\end{equation}

To finish the proof we still need the following lemma from~\cite{fols}.

\begin{lemma}\cite[Lemma 4.9]{fols}\label{lemlong} 
Given $C$, $\sigma(C)$, $e>0$ and $\W\subset C^\infty(S^1,M)$ as above,  there exists $h>0$ such that the following holds. If $\util=(a,u):[r,R]\times S^1 \to \R \times M$ is a $\jtil$-holomorphic cylinder satisfying $$ E(\util) \leq C, \hspace{0.5cm} \int_{[r,R] \times S^1} u^* d\lambda \leq \sigma(C) \hspace{0.3cm} \mbox{ and }\hspace{0.3cm}  \int_{\{r\} \times S^1} u^* \lambda \geq e, $$ then each loop $t\in S^1\mapsto u(s,t)$ is contained in $\W$ for all $s\in[r+h,R-h]$.
\end{lemma}

Consider for each $n$ the $\jtil$-holomorphic cylinder $$ (s,t)\in \left[ \frac{\ln R_0 \delta_n}{2\pi}, \frac{\ln \epsilon_0}{2\pi}  \right] \times S^1 \mapsto \widetilde C_n(s,t) = (c_n(s,t),C_n(s,t)) := \vtil_n(z_n + e^{2 \pi (s+i t)}). $$ Observe from \eqref{eqcond2} that \begin{equation}\label{hiplem}\int_{\left[ \frac{\ln R_0 \delta_n}{2\pi}, \frac{\ln \epsilon_0}{2\pi}  \right] \times S^1} C_n^* d\lambda \leq \sigma(C).\end{equation} Using \eqref{eqe} and \eqref{hiplem} we apply Lemma \ref{lemlong} to find $h>0$ so that the loop $t\in S^1\mapsto C_n(s,t)$ is contained in $\W$ for all $s\in[ (2\pi)^{-1}\ln R_0 \delta_n+h, (2\pi)^{-1}\ln \epsilon_0-h]$ and all $n$ large. Using \eqref{eqviz} and \eqref{eqR0}, we see that these loops are all contained in $\W_\infty$ and $\W_z$ for $n$ large. This implies that $\W_\infty =\W_z$ and therefore $P_\infty = P_z$. \end{proof}

\begin{remark}\label{remper}
An important feature of the limit $\wtil = (d,w):\C \setminus \Gamma' \to \R \times M$ in Proposition \ref{proplimit2} is that either 
\begin{itemize} 
\item[(i)] $\int_{\C \setminus \Gamma'} w^*d\lambda >0$ or 
\item[(ii)] $\int_{\C \setminus \Gamma'} w^*d\lambda =0$ and $\# \Gamma' \geq 2$. 
\end{itemize} 
  {In fact, $\Gamma'\neq\emptyset \Rightarrow 0\in\Gamma'$; this follows from~\eqref{eqcond1}-\eqref{eqwn}. Arguing by contradiction, assume that $\int_{\C \setminus \Gamma'} w^*d\lambda =0$ and $\# \Gamma' = 1$. Thus $\Gamma'=\{0\}$ and we can estimate
\begin{equation}
\begin{aligned}
m(z) &= T_z = \int_{\partial\D}w^*\lambda = \lim_{n\to\infty} \int_{\partial\D} w_n^*\lambda \\
&= \lim_{n\to\infty} \int_{\partial B_{\delta_n}(z_n)} v_n^*\lambda = \lim_{n\to\infty} \int_{\partial B_\epsilon(z)} v_n^*\lambda - \sigma(C) \\
&= \int_{\partial B_\epsilon(z)} v^*\lambda - \sigma(C) \leq m(z)-\frac{\sigma(C)}{2}
\end{aligned}
\end{equation} 
which is a contradiction; here we used~\eqref{sig2},~\eqref{eqcond2} and Proposition~\ref{proplimit2}.} It follows that in both cases (i) and (ii) the period $T'$ of the asymptotic limit at a negative puncture $z'\in \Gamma'$ is strictly less than the period $T_\infty$ of the asymptotic limit $P_\infty$ at $\infty$. In fact, from \eqref{eqsigC} one has the following lower bound on the difference of periods \begin{equation}\label{eqdifT} T' < T_\infty - \sigma(C). \end{equation} This inequality will be useful in the next section.
\end{remark}

\subsection{The bubbling-off tree}\label{secbubtree}

Now we describe the compactness properties of germinating sequences in the sense of Symplectic Field Theory (SFT)~\cite{sftcomp}.

\begin{definition}  
A   {{\it bubbling-off tree}} associated to a germinating sequence $\vtil_n=(b_n,v_n)$ is a finite set of finite energy spheres $\{\util_q: q \mbox{ is a vertex of } \T\}$, modelled on a finite tree $\T=(V,r,E)$, where $V$ is the set of vertices, $r$ is a special vertex called the   {{\it root}}, and $E$ is the set of edges, oriented from the root, satisfying the following requirements:
\begin{itemize}
\item[(i)] Each $\util_q$, $q\in V$, is a non-constant finite energy sphere with one positive puncture and a finite number of negative punctures. Moreover, if $q\neq r$ then $\util_q$ is not a trivial cylinder over a periodic orbit.
\item[(ii)] The edges issuing from a vertex $q\in V$ are in bijection with the negative punctures of $\util_q$. If $e$ is such an edge from $q$ to $q'$ and $z$ is the negative puncture corresponding to $e$ then the asymptotic limit $P_z$ of $\util_q$ at $z$ coincides with the asymptotic limit $P_\infty$ of $\util_{q'}$ at its positive puncture.
\item[(iii)] There exists a subsequence of $\vtil_n$, still denoted by $\vtil_n$, such that $\vtil_n \to \util_r$ in $C^\infty_{\text{loc}}$ away from the negative punctures of $\util_r$, and for each $r\neq q \in V$, we find sequences $\{z_n\} \subset \C$, $r_n \to 0^+$ and $\{c_n\} \subset \R$, such that $$ \widetilde U_n(z):=(b_n(z_n +r_nz)+c_n,v_n(z_n+r_n z)) \to \util_q \mbox{ in } C^\infty_{\text{loc}}. $$
\end{itemize}
\end{definition}

It follows from this definition that all asymptotic limits of all curves $\util_q$, $q\in V$, are contractible closed Reeb orbits.   {The following proposition is a particular and simpler case of the SFT Compactness Theorem~\cite{sftcomp}, we include a proof here for the sake of completeness.}

\begin{proposition}\label{propbubtree}
Let $\vtil_n$ be a germinating sequence with a non-constant limit $\vtil$. Then associated to $\vtil_n$ there exists a bubbling-off tree $\{\util_q: q \mbox{ is a vertex of }\T\},$ modelled on the tree $\T=(V,r,E)$, with $\util_r = \vtil$. Moreover, $E(\util_q)\leq \sup_n E(\vtil_n)$ for every vertex $q$.
\end{proposition}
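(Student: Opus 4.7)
My plan is to build the tree inductively, taking $\util_r := \vtil$ as the root and repeatedly applying the soft rescaling analysis of \S\ref{secsoft}. The root has a positive puncture at $\infty$ and finitely many negative punctures $\Gamma \subset \D$; for each $z \in \Gamma$ I would perform the soft rescaling~\eqref{eqwn} to produce a new germinating sequence $\wtil_n^z$, pass to its non-constant limit $\wtil^z$ via Proposition~\ref{proplimit2}, and attach $\wtil^z$ as a child vertex along the edge indexed by $z$. Proposition~\ref{proplimit2} guarantees that $\wtil^z$ has a unique positive puncture at $\infty$ whose asymptotic limit matches $P_z$, and the dichotomy of Remark~\ref{remper} rules out $\wtil^z$ being a trivial cylinder (either its $d\lambda$-energy is positive, or it carries at least two negative punctures). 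I then iterate the construction at each negative puncture of $\wtil^z$.

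The crucial point is proving termination, which I would derive from the quantitative inequality~\eqref{eqdifT}. At every non-root vertex $q$, the period $T_{q,i}^-$ at each negative puncture strictly satisfies $T_{q,i}^- < T_q^+ - \sigma(C)$, where $T_q^+$ is the period at the positive puncture of $\util_q$. Along any branch, descending from one non-root vertex to its child decreases the positive-puncture period by at least $\sigma(C) > 0$; since all periods are positive and bounded by $C$, the depth of the tree is bounded by a constant depending only on $C$ and $\sigma(C)$. Branching is likewise finite: Stokes' theorem yields $T_q^+ = \int u_q^* d\lambda + \sum_i T_{q,i}^-$ at each vertex, while nondegeneracy of $\lambda$ on the compact manifold $M$ makes the set of periods bounded by $C$ a finite set with positive minimum $T_{\min}$, so each vertex has at most $C/T_{\min}$ negative punctures. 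Hence the tree is finite.

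The subsequence of condition (iii) is arranged by a diagonal extraction: the construction visits only finitely many vertices, so finitely many successive subsequence extractions produce one subsequence of $\vtil_n$ along which every $\util_q$ is the required $C^\infty_{\rm loc}$-limit of suitably rescaled maps. The energy bound $E(\util_q) \leq \sup_n E(\vtil_n)$ at each vertex follows from Fatou's lemma, exactly as in the proofs of Propositions~\ref{proplimit} and~\ref{proplimit2}. The main obstacle I anticipate is precisely the termination step: without Remark~\ref{remper} one could in principle be trapped in infinite chains of trivial-cylinder rescalings with no strict period drop, so securing the case (i)/(ii) dichotomy at every soft rescaling is the decisive technical input.
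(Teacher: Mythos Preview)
Your proposal is correct and follows essentially the same approach as the paper: build the tree by taking $\vtil$ as the root, apply the soft rescaling of Proposition~\ref{proplimit2} at each negative puncture to create the children, invoke Remark~\ref{remper} to exclude trivial cylinders at non-root vertices, and use the strict period drop~\eqref{eqdifT} at non-root vertices to force finite depth. Your additional remarks on finite branching via Stokes' theorem and on the diagonal subsequence extraction are correct elaborations that the paper leaves implicit.
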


\begin{proof} 
Let $\vtil:\C \setminus \Gamma \to \R \times M$ be a non-constant limit of $\vtil_n$ as in Proposition~\ref{proplimit}. We can assume that $\vtil_n \to \vtil$ in $C^{\infty}_{\text{loc}}$ after selection of a subsequence. We start with a tree containing just the root $r$ and let $\util_r = \vtil$. If $\Gamma= \emptyset$ then we are done and $\T=(\{r\},r,\emptyset)$. Otherwise let $z\in \Gamma$ be a negative puncture of $\vtil$. We find a germinating sequence $\widetilde w_{z,n}$ near $z$, defined as in \eqref{eqwn}, such that, up to extraction of a subsequence, $\widetilde w_{z,n} \to \widetilde w_z$ in $C^\infty_{\text{loc}}$. See Proposition \ref{proplimit2}. Moreover, the asymptotic limit of $\vtil$ at $z$ coincides with the asymptotic limit of $\widetilde w_z$ at its positive puncture. Also, $\widetilde w_z$ is not a cylinder over a periodic orbit, see Remark \ref{remper}. Then we add a vertex $q_z$ to the tree and let $\util_{q_z} = \widetilde w_z$. An edge from $r$ to $z$ is also added to the tree. We do the same for all $z\in \Gamma$.

Now given $z\in \Gamma$ we take a negative puncture $z'\in \D$ of $\widetilde w_z$ and use the sequence $\widetilde w_{z,n}$ as above in order to define near $z'$ a new germinating sequence $\widetilde w_{z'z,n}$ via soft-rescalling as in the proof of Proposition~\ref{proplimit2}. Then after taking a subsequence we assume that $\widetilde w_{z'z,n} \to \widetilde w_{z'z}$ in $C^{\infty}_{\text{loc}}$. We add a new vertex $q_{z'}$ below $q_z$ corresponding to $z'$ and let $\util_{q_{z'}}=\widetilde w_{z'z}$. We also add an edge from $q_z$ to $q_{z'}$.  We do the same for all negative punctures of $\widetilde w_z$.

We keep doing this step by step for all negative punctures. As explained in Remark \ref{remper} this process has to terminate after a finite number of steps since the periods of the corresponding asymptotic limits strictly decrease when going down the tree. See \eqref{eqdifT}. We conclude that after taking a subsequence of $\vtil_n$ we end up with a finite set of finite energy spheres modelled on a tree $\T=(V,r,E)$, so that all the requirements (i), (ii) and (iii) above are satisfied, thus obtaining the desired bubbling-off tree associated to the germinating sequence $\vtil_n$.     \end{proof}

\begin{figure}
\includegraphics[width=250\unitlength]{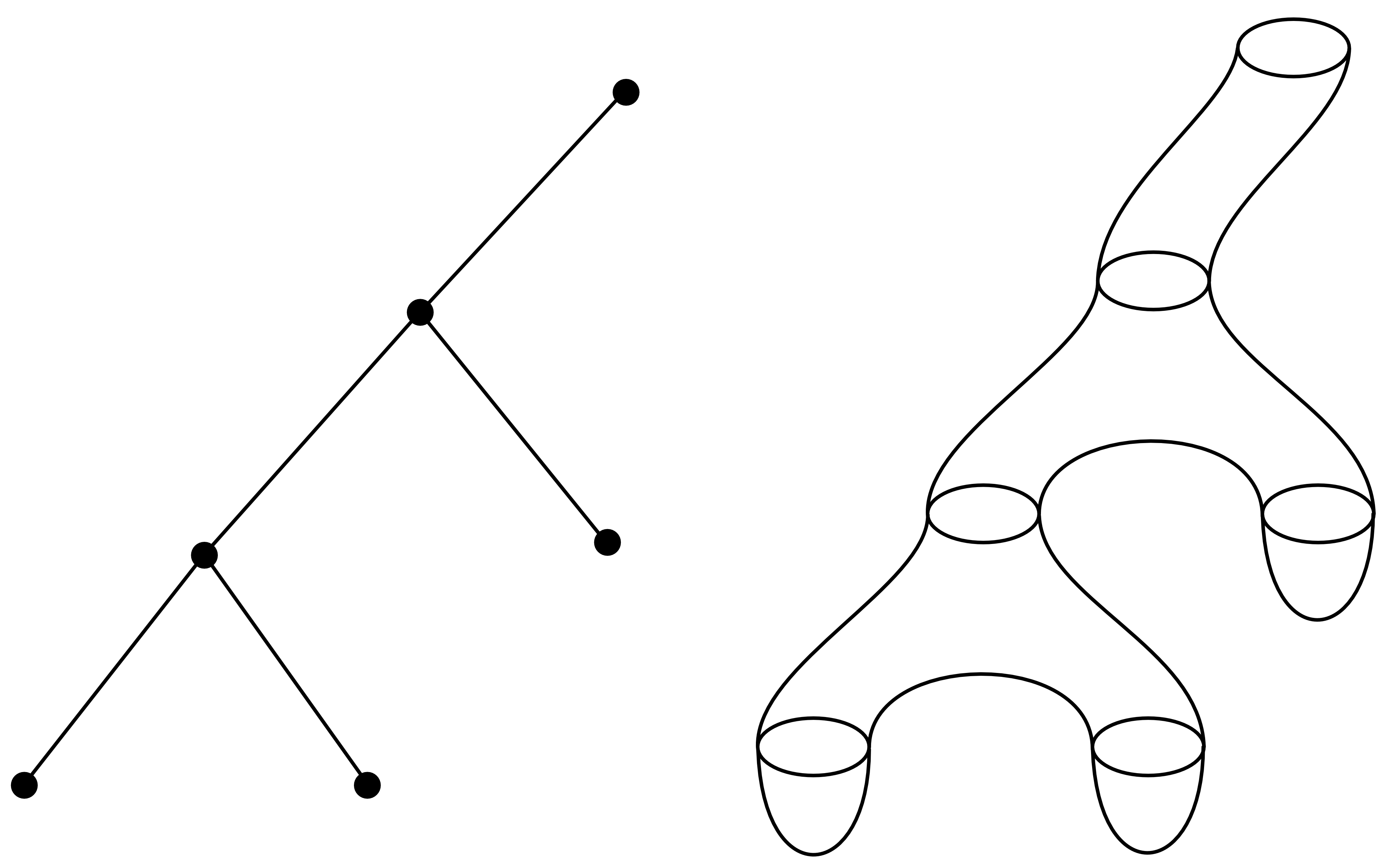}
\caption{A bubbling-off tree.}
\end{figure}

\subsection{Some index estimates}\label{sec_index_estimates}

Let $\vtil=(a,v):\C \setminus \Gamma \to \R \times M$ be a non-constant finite energy sphere, where $\Gamma$ is finite and  consists of negative punctures. Assume that for each $z\in \Gamma$ the asymptotic limit $P_z = (x_z,T_z)$  at $z$ is contractible. In particular, $P_\infty = (x_\infty, T_\infty)$, the asymptotic limit at the positive puncture $\infty$, is also contractible.

Let $u_z:\D \to M$ be a spanning disk for $P_z$ for $z\in\Gamma$, i.e., $u_z$ is continuous and $u_z(e^{2 \pi i t}) = x_z(tT_z)$, $\forall t\in S^1$.  A trivialization $\Phi_z: u_z ^* \xi \to \D \times \C$  induces a homotopy class $\beta_z$ of trivializations of $x_z(\cdot T_z)^*\xi \to S^1$. Since $c_1(\xi)$ vanishes on $\pi_2(M)$, $\beta_z$ does not depend on the choice of $u_z$.

Let $\bar v$ be a continuous extension of $v$ to the compactification of $\C \setminus \Gamma$ by adding copies of $S^1$ at its punctures in $\Gamma \cup \{\infty\}$. From the asymptotic behavior of $\vtil$ at the punctures, we can assume that $u_\infty := \bar v \cup_{z \in \Gamma} u_z$ is a spanning disk for $P_\infty$ and observe that a trivialization $\Phi_\infty: u_\infty^* \xi \to \D \times \C$ restricts to a trivialization of $x_z(\cdot T_z)^*\xi \to S^1$ in class $\beta_z$. Denote by $\beta_\infty$ to homotopy class of trivializations of $x_\infty(\cdot T_\infty)^* \xi \to S^1$ induced by $u_\infty$.

Let $A_{P_\infty},A_{P_z}$, $z\in \Gamma$, be the asymptotic operators associated to $P_\infty ,P_z,z\in \Gamma,$ defined in \S~\ref{sssec_asymp_ops}. Denote by
\begin{equation}\label{notation_wind_infty}
\wind_\infty(z):=\wind_\infty(\vtil,z,\sigma) \ \ \ \text{and} \ \ \ \wind_\infty(\infty):=\wind_\infty(\vtil,\infty,\sigma)
\end{equation}
the asymptotic winding numbers defined in \S~\ref{sec_alg_invs} where $\sigma$ is a non-trivial section of $u_\infty^*\xi$.   {These} numbers are well-defined if $\pi \circ dv$ does not vanish identically. We denote by $\mu_{CZ}(P_z)$, $z\in \Gamma$, $\mu_{CZ}(P_\infty)$ the Conley-Zehnder indices of $P_z$, $z\in \Gamma$, $P_\infty$ computed with respect to $\beta_z$, $z\in\Gamma$, $\beta_\infty$ respectively.

\begin{lemma}\label{lemindex}
Let $\widetilde v=(a,v):\C \setminus \Gamma \to \R \times M$ be a non-constant finite energy $\jtil$-holomorphic map as above.
\begin{itemize} 
\item[(i)] Assume that $\mu_{CZ}(P_\infty) \leq 1$. Then $\Gamma \neq \emptyset$ and there exists $z\in \Gamma$ so that $\mu_{CZ}(P_z) \leq 1$. 
\item[(ii)] Assume that $\mu_{CZ}(P_\infty) = 2$ and $\mu_{CZ}(P_z) \geq 2$, $\forall z\in \Gamma$. Then $\mu_{CZ}(P_z)=2$, $\forall z\in\Gamma$.
\item[(iii)] Assume that $\int_{\C \setminus \Gamma} v^*d\lambda >0$, $\wind_{\infty}(\infty) \leq 1$ and  $\mu_{CZ}(P_z)\geq 2$, $\forall z\in \Gamma$. Then $\mu_{CZ}(P_z)=2$, $\forall z\in \Gamma$. 
\end{itemize} 
\end{lemma}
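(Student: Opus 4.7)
The plan is to derive a single master inequality from Lemma~\ref{lemma_wind_relation} and the asymptotic winding bounds of Theorem~\ref{thm_precise_asymptotics}, then dispatch all three parts by counting. Provided $\pi\circ dv\not\equiv 0$, so that $\wind_\pi(\vtil)\ge 0$ is defined, specializing Lemma~\ref{lemma_wind_relation} to $S=S^2$ with total puncture set $\Gamma\cup\{\infty\}$ and separating the positive puncture from the negative ones gives
\[
0\le\wind_\pi(\vtil)=\wind_\infty(\infty)-\sum_{z\in\Gamma}\wind_\infty(z)+\#\Gamma-1.
\]
Since the asymptotic eigenvalue is strictly negative at the positive puncture and strictly positive at each negative puncture, Theorem~\ref{thm_precise_asymptotics} yields $\wind_\infty(\infty)\le\wind^{<0}(A_{P_\infty},\beta_\infty)$ and $\wind_\infty(z)\ge\wind^{\geq0}(A_{P_z},\beta_z)$ for $z\in\Gamma$. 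Substituting produces the master inequality
\[
(\star)\quad\sum_{z\in\Gamma}\wind^{\geq0}(A_{P_z},\beta_z)\le\wind^{<0}(A_{P_\infty},\beta_\infty)+\#\Gamma-1.
\]
From $\mu_{CZ}=2\wind^{<0}+p$ with $p\in\{0,1\}$ and $\wind^{\geq0}=\wind^{<0}+p$ one reads off the dictionary $\mu_{CZ}(P)\le 1\iff\wind^{<0}(A_P,\beta)\le 0$ and $\mu_{CZ}(P)\ge 2\Rightarrow\wind^{\geq0}(A_P,\beta)\ge 1$.

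Each part is then a short counting argument applied to $(\star)$. For (i), $\mu_{CZ}(P_\infty)\le 1$ makes the right-hand side of $(\star)$ at most $\#\Gamma-1$; if $\Gamma$ were empty this would read $0\le -1$, and if every $z\in\Gamma$ satisfied $\mu_{CZ}(P_z)\ge 2$ the left-hand side would be at least $\#\Gamma$, both absurd, so $\Gamma\ne\emptyset$ and some $z\in\Gamma$ satisfies $\mu_{CZ}(P_z)\le 1$. For (ii), $\mu_{CZ}(P_\infty)=2$ forces $\wind^{<0}(A_{P_\infty},\beta_\infty)=1$, so $(\star)$ becomes $\sum_{z\in\Gamma}\wind^{\geq0}(A_{P_z},\beta_z)\le\#\Gamma$; combined with $\wind^{\geq0}(A_{P_z},\beta_z)\ge 1$ for every $z\in\Gamma$ (coming from $\mu_{CZ}(P_z)\ge 2$), every term must equal $1$, and $\wind^{\geq0}(A_{P_z},\beta_z)=1$ together with $\mu_{CZ}(P_z)\ge 2$ forces $\mu_{CZ}(P_z)=2$. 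For (iii), use $\wind_\infty(\infty)\le 1$ directly in the equation for $\wind_\pi(\vtil)$ before passing to $\wind^{\geq0}$, obtaining $\sum_{z\in\Gamma}\wind^{\geq0}(A_{P_z},\beta_z)\le\#\Gamma$, and conclude as in (ii).

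The main obstacle I anticipate is the degenerate branch $\pi\circ dv\equiv 0$, in which $\wind_\pi$ is not defined and the above reasoning does not apply. Case (iii) excludes this immediately because $\int v^*d\lambda>0$. In cases (i) and (ii) one has $v$ factoring through a single prime closed Reeb orbit $P_0$, and identifying the resulting map from $S^2$ to the target $\R\times\gamma\simeq\C^*$ with a rational function on $\C P^1$ having pole of order $k$ at $\infty$ (so $P_\infty=P_0^k$) and zero of order $m_z$ at each $z\in\Gamma$ (so $P_z=P_0^{m_z}$), the fundamental theorem of algebra forces $\Gamma\ne\emptyset$ and $k=\sum_{z\in\Gamma}m_z$. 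Combining the scaling $\rho(P_0^m)=m\rho(P_0)$ with the nondegenerate equivalence $\mu_{CZ}(P)\ge 2\iff\rho(P)\ge 1$ (a consequence of~\eqref{mu_rho_rel} together with the excerpt's case $\mu_{CZ}=2\iff\rho=1$) gives the parallel conclusions: for (i), assuming every $\mu_{CZ}(P_0^{m_z})\ge 2$ yields $k\rho(P_0)\ge\#\Gamma\ge 1$ so $\mu_{CZ}(P_0^k)\ge 2$, contradicting $\mu_{CZ}(P_\infty)\le 1$; for (ii), $k\rho(P_0)=1$ together with $m_z\rho(P_0)\ge 1$ forces $m_z\ge k$ for each $z$, so $\sum m_z=k$ gives $\#\Gamma=1$ and $m_1=k$, whence $\mu_{CZ}(P_0^{m_1})=2$. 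Beyond the careful sign bookkeeping that goes into $(\star)$ and the separate treatment of this degenerate branch, the whole argument is purely combinatorial.
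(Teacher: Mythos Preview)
Your argument in the case $\pi\circ dv\not\equiv 0$ is exactly the paper's: the master inequality $(\star)$ is just a compact repackaging of the same winding estimate $0\le\wind_\pi(\vtil)=\wind_\infty(\infty)-\sum_z\wind_\infty(z)+\#\Gamma-1$ combined with the asymptotic bounds $\wind_\infty(\infty)\le\wind^{<0}$ and $\wind_\infty(z)\ge\wind^{\geq0}$ that the paper also uses.

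In the degenerate branch $\pi\circ dv\equiv 0$ you take a genuinely different (and slicker) route, using the homogeneity $\rho(P_0^m)=m\rho(P_0)$ together with the nondegenerate equivalence $\mu_{CZ}\ge 2\Leftrightarrow\rho\ge 1$, whereas the paper argues directly with Conley--Zehnder indices via an lcm trick. Your route is fine in spirit but has one real gap: you take $P_0$ to be the \emph{prime} orbit $Q$, and $\rho(P_0)=\rho(Q,\beta_{\rm disk})$ is only defined when $Q$ is contractible. Nothing in the hypotheses guarantees this; only the iterates $P_z=Q^{k_z}$ and $P_\infty=Q^{k_\infty}$ are assumed contractible. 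In general there is no class $\beta_0$ on $Q$ with $\beta_0^{(k_z)}=\beta_z$ for all $z$, so ``$\rho(P_0)$'' is not a number you can invoke.

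The fix is easy: set $d=\gcd(k_\infty,\{k_z\})$ and replace $P_0$ by $Q^d$, which \emph{is} contractible (the set of $n$ with $Q^n$ contractible is a subgroup of $\Z$). Since $c_1(\xi)|_{\pi_2(M)}=0$, the disk class on $Q^{k_z}$ is the $(k_z/d)$-iterate of the disk class on $Q^d$, so $\rho(P_z)=(k_z/d)\,\rho(Q^d)$ and likewise for $P_\infty$. With $r:=\rho(Q^d)/d$ your scaling $\rho(P_z)=k_z r$, $\rho(P_\infty)=k_\infty r$ holds and the rest of your counting goes through verbatim. This is exactly the compatibility the paper extracts, somewhat more laboriously, from its lcm comparison of $\mu_{CZ}(Q^{l_z})$.
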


\begin{proof}
Assume that $\mu_{CZ}(P_\infty)\leq 1$. If $\Gamma= \emptyset$ then it is possible to show in a standard way that $\int_\C v^* d\lambda>0$, since $\vtil$ is non-constant and has finite energy. Thus we have well defined winding numbers $\wind_\pi(\vtil)$ and $\wind_\infty(\infty)$. From $\mu_{CZ}(P_\infty) \leq 1$ we have $\wind_\infty(\infty) \leq \wind^{<0}(A_{P_\infty}) \leq 0$  {, see Remark~\ref{rmk_wind_infty}}. Hence $$ 0 \leq \wind_\pi(\vtil) = \wind_\infty(\infty) -1 \leq-1, $$ a contradiction which proves that $\Gamma \neq \emptyset$.

If $\Gamma \neq \emptyset$ and $\int_{\C \setminus \Gamma} v^*d \lambda >0$, then we also have well defined winding numbers $\wind_\infty(z)$, $z\in \Gamma\cup\{\infty\}$, and 
\begin{equation}\label{windinf1}
\begin{aligned} 
0\leq \wind_\pi(\vtil) & =  \wind_\infty(\vtil) + \#\Gamma -1 \\ 
& =  \wind_\infty(\infty) - \sum_{z\in \Gamma} \wind_\infty(z) + \#\Gamma-1.
\end{aligned} 
\end{equation} 
As before we have $\wind_\infty(\infty) \leq 0.$ Assuming that $\mu_{CZ}(P_z) \geq 2$, $\forall z \in \Gamma,$ we get $\wind_\infty(z) \geq \wind^{\geq 0}(A_{P_z}) \geq 1,\forall z\in \Gamma$  {, see again Remark~\ref{rmk_wind_infty}}. Using \eqref{windinf1} we find $$0\leq 0 - \sum_{z\in \Gamma} 1 +\#\Gamma -1 = -\#\Gamma + \#\Gamma -1=-1,$$ a contradiction. Thus there exists at least one $z\in \Gamma$ such that $\mu_{CZ}(P_z) \leq 1$.

Now if $\Gamma \neq \emptyset$ and $\int_{\C \setminus \Gamma} v^* d\lambda =0$, then we find a simple periodic orbit $Q$ and positive integers $k_z$, $z\in \Gamma$, $k_\infty$ such that $P_z=Q^{k_z}$, $P_\infty=Q^{k_\infty}$, where $k_\infty = \sum_{z\in \Gamma} k_z$. For any $z\in \Gamma$, let $l_z\in \N$ be the least common multiple of $k_z$ and $k_\infty$. Then $Q^{l_z}=P_z^{\frac{l_z}{k_z}}=P_\infty^{\frac{l_z}{k_\infty}}$ is contractible and we may compute $\mu_{CZ}(Q^{l_z})$ using trivializations induced by a $\frac{l_z}{k_z}$-cover of $u_z$ or by a $\frac{l_z}{k_\infty}$-cover of $u_\infty$. It is independent of this choice since $c_1(\xi)$ vanishes on $\pi_2(M)$. From the hypothesis $\mu_{CZ}(P_\infty) \leq1$ and the definition of the Conley-Zehnder index we get $$\mu_{CZ}(Q^{l_z})=\mu_{CZ}(P_\infty^{\frac{l_z}{k_\infty}}) \leq 2\frac{l_z}{k_\infty} -1. $$ Arguing indirectly, assume that  $\mu_{CZ}(P_z) \geq 2$. Then $$\mu_{CZ}(Q^{l_z}) =\mu_{CZ}(P_z^{\frac{l_z}{k_z}}) \geq 2\frac{l_z}{k_z}\geq 2\frac{l_z}{k_\infty},$$ since $k_z \leq k_\infty$. This contradiction proves that $\mu_{CZ}(P_z) \leq 1,\forall z\in \Gamma$, proving~(i).

The proof of (ii) is similar. If $\int_{\C\setminus \Gamma} v^* d\lambda =0$ then as before $\Gamma \neq \emptyset$ and $P_\infty = Q^{k_\infty}$, $P_z=Q^{k_z}$ for a simple periodic orbit $Q$ and positive integers $k_\infty,k_z,z\in \Gamma,$ satisfying $k_\infty = \sum_{z\in \Gamma}k_z$. If $\mu_{CZ}(P_z)>2$ for some $z\in \Gamma$, then we consider  $Q^{l_z}=P_z^{\frac{l_z}{k_z}}=P_\infty^{\frac{l_z}{k_\infty}}$ where $l_z=\text{lcm}(k_\infty,k_z)$. From $\mu_{CZ}(P_\infty)=2,$ we get $\mu_{CZ}(Q^{l_z})=2\frac{l_z}{k_\infty}.$ From $\mu_{CZ}(P_z)>2$ we get $\mu_{CZ}(Q^{l_z}) \geq 2\frac{l_z}{k_z}+1 \geq 2\frac{l_z}{k_\infty}+1,$ a contradiction.

If $\int_{\C\setminus \Gamma} v^* d\lambda >0$ then $\mu_{CZ}(P_\infty)=2 \Rightarrow \wind_\infty(\infty) \leq 1$. If there exists $z\in \Gamma$ such that $\mu_{CZ}(P_z)>2$ then $\wind_\infty(z) \geq  2.$ For all the other punctures we have $\wind_\infty(z) \geq 1$ since $\mu_{CZ}(P_z) \geq 2$. It follows that $$0\leq \wind_\pi(\vtil) = \wind_\infty(\vtil) +\#\Gamma  -1 \leq 1 -2-(\#\Gamma -1) + \#\Gamma -1 =-1,$$ a contradiction. This finishes the proof of (ii) and also proves (iii).   \end{proof}

\subsection{Existence of a periodic orbit with Conley-Zehnder index $2$}

We start proving the following lemma. Here we use the notation defined in \S~\ref{sec_index_estimates}.

\begin{lemma}\label{lemprinc}
Let $\vtil=(b,v)$ be a non-constant limit of a germinating sequence $\vtil_n=(b_n,v_n)$ on $\R\times M$. Then the asymptotic limit $P_\infty$ at the positive puncture of $\vtil$ satisfies $\mu_{CZ}(P_\infty) \geq 2$. 
\end{lemma}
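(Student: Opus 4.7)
The plan is to argue by contradiction, combining the finiteness of the bubbling-off tree with the Conley--Zehnder descent provided by Lemma~\ref{lemindex}(i). Suppose, aiming at a contradiction, that $\mu_{CZ}(P_\infty) \leq 1$. By Proposition~\ref{propbubtree}, after passing to a subsequence we may associate to $\vtil_n$ a bubbling-off tree $\{\util_q : q \in V\}$ modelled on a finite rooted tree $\T = (V,r,E)$ with $\util_r = \vtil$. All asymptotic limits of all $\util_q$ are contractible, so the Conley--Zehnder indices entering the argument are unambiguously defined in view of the standing hypothesis $c_1(\xi)|_{\pi_2(M)} = 0$.

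I would then perform an iterative descent on $\T$ starting at the root, constructing a sequence $r = q_0, q_1, q_2, \ldots$ of vertices with the property that the asymptotic limit at the positive puncture of $\util_{q_k}$ has Conley--Zehnder index at most $1$. Given such a $q_k$, Lemma~\ref{lemindex}(i) applied to $\util_{q_k}$ guarantees that its set of negative punctures is non-empty and contains at least one puncture $z_k$ whose asymptotic limit satisfies $\mu_{CZ}(P_{z_k}) \leq 1$. By condition (ii) in the definition of the bubbling-off tree, the negative puncture $z_k$ corresponds to an edge from $q_k$ to a child $q_{k+1}$ in $\T$, and the asymptotic limit at $z_k$ agrees with the asymptotic limit of $\util_{q_{k+1}}$ at its positive puncture. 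Thus $q_{k+1}$ inherits the same property, and the construction continues.

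Since $\T$ is a finite tree, the descent must terminate after finitely many steps at some leaf $q_\ast$, which by definition has no outgoing edges and therefore no negative punctures. But this directly contradicts Lemma~\ref{lemindex}(i) applied to $\util_{q_\ast}$: its positive asymptotic limit still has Conley--Zehnder index $\leq 1$, yet the lemma explicitly asserts that the puncture set must be non-empty in this case. The resulting contradiction forces $\mu_{CZ}(P_\infty) \geq 2$. The main delicate point is ensuring that Lemma~\ref{lemindex}(i) actually applies at every vertex of the descending path; this is handled uniformly by the contractibility of the asymptotic limits in the bubbling-off tree together with the vanishing of $c_1(\xi)$ on $\pi_2(M)$, which ensures the Conley--Zehnder indices along the descent are computed with a coherent choice of capping disks.
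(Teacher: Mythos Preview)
Your argument is correct and follows essentially the same route as the paper: build the bubbling-off tree via Proposition~\ref{propbubtree}, then descend from the root using Lemma~\ref{lemindex}(i) to reach a leaf corresponding to a finite-energy plane whose positive asymptotic limit has $\mu_{CZ}\leq 1$, which is impossible since Lemma~\ref{lemindex}(i) forces $\Gamma\neq\emptyset$. The only cosmetic difference is that the paper phrases the descent as showing every vertex has all asymptotic limits with $\mu_{CZ}\geq 2$, whereas you start the contradiction directly at the root; the content is the same.
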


\begin{proof}
Let $\{\util_q:q \mbox{ is a vertex of } \T\}$, $\T=(V,r,E)$, be the bubbling-off tree associated to the germinating sequence $\vtil_n$ so that $\util_r = \vtil$. See Proposition \ref{propbubtree}. We claim  that the asymptotic limits of $\util_q$ satisfy $\mu_{CZ} \geq 2$, $\forall q$. Otherwise we find $q\in V$ such that the asymptotic limit of $\util_q$ at its positive puncture has index $\mu_{CZ} \leq 1$. By Lemma \ref{lemindex}-(i) the asymptotic limit of a negative puncture $z$ of $\util_q$ also has $\mu_{CZ} \leq 1$. Thus if $q'$ is the vertex immediately below $q$, corresponding to $z$, then the asymptotic limit of the positive puncture of $\util_{q'}$ has $\mu_{CZ}\leq 1$. Now we proceed in the same way starting from the vertex $q'$ and find a vertex $q''$ immediately below $q'$ such that the asymptotic limit of $\util_{q''}$ at its positive puncture has $\mu_{CZ} \leq 1$. Keeping track of such punctures with asymptotic limits having $\mu_{CZ} \leq 1$ we end up finding a vertex $q_b$ at the bottom of the tree so that $\util_{q_b}$ is a finite energy plane with positive asymptotic limit $\tilde P_\infty$ satisfying $\mu_{CZ}(\tilde P_\infty) \leq 1$. This is impossible and this contradiction shows that all the asymptotic limits of all $\util_q$, $q\in V$, have $\mu_{CZ} \geq 2$. In particular, we have $\mu_{CZ}(P_\infty)\geq 2,$ where $P_\infty$ is the asymptotic limit of $\vtil$ at its positive puncture.
\end{proof}


\begin{proposition}\label{main_prop_compactness}
Let $\vtil=(b,v)$ be a limit of a germinating sequence $\vtil_n=(b_n,v_n)$ on $\R\times M$ and assume that $\vtil$ has a negative puncture. Assume further that one of the following conditions holds:
\begin{itemize}
\item[(a)] $\vtil$ is asymptotic at its positive puncture to $P_\infty$ satisfying $\mu_{CZ}(P_\infty)\leq2$.
\item[(b)] $\int v^*d\lambda>0$ and $\wind_\infty(\infty)\leq 1$, see~\eqref{notation_wind_infty}.
\end{itemize}
Then there is a finite-energy $\jtil$-holomorphic plane $\util_0=(a_0,u_0):\C\to\R\times M$ asymptotic to a closed Reeb orbit $P_0$ satisfying $\mu_{CZ}(P_0)=2$ such that $E(\util_0)\leq \sup_n E(\vtil_n)$. Moreover, if the images of the maps $v_n$ do not intersect a given periodic orbit $P$ then $u_0(\C) \cap P = \emptyset$.
\end{proposition}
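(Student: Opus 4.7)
My approach is to exploit the bubbling-off tree machinery developed in the preceding subsections and descend from the root to a leaf, keeping track of Conley-Zehnder indices at every asymptotic limit that appears along the way. First I would invoke Proposition~\ref{propbubtree} to extract, after passing to a subsequence, a bubbling-off tree $\{\util_q : q \in V\}$ modelled on $\T=(V,r,E)$ with $\util_r = \vtil$. Applying Lemma~\ref{lemprinc} vertex by vertex, and using the edge-matching requirement (ii) in the definition of a bubbling-off tree to identify the negative asymptotic limit of a vertex with the positive asymptotic limit of its child, one obtains the uniform lower bound $\mu_{CZ}(P)\geq 2$ at every asymptotic limit $P$ of every $\util_q$.

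Next I would traverse a descending path $r = q_0, q_1, q_2,\ldots$ along negative punctures whose asymptotic limits have $\mu_{CZ}$ equal to~$2$. At the root: under~(a), the uniform lower bound promotes $\mu_{CZ}(P_\infty)\leq 2$ to $\mu_{CZ}(P_\infty)=2$, and Lemma~\ref{lemindex}-(ii) then forces $\mu_{CZ}(P_z)=2$ at every negative puncture $z$ of $\vtil$; under~(b), the same conclusion follows directly from Lemma~\ref{lemindex}-(iii). Since $\vtil$ has at least one negative puncture by hypothesis, pick one such $z_1$ and let $q_1$ be the corresponding child. The curve $\util_{q_1}$ has positive asymptotic limit with $\mu_{CZ}=2$, and by the uniform lower bound all its negative asymptotic limits have $\mu_{CZ}\geq 2$, so Lemma~\ref{lemindex}-(ii) applies again and gives $\mu_{CZ}=2$ at each negative puncture of $\util_{q_1}$. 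Iterating and using the finiteness of $\T$, the descent terminates at a leaf $q_k$; since leaves have no negative punctures, $\util_0 := \util_{q_k} = (a_0, u_0)$ is a finite-energy $\jtil$-holomorphic plane with asymptotic limit $P_0$ satisfying $\mu_{CZ}(P_0)=2$, and the estimate $E(\util_0)\leq \sup_n E(\vtil_n)$ is part of the conclusion of Proposition~\ref{propbubtree}.

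For the final assertion, let $P=(x,T)$ be a periodic orbit avoided by every $v_n$, and consider the $\jtil$-holomorphic submanifold $N := \R\times x(\R) \subset \R \times M$. Unwinding requirement (iii) in the definition of bubbling-off tree along the chain $q_0,\ldots,q_k$, one produces rescalings $\widetilde U_n = (B_n, V_n)$ of $\vtil_n$ converging to $\util_0$ in $C^\infty_{\rm loc}$, whose $M$-components satisfy $V_n(\C)\subset v_n(\C)$ and therefore $\widetilde U_n(\C)\cap N=\emptyset$. Either $\util_0(\C)\subset N$---which is excluded because a nonconstant finite-energy $\jtil$-holomorphic plane cannot take values inside a trivial cylinder over a closed Reeb orbit (otherwise $\pi\circ du_0$ would vanish identically and $u_0$ would reparametrize a subarc of the Reeb trajectory, incompatible with the asymptotic behaviour of a plane described in Theorem~\ref{thm_precise_asymptotics})---or, by positivity of intersections for pseudo-holomorphic curves in a four-dimensional almost complex manifold, $\util_0(\C)\cap N$ consists of isolated points of strictly positive local intersection index. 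In this second case each such intersection point would persist under $C^\infty_{\rm loc}$ convergence, forcing $\widetilde U_n(\C)\cap N\neq\emptyset$ for $n$ large, which contradicts the construction of $\widetilde U_n$. The main obstacle I anticipate is precisely this last step, where positivity of intersections must be combined with careful bookkeeping of the iterated rescalings along the descent, and the degenerate case $\util_0(\C)\subset N$ must be ruled out using the fine asymptotic analysis at the puncture of $\util_0$.
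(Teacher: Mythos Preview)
Your argument is correct and follows the paper's approach almost verbatim: build the bubbling-off tree via Proposition~\ref{propbubtree}, get the uniform lower bound $\mu_{CZ}\geq 2$ at every asymptotic limit from Lemma~\ref{lemprinc}, then use Lemma~\ref{lemindex}-(ii)/(iii) at the root and Lemma~\ref{lemindex}-(ii) inductively down the tree to force $\mu_{CZ}=2$ along a descending path to a leaf, which is the desired plane $\util_0$.

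The only point where you diverge from the paper is the last claim $u_0(\C)\cap P=\emptyset$. The paper argues as follows: from $\mu_{CZ}(P_0)=2$ one gets $\wind_\infty(\infty)\leq \wind^{<0}(A_{P_0})=1$, hence $0\leq \wind_\pi(\util_0)=\wind_\infty(\infty)-1\leq 0$, so $\wind_\pi(\util_0)=0$ and $u_0:\C\to M$ is an immersion \emph{transverse to the Reeb vector field}. A transverse intersection of $u_0$ with the Reeb orbit $P$ in $M$ is then stable under $C^\infty_{\rm loc}$-perturbation, yielding intersections of the rescalings of $v_n$ with $P$, a contradiction. Your route via positivity of intersections with the $\jtil$-holomorphic cylinder $\R\times x(\R)$ in the symplectization is also valid; the price you pay is having to exclude the degenerate case $\util_0(\C)\subset \R\times x(\R)$, which you do correctly (indeed $\int_\C u_0^*d\lambda=T_0>0$ already forces $\pi\circ du_0\not\equiv 0$). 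The paper's $\wind_\pi=0$ computation makes this degenerate case disappear automatically and keeps the whole discussion inside $M$, but both arguments are standard and short.
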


\begin{proof}
Let $\{\util_q:q \mbox{ is a vertex of } \T\}$, $\T=(V,r,E)$, be the bubbling-off tree associated to the germinating sequence $\vtil_n$ so that $\util_r = \vtil$. See Proposition \ref{propbubtree}. We have $\mu_{CZ}(P)\geq 2$, where $P$ is any asymptotic limit of any $\tilde u_q$, $q \in V$. This follows from Lemma~\ref{lemprinc} because every vertex $q\in V$ is the root of the subtree below $q$, see the proof of Lemma~\ref{lemprinc}. Under hypothesis (a) or (b) we start from the root $r$ applying Lemma \ref{lemindex}-(ii) or (iii), respectively, to conclude that all asymptotic limits of $\util_r$ at its negative punctures satisfy $\mu_{CZ}=2$. Then we apply Lemma \ref{lemindex}-(ii) successively from the root to the bottom to conclude that all asymptotic limits of $\util_q$, $r\neq q\in V$, have $\mu_{CZ}=2$. In particular, any vertex $q_0$ in the bottom of the tree corresponds to a finite energy plane $\util_0 = (a_0,u_0)$ with asymptotic limit $P_0$ at $\infty$ with $\mu_{CZ}(P_0)=2$. We necessarily have $\int_\C u_0^*d\lambda>0$ and $\wind_\infty(\util_0)=1$. Thus $0\leq \wind_\pi(\util_0) = \wind_\infty(\util_0)-1 = 0,$ which implies that $\wind_\pi(\util_0)=0$. It follows that $u_0$ is an immersion transverse to the Reeb vector field and if $u_0$ intersects a given periodic orbit $P$ then $v_n$ also intersects $P$ for all large $n$, concluding the proof of the proposition.
\end{proof}

\section{Compactness of fast planes}

As in the previous section, we fix a closed co-oriented   {tight} contact $3$-manifold $(M,\xi)$ such that $c_1(\xi)$ vanishes on $\pi_2(M)$. We also fix a nondegenerate contact form $\lambda$ which defines $\xi$ and induces the given co-orientation.

Let $P = (x,T) \in \P(\lambda)$. Consider $p$ defined by $T=pT_{\rm min}$, where $T_{\rm min}>0$ is the minimal period of $x$.

\begin{definition}[Hofer, Wysocki and Zehnder~\cite{props2}]\label{def_p_unknotted_disk}
Let $h:\D\to M$ be a continuous capping disk for $P$ in the sense that $h(e^{i2\pi t}) = x(Tt+c)$ for some $c\in\R$. We will say that $P$ is $p$-unknotted in the homotopy class of $h$ if there exists a $p$-disk for $x(\R)$ representing the same class in $\pi_2(M,x(\R))$ as $h$.
\end{definition}

\begin{remark}
Let $\util=(a,u):\C\to \R\times M$ be a finite-energy plane asymptotic to $P$. The open domain $\C$ can be compactified to a closed disk $\C \sqcup S^1$ by adding a circle at $\infty$, and $u$ can be extended to a smooth capping disk $\bar u: \C \sqcup S^1 \to M$ for $P$. We say that $P$ is   {{\it $p$-unknotted in the homotopy class of $u$}} when it is $p$-unknotted in the homotopy class of $\bar u$ in the sense of Definition~\ref{def_p_unknotted_disk}.
\end{remark}

Choose $J\in \J_+(\xi)$ and a compact set $H\subset \R\times (M\setminus x(\R))$. We denote by
\begin{equation}\label{set_of_fast_planes}
\Lambda(H,P,\lambda,J)
\end{equation}
the set of embedded finite-energy $\jtil$-holomorphic planes $\util = (a,u) : \C\to \R\times M$ asymptotic to $P$ at its positive puncture $\infty$, such that $x(\R)$ is $p$-unknotted in the homotopy class of $u$, the plane $\util$ is fast  i.e., $\wind_\pi(\util)=0$,
\begin{equation}\label{normalization_fast_planes}
\begin{array}{ccc}
\util(0) \in H & \text{and} & \int_{\C\setminus\D} u^*d\lambda = \sigma(T). \end{array}
\end{equation}
The positive constant $\sigma(T)$ is defined as in~\eqref{eqsigC}.

\begin{theorem}\label{thm_comp_fast}
Let $x : \R\to M$ be a periodic trajectory for the Reeb flow of $\lambda$, and denote by $T_{\rm min}$ its minimal period. If $J \in \J_+(\xi)$, $H\subset \R\times(M\setminus x(\R))$ is a compact set, $x(\R)$ is $p$-unknotted, and every contractible closed Reeb orbit $P_* \subset M\setminus x(\R)$ satisfying $\mu_{CZ}(P_*,\beta_{\rm disk})=2$ and $\int_{P_*}\lambda \leq pT_{\rm min}$ is not contractible in $M\setminus x(\R)$, then, defining $P=(x,T:=pT_{\rm min})$, the set $\Lambda(H,P,\lambda,J)$ is $C^\infty_{\rm loc}$-compact.
\end{theorem}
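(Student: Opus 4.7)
The plan is to extract a $C^\infty_{\rm loc}$-subsequential limit of $\util_n=(b_n,u_n)\in\Lambda(H,P,\lambda,J)$ and verify that it lies in $\Lambda(H,P,\lambda,J)$. First I would check that $\{\util_n\}$ is a germinating sequence: the normalization $\int_{\C\setminus\D}u_n^*d\lambda=\sigma(T)<T_{\rm min}$ prevents bubbling on $\C\setminus\D$ and yields uniform gradient estimates there, in particular the boundedness of $\{b_n(2)\}$, while $E(\util_n)\leq T$. Proposition~\ref{proplimit} then produces, along a subsequence, a non-constant $\jtil$-holomorphic limit $\util=(b,v):\C\setminus\Gamma\to\R\times M$ with $\Gamma\subset\D$ finite. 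Since $\util(0)\in H\subset\R\times(M\setminus x(\R))$, the point $v(0)$ avoids $x(\R)$; in particular $\util$ cannot be a cover of the trivial cylinder over $x(\R)$, so $\int_{\C\setminus\Gamma}v^*d\lambda>0$.

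Next I would show that the positive asymptotic $P_\infty$ of $\util$ equals $P$. A Stokes argument on $\{1\leq|z|\leq R\}$ applied to both $\util_n$ and $\util$, together with $C^\infty_{\rm loc}$-convergence on the circle $|z|=1$, constrains the period $T_{P_\infty}$ to $[T-\sigma(T),T]$. The choice of $\sigma(T)$ strictly smaller than the gap between distinct periods not exceeding $T$ forces $T_{P_\infty}=T$, and applying Lemma~\ref{lemlong} to the cylindrical end $\{|z|\geq 1\}$ (whose $d\lambda$-area $\sigma(T)$ is less than $T_{\rm min}$) confines the loops $u(s,\cdot)$ and $u_n(s,\cdot)$ for $s\gg 1$ to the same connected component of an $S^1$-invariant $C^\infty$-neighborhood of $\P(\lambda)$. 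This forces $P_\infty=P$. Since $\pi\circ du_n$ has no zeros and convergence on fixed circles is smooth, the winding invariant $\wind_\infty(\util,\infty,\sigma)=1$.

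The crucial step is to rule out $\Gamma\neq\emptyset$. Assume it does. Proposition~\ref{main_prop_compactness}(b) applies (its hypotheses $\int v^*d\lambda>0$ and $\wind_\infty(\infty)\leq 1$ having just been verified) and produces a finite-energy $\jtil$-holomorphic plane $\util_0=(a_0,u_0):\C\to\R\times M$ asymptotic to a closed Reeb orbit $P_0$ with $\mu_{CZ}(P_0)=2$, $E(\util_0)\leq T$, and $u_0(\C)\cap x(\R)=\emptyset$; the last disjointness follows from the same disjointness $u_n(\C)\cap x(\R)=\emptyset$ for each $\util_n$, itself a consequence of positivity of intersections with the trivial cylinder $\R\times x(\R)$ combined with embeddedness and fastness of $\util_n$. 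By the strict monotonicity of periods along the bubbling tree (Remark~\ref{remper}), the period $T_0$ of $P_0$ satisfies $T_0\leq T-\sigma(T)<T=pT_{\rm min}$. If the prime orbit underlying $P_0$ coincides with that of $x$, then $P_0=(x,kT_{\rm min})$ for some $1\leq k<p$, and the capping disk provided by $\util_0$ contradicts Lemma~\ref{lemma_no_p'_disk}. Otherwise the prime orbit of $P_0$ is disjoint from $x(\R)$, so $P_0\subset M\setminus x(\R)$, and $\util_0$ exhibits $P_0$ as contractible in $M\setminus x(\R)$, contradicting the theorem's hypothesis. Hence $\Gamma=\emptyset$.

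With $\Gamma=\emptyset$, $\util$ is a finite-energy plane asymptotic to $P$ with $\wind_\pi(\util)=\wind_\infty(\util,\infty)-1=0$, and Stokes combined with the absence of bubbling on $\C\setminus\D$ yield $\int_{\C\setminus\D}u^*d\lambda=\sigma(T)$. Embeddedness of $\util$ follows from positivity of intersections: any interior double point would persist to $\util_n$ for large $n$. Together with $\util(0)\in H$ (closedness of $H$), this gives $\util\in\Lambda(H,P,\lambda,J)$. The main obstacle is the dichotomy in the previous paragraph: the combination of Lemma~\ref{lemma_no_p'_disk} and the hypothesis on orbits in $\P^*$ is exactly what excludes both possible types of bubbles, so the proof hinges on correctly identifying these two cases and verifying at the outset that the limit is not degenerate enough to be a cover of the trivial cylinder over $x(\R)$.
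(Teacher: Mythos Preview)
Your overall strategy matches the paper's, but there is a genuine gap in the step where you conclude $\int_{\C\setminus\Gamma}v^*d\lambda>0$. You write ``Since $\util(0)\in H$, the point $v(0)$ avoids $x(\R)$''; this presupposes that $0\notin\Gamma$, which you never establish. In fact the dangerous scenario is exactly the opposite one: if the limit $\vtil$ were a branched cover of the trivial cylinder over $x(\R)$ (so $\pi\circ dv\equiv 0$), then $v$ maps into $x(\R)$ on its whole domain and the condition $\util_n(0)\in H\subset\R\times(M\setminus x(\R))$ \emph{forces} $0\in\Gamma$, not the reverse. The paper handles this case separately: assuming $\pi\circ dv\equiv 0$ one factors $\vtil=Z\circ Q$ through a polynomial $Q$ with $\Gamma=Q^{-1}(0)$, shows $0\in\Gamma$ (by the $H$-condition), then uses an action computation $\int_{\partial\D}v^*\lambda=T-\sigma(T)\neq T$ to force $\#\Gamma\geq 2$, and finally observes that near any $z_*\in\Gamma$ the loop $t\mapsto v(z_*+\epsilon e^{i2\pi t})$ is a $p'$-fold cover of $x(\R)$ with $1\leq p'<p$ which is contractible in $M$ (as a $C^\infty$-limit of contractible loops), contradicting Lemma~\ref{lemma_no_p'_disk}. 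Without this argument your proof does not exclude the trivial-cylinder degeneration, and then neither Proposition~\ref{main_prop_compactness}(b) nor your $\wind_\infty$ computation is available.

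Two smaller points. First, your claim that $\{b_n(2)\}$ is bounded does not follow from gradient estimates on $\C\setminus\D$ alone (you need an anchor, and your only anchor $\util_n(0)\in H$ lies at a point that could be in $\Gamma$); the paper simply translates by $-a_n(2)$ and recovers the constant at the end once $\Gamma=\emptyset$ is known. Second, your embedding argument omits the check that $\util$ is somewhere injective: positivity of intersections rules out isolated double points, but you must also exclude that $\util$ is a nontrivial multiple cover. The paper does this by noting $\wind_\pi(\util)=0$ makes $\util$ an immersion, and a factorization $\util=F\circ Q$ with $\deg Q\geq 2$ would force zeros of $d\util$ at the critical points of $Q$.
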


Until the end of this section we fix $x$, $J$, $H$, $p$, $T_{\rm min}$ and $T=pT_{\rm min}$ as in Theorem~\ref{thm_comp_fast}. An important ingredient in the proof of Theorem~\ref{thm_comp_fast} is the following immediate consequence of the proof of Theorem~4.10 from~\cite{props2}.

\begin{theorem}\label{thm_fast_embedded}
If $\util =(a,u) \in \Lambda(H,P,\lambda,J)$ then $u(\C) \cap x(\R) = \emptyset$ and $u:\C \to M\setminus x(\R)$ is an embedding. In particular, $\bar u$ is a $p$-disk for $x(\R)$.
\end{theorem}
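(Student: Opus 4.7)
My plan is to follow closely the strategy of Hofer--Wysocki--Zehnder in the proof of \cite[Theorem 4.10]{props2}, which the authors have explicitly identified as the source. The argument naturally divides into three stages: immersion, non-intersection with $x(\R)$, and injectivity off infinity.

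First, because $\util$ is fast, $\wind_\pi(\util)=0$, so $\pi\circ du$ is nowhere zero. By $\jtil$-holomorphicity this forces $du$ to have full rank everywhere, and hence $u$ is an immersion whose image is transverse to the Reeb vector field $R$. Lemma~\ref{lemma_wind_relation} then gives $\wind_\infty(\util)=1$, which is the crucial asymptotic input for what follows.

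Second, to prove $u(\C)\cap x(\R)=\emptyset$, I would begin with the asymptotic description in Theorem~\ref{thm_precise_asymptotics}: near the positive puncture, $u$ lies in a Martinet tube around $x(\R)$ and its transverse component is of the form $e^{\int h}(e(t)+R(s,t))$ with $e(t)\neq 0$, so $u$ never meets $x(\R)$ outside a compact set. Intersections are therefore finite in number. Because $u$ is transverse to $R$ and the complex structure on $T(\R\times x(\R))$ is $\jtil$-invariant, each intersection contributes positively to the algebraic intersection number of $\bar u$ with $x(\R)$. The $p$-unknottedness hypothesis means that $\bar u$ is homotopic, rel asymptotic behavior, to a genuine $p$-disk $v:\D\to M$ for $x(\R)$ whose interior avoids $x(\R)$, so the algebraic intersection is zero; combined with positivity, no geometric intersections survive.

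Third, for the embedding statement, I would exploit positivity of intersections for $\jtil$-holomorphic curves in the almost complex $4$-manifold $\R\times M$. Consider the $\R$-family of translates $\util_c(z):=(a(z)+c,u(z))$, which are $\jtil$-holomorphic by $\R$-invariance of $\jtil$. For $|c|$ very large, $\util$ and $\util_c$ separate in the $\R$-direction on any prescribed compact region and so do not intersect there; a Siefring-type asymptotic analysis, built on the exponential-decay formula of Theorem~\ref{thm_precise_asymptotics}, prevents intersection points of $\util$ and $\util_c$ from escaping to the puncture as $c$ varies. Hence the algebraic count $\util\cdot\util_c$ is a topological invariant equal to $0$ for every $c\neq 0$, and positivity of intersections forces $\util(\C)\cap\util_c(\C)=\emptyset$. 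This rules out pairs $z_1\neq z_2$ with $u(z_1)=u(z_2)$ and $a(z_1)\neq a(z_2)$. Double points with $a(z_1)=a(z_2)$, corresponding to genuine self-intersections of $\util$, are excluded by the same method applied to the Siefring self-intersection number of $\util$, which vanishes because $\util$ is fast and its asymptotic homotopy class is that of a $p$-disk. Together with Stage~2, $u$ is an injective immersion into $M\setminus x(\R)$ whose boundary-at-infinity covers $x(\R)$ exactly $p$ times, making the continuous extension $\bar u$ a $p$-disk.

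The hard part will be the Siefring-type asymptotic intersection analysis in Stage~3: one must verify that intersection points between $\util$ and its translates (or between $\util$ and itself) cannot escape to the puncture under perturbation, and one must compute the asymptotic self-intersection contribution precisely. The exponential decay rates and the explicit eigensection description of Theorem~\ref{thm_precise_asymptotics} are exactly the inputs required for these computations, and the execution follows the pattern established in \cite{props2}.
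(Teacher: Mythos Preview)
Your proposal is essentially the correct strategy and follows the same route as the paper, which simply invokes \cite[Theorem~4.10]{props2} after observing that the hypothesis $\mu_{CZ}\leq 3$ there is only used to force $\wind_\infty=1$, a conclusion you obtain directly from the fast condition via Lemma~\ref{lemma_wind_relation}.

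One small overcomplication in Stage~3: you do not need any Siefring-type self-intersection analysis for the case $a(z_1)=a(z_2)$. By definition of $\Lambda(H,P,\lambda,J)$ the plane $\util$ is already assumed to be an embedding in $\R\times M$, so $u(z_1)=u(z_2)$ with $a(z_1)=a(z_2)$ is impossible from the outset. Only the case $a(z_1)\neq a(z_2)$ requires work, and your translation argument (which is exactly the mechanism in \cite{props2}) handles it. The ``Siefring-type'' terminology is slightly anachronistic---the asymptotic intersection control you need is already carried out in \cite{props2} using the explicit eigensection decay of Theorem~\ref{thm_precise_asymptotics}---but the content is the same.
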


\begin{proof}[Proof of Theorem~\ref{thm_fast_embedded}]
In~\cite[Theorem~4.10]{props2} it is assumed that $\mu_{CZ}(P,\beta_{\rm disk})\leq 3$. But inspecting its proof one sees that the only point where this inequality is used is to guarantee that $\wind_\infty=1$, which is automatically true under our assumption that $\util$ is fast.   {The assumption that $\xi$ is tight is crucial.}
\end{proof}

Let us prove Theorem~\ref{thm_comp_fast}. In the following we fix, as in section~\ref{section_bubb_off_analysis}, an $\R$-invariant Riemannian metric on $\R\times M$. Domains in $\C$ and $\R\times S^1$ will be equipped with its standard complex structure and euclidean metric. Norms of vectors and linear maps are taken with respect to these choices.

Consider an arbitrary sequence
\begin{equation*}
\util_k = (a_k,u_k) \in \Lambda(H,P,\lambda,J)
\end{equation*}
and define $$ \Gamma = \{ z\in\C \mid \exists k_j \to \infty \ \text{and} \ z_j\to z \ \text{such that} \ |d\util_{k_j}(z_j)|\to\infty \}. $$ There is no loss of generality to assume that $\Gamma$ is finite. This can be achieved by passing to a subsequence of $\util_k$, still denoted by $\util_k$, and is proved using Hofer's lemma   {\cite[Lemma~4.6.4]{mcdsal}, which allows us to rescale the curves near a bubbling-off point to produce a finite-energy plane,} and the fact that a finite-energy plane that bubbles-off from points of $\Gamma$ has a minimum positive quantum of $d\lambda$-area. By similar reasons it follows from~\eqref{normalization_fast_planes} and from the definition of the number $\sigma(T)$ that $\Gamma \subset\D$. Thus the sequence $|d\util_k|$ is $C^0_{\rm loc}$-bounded on $\C\setminus \Gamma$. Defining $\vtil_k$ by
\begin{equation*}
\vtil_k(z) = (b_k(z),v_k(z)) := (a_k(z)-a_k(2),u_k(z))
\end{equation*}
we can use   {a} standard elliptic boot-strapping argument  {, see~\cite[chapter~4]{mcdsal},} to obtain a subsequence of $\vtil_k$, still denoted by $\vtil_k$, and a finite-energy $\jtil$-holomorphic map
\begin{equation*}
\vtil =(b,v) : \C\setminus \Gamma \to \R\times M
\end{equation*}
such that
\begin{equation}\label{conv_subseq_fast}
\vtil_k \to \vtil \ \text{in} \ C^\infty_{\rm loc}(\C\setminus \Gamma) \ \text{as} \ k\to\infty.
\end{equation}
  {Up to taking a further subsequence we may also assume, without loss of generality, that $\Gamma$ consists of non-removable punctures of $\vtil$.} The map $\vtil$ is not constant. This is obvious if $\Gamma\neq \emptyset$ and
\begin{equation}\label{area_no_neg_punctures}
\Gamma = \emptyset \Rightarrow \int_\D v^*d\lambda = \lim_{k\to\infty} \int_\D v_k^*d\lambda = T - \sigma(T) > 0.
\end{equation}
It is easy to see that points in $\Gamma$ are negative punctures of $\vtil$, from where it follows that $\infty$ is a positive puncture. The nontrivial Lemma~\ref{lemlong}, the properties of the number $\sigma(T)$ and the normalization conditions~\eqref{normalization_fast_planes} together tell us that for every $\R/\Z$-invariant neighborhood $\W$ of $P$ in the loop space $C^\infty(\R/\Z,M)$, there exists $k_0$ and $R_0> 1$ such that the loops $t\mapsto v_k(R_1 e^{i2\pi t})$, $t\mapsto v(R_1 e^{i2\pi t})$ belong to $\W$ whenever $k\geq k_0$ and $R_1\geq R_0$. Since $\lambda$ is nondegenerate we can take $\W$ with the property that if $\gamma \in \W$ is a periodic Reeb trajectory then $\exists c'\in\R$ such that $\gamma(t) = x(Tt+c')$. In particular, $P$ is the asymptotic limit of $\vtil$ at its (unique) positive puncture $\infty$.

We claim that $\pi \circ dv$ does not vanish identically. By~\eqref{area_no_neg_punctures} this is true when $\Gamma=\emptyset$. Assume by contradiction that $\Gamma\neq \emptyset$ and $\pi \circ dv \equiv 0$. Then we find a non-constant complex polynomial $Q$ of degree $p$ such that $\Gamma = Q^{-1}(0)$ and
\begin{equation*}
\vtil = Z \circ Q
\end{equation*}
where $Z:\C\setminus\{0\} \to \R\times M$ denotes the trivial cylinder given by the formula $$ Z(e^{2\pi(s+it)}) = (T_{\rm min}s,x(T_{\rm min}t)). $$ Consequently $0 \in\Gamma$ since, otherwise, $\lim_k \util_k(0)=(b(0)+c,v(0)) \in H \cap (\R\times x(\R))$, contradicting the properties of the set $H$, where $c=\lim_k a_k(2)$ (which only exists up to a subsequence and when $0\not\in\Gamma$). We claim that $\#\Gamma\geq 2$. If not then $\Gamma = \{0\}$ and 
\begin{equation*}
T = \int_{\partial\D} v^*\lambda = \lim_{k\to\infty} \int_\D v_k^*d\lambda = T-\sigma(T)
\end{equation*}
which is absurd. Choosing $z_*\in \Gamma$, $\exists 1\leq p'<p$ such that $c(t) = v(z_* + \epsilon e^{i2\pi t})$ is a reparametrization of $t\mapsto x(p'T_{\rm min}t)$, where $0<\epsilon\ll{\rm dist}(z_*,\Gamma\setminus\{z_*\})$. The sequence of loops $c_k(t) := v_k(z_* + \epsilon e^{i2\pi t})$ $C^\infty$-converges to $c$ and clearly each $c_k$ is contractible. Thus so is $c$, contradicting Lemma~\ref{lemma_no_p'_disk}. We showed that $\vtil$ has positive $d\lambda$-area.

%
%

Let $R_1>R_0$ satisfy the following property:
\begin{equation}\label{special_radius}
\pi \circ dv \ \text{does not vanish on} \ \{z\in\C : |z|\geq R_1\}.
\end{equation}
The existence of $R_1$ with this property follows from Theorem~\ref{thm_precise_asymptotics} and from the already established fact that $\pi \circ dv$ does not vanish identically. Now consider the loop $\gamma_\infty :\R/\Z\to M$, $\gamma_\infty(t) = v(R_1e^{i2\pi t})$. The sequence of loops $\gamma_k(t) = v_k(R_1e^{i2\pi t})$ converges in $C^\infty$ to $\gamma_\infty$ because $\Gamma \subset \D$, $R_1>1$ and by~\eqref{conv_subseq_fast}. By the same reason the vector field $t\mapsto \pi \cdot \partial_r v(R_1e^{i2\pi t})$ along $\gamma_\infty$ and the vector field $t\mapsto \pi\cdot \partial_r v_k(R_1e^{i2\pi t})$ along $\gamma_k$ are arbitrarily $C^\infty$-close to each other when $k$ is large enough. These vector fields define non-vanishing sections of $\gamma_\infty^*\xi$ and of $\gamma_k^*\xi$, respectively. Consider $C^\infty$-small smooth homotopies $g_k : [0,1]\times \R/\Z \to M$ satisfying $g_k(0,t) = \gamma_k(t)$ and $g_k(1,t) = \gamma_\infty(t)$. Then, when $k$ is large enough, the vector fields $\pi \cdot \partial_r v(R_1 e^{i2\pi t})$ and $\pi\cdot \partial_rv_k(R_1 e^{i2\pi t})$ can be   {extended} smoothly to a non-vanishing section of $g_k^*\xi$ since they are $C^\infty$-close.

For every $k$ large enough, consider a non-vanishing section $Z_k$ of $v_k^*\xi$. The sections $Z_k|_{\overline B}$ of $(v_k|_{\overline B})^*\xi$, with $B = B_{R_1}(0)$, extend to a non-vanishing section $\mathcal Z_k$ of $\xi$ over a piecewise smooth capping disk $\mathcal D_k$ for $\gamma_\infty$ defined by attaching $g_k$ to $v_k|_{\overline B}$. We can arrange $\mathcal Z_k$ to be smooth over $\gamma_\infty = \partial\mathcal D_k$. It is important now to finally note that, since $c_1(\xi)$ vanishes on $\pi_2(M)$, we can extend $\mathcal Z_k|_{\gamma_\infty}$ to a smooth non-vanishing section of $v^*\xi$, still denoted by $\mathcal Z_k$, with the following property:
\begin{itemize}
\item[$(*)$] Let $P_z$ be the asymptotic limit of $\vtil$ at each negative puncture $z\in\Gamma$ and denote $P_\infty=P$. Choose capping disks $D_z$ for each $P_z$, $\forall z\in \Gamma\cup\{\infty\}$. Then the section $\mathcal Z_k$ extends along each $z\in \Gamma\cup\{\infty\}$ to non-vanishing sections of $\xi$ along $P_z$ that can be further extended along $D_z$ without vanishing.
\end{itemize}
The capping disks $D_z$ do exist since all orbits $P_z$ are necessarily contractible. Property $(*)$ follows easily from the fact that $c_1(\xi)$ vanishes on $\pi_2(M)$. In particular, all $\mathcal Z_k$ are homotopic to each other through non-vanishing sections of $v^*\xi$. Now we can compute
\begin{equation}\label{estimate_wind_infty}
\begin{aligned}
\wind_\infty(\vtil,\infty,\mathcal Z_k) &= \wind(t\mapsto \pi\cdot \partial_rv (R_1e^{i2\pi t}), t\mapsto \mathcal Z_k(R_1e^{i2\pi t})) \\
&= \wind(t\mapsto \pi\cdot \partial_rv_k (R_1e^{i2\pi t}), t\mapsto Z_k(R_1e^{i2\pi t})) \\
&\leq \wind_\infty(\vtil_k) = 1.
\end{aligned}
\end{equation}
The first equality follows from~\eqref{special_radius}. The second equality follows from invariance of winding numbers under deformation through non-vanishing sections. The inequality in the third line follows since all zeros of $\pi \circ dv_k$ are isolated and count positively to the algebraic count of zeros of $\pi \circ dv_k$.

Inequality~\eqref{estimate_wind_infty} is the key step in proving that $\Gamma=\emptyset$. In fact, suppose by contradiction that $\Gamma\neq\emptyset$. The curve $\vtil$ is a limit of the germinating sequence $\vtil_k$ satisfying (b) in Proposition~\ref{main_prop_compactness}, with $E(\vtil_k) = pT_{\rm min}$, $\forall k$. As a consequence we find a finite-energy $\jtil$-holomorphic plane $\util_*=(a_*,u_*)$ asymptotic to a closed Reeb orbit $P_*$ satisfying $\mu_{CZ}(P_*,\beta_{\rm disk})=2$, $\int_{P_*}\lambda \leq pT_{\rm min}$ and $u_*(\C) \cap x(\R) = \emptyset$. We claim that $P_*$ and $P$ are geometrically distinct. If not then $P_* = (x,p'T_{\rm min})$. Moreover, $p'<p$ since $\mu_{CZ}(P,\beta_{\rm disk})\geq 3$ and $\mu_{CZ}(P_*,\beta_{\rm disk})=2$, see the proof of Lemma~\ref{lemindex}. The closure of the open disk $u_*(\C)$ is a disk for the $p'$-th iterate of $x(\R)$, a contradiction to Lemma~\ref{lemma_no_p'_disk}. This shows that $P_* \subset M\setminus x(\R)$ and, consequently, that $P_*$ is contractible in $M\setminus x(\R)$, contradicting the hypotheses of Theorem~\ref{thm_comp_fast}. Thus $\Gamma=\emptyset$.

We claim that $\vtil:\C\to \R\times M$ is an embedding. The identity $\wind_\infty(\vtil)\leq 1$ implies that $\wind_\infty(\vtil)=1$ and $\wind_\pi(\vtil)=0$, from where it follows that $\vtil$ is an immersion. If $\vtil$ is not somewhere injective then, by results of~\cite{props2}, one finds a complex polynomial $Q$ of degree at least $2$ and a non-constant $\jtil$-holomorphic map $F:\C\to \R\times M$ such that $\vtil = F\circ Q$. The zeros of $Q'$ would force zeros of $d\vtil$ which is impossible because $\vtil$ is an immersion. We conclude that $\vtil$ is a somewhere injective immersion. In view of positivity and stability of isolated self-intersections, a self-intersection point of $\vtil$ would force self-intersections of $\vtil_k$ for large values of $k$, which is impossible since $\util_k\in \Lambda(H,P,\lambda,J) \ \forall k$. We conclude that $\vtil$ is an embedding.

Thus, up to a subsequence, $\util_k \to \util \in \Lambda(H,P,\lambda,J)$ in $C^\infty_{\rm loc}$, where $\util =(a,u)$ is the plane defined by $a = b+c$ and $u=v$, where the constant $c$ is given by $c=\lim_k a_k(2)$. The proof is complete.

\section{Existence of fast planes}\label{section_existence}

Our goal in this section is to prove an existence result for special fast planes needed in the proof of our main results, see Proposition~\ref{prop_existence_fast}.

\subsection{Characteristic foliation}

Let $(M,\xi)$ be a co-oriented tight contact manifold with dimension $3$, and let $\lambda$ be any defining contact form for $\xi$. We stress that for this discussion, including the statement of Proposition~\ref{prop_nice_disk}, the contact form $\lambda$ may not be nondegenerate.

If $F\subset M$ is an oriented embedded surface then $(\xi\cap TF)^{d\lambda}$ defines a singular distribution on $F$, as is well-known. It can be parametrized by a smooth vector field $V$ on $F$ vanishing precisely at the so-called singular points $p$ where $\xi|_p = T_pF$. At a singular point $p$ the space $\xi|_p = T_pF$ has two orientations: one coming from $F$ and denoted by $o_p$, and another induced by $d\lambda$ denoted by $o'_p$. Moreover, at such $p$ there is a well-defined linearization $DV_p:T_pF\to T_pF$, and $p$ is a nondegenerate rest point of the dynamics of $V$ if, and only if, $DV_p$ is an isomorphism; in this case $p$ is elliptic if $DV_p$ preserves orientations, nicely elliptic if it is elliptic and $DV_p$ has real eigenvalues, and hyperbolic if $DV_p$ reverses orientations.

If $\partial F$ is connected and transverse to $\xi$ then we orient $F$ by orienting $\partial F$ with the co-orientation of $\xi$. A nondegenerate singular point $p$ is called positive or negative if $o_p=o'_p$ or $o_p=-o'_p$, respectively. The following is contained in~\cite[Chapter 4]{AH}, see also~\cite[Sections 5.1, 5.2]{93} and~\cite[Section 3]{char1}.

\begin{lemma}\label{good_char_fol}
Let $F\subset M$ be an embedded (closed) disk bounding a knot $\partial F$ transverse to $\xi$ and oriented by $\lambda$. If $\sl(\partial F,F)=-1$ then there exists $\epsilon_0>0$ with the following property. Denoting $U_{\epsilon_0} = \{p\in M \mid {\rm dist}_M(p,\partial F)<\epsilon_0\}$, there exists an arbitrarily $C^0$-small perturbation of $F$ into a new embedded disk $F'$ such that $F\cap U_{\epsilon_0}=F'\cap U_{\epsilon_0}$ and  $(\xi\cap TF')^{d\lambda}$ has precisely one singular point $e$. Moreover, $e \in M\setminus\overline U_{\epsilon_0}$, $e$ is positive and it is possible to obtain coordinates $(x,y,z)$ on an arbitrarily small open neighborhood $V$ of $e$ such that $e \simeq (0,0,0)$, $\lambda \simeq dz+xdy$ and $F' \cap V \simeq \{z = -\frac{1}{2}xy\}$.
\end{lemma}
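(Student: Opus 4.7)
The plan is to put the characteristic foliation of $F$ in Morse-Smale form, use Giroux's elimination lemma together with tightness to reduce it to a single positive elliptic singularity, and then choose Darboux coordinates adapted to that singularity. Throughout the modifications I keep $F$ unchanged on a collar of its boundary: since $\partial F$ is a transverse knot oriented by $\lambda$, the standard tubular model of $\xi$ along a transverse knot (as in \cite{AH, 93}) shows that for some small $\epsilon_0>0$ the characteristic foliation on $F\cap U_{\epsilon_0}$ is nonsingular with leaves transverse to $\partial F$; fixing such $\epsilon_0$, every subsequent perturbation will be supported in $F\setminus \overline{U_{\epsilon_0}}$, automatically giving $F\cap U_{\epsilon_0}=F'\cap U_{\epsilon_0}$.

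First I would perform a $C^\infty$-small perturbation of $F$ inside $F\setminus \overline{U_{\epsilon_0}}$ so that $(\xi\cap TF)^{d\lambda}$ becomes Morse-Smale: its singularities are nondegenerate (hence either elliptic or hyperbolic), the stable and unstable manifolds of hyperbolic singularities intersect transversely, and there are no closed leaves. Writing $e_\pm$ and $h_\pm$ for the numbers of positive/negative elliptic and hyperbolic singularities, Poincar\'e-Hopf yields $(e_+-h_+)+(e_--h_-)=\chi(F)=1$, while the standard interpretation of the self-linking number via the characteristic foliation (the Bennequin formula, see \cite{93, char1}) gives $(e_--h_-)-(e_+-h_+)=\sl(\partial F,F)=-1$. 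Solving, $e_+-h_+=1$ and $e_-=h_-$.

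Next I would invoke Giroux's elimination lemma: a $C^0$-small perturbation of $F$ localised in an arbitrarily small neighborhood of a separatrix connecting a same-sign elliptic-hyperbolic pair removes both singularities. The tightness of $\xi$ is crucial here: Eliashberg's argument shows that as long as a negative singularity remains or $(e_+,h_+)\neq(1,0)$, the Morse-Smale structure on $F$ necessarily contains such a same-sign connecting separatrix, since otherwise the orbit configuration of the characteristic foliation would produce an overtwisted disk inside $F$. Iterating the elimination lemma, I arrive at $e_+=1$, $h_+=e_-=h_-=0$; the resulting disk is the desired $F'$ and the unique singularity $e$ lies in its interior, outside $U_{\epsilon_0}$, and is positive.

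Finally, Darboux's theorem for contact forms supplies coordinates $(x,y,z)$ on an arbitrarily small neighborhood $V$ of $e$ with $e\simeq(0,0,0)$ and $\lambda\simeq dz+xdy$. Since $e$ is elliptic, $T_eF'=\xi|_e=\ker dz$, so $F'\cap V$ is locally a graph $z=g(x,y)$ with $g(0,0)=0$ and $dg(0,0)=0$. A further $C^\infty$-small perturbation of $g$ supported inside a smaller neighborhood of the origin replaces $g$ by $-\frac{1}{2}xy$ while keeping the unique nondegenerate singularity at the origin; a short calculation confirms that the resulting graph $\{z=-\frac{1}{2}xy\}$ has characteristic line field proportional to $x\partial_x+y\partial_y$ in the $(x,y)$-parametrisation, so that the singular point remains positive and nicely elliptic as required. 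The main obstacle is the elimination step: the combination of the Giroux-Eliashberg lemma with tightness requires delicate bookkeeping of signs and separatrix connections, although each piece is classical in the theory of characteristic foliations in tight contact $3$-manifolds.
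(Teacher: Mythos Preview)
Your outline is correct and matches the paper's approach. The paper does not actually prove this lemma: it states that the result is contained in \cite[Chapter 4]{AH} (see also \cite[Sections 5.1, 5.2]{93} and \cite[Section 3]{char1}), and the subsequent remark records precisely the two ingredients you use, namely Giroux's elimination lemma and the tightness of $\xi$. Your sketch---Morse--Smale perturbation, the count $e_+-h_+=1$, $e_-=h_-$ from Poincar\'e--Hopf and the self-linking formula, iterated elimination of same-sign elliptic/hyperbolic pairs guided by Eliashberg's tightness argument, and the final Darboux normal form---is exactly the classical argument in those references.
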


\begin{remark}
The proof of the above statement makes use of Giroux's elimination lemma and relies on the assumption that $\xi$ is tight. The point $e$ given by Lemma~\ref{good_char_fol} is necessarily nicely elliptic.
\end{remark}

Let $R$ denote the Reeb vector field of $\lambda$ and assume that $R$ is tangent to an order $p$ rational unknot $K$.

\begin{definition}\label{def_special_robust}
A $p$-disk $u_0:\D\to M$ for $K$ is said to be   {{\it special robust}} for $(\lambda,K)$ if it satisfies the following properties:
\begin{itemize}
\item[(a)] The singular characteristic distribution of $u_0(\D\setminus\partial\D)$ has precisely one singular point $e$, which is positive. Moreover, it is possible to find coordinates $(x,y,z)$ on an arbitrarily small neighborhood $V$ of $e$ such that $e \simeq (0,0,0)$, $\lambda \simeq dz+xdy$ and $$ u_0(\D) \cap V \simeq \{z = -\frac{1}{2}xy\}. $$
\item[(b)] $\exists \epsilon>0$ such that for every sequence of smooth functions $h_k:M \to (0,+\infty)$ satisfying $h_k \to 1 \ \text{in} \ C^\infty$, $h_k|_{K}\equiv1$ and $dh_k|_{K} \equiv 0$ $\forall k$, there exists $k_0\geq 1$ such that $$ 1-\epsilon < |z| < 1, \ k\geq k_0 \Rightarrow R_k|_{u_0(z)} \not\in d{u_0}|_z(T_z\D) $$ where we denoted by $R_k$ the Reeb vector field associated to $h_k\lambda$.
\end{itemize}
\end{definition}

We get the following statement partly as a consequence of Corollary~\ref{cor_special_spanning_disk} and of Lemma~\ref{good_char_fol}.

\begin{proposition}\label{prop_nice_disk}
Let $x_0$ be a periodic trajectory of the Reeb flow of $\lambda$ such that $x_0(\R)=K$. We orient $x_0(\R)$ by $\lambda$, denote its minimal period by $T_{\rm min}>0$ and set $T_0:=pT_{\rm min}$. Let $u$ be an oriented $p$-disk for $x_0(\R)$ and let $\beta_{\rm disk} \in \Omega^+_{({x_0}_{T_0})^*\xi}$ be induced by a $d\lambda$-symplectic trivialization of $u^*\xi$. Assume that $$ \begin{array}{ccc} \sl(K,u)=  {\frac{-1}{p}} & \text{and} & \rho((x_0,T_0),\beta_{\rm disk}) \neq 1. \end{array} $$ Then there exists a $p$-disk $u_0$ which is special robust for $(\lambda,K)$. Moreover, $u_0$ and $u$ define the same element in $\pi_2(M,K)$.
\end{proposition}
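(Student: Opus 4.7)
The plan is to produce $u_0$ in two independent stages: first arrange property~(b) of Definition~\ref{def_special_robust} by a $C^0$-small modification of $u$ supported in an annular collar of $\partial\D$, then arrange property~(a) by a further $C^0$-small modification supported in the complement of a (smaller) collar. Since the two modifications have disjoint supports they will not interfere with one another, and as all perturbations are $C^0$-small, the resulting disk $u_0$ will remain in the same relative homotopy class as $u$ in $\pi_2(M,K)$.

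For the first stage I would apply Corollary~\ref{cor_special_spanning_disk} directly. Its hypothesis requires $\rho((x_0,T_0),\beta_u)\neq 0$, where $\beta_u$ is induced by the section $\pi\cdot\partial_r u|_{\partial\D}$. Since $\sl(K,u)=-p$, Corollary~\ref{cor_rel_windings} gives $\rho((x_0,T_0),\beta_{\rm disk})=\rho((x_0,T_0),\beta_u)+1$, so the assumption $\rho((x_0,T_0),\beta_{\rm disk})\neq 1$ translates precisely into $\rho((x_0,T_0),\beta_u)\neq 0$. The corollary then yields an intermediate $p$-disk $u_1$, $C^0$-close to $u$ and agreeing with $u$ outside a thin collar of $\partial\D$, and an $\epsilon>0$ such that the Reeb vector field of every sufficiently small perturbation $h_k\lambda$ (satisfying the normalization conditions appearing in~(b)) is transverse to $u_1$ on the annulus $\{1-\epsilon<|z|<1\}$.

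For the second stage I would modify $u_1$ only on $\{|z|\le 1-\epsilon/2\}$. Since the abstract domain is topologically a disk, the singularities of the characteristic foliation of $u_1$ have total index $1$; arguing as in the proof of Lemma~\ref{good_char_fol}, I would use tightness of $\xi$ together with Giroux's elimination lemma to cancel pairs of opposite-sign singularities until a unique positive nicely-elliptic singularity $e$ remains, and then apply a standard Darboux-type straightening near $e$ to bring $(\lambda,u_0)$ into the normal form $(dz+x\,dy,\,\{z=-\frac{1}{2}xy\})$ on a small neighborhood $V$ of $e$. All of these moves are $C^0$-small and compactly supported in $\{|z|<1-\epsilon/2\}$, so they preserve both property~(b) and the relative homotopy class established in Stage~1.

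The main obstacle I anticipate is that Lemma~\ref{good_char_fol} is stated for an embedded disk whose boundary is a transverse knot of self-linking $-1$, whereas $u_1$ has boundary a $p$-fold cover of $K$ with $\sl(K,u_1)=-p$. I would bridge this gap by working near $K$ in the $p$-fold cyclic cover $\widetilde U\to U$ of a small tubular neighborhood $U$ of $K$: in $\widetilde U$ the knot $K$ lifts to a transverse unknot of self-linking $-1$, the collar portion of $u_1$ lifts to a genuine embedded disk to which Lemma~\ref{good_char_fol} applies verbatim, and the resulting modification can be taken $\Z_p$-equivariantly and then pushed down to $U$. Away from $U$ the surface $u_1(\D\setminus\partial\D)$ is already embedded, so Giroux elimination can be performed directly in $M$ using tightness of $\xi$, and the two local constructions glue together because they are supported in disjoint open sets.
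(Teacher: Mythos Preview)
Your two-stage plan is exactly the paper's strategy, and Stage~1 matches it verbatim: use Corollary~\ref{cor_rel_windings} to convert $\rho(\cdot,\beta_{\rm disk})\neq 1$ into $\rho(\cdot,\beta_u)\neq 0$, then invoke Corollary~\ref{cor_special_spanning_disk} to obtain a collar on which~(b) holds.

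Where you diverge is in Stage~2. Your worry that Lemma~\ref{good_char_fol} does not apply because the boundary is a $p$-fold cover is legitimate, but your proposed fix through a $p$-fold cyclic cover is both unnecessary and, as written, not coherent: the ``collar portion of $u_1$'' is an annulus, not a disk, and in the cyclic cover of a tubular neighborhood of $K$ it lifts to a connected annulus (since $\gcd(p,q)=1$), still not a disk, so Lemma~\ref{good_char_fol} would not apply to it. One could instead pass to the global $p$-cover $\widetilde M\to M$ built later in the proof of Theorem~\ref{C1_estimates}, where the entire $p$-disk lifts to an embedded disk, but this is heavy machinery for the task.

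The paper sidesteps the issue with a one-line observation you overlooked: by the definition of a $p$-disk, $u_1|_{\D\setminus\partial\D}$ is already an embedding. Hence for any $r_0<1$ close to~$1$ the sub-disk $F:=u_1(\{|z|\le r_0\})$ is an honestly embedded closed disk in $M$, its boundary $\partial F=u_1(\{|z|=r_0\})$ is a transverse unknot (this is immediate from the explicit collar produced by Lemma~\ref{lemma_embedded_strips}), and Lemma~\ref{lemma_self_link_props} gives $\sl(\partial F,F)=-1$. Now Lemma~\ref{good_char_fol} applies directly to $F$, yielding a $C^0$-small perturbation $F_0$ supported away from $\partial F$ with the single nicely elliptic singularity in normal form. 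Patching $F_0$ with the unchanged strip $u_1(\{r_0\le|z|\le 1\})$ gives $u_0$. No covering argument is needed.
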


\begin{proof}
There is no loss of generality to assume that $u(e^{i2\pi t}) = x_0(T_0t)$. Consider $\beta_u$ the homotopy class of the oriented (by $d\lambda$) trivializations of $({x_0}_{T_0})^*\xi$ with respect to which the section $Z(t) = \pi \cdot \partial_ru(e^{i2\pi t})$ has zero winding, where $\pi$ is the projection~\eqref{proj_along_Reeb}. Then $\rho((x_0,T_0),\beta_u)\neq 0$ by Corollary~\ref{cor_rel_windings}. Using Corollary~\ref{cor_special_spanning_disk} we find a $p$-disk for $x_0(\R)$ satisfying (b) in Definition~\ref{def_special_robust}, which will be still denoted by $u$.

Denoting $F = u(\{|z|\leq r_0\})$ and $\partial F = u(\{|z|= r_0\})$, for some $r_0<1$ very close to $1$, one computes $\sl(\partial F,F)=-1$: this follows from Lemma~\ref{lemma_self_link_props}. Then by Lemma~\ref{good_char_fol} we can modify $F$ by a $C^0$-small perturbation compactly supported on $F\setminus \partial F$ in order to obtain a new embedded disk ${F_0}$ such that the singular characteristic distribution of ${F_0}$ has a unique singular point $e$ with the properties described in Defintion~\ref{def_special_robust} item (a). Also $e$ is positive and $e\in {F_0}\setminus \partial {F_0}$. By the properties of ${F_0}$ we can patch ${F_0}$ with the strip $u(\{r_0\leq|z|\leq 1\}$ to obtain an immersed disk ${u_0}$ bounding $t\mapsto x_0(T_0t)$. Again by Lemma~\ref{good_char_fol} one can be sure that the immersed disk ${u_0}$ is a $p$-disk for $x_0(\R)$ if the perturbation of $F$ is taken sufficiently $C^0$-small.
\end{proof}

\begin{definition}
Let $u_0$ be a special robust $p$-disk for $(\lambda,K)$. We define
\begin{equation}\label{C_special_robust}
C(\lambda,K,u_0) = 1 + \int_\D |u_0^*d\lambda|.
\end{equation}
\end{definition}

\begin{definition}\label{def_special}
We say that a $p$-disk $u_0:\D\to M$ for $K$ is   {{\it special}} for $(\lambda,K)$ if it satisfies the following properties:
\begin{itemize}
\item[(a)] The singular characteristic distribution of $u_0(\D\setminus\partial\D)$ has precisely one singular point $e$, which is positive. Moreover, it is possible to find coordinates $(x,y,z)$ on an arbitrarily small neighborhood $V$ of $e$ such that $e \simeq (0,0,0)$, $\lambda \simeq dz+xdy$ and $$ u_0(\D) \cap V \simeq \{z = -\frac{1}{2}xy\}. $$
\item[(b)] $\exists \epsilon>0$ such that $1-\epsilon < |z| < 1 \Rightarrow R|_{u_0(z)} \not\in d{u_0}(T_z\D)$ where we denoted by $R$ the Reeb vector field associated to $\lambda$.
\item[(c)] If $x$ is a periodic trajectory of $R$ contained in $u_0(\D)$ then $x(\R) = K$.
\end{itemize}
\end{definition}

\begin{proposition}\label{prop_perturb_special_robust}
Assume that $M$ is compact and let $u_0$ be a special robust $p$-disk for $(\lambda,K)$. Consider a sequence of smooth functions $f_n:M\to (0,+\infty)$, $f_n\to 1$ in $C^\infty$, $f_n|_K \equiv 1$, $df_n|_K \equiv 0$ and $f_n|_V\equiv1$ on an open small neighborhood $V$ of the singular point $e=u_0(0)$ of the characteristic foliation of $u_0(\D\setminus\partial\D)$. Assume that $\lambda_n:=f_n\lambda$ is nondegenerate $\forall n$. There exists an arbitrarily $C^\infty$-small perturbation of $u_0$ into a new special robust $p$-disk $u_0'$ for $(\lambda,K)$ and $n_0$ such that $u_0'$ is special for $(\lambda_n,K)$, for all $n\geq n_0$. Moreover, $$ \int_\D |(u_0')^*d\lambda_n| \leq C(\lambda,K,u_0) \ \ \forall n\geq n_0 $$ where $C(\lambda,K,u_0)$ is the constant~\eqref{C_special_robust}.
\end{proposition}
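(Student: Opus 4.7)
The plan is to take $u_0'$ as an arbitrarily $C^\infty$-small perturbation of $u_0$ that coincides with $u_0$ on a smaller open neighborhood $V_0\Subset V$ of $e$ and on an annular neighborhood $\{|z|\geq 1-\epsilon/2\}$ of $\partial\D$, where $\epsilon>0$ is the constant from Definition~\ref{def_special_robust}(b) for $u_0$; the perturbation is supported in the compact region $A=\{|z|\leq 1-\epsilon/2\}\cap u_0^{-1}(M\setminus V_0)$. Any such sufficiently small perturbation remains a $p$-disk for $K$ and is again special robust for $(\lambda,K)$, since Definition~\ref{def_special_robust} concerns only $\lambda$ and the unchanged data near $e$ and $\partial\D$. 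Conditions (a) and (b) of Definition~\ref{def_special} for $(\lambda_n,K)$ then hold for large $n$: (a) because $f_n\equiv 1$ on $V$ gives $\lambda_n=\lambda$ there and the local model is inherited verbatim, and (b) by applying Definition~\ref{def_special_robust}(b) to the sequence $h_k=f_n$. The integral bound $\int_\D|(u_0')^*d\lambda_n|\leq C(\lambda,K,u_0)$ is automatic for $n$ large and sufficiently small perturbation, since $(u_0')^*d\lambda_n\to u_0^*d\lambda$ in $C^\infty$.

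The main obstacle is establishing condition (c) of Definition~\ref{def_special} for $(\lambda_n,K)$. The essential tool is a uniform period bound via Stokes' theorem: any prime closed $R_n$-orbit $P$ whose geometric image is contained in $u_0'(\D)$ and differs from $K$ lies entirely in the embedded open disk $u_0'(\D\setminus\partial\D)$ and bounds a sub-disk $D$ there, whence
\begin{equation*}
T_P=\int_P\lambda_n=\int_D d\lambda_n \leq \int_\D|(u_0')^*d\lambda_n|\leq C(\lambda,K,u_0).
\end{equation*}
Combined with the standard lower bound $T_*>0$ on periods of closed Reeb orbits on compact $(M,\lambda_n)$ (uniform in $n$ since $f_n\to 1$ in $C^\infty$), periods are confined to $[T_*,C(\lambda,K,u_0)]$. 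Suppose for contradiction that for arbitrarily small perturbations $u_0'$ one can find $n_j\to\infty$ and $P_{n_j}\subset u_0'(\D)$ with $P_{n_j}$ not equal to $K$ as a geometric image. By condition (b) for $u_0'$, the orbits $P_{n_j}$ stay uniformly bounded away from $K$, since $R_{n_j}$ is transverse to $u_0'$ in the boundary annulus and hence cannot admit an invariant curve there. By Arzelà--Ascoli and the $C^\infty$-convergence $R_n\to R$, a subsequence of $P_{n_j}$ converges in $C^\infty$ to a closed $R$-orbit $P_\infty\subset u_0'(\D)$ of period in $[T_*,C]$ whose geometric image is disjoint from $K$.

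The proof therefore reduces to a genericity statement: choose $u_0'$ so that $u_0'(\D)$ does not contain the geometric image of any closed $R$-orbit of period at most $C(\lambda,K,u_0)$ whose image differs from $K$. For each such orbit $\gamma$, the set of admissible perturbations $u_0'$ with $\gamma\subset u_0'(\D)$ is closed and of positive codimension in the Banach space of $C^\infty$-small perturbations supported in $A$, since a local normal perturbation at any interior point of $\gamma$ lifts $u_0'(\D)$ off $\gamma$ and destroys containment. When $\lambda$ is nondegenerate, only finitely many such $\gamma$ exist and a finite intersection of open dense subsets of perturbations suffices. In the general case, a Sard--Baire argument applied to the evaluation map on the parameter space of perturbations yields a residual set of admissible $u_0'$ avoiding the containment of any closed $R$-orbit of period $\leq C$ other than $K$. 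The main technical point is carrying out this genericity uniformly over the possibly continuous family of limit orbits in the degenerate case; this is controlled by the compactness, in the $C^\infty$ topology, of the space of closed $R$-orbits with period in $[T_*,C]$, which makes the exceptional set a countable union of positive-codimension closed subsets.
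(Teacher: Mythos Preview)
Your overall strategy is sound, but it diverges from the paper's argument and the final ``Sard--Baire'' step in the degenerate case is not actually carried out. The paper's proof is considerably more direct: instead of passing to limit orbits of $\lambda$ via Arzel\`a--Ascoli, it works directly with the closed Reeb orbits of the nondegenerate forms $\lambda_n$. For each $n\geq n_0$ and each closed $\lambda_n$-Reeb trajectory $\hat x$ with $\hat x(\R)\neq K$, the set $X_{\delta,n,\hat x}$ of admissible disks $u'$ with $\hat x(\R)\not\subset u'(\D)$ is open and dense. Since each $\lambda_n$ is nondegenerate, there are only countably many such $\hat x$ for each $n$, and taking the intersection over all $n\geq n_0$ and all $\hat x$ is still a countable intersection. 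Baire's theorem in the complete space of $C^\infty$-perturbations then furnishes a single $u_0'$ that avoids \emph{every} closed $\lambda_n$-orbit (other than $K$) for every $n\geq n_0$ simultaneously.

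Your route buys nothing here and costs something: by pushing the genericity problem back to $\lambda$, you lose the countability that the hypothesis on $\lambda_n$ hands you for free, and you are left having to argue that the exceptional set of perturbations is ``a countable union of positive-codimension closed subsets'' even when $\lambda$ may have continuous families of closed orbits. Compactness of the orbit space alone does not give this decomposition; you would need an additional transversality or stratification argument that you have not supplied. The paper's approach sidesteps this entirely by never touching the possibly degenerate $\lambda$-orbits. Your period bound via Stokes and the compactness/limit argument are correct and would make a valid alternative proof if $\lambda$ itself were nondegenerate, but in the stated generality you should use the paper's direct Baire argument on the $\lambda_n$-orbits.
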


\begin{proof}
Let $0<\delta<1/2$ and consider the set $$ A_\delta = \{z\in\D : |z| \in [0,\delta] \cup [1-\delta,1] \}. $$ Let $X_\delta$ be the set of smooth maps $\D\to M$ which agree with ${u_0}$ on $A_\delta$. Denote by $R_n$ the Reeb vector field of $\lambda_n$. In view of Definition~\ref{def_special_robust} we find that ${u_0}$ is transverse to $R_n$ on $A_\delta\setminus \partial \D$, when $n$ is large enough and $\delta$ is small enough. Note that the assumptions on $f_n$ guarantee that $\lambda_n|_V \equiv \lambda|_V$. Hence, we find $n_0$ and $\delta$ such that if $P'=(x',T') \in \P(\lambda_n)$ satisfies $x'(\R) \subset {u_0}(\D)$ and $x'(\R)\neq K$ then $x'(\R) \cap {u_0}(A_\delta) = \emptyset$, when $n\geq n_0$. For every $n\geq n_0$ and every periodic $\lambda_n$-Reeb trajectory $\hat x$ satisfying $\hat x(\R) \neq K$ consider the set $X_{\delta,n,\hat x}\subset X_\delta$ of disks $u'$ such that $\hat x(\R) \not\subset u'(\D)$. Clearly $X_{\delta,n,\hat x}$ is open and dense in $X_\delta$. Since each $\lambda_n$ is nondegenerate there are only countably many $\lambda_n$-Reeb trajectories geometrically distinct from $K$. By compactness of $M$ the $C^\infty$-topology in $X_\delta$ is induced by a complete metric. We conclude from Baire's category theorem that $\cap_{n\geq n_0,\hat x} X_{\delta,n,\hat x}$ is dense. Thus ${u_0}$ can be slightly $C^\infty$-perturbed away from $\partial\D\cup\{0\}$ as desired into the new disk $u_0'$ which is clearly special for $(\lambda_n,K)$, $\forall n\geq n_0$. Perhaps after making $n_0$ larger we can assume that $(u_0')^*d\lambda_n$ is $C^\infty$-close to $u_0^*d\lambda$, completing the proof.
\end{proof}

\subsection{Statement of existence result}

\begin{proposition}\label{prop_existence_fast}
Let $\lambda$ be a nondegenerate defining contact form for a tight contact structure $\xi$ on the closed 3-manifold $M$. Suppose that $c_1(\xi)$ vanishes on $\pi_2(M)$ and that there exists a closed Reeb orbit $P_0=(x_0,T_0)$ with minimal positive period~$T_{\rm min}$, such that $K=x_0(\R)$ is an order $p$ rational unknot with self-linking number   {$\frac{-1}{p}$}, where $p=T_0/T_{\rm min}$. Suppose further that $\mu_{CZ}(P_0,\beta_{\rm disk}) \geq 3$. Consider the set $\P^* \subset \P(\lambda)$ of closed Reeb orbits $P$ satisfying
\begin{itemize}
\item $P$ is contractible in $M$,
\item $P$ is contained $M\setminus x_0(\R)$,
\item $\mu_{CZ}(P,\beta_{\rm disk})=2$, or equivalently, $\rho(P,\beta_{\rm disk})=1$.
\end{itemize}
Consider also a $p$-disk $u_0$ which is special for $(\lambda,K)$ in the sense of Definition~\ref{def_special} and set
\begin{equation}\label{constant_C_0}
C_0 = \int_\D |u_0^*d\lambda|.
\end{equation}
If every orbit $P \in \P^*$ satisfies
\begin{center}
$\int_P \lambda > C_0$ or $P$ is not contractible in $M\setminus K$,
\end{center}
then for some $J\in\J_+(\xi)$ there exists an embedded fast $\jtil$-holomorphic finite-energy plane $\util=(a,u)$ asymptotic to $P_0$. Moreover, $K$ is $p$-unknotted in the homotopy class of $u$ in the sense of Definition~\ref{def_p_unknotted_disk}.
\end{proposition}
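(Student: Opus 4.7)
The plan is to construct $\util$ as the end of a one-parameter Bishop family of embedded $\jtil$-holomorphic disks with boundary on the special $p$-disk $u_0(\D)$, following the disk-filling strategy of Hofer~\cite{93} and Hofer--Wysocki--Zehnder~\cite{char1}. I first pick $J\in\J_+(\xi)$ so that, in the Darboux chart around the nicely elliptic singular point $e$ furnished by Definition~\ref{def_special}(a), the induced almost complex structure $\jtil$ is in a normal form that admits a germ of Bishop disks, while $u_0(\D)\setminus\{e\}$ is totally real for $\jtil$; the latter is allowed by Definition~\ref{def_special}(b), which makes the Reeb direction transverse to $u_0(\D)$ near $\partial\D$. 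Automatic transversality then produces a smooth, connected, maximal one-parameter family $\util_\tau=(a_\tau,u_\tau):\D\to\R\times M$, $\tau\in[0,\tau^*)$, of embedded $\jtil$-holomorphic disks with $u_\tau(\partial\D)\subset u_0(\D)\setminus\{e\}$, starting at the constant map $\util_0\equiv e$, with strictly increasing $d\lambda$-area in $\tau$. A Stokes argument bounds $\int_\D u_\tau^*d\lambda\leq C_0$, which also uniformly controls the Hofer energy.

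The hard part will be the compactness analysis as $\tau\nearrow\tau^*$. Take a sequence $\tau_n\nearrow\tau^*$ and normalize by $\R$-translation so that $a_{\tau_n}(2)$ stays bounded. The task is to rule out interior bubbling, so that the only way the family degenerates is by its boundaries sweeping up to $K=\partial u_0(\D)$ and the disks opening into a finite-energy plane. Suppose by contradiction that a bubble forms. A soft rescaling at the blow-up point as in \S~\ref{secsoft} yields a germinating sequence whose bubbling-off tree, provided by Propositions~\ref{propbubtree} and~\ref{main_prop_compactness}, contains a finite-energy $\jtil$-holomorphic plane $\util_*=(a_*,u_*)$ asymptotic to a closed Reeb orbit $P_*$ with $\mu_{CZ}(P_*,\beta_{\rm disk})=2$ and $E(\util_*)\leq C_0$. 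The family $\util_\tau$ avoids $\R\times K$ throughout (this holds near $e$ and is preserved by positivity of intersections, using Definition~\ref{def_special}(c) to prevent $K$ from being approached tangentially), so $u_*(\C)\subset M\setminus K$. Moreover $P_*$ cannot be supported on $K$: $P_0$ has $\mu_{CZ}\geq 3$, while any lower iterate of the prime orbit over $K$ is non-contractible by Lemma~\ref{lemma_no_p'_disk}. Hence $P_*\in\P^*$ is contractible in $M\setminus K$ with $\int_{P_*}\lambda\leq C_0$, contradicting the hypothesis.

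In the absence of bubbling, $u_{\tau_n}(\partial\D)$ must accumulate on $K$; after a suitable rescaling, $\util_{\tau_n}$ converges in $C^\infty_{\rm loc}$ on $\C$ to an embedded finite-energy plane $\util=(a,u):\C\to\R\times M$ whose unique positive puncture asymptotes a Reeb orbit with image $K$. Since each $u_\tau$ is continuously deformed from $e$ with boundary on $u_0(\D)$, $\bar u$ and $u_0$ represent the same class in $\pi_2(M,K)$, so the asymptotic limit is $P_0$ (lower iterates again excluded by Lemma~\ref{lemma_no_p'_disk}) and $K$ is $p$-unknotted in the homotopy class of $u$. Finally, $\bar u$ is a $p$-disk for $K$ with $\sl(K,\bar u)=-p$, so Corollary~\ref{cor_rel_windings} gives $\rho(P_0,\beta_u)=\rho(P_0,\beta_{\rm disk})-1>0$ using $\mu_{CZ}(P_0,\beta_{\rm disk})\geq 3$. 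By Theorem~\ref{thm_precise_asymptotics} the asymptotic behaviour of $\pi\cdot\partial_ru$ is governed by an eigenvector of the asymptotic operator with winding zero relative to $\beta_u$; combined with $\wind(\beta_u,\beta_{\rm disk})=1$ this yields $\wind_\infty(\util,\infty)=1$, and Lemma~\ref{lemma_wind_relation} then gives $\wind_\pi(\util)=0$, i.e., $\util$ is fast. Embeddedness of $\util$ follows by positivity of intersections from embeddedness of each $\util_\tau$.
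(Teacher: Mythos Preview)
Your strategy is right, but the compactness picture is wrong in a way that derails the argument. The Bishop family never terminates ``without interior bubbling'': if the bubbling set $\Gamma$ were empty then the $C^\infty_{\rm loc}$-limit $\util_\infty$ of the disks would itself be a non-constant disk with boundary on $F_0$ (Corollary~\ref{cor_int} rules out the boundary touching $K$), hence an element of $\M(J)$, contradicting maximality of the family. What actually happens (Proposition~\ref{prop_limit_bishop_disks} and Lemma~\ref{zero_dlambda_area}) is that $\Gamma=\{0\}$ is a single interior point, the limit $\util_\infty:\D\setminus\{0\}\to\R\times M$ has \emph{zero} $d\lambda$-area and is a half-cylinder over $P_0$, and the desired plane is obtained by soft-rescaling the Bishop disks at~$0$. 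So interior bubbling is not the enemy to be excluded; the bubble \emph{is} the plane. The hypothesis on $\P^*$ is used twice: once to force $\int u_\infty^*d\lambda=0$ (otherwise $\util_\infty$ would solve the mixed problem~\eqref{mixed_bnd_value_prob} with index-$2$ asymptotics at its negative punctures, producing a forbidden plane), and once to show the rescaled curve $\vtil$ has no negative punctures (Lemma~\ref{lemma_plane_below_bishop}).

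Two further gaps. First, to invoke Proposition~\ref{main_prop_compactness} in either place you must verify one of its hypotheses (a) or (b). For (a) the paper chooses $J\in\J_{\rm reg}$ (Proposition~\ref{prop_generic_J}) so that the negative asymptotics of $\util_\infty$ are forced to have $\mu_{CZ}=2$; for (b) the paper computes $\wind_\infty(\vtil,\infty)=1$ by first showing $\pi\circ du_k$ never vanishes on the Bishop disks (Lemma~\ref{lemma_no_zeros_bishop}) and then tracking this winding through the rescaling (Lemma~\ref{lem_wind_infty_below_bishop}). You skip both verifications. Second, your final proof that $\util$ is fast is circular: you compute $\wind_\infty=1$ via Corollary~\ref{cor_rel_windings}, which requires $\bar u$ to be a genuine $p$-disk for $K$ with $\sl=-p$; but $\bar u$ is a $p$-disk only once you know $u:\C\to M\setminus K$ is a proper embedding, which is the conclusion of Theorem~\ref{thm_fast_embedded} and needs fastness as input. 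In the paper, $\wind_\infty=1$ is proved independently of any embedding property of $u$, using only the winding information inherited from the Bishop disks.
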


The remaining paragraphs of this section are devoted to the proof of the above statement.


\subsection{Filling by holomorphic disks}\label{filling_disks}

From now on we assume that $(M,\xi)$, $\lambda$, $P_0=(x_0,T_0)$, $K=x_0(\R)$, $u_0$ satisfy all the hypotheses of Proposition~\ref{prop_existence_fast}. Let us denote by $R$ the Reeb vector field of $\lambda$. We denote by $T_{\rm min}>0$ the minimal positive period of $x_0$ and set $p=T_0/T_{\rm min}$. From now on we denote $$ F_0 := u_0(\D\setminus\partial \D) $$ which is an open disk properly embedded in $M\setminus x_0(\R)$. The unique singular point of its characteristic distribution is denoted by $e$ and there is no loss of generality to assume that
\begin{equation}\label{norm_map_u_0}
\begin{array}{ccc} e=u_0(0), & & u_0(e^{i2\pi t}) = x_0(T_0t). \end{array}
\end{equation}

\subsubsection{Lifting the characteristic foliation}\label{lift_fol}

We need an auxiliary construction in order to adapt the arguments from~\cite{93,char1,char2} to our present context. The reason is that we will work with holomorphic disks having their boundaries mapped to $u_0(\D)$ but this set is not an embedded surface.

Since $u_0|_{\D\setminus\partial \D}$ is an embedding onto $F_0$, the characteristic distribution on $F_0$ can be pulled-back to $\D\setminus\partial \D$. Note that $u_0^*\lambda$ is a smooth $1$-form on $\D$ and $\ker u_0^*\lambda$ extends this pulled-back characteristic distribution to $\D$. The associated singular foliation on $\D$ will be denoted by $\F_0$. The leafs of $\F_0$ hit $\partial\D$ transversely since $u_0(\partial\D)$ is transverse to $\xi$. A leaf of $\F_0$ is either singular and equal to $\{0\}$, or is an embedded copy of a half-closed interval which converges to $0$ along its open end and to a point in $\partial \D$ along its closed end. The length of such a nonsingular leaf is finite because $e$ is nicely elliptic.

This construction yields a smooth submersion
\begin{equation}\label{map_L}
L : \D\setminus\{0\} \to \R/\Z
\end{equation}
given by $L(z) = t$ where the leaf of $\F_0$ through $z$ hits $\partial\D$ at the point $e^{i2\pi t}$. A continuous map $\util=(a,u):\D\to\R\times M$ satisfying $u(\partial\D)\subset F_0$ determines a unique continuous loop
\begin{equation}\label{loop_gamma}
\begin{array}{ccc} \gamma_\util:\R/\Z\to \D & \text{defined by} & \gamma_\util(t) = u_0^{-1} \circ u(e^{i2\pi t}), \ \forall t \end{array}
\end{equation}
since $u_0|_{\D\setminus\partial\D}$ is an embedding. If $\util$ is smooth then so is $\gamma_\util$.

\subsubsection{Consequences of the strong maximum principle}\label{max_prin}

On $\D\setminus\{0\}$ we have polar coordinates $(r,\theta) \simeq re^{i\theta}$ and denote by $\partial_r,\partial_\theta$ the corresponding partial derivatives. Let $\util=(a,u):\D\to\R\times M$ be any non-constant $\jtil$-holomorphic map satisfying $a(\partial\D)\equiv0$, for some $J \in \J_+(\xi)$. The function $a$ is subharmonic and $a|_{\partial\D}\equiv0$, so the strong maximum principle implies
\begin{equation}\label{str_max_princ_conseq}
\partial_ra|_{\partial\D} = (\lambda(u)\cdot \partial_\theta u|_{\partial\D}) > 0.
\end{equation}
In particular, if $u(\partial\D) \subset F_0 = u_0(\D\setminus\partial \D)$ then $\gamma_\util$ defined by~\eqref{loop_gamma} does not touch $0$ and is transverse to $\F_0$. Moreover, $\gamma_\util$ is an embedding if, and only if, it winds once around $0$, and $L\circ \gamma_\util:\R/\Z\to\R/\Z$ is an orientation preserving covering map which is a diffeomorphism if, and only if, its degree is $1$. Here we used the map $L$~\eqref{map_L}.

\subsubsection{Defining the Bishop family}\label{def_bishop_family}

Following~\cite{char1} we consider for any $J \in \J_+(\xi)$ the boundary value problem
\begin{equation}\label{bvp_bishop}
\begin{aligned}
& \util = (a,u) : \D \to \R\times M \ \text{satisfies} \ \bar\partial_{\jtil}(\util)=0, \\
& \util \text{ is an embedding,} \ a|_{\partial \D}\equiv0 \ \text{and} \ u(\partial\D) \subset F_0, \\
& t\mapsto u(e^{i2\pi t}) \ \text{winds once around} \ e.
\end{aligned}
\end{equation}
The last condition is understood as follows: by~\eqref{str_max_princ_conseq} the loop $\gamma_\util$ defined in~\eqref{loop_gamma} does not touch $0$, and we ask that it winds once around $0$. As remarked before, this is equivalent to $L\circ\gamma_\util$ having degree $1$, and when this is the case we know that $\gamma_\util$ is actually embedded in $\D\setminus (\partial\D\cup\{0\})$. We shall be interested in the subset of solutions $\util=(a,u)$ of~\eqref{bvp_bishop} satisfying the following extra condition:
\begin{enumerate}
\item[(C)] If $D\subset \D$ is the disk bounded by $\gamma_\util$ then $u:\D\to M$ and $u_0|_D:D\to M$ are homotopic keeping their common boundaries $u(\partial\D)=u_0(\partial D)$ fixed.
\end{enumerate}
The set of solutions of~\eqref{bvp_bishop} which in addition satisfy (C) above is denoted by $\M(J)$ and will be endowed with the $C^\infty$-topology. It follows easily from the definition of Hofer's energy that
\begin{equation}\label{crucial_energy_estimate}
E(\util) = \int_\D u^*d\lambda \leq C_0, \ \ \ \forall \util=(a,u) \in \M(J)
\end{equation}
where $C_0>0$ is the constant satisfying~\eqref{constant_C_0}.

Hofer~\cite{93} showed that $\M(J)\neq \emptyset$ for some $J$. In fact, consider the coordinates $(x,y,z)$ near the point $e$ described in Definition~\ref{def_special_robust}. In these coordinates the vectors $\partial_x$, $\partial_y-x\partial_z$ span $\xi$ and we find $J$ satisfying $J\cdot \partial_x=\partial_y-x\partial_z$ near $e$. A $1$-parameter family of solutions of~\eqref{bvp_bishop} is explicitly given in these coordinates by $$ s+it \in\D \mapsto \left(\frac{\tau^2}{4}(s^2+t^2-1),\tau s,\tau t,-\frac{\tau^2}{2}st\right). $$ Note that as $\tau\to0^+$ these solutions converge to the constant $(0,e)$.

The solutions in $\M(J)$ are automatically Fredholm regular, and the linearized Caucy-Riemann operator at these (parametrized) solutions has index $4$. This is proved in~\cite{93}. There is a $3$-dimensional group $\Mob$ of holomorphic self-diffeomor\-phisms of $\D$ acting freely and properly on $\M(J)$. This shows that $\M(J)/\Mob$ is the $1$-dimensional base space of the principal bundle $\Pi:\M(J) \to \M(J)/\Mob$ with group $\Mob$; here $\Pi$ is the quotient projection.

A map $\util:\D\to\R\times M$ is called an   {{\it embedding near the boundary}} if $\exists \epsilon<1$ such that $\util^{-1}(\util(\{1-\epsilon\leq|z|\leq1\})) = \{1-\epsilon\leq|z|\leq1\}$ and $\util|_{\{1-\epsilon\leq|z|\leq1\}}$ is an embedding. With this notion in mind,   {we prove the following}

\begin{lemma}\label{lemma_intersections}
Let $\util_k \in \M(J)$ converge to some $\util=(a,u):\D\to\R\times M$ in $C^\infty$ which is non-constant and satisfies $u(\partial\D) \subset F_0$. Then $\util \in \M(J)$.
\end{lemma}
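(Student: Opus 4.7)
The limit $\util$ automatically inherits $\bar\partial_{\jtil}(\util)=0$ and $a|_{\partial\D}\equiv 0$ from the $C^\infty$-convergence, and $u(\partial\D)\subset F_0$ is given by hypothesis. So what remains to verify is that $\gamma_\util$ of~\eqref{loop_gamma} winds once around $0$, that $\util$ is an embedding, and that condition~(C) of~\S\ref{def_bishop_family} holds. The main obstacle is the embeddedness statement, where I would invoke the local structure results for pseudo-holomorphic curves from~\cite{dusa}.

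Since $\util$ is non-constant with $a|_{\partial\D}\equiv 0$, the strong maximum principle as in~\S\ref{max_prin} gives $a<0$ on $\D\setminus\partial\D$ and $\partial_r a>0$ along $\partial\D$; consequently $\gamma_\util=u_0^{-1}\circ u|_{\partial\D}$ is a well-defined smooth loop in $\D\setminus\partial\D$ avoiding $0$ and transverse to $\F_0$. The loops $\gamma_{\util_k}$ converge in $C^\infty$ to $\gamma_\util$ and all take values in $\D\setminus\{0\}$, so the winding number about $0$ is preserved along the sequence; it equals $1$ for every $k$, hence also for~$\util$. As recalled in~\S\ref{max_prin}, this is equivalent to $\gamma_\util$ being an embedded loop in $\D\setminus(\partial\D\cup\{0\})$.

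For embeddedness of $\util$, I first argue that $\util$ is somewhere injective. If not, by~\cite{dusa} there would exist a factorization $\util=\vtil\circ\phi$ with $\vtil$ somewhere injective and $\phi$ a holomorphic branched cover of degree $d\geq 2$. Restricted to $\partial\D$, the map $\phi$ would then cover (a component of) the boundary of its target as a degree-$d$ covering of circles, forcing $u|_{\partial\D}$ to be at least $d$-to-$1$ generically. But $u|_{\partial\D}=u_0\circ\gamma_\util$ is the composition of the embedding $u_0|_{\D\setminus\partial\D}$ with the embedded loop $\gamma_\util$, hence itself an embedding, a contradiction. So $\util$ is somewhere injective. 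Then, by positivity of intersections in the $4$-manifold $\R\times M$ together with the Micallef--White local normal form at critical points of somewhere injective $\jtil$-holomorphic curves~\cite{dusa}, any interior self-intersection or critical point of $\util$ would be isolated and stable under small $C^\infty$-perturbations, contradicting the fact that each $\util_k$ is an embedding. Hence $\util$ is an injective immersion on $\D\setminus\partial\D$. Interior--boundary coincidences are excluded by $a<0$ on $\D\setminus\partial\D$ versus $a\equiv 0$ on $\partial\D$, while injectivity along $\partial\D$ follows from the embedding of $\gamma_\util$; combined with $\partial_r a>0$ near $\partial\D$, this upgrades injectivity to an embedding up to the boundary.

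Condition~(C) is a discrete invariant preserved in the $C^\infty$-limit: the disks $D_k\subset\D$ bounded by $\gamma_{\util_k}$ converge to the disk $D$ bounded by $\gamma_\util$, and the relative homotopy class of $u_k$ with respect to $u_0|_{D_k}$ is locally constant in~$k$. Since each $\util_k$ satisfies (C), so does $\util$, and we conclude $\util\in\M(J)$.
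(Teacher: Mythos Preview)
Your proof is correct and follows essentially the same route as the paper's: maximum principle to keep $\gamma_{\util}$ away from $0$, continuity of the winding number to get degree $1$ and hence an embedded boundary loop, McDuff's results~\cite{dusa} to upgrade this to a global embedding, and closedness of condition~(C). The only organizational difference is that the paper bypasses your factorization argument for somewhere-injectivity by directly observing that $\util$ is an embedding near $\partial\D$ (from the embedded $\gamma_{\util}$ together with $\partial_r a>0$), which already gives injective points; this is slightly more economical and avoids invoking a branched-cover factorization for disks with totally real boundary, which is not quite the content of~\cite{dusa}.
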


\begin{proof}
Clearly $\bar\partial_{\jtil}(\util)=0$. The curve $\gamma_\util$ is well-defined and, since $\util$ is not constant, the maximum principle tells us that $\gamma_\util$ does not pass through $0$. Thus $t\mapsto L\circ \gamma_\util(t)$ has a well-defined degree. By continuity $L\circ\gamma_\util$ must have degree $1$. Then $u|_{\partial\D}$ is an embedding because, as remarked before, so is $\gamma_\util$. This fact together with~\eqref{str_max_princ_conseq} implies that $\util$ is an embedding near the boundary.   {Since $\util$ is the $C^\infty$-limit of the embeddings $\util_k$ and is an embedding near the boundary, } results of~\cite{dusa}   {imply that $\util$ is} an embedding   {as well}. Condition (C) is obviously closed under limits.
\end{proof}

Let $\ell$ be the length of the leaf $L^{-1}(0)$ of $\F_0$. There is a map
\begin{equation}\label{length_function}
\tau : \M(J)/\Mob \to (0,\ell)
\end{equation}
defined as follows. For every $\util=(a,u) \in \M(J)$ the map $L \circ \gamma_\util:\R/\Z\to\R/\Z$ is a diffeomorphism and $\gamma_\util$ hits $L^{-1}(0)$ at a unique point $z_*$. Then we set $\tau(\Pi(\util))$ to be the length of the piece of $L^{-1}(0)$ connecting $0$ to $z_*$. It is easy to check that $\tau$ is smooth. In~\cite{93} it is proved that if $s$ is a smooth section of $\M(J)$ defined on a small open neighborhood $U$ of some $y_0\in \M(J)/\Mob$, then
\begin{equation}\label{local_embedding_93}
\begin{array}{ccc} \Phi : U \times \D \to \R\times M & \text{defined by} & (y,z) \mapsto s(y)(z) \end{array}
\end{equation}
is a smooth embedding onto its image. Thus $\tau$ is a local diffeomorphism.

\subsubsection{Intersections}

We start with a technical lemma which is contained in the proof of Theorem 2.1 from~\cite{char1}. Consider coordinates $(z_1,z_2)$ on $\C \times B$ where $B\subset\C$ is the unit open ball centered at $0$. In the following we will write $B^+_r = \{ z\in \C \mid |z|<r, \ \Im z\geq0 \}$.

\begin{lemma}\label{lemma_int}
Let $j$ be a smooth almost complex structure on $\C \times B$ satisfying
\begin{equation}\label{hyp_j}
j|_{\C \times\{0\}} = \begin{pmatrix} i & 0 \\ 0 & i \end{pmatrix} \in \mathcal L_\R(\C^2).
\end{equation}
Let $v = (v_1,v_2):B^+_r \to \C \times B$ be a smooth $j$-holomorphic map satisfying 
\begin{equation}\label{hyp_v_2}
\begin{array}{cccc}
v_2((-r,r)) \subset [0,1), & v_2(0)=0 & \text{and}  & v_2 \not\equiv 0. \end{array}
\end{equation}
If there is a sequence of continuous maps $v_2^k : B^+_r \to \C$ satisfying
\begin{equation}\label{hyp_v_k_2}
v^k_2((-r,r)) \subset [0,+\infty), \ \ v_2^k \to v_2 \text{ in } C^0_{\rm loc},
\end{equation}
then for $k$ large $\exists \zeta_k \in B_r^+$ such that $v^k_2(\zeta_k)=0$.
\end{lemma}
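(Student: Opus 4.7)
My plan is to exploit the fact that $\C \times \{0\}$ is a $j$-holomorphic submanifold of $\C \times B$ (since $j|_{\C \times \{0\}} = i \oplus i$), so that $v_2^{-1}(0)$ is isolated and carries a positive ``local order'' at $\zeta = 0$; this order will then force the continuous approximations $v_2^k$ to have nearby zeros via a winding-number argument. The first step is to extract a local holomorphic factorization of $v_2$ at the origin. The Cauchy-Riemann equation $\partial_s v + j(v) \partial_t v = 0$, after Taylor-expanding $j(v_1, v_2) - j(v_1, 0) = O(|v_2|)$ in $v_2, \bar v_2$, yields a perturbed equation of the form $\bar\partial v_2 = \alpha(\zeta) v_2 + \beta(\zeta) \bar v_2$ with coefficients $\alpha, \beta \in L^\infty(B_r^+)$ (using smoothness of $v$). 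Since $v_2|_{(-r, r)} \subset \R$, the boundary similarity principle then produces some $\rho \in (0, r)$ and a factorization $v_2 = \phi \cdot g$ on $B_\rho^+$, where $g : B_\rho \to \C$ is holomorphic and real on $(-\rho, \rho)$ (hence with real Taylor coefficients at $0$ by Schwarz reflection), and $\phi : \overline{B_\rho^+} \to \C$ is continuous, non-vanishing, and real on $(-\rho, \rho)$.

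Since $v_2 \not\equiv 0$ and $v_2(0) = 0$, the function $g$ has an isolated zero of some finite order $m \geq 1$ at $0$, and after shrinking $\rho$ I may assume $g$ has no other zeros on $\overline{B_\rho}$. Write $g(\zeta) = c \zeta^m + O(\zeta^{m+1})$ with $c \in \R \setminus \{0\}$. The condition $v_2 = \phi g \geq 0$ on $(-\rho, \rho)$, combined with $\phi$ having constant sign near $0$, forces $g$ to have constant sign on both sides of $0$ along the real axis. Since $g|_{(-\rho, \rho)}(\zeta) \sim c \zeta^m$ with $c$ real, this rules out odd $m$, so $m = 2n$ with $n \geq 1$. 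The leading expansion $v_2(\epsilon e^{i\theta}) \sim c\phi(0)\epsilon^{2n} e^{i 2n \theta}$ then shows that on the upper semicircle $\gamma_\epsilon(\theta) = \epsilon e^{i\theta}$, $\theta \in [0, \pi]$, the continuous argument of $v_2 \circ \gamma_\epsilon$ normalized to $0$ at $\theta = 0$ (note $v_2(\pm \epsilon) > 0$) reaches $2n\pi$ at $\theta = \pi$.

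To close the argument I would fix $\epsilon \in (0, \rho)$ small enough that all of the above holds, then suppose for contradiction that $v_2^k$ is non-vanishing on $\overline{B_\epsilon^+}$ for arbitrarily large $k$. Since $v_2$ is bounded away from zero on the compact semicircle $\gamma_\epsilon$, uniform convergence $v_2^k \to v_2$ on $\overline{B_\epsilon^+}$ yields that the argument change of $v_2^k$ along $\gamma_\epsilon$ tends to $2n\pi$. On the flat boundary $I = [-\epsilon, \epsilon]$, non-vanishing of $v_2^k$ combined with $v_2^k(I) \subset [0, +\infty)$ forces $v_2^k(I) \subset \R^+$, so the argument of $v_2^k$ is constantly zero along $I$. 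Therefore the closed loop $v_2^k \circ \partial B_\epsilon^+$ in $\C^*$ has winding number $n \geq 1$ around $0$. But $v_2^k$ continuously maps the contractible set $B_\epsilon^+$ into $\C^*$, so the boundary loop is null-homotopic and must have winding zero, a contradiction. Hence some $\zeta_k \in \overline{B_\epsilon^+} \subset B_r^+$ satisfies $v_2^k(\zeta_k) = 0$, as desired. The main technical obstacle is verifying the hypotheses of the boundary similarity principle, which requires the careful Taylor expansion of $j(v) - j(v_1,0)$ in $v_2$ and uses both smoothness of $v$ and the precise hypothesis $j|_{\C\times\{0\}} = i \oplus i$; everything else is a standard degree computation.
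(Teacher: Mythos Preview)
Your proposal is correct and follows essentially the same strategy as the paper's proof: both derive a perturbed Cauchy--Riemann equation for $v_2$ from the hypothesis $j|_{\C\times\{0\}}=i\oplus i$, apply a boundary version of Carleman's similarity principle to factor $v_2=\phi\cdot g$ with $g$ holomorphic and real on the real axis, use the boundary condition $v_2\geq 0$ to force the vanishing order $m=2n$ to be even, and conclude by a degree argument. The only cosmetic difference is that the paper computes the Brouwer degree $\deg(v_2,D_r^+,-\rho)=n$ at a small negative value $-\rho$ and transfers it to $v_2^k$ via the linear homotopy $\lambda(v_2+\rho^k)+(1-\lambda)v_2^k$, whereas you compute the winding number of $v_2^k$ around $0$ directly along $\partial B_\epsilon^+$; these are equivalent formulations of the same topological obstruction.
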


For a proof see the Appendix~\ref{app_lemma}.

\begin{corollary}\label{cor_int}
Let $J\in\J_+(\xi)$. Consider a point $z_0 \in \partial\D$, an open neighborhood $U$ of $z_0$ in $\D$ and a sequence $\util^k=(a^k,u^k):U\to\R\times M$ of $\jtil$-holomorphic maps satisfying $u^k(U\cap\partial\D) \subset F_0 = u_0(\D\setminus\partial\D) \ \forall k$. Assume that $\util=(a,u):U\to\R\times M$ is a $\jtil$-holomorphic map such that $\util^k \to \util$ in $C^\infty_{\rm loc}(U)$ and $\int_U u^*d\lambda>0$. If $u(z_0) \in x_0(\R)$ then for $k$ large enough there exists $\zeta_k \in U$ satisfying $u^k(\zeta_k) \in x_0(\R)$.
\end{corollary}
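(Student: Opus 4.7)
The plan is to reduce Corollary~\ref{cor_int} to Lemma~\ref{lemma_int} by constructing suitable local coordinates around $(a(z_0),u(z_0))$ in $\R\times M$. Choosing a Martinet tube $(U_0,\Phi)$ at the prime orbit $(x_0,T_{\rm min})$ around $u(z_0)$ from Definition~\ref{def_Martinet}, we obtain coordinates $(\theta,x_1,x_2)\in\R/\Z\times B$ in which the orbit becomes $\R/\Z\times\{0\}$. After unwrapping $\theta$ near $u(z_0)$, rescaling the symplectization coordinate $a$ appropriately, and rotating the $(x_1,x_2)$-plane so that $J\partial_{x_1}|_{x_0(\R)}=\partial_{x_2}$, a complex combination $z_1$ of $(a,\theta)$ together with $z_2=x_1+ix_2$ identifies a neighborhood of $(a(z_0),u(z_0))$ in $\R\times M$ with an open subset of $\C\times B$, so that $\R\times x_0(\R)$ corresponds to $\C\times\{0\}$ and $\jtil|_{\C\times\{0\}}$ becomes the standard complex structure on $\C^2$, matching hypothesis~\eqref{hyp_j}.

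Next I arrange that the boundary image lies along the positive real $z_2$-axis. Near a point of $K$, the $p$-disk $u_0$ decomposes $F_0$ into $p$ disjoint sheets inside the Martinet tube, one for each of the $p$ preimages in $\partial\D$ of $u(z_0)$ under $u_0|_{\partial\D}$. Since $u^k\to u$ uniformly near $z_0$ and each connected curve $u^k(\partial\D\cap U)\subset F_0$ lies close to $u(z_0)\in K$, after passing to a subsequence I may assume they all lie inside a single sheet $S$. A further $\theta$-dependent rotation of the $(x_1,x_2)$-plane---which is the identity along the zero section and therefore preserves the standard form of $\jtil$ already established on $\C\times\{0\}$---straightens $S$ so that it locally coincides with the positive real half-line $\{z_2\in[0,\infty)\}$.

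After shrinking $U$ and conformally identifying it with a half-disk $B_r^+\subset\C$ (sending $z_0\mapsto 0$ and $\partial\D\cap U\mapsto(-r,r)$), the local coordinate representatives $v=(v_1,v_2)$ and $v^k=(v_1^k,v_2^k)$ of $\util$ and $\util^k$ are $j$-holomorphic maps $B_r^+\to\C\times B$ with $v^k\to v$ in $C^\infty_{\rm loc}$. By construction, $v_2^k((-r,r))\subset[0,+\infty)$, one gets $v_2((-r,r))\subset[0,1)$ after possibly shrinking $r$, and $v_2(0)=0$ since $u(z_0)\in x_0(\R)$. To verify $v_2\not\equiv 0$, I argue by contradiction: if $v_2$ vanished on $B_r^+$ then $\util$ would map $B_r^+$ into the $\jtil$-holomorphic submanifold $\R\times x_0(\R)$, and unique continuation for $\jtil$-holomorphic maps would force $\util(U)\subset\R\times x_0(\R)$; this gives $u^*d\lambda\equiv 0$ on $U$, contradicting $\int_U u^*d\lambda>0$.

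Lemma~\ref{lemma_int} now applies and yields, for large $k$ along the chosen subsequence, points $\zeta_k\in B_r^+$ with $v_2^k(\zeta_k)=0$; equivalently $u^k(\zeta_k)\in x_0(\R)$. A routine argument---if the conclusion failed for infinitely many $k$ one applies the above to that subsequence and contradicts the lemma---upgrades the assertion to the full sequence. The main obstacle is the simultaneous normalization of two pieces of independent data: realizing $\jtil$ as standard along the trivial cylinder while straightening an \emph{a priori} arbitrary sheet of $F_0$ into the positive real axis of $z_2$. The key observation enabling this is precisely that $\theta$-dependent rotations of the transverse $(x_1,x_2)$-plane act as the identity on the zero section, so they preserve the standard form of $\jtil$ on $\C\times\{0\}$ while allowing us to rotate the sheet $S$ freely.
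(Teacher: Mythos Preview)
Your proposal is correct and follows essentially the same approach as the paper: construct tubular coordinates around $x_0(\R)$ in which $\jtil$ is standard along $\C\times\{0\}$ and the relevant sheet of $F_0$ lies in $\R/\Z\times[0,1)$, identify $(U,z_0)$ with a half-disk, and apply Lemma~\ref{lemma_int}. Your treatment is in fact slightly more explicit than the paper's in two places---you spell out why $v_2\not\equiv 0$ via unique continuation and the hypothesis $\int_U u^*d\lambda>0$, and you make the subsequence-to-full-sequence step explicit---while the paper simply asserts the existence of the required coordinates and that $v_2$ satisfies~\eqref{hyp_v_2}. One minor imprecision: a $\theta$-dependent \emph{rotation} of the $(x_1,x_2)$-plane need not take an arbitrary $d\lambda$-compatible $J$ to the standard $i$; in general one needs a $\theta$-dependent element of $Sp(2)$, but this does not affect your argument since the subsequent rotation straightening the sheet still commutes with $i$ on the zero section.
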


\begin{proof}
It follows from the hypotheses that $u(\partial\D\cap U) \subset u_0(\D)$. Up to a subsequence, we can assume that $u_0^{-1}(u^k(z_0))$ converges to a point $z_* \in \partial\D$ such that $u_0(z_*) = u(z_0)$.

Consider a small tubular neighborhood $N$ of $x_0(\R)$ equipped with coordinates $(\theta,x+iy) \in \R/\Z\times B$, where $B\subset \C$ is the open unit ball centered at $0$. We can construct these coordinates in such a way that 
\begin{itemize}
\item[($\rm a_1$)] $x_0(\R) \simeq \R/\Z\times \{0\}$, $\lambda|_{\R/\Z\times\{0\}} \simeq d\theta$, $\xi|_{\R/\Z\times\{0\}} \simeq \{0\}\times \C$ and $J|_{\R/\Z\times\{0\}} \simeq i$.
\item[($\rm a_2$)] $\exists$ open neighborhood $V$ of $z_*$ on $\D$ such that $u_0(V)$ corresponds to a subset of $\R/\Z \times [0,1)$.
\end{itemize}
Thus $\R\times N$ has coordinates $(a,\theta,z_2=x+iy) \in \R\times\R/\Z\times B$ which we lift to $\R^2\times B$. From now on we identify $\R^2\times B \simeq \C\times B$ by $(a,\theta,z_2) \simeq (z_1 = a+i\theta,z_2)$. 

Without loss of generality assume that $T_{\rm min}=1$. Pulling back $\jtil|_{\R\times N}$ to $\R\times\R/\Z\times B$ and lifting to $\C\times B$ we obtain an almost complex structure $j$ satisfying~\eqref{hyp_j}. Possibly after shrinking $U$ we can assume that $u^k(U\cap\partial\D) \subset u_0(V)$ and find a biholomorphism $$ (U,U\cap\partial\D,z_0) \simeq (B_1^+,(-1,1),0). $$ Composing $\util^k|_U,\util|_U$ with this biholomorphism we obtain maps $v^k,v$ defined on $B_1^+$. From now on we equip $B_1^+$ with a complex coordinate $s+it$. Moreover, using our coordinates we can represent $v^k,v$ as maps taking values on $\R\times\R/\Z\times B$, which can be lifted to maps taking values on $\C\times B$. Summarizing $v^k,v$ are $j$-holomorphic maps defined on $B_1^+$ and if we write $v=(v_1,v_2)$, $v^k=(v^k_1,v^k_2)$ then $v^k_2$ satisfies~\eqref{hyp_v_k_2} because $v^k((-1,1)) \subset \C\times[0,1)$. Thus $v_2$ satisfies~\eqref{hyp_v_2}. By Lemma~\ref{lemma_int} we find $\zeta_k \in B_1^+$ such that $v^k_2(\zeta_k) =0$, for $k$ large. This amounts to saying that $\util^k$ intersects $\R\times x_0(\R)$.
\end{proof}

\begin{theorem}\label{bishop_disks_dont_intersect}
Let $\mathcal Y \subset \M(J)/\Mob$ be a connected component containing disks close to the constant $(0,e)$. If $\util:\D\to \R\times M$ is a smooth map and $\util_k \in \Pi^{-1}(\mathcal Y)$ is a sequence satisfying $\util_k \to \util$ in $C^\infty$ then $\util(\D) \cap (\R\times x_0(\R)) = \emptyset$.
\end{theorem}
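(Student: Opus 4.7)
The plan is to combine a connectedness/positivity-of-intersections argument with the boundary analysis of Corollary~\ref{cor_int}, handling a few degenerate limit scenarios separately. Throughout, set $C := \R \times x_0(\R)$, which is a $\jtil$-holomorphic cylinder somewhere injective because $x_0|_{[0,T_{\rm min}]}$ is the underlying prime orbit.

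First I would prove the stronger statement that \emph{every} Bishop disk in $\Pi^{-1}(\mathcal Y)$ is already disjoint from $C$. Set $\mathcal Y' := \{y\in\mathcal Y : \Pi^{-1}(y)(\D)\cap C = \emptyset\}$ and verify $\mathcal Y' = \mathcal Y$ by showing $\mathcal Y'$ is nonempty, open, and closed. Nonemptiness comes from the explicit Bishop disks near $(0,e)$ of \S\ref{def_bishop_family}, which lie in a neighborhood of $(0,e)\notin C$; openness is immediate from compactness of $\Pi^{-1}(y)(\D)$ and closedness of $C$. For closedness, take $y_k \to y$ with $y_k \in \mathcal Y'$, choose a local section of the $\Mob$-bundle $\Pi$ near $y$ to obtain $C^\infty$-convergence $\util_k = \Pi^{-1}(y_k) \to \util = \Pi^{-1}(y)$, and observe that $\util$ is an embedded Bishop disk of positive $d\lambda$-area; an assumed intersection $\util(z_*) \in C$ must lie in the interior of $\D$ because $u(\partial\D)\subset F_0 \subset M\setminus x_0(\R)$, and positivity-and-stability of intersections of the distinct somewhere-injective $\jtil$-holomorphic curves $\util$ and $C$ then produces an intersection of $\util_k$ with $C$ for large $k$, contradicting $y_k \in \mathcal Y'$.

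Assuming $\mathcal Y' = \mathcal Y$, suppose toward contradiction that $\util_k \in \Pi^{-1}(\mathcal Y)$ converges in $C^\infty$ to $\util$ and $\util(z_*) \in C$. Three subcases have to be ruled out. If $\util$ is nonconstant with $\util(\D)\not\subset C$: when $z_*\in\D\setminus\partial\D$, positivity-and-stability of intersections again forces $\util_k \cap C \neq \emptyset$ for large $k$; when $z_*\in\partial\D$, the similarity principle applied to $\pi\circ du$ ensures $\int_U u^*d\lambda > 0$ on every small neighborhood $U$ of $z_*$, so Corollary~\ref{cor_int} yields the same conclusion, contradicting $\mathcal Y'=\mathcal Y$. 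If $\util(\D)\subset C$: use the diffeomorphism $(s,t)\in\R\times\R/\Z \mapsto (T_{\rm min}s, x_0(T_{\rm min}t))\in C$ to identify $\jtil|_C$ with the standard complex structure, and lift $\util$ to the universal cover $\C$ of $C$; this produces a holomorphic map $V\colon\D\to\C$ with $\mathrm{Re}(V)$ harmonic and vanishing on $\partial\D$, so by the strong maximum principle $\mathrm{Re}(V)\equiv 0$, and therefore $V$ (hence $\util$) is constant. Finally, if $\util \equiv (0,p_*)$ with $p_*\in x_0(\R)$, then $u_k(\partial\D) \to p_*$ uniformly; because $u_0$ is a $p$-to-$1$ cover of $x_0(\R)$ along $\partial\D$ and an embedding on $\D\setminus\partial\D$, a small neighborhood of $p_*$ in $M$ meets $F_0$ in $p$ disjoint local sheets, so the connected loops $u_k(\partial\D)$ eventually lie in a single sheet and $\gamma_{\util_k} = u_0^{-1}\circ u_k|_{\partial\D}$ is confined to a small $\D$-neighborhood of one point of $\partial\D$, contradicting the defining Bishop condition that $\gamma_{\util_k}$ wind once around $0\in\D$.

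The principal obstacle is verifying the hypotheses of positivity and stability of intersections each time they are invoked: $C$ is somewhere injective by primality of the underlying orbit, and the Bishop disks are somewhere injective by embeddedness, but one must also exclude that $\util$ is contained in (or a branched cover of) $C$ --- this is precisely why the degenerate case $\util(\D)\subset C$ has to be peeled off and handled via the maximum principle, and why the constant case at a point of $x_0(\R)$ requires a separate winding argument using the local sheet structure of $\overline{F_0}$ at the binding rather than purely intersection-theoretic reasoning.
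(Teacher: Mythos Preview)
Your proof is correct and follows essentially the same approach as the paper: first establish that every Bishop disk in $\Pi^{-1}(\mathcal Y)$ avoids $C = \R\times x_0(\R)$ via a connectedness argument driven by positivity and stability of intersections, then treat the limit using the interior intersection argument together with Corollary~\ref{cor_int} at boundary points. The paper argues dually (the set $\mathcal C$ of disks that \emph{do} intersect $C$ is open and closed, hence empty), but this is the same idea.

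Where you differ is in rigor: the paper's proof of the limit statement simply invokes positivity of intersections at interior points and Corollary~\ref{cor_int} at boundary points, tacitly assuming $\util$ is nonconstant and has positive $d\lambda$-area near any boundary intersection. You explicitly peel off the degenerate cases $\util(\D)\subset C$ (reduced to constant via the maximum principle on the lifted harmonic $\R$-component) and $\util\equiv(0,p_*)$ with $p_*\in x_0(\R)$ (ruled out by the local sheet structure of $\overline{F_0}$ near the binding together with the winding-one condition on $\gamma_{\util_k}$). These cases genuinely need to be excluded for the intersection arguments and the hypothesis $\int_U u^*d\lambda>0$ of Corollary~\ref{cor_int} to apply, so your version is the more complete one.
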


\begin{proof}
The set $\mathcal C$ of disks $\vtil \in \Pi^{-1}(\mathcal Y)$ satisfying $\vtil(\D) \cap (\R\times x_0(\R)) \neq\emptyset$ is clearly closed. For any $\vtil \in \mathcal C$ there exists $z\in\D$ such that $\vtil(z) \in \R\times x_0(\R)$ and, by definition of $\M(J)$, we have $z\in \D\setminus\partial\D$. By positivity and stability of intersections, $\vtil$ is an interior point of $\mathcal C$. This shows that $\mathcal C$ is also open. It follows that $\mathcal C = \emptyset$ because $\Pi^{-1}(\mathcal Y)$ is connected and contains disks close to $(0,e)$. 

Consider $\util,\util_k$ as in the statement. Clearly $\util$ is $\jtil$-holomorphic. If there exists $z\in\D\setminus\partial\D$ such that $\util(z) \in \R\times x_0(\R)$ then, arguing as above using positivity and stability of intersections, $\util_k(\D)\cap\R\times x_0(\R)\neq\emptyset$ for $k$ large, a contradiction. If $\util(z) \in \R\times x_0(\R)$ for some $z\in\partial \D$ then we use Corollary~\ref{cor_int} to find that $\util_k(\D)\cap\R\times x_0(\R)\neq\emptyset$ for $k$ large, again a contradiction.
\end{proof}

\subsubsection{$C^1$-estimates near the boundary}

The following statement is a version of the non-trivial Theorem 2.1 from~\cite{char1} adapted to our set-up. Here ${\rm dist}_M$ denotes any fixed distance function on $M$.


\begin{theorem}\label{C1_estimates}
Let $\mathcal Y \subset \M(J)/\Mob$ be a component containing disks close to $(0,e)$, and $\util_k=(a_k,u_k) \in \Pi^{-1}(\mathcal Y)$, $k\geq 1$, be disks satisfying $\gamma_{\util_k}(0) \in L^{-1}(0)$, $\gamma_{\util_k}(1/4) \in L^{-1}(1/4)$, $\gamma_{\util_k}(1/2) \in L^{-1}(1/2)$ and $\inf_k {\rm dist}_M(u_k(\partial\D),e) > 0$. Then there exists $\epsilon>0$ such that $\sup_k \ \sup \{|d\util_k(z)| : 1-\epsilon\leq|z|\leq1\} < \infty$.
\end{theorem}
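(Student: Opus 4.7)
The plan is to argue by contradiction using a bubbling-off analysis at the boundary, adapting the method of~\cite[Theorem~2.1]{char1} to the present setting in which the boundary condition comes from the (possibly non-embedded) $p$-disk $u_0$. Suppose no such $\epsilon$ exists. After passing to a subsequence, one finds points $z_k\in\D$ with $1-|z_k|\to 0$ and $|d\util_k(z_k)|\to\infty$, converging to some $z_\infty\in\partial\D$. Hofer's Lemma yields modified points $\tilde z_k\to z_\infty$ and radii $\epsilon_k\to 0^+$ with $R_k:=|d\util_k(\tilde z_k)|\to\infty$, $R_k\epsilon_k\to\infty$, and $|d\util_k|\le 2R_k$ on $B_{\epsilon_k}(\tilde z_k)\cap\D$. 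Define the rescaled maps
\[
\vtil_k(\zeta)=\bigl(a_k(\tilde z_k+\zeta/R_k)-a_k(\tilde z_k),\,u_k(\tilde z_k+\zeta/R_k)\bigr),
\]
so that $|d\vtil_k(0)|=1$ and $E(\vtil_k)\le C_0$ by~\eqref{crucial_energy_estimate}. Set $d_k:=R_k(1-|\tilde z_k|)\ge 0$; the analysis then splits according to whether $d_k\to+\infty$ or $d_k$ stays bounded.

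In the interior-bubble case, $d_k\to\infty$, and elliptic boot-strapping produces in the limit a non-constant finite-energy $\jtil$-holomorphic plane $\vtil_\infty=(a_\infty,u_\infty):\C\to\R\times M$ with $E(\vtil_\infty)\le C_0$, asymptotic at its unique positive puncture to a closed Reeb orbit $P_\infty$ of period $T_\infty\le C_0$. Theorem~\ref{bishop_disks_dont_intersect} applied to the sequence $\util_k$ on $\D$ (passing to a further subsequence that converges in $C^\infty_{\rm loc}$ on $\D$ away from the bubbling point) gives $u_\infty(\C)\cap x_0(\R)=\emptyset$; in particular $P_\infty$ is not an iterate of $x_0$. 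The fact that the $\util_k$ satisfy condition~(C) together with the structure of the $p$-disk $u_0$ provides a capping disk in $M\setminus x_0(\R)$ for $P_\infty$ obtained by concatenating the bubble with a piece of the limit Bishop disk, so $P_\infty$ is contractible in $M\setminus K$. Applying Proposition~\ref{main_prop_compactness} to the germinating sequence $\vtil_k$ now produces a closed Reeb orbit $P_*\in\P^*$ with $\int_{P_*}\lambda\le C_0$ that bounds a finite-energy plane inside $M\setminus x_0(\R)$, contradicting the hypothesis on $\P^*$ in Proposition~\ref{prop_existence_fast}.

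In the boundary-bubble case, $d_k\to d<\infty$; a biholomorphism absorbing the translation by $id_k$ brings the rescaled domains to exhaust the upper half-plane $\mathbb{H}$, and $\vtil_k\to\vtil_\infty:\mathbb{H}\to\R\times M$ in $C^\infty_{\rm loc}$, where $\vtil_\infty$ is non-constant with $u_\infty(\partial\mathbb{H})\subset F_0$, $a_\infty|_{\partial\mathbb{H}}\equiv 0$, $E(\vtil_\infty)\le C_0$ and $|d\vtil_\infty(0)|=1$. Subharmonicity of $a_\infty$ and Hopf's lemma imply $a_\infty<0$ in the interior, and the half-plane asymptotic analysis of~\cite{char1} describes $u_\infty$ near $\infty\in\partial\mathbb{H}$: either $\vtil_\infty$ extends smoothly across $\infty$ (closing $u_\infty(\partial\mathbb{H})$ into a loop on $F_0$) or $\infty$ is a puncture at which $\vtil_\infty$ is asymptotic to a closed Reeb orbit. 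In either sub-case the boundary loop $u_\infty(\partial\mathbb{H})$, lifted to $\D$ through $u_0$ and the map~\eqref{map_L} of~\S\ref{lift_fol}, must collapse onto a single leaf of the characteristic foliation $\F_0$. This contradicts the three-marked-point condition $\gamma_{\util_k}(j/4)\in L^{-1}(j/4)$ for $j=0,1,2$, together with the hypothesis $\inf_k{\rm dist}_M(u_k(\partial\D),e)>0$ which prevents collapse onto the singular leaf $\{0\}$.

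The main obstacle is the analysis of the boundary bubble, since $F_0$ is merely immersed (not embedded) near its end at $K$: all boundary tracking must therefore be performed in the domain $\D$ via the loops $\gamma_{\util_k}$ and the submersion $L$ of~\eqref{map_L}, so that the three marked points can play their role of obstructing degeneration of boundary data, as in the classical argument of~\cite[Theorem~2.1]{char1} for the embedded case.
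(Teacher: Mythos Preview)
Your approach differs substantially from the paper's and contains a genuine gap in the interior-bubble case.

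The paper does \emph{not} redo the bubbling analysis on $M$. Instead it observes that a tubular neighborhood of the immersed $p$-disk $u_0(\D)$ is diffeomorphic to $L(p,q)\setminus B^3$, hence is $p$-covered by $S^3\setminus pB^3$; gluing in $p$ copies of $M'\setminus B^3$ (where $M\simeq L(p,q)\# M'$) produces a $p$-cover $\widetilde M\to M$. The $p$-disk $u_0$ lifts to an \emph{embedded} disk $U_0:\D\to\widetilde M$ bounding the lifted orbit, with self-linking $-1$ and the same normal form near the singular point. The Bishop disks $\util_k$ lift to Bishop disks in $\R\times\widetilde M$, and~\cite[Theorem~2.1]{char1} applies verbatim to give the $C^1$-bound upstairs, which descends. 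No dynamical hypothesis on $\P^*$ is invoked; the estimate is purely geometric.

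Your interior-bubble argument has several problems. First, Proposition~\ref{main_prop_compactness} requires the limit $\vtil$ of the germinating sequence to have a \emph{negative} puncture; your bubble is a plane with none, so the proposition does not apply. Second, from $u_\infty(\C)\cap x_0(\R)=\emptyset$ you cannot conclude that $P_\infty$ is not an iterate of $x_0$: a plane may be asymptotic to $P_0=(x_0,pT_{\min})$ while missing $x_0(\R)$ in its image (this is exactly the situation for the fast planes one eventually constructs). Consequently your claim that $P_\infty$ is contractible in $M\setminus K$ is unjustified. Third, even granting $P_\infty\subset M\setminus K$, Lemma~\ref{lemprinc} only gives $\mu_{CZ}(P_\infty)\ge 2$; if $\mu_{CZ}(P_\infty)\ge 3$ your argument produces no contradiction at all. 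In short, the dynamical hypotheses of Proposition~\ref{prop_existence_fast} are neither the right tool here nor sufficient to close this case. If you wish to avoid the covering trick, you must instead reproduce the geometric argument of~\cite[Theorem~2.1]{char1} for the interior case (which links the neck between the bubble and the boundary to degeneration of the boundary data, contradicting the three-marked-point normalization), carried out via the lifted characteristic foliation $\F_0$ and the submersion $L$ so that non-embeddedness of $u_0$ near $K$ is irrelevant.
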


\begin{proof}
As remarked after the proof of Lemma~\ref{lem:lpqmonodromy}, a regular neighborhood $\mathcal N$ of the (image of the) $p$-disk $u_0(\D)$ is diffeomorphic to $L(p,q)\setminus B^3$, where $B^3$ denotes a $3$-ball. Hence $M\simeq L(p,q)\# M'$ for some $3$-manifold $M'$, and $\mathcal N$ is $p$-covered by the complement in $S^3$ of $p$ disjoint copies of~$B^3$ which we denote for simplicity by $S^3\setminus pB^3$. There is absolutely no restriction on what manifold $M'$   {can} be, but we can use these facts to construct a $p$-covering space over $M$ by gluing in $p$ copies of $M'\setminus B^3$ with $S^3\setminus pB^3$ to obtain a manifold $\widetilde M$. In fact, the group of deck transformations of the $p$-covering $S^3\setminus pB^3 \to \mathcal N$, which is isomorphic to $\Z_p$, permute the $p$ copies of $S^2$ composing $\partial(S^3\setminus pB^3)$ and, consequently, its generator can be extended to a diffeomorphism of $\widetilde M$ which interchange the $p$ copies of $M'\setminus B^3$ accordingly. This diffeomorphism generates a free action of $\Z_p$ on $\widetilde M$ and $M$ is the orbit space. The quotient map is a $p$-covering map denoted by $\widetilde \pi$ with group of deck transformations $\Z_p$. The lifted contact structure to $\widetilde M$ will be denoted by $\widetilde \xi$, and the lifting of the contact form $\lambda$ to $\widetilde M$ by $\widetilde \lambda$. 

The map $u_0$ lifts to an embedding $U_0:\D\to \widetilde M$. The Reeb flow of $\widetilde\lambda$ projects onto the Reeb flow of $\lambda$, so that the closed $\lambda$-Reeb orbit $(x_0,T_0)$ is lifted to a closed $\widetilde\lambda$-Reeb orbit $(\widetilde x_0,T_0)$ with minimal positive period $T_0$. The image $\mathcal D_0 = U_0(\D)$ is an embedded disk with boundary $\widetilde x_0(\R)$, and the transverse unknot $\widetilde x_0(\R)$ has self-linking number $-1$. This last claim follows from Lemma~\ref{lemma_self_link_props}. The characteristic foliation $(\widetilde \xi\cap T\mathcal D_0)^\bot$ has precisely one sigular point $\widetilde e$, which is nicely elliptic, and one finds suitable coordinates near $\widetilde e$ where $\widetilde \lambda$ and $\mathcal D_0$ assume a precise normal form exactly as for the original data in $M$. The Bishop disks $\util_k$ lift to holomorphic disks $\widetilde U_k$ with boundary on $\mathcal D_0$ since the almost complex structure lifts to $\R\times\widetilde M$ keeping $\R$-invariance. At this point we note that we can apply the non-trivial Theorem~2.1 from~\cite{char1} to find $\epsilon>0$ such that $$ \sup_k \{ |d\widetilde U_k(z)| : 1-\epsilon\leq |z| \leq 1 \} < \infty. $$ Here we lifted the $\R$-invariant Riemannian metric from $\R\times M$ to $\R\times \widetilde M$. The desired conclusion about the disks $\util_k$ follows.
\end{proof}

\subsubsection{Generic almost complex structures}\label{fred_argument}

Consider a finite set $\Gamma \subset \D\setminus\partial \D$ and some $J\in\J_+(\xi)$. Consider the following mixed boundary value problem studied in~\cite{props3}:
\begin{equation}\label{mixed_bnd_value_prob}
\left\{
\begin{aligned}
& \util = (a,u) : \D\setminus \Gamma \to \R\times M \ \text{is an embedding}, \\
& \bar\partial_{\jtil}(\util)=0 \ \text{and} \ 0<E(\util)<\infty, \\
& a\equiv0 \ \text{on} \ \partial\D \ \text{and} \ u(\partial\D) \subset F_0 = u_0(\D\setminus \partial\D), \\
& \int_{\D\setminus\Gamma} u^*d\lambda>0 \ \text{and each} \ z\in\Gamma \ \text{is a negative puncture}, \\
& t\mapsto u(e^{i2\pi t}) \ \text{winds once around} \ e.
\end{aligned}
\right.
\end{equation}
Here the complex structure on $\D$ is not normalized in any way. We need the following statement which is a consequence of the Fredholm theory developed in~\cite{props3}.

\begin{proposition}\label{prop_generic_J}
There exists a dense set $\J_{\rm reg} \subset \J_+(\xi)$ with the following property. If $J\in\J_+(\xi)$ and $\util$ is a $\jtil$-holomorphic solution of~\eqref{mixed_bnd_value_prob} such that at each $z\in\Gamma$ the map $\util$ is asymptotic to a contractible closed Reeb orbit $P_z$ satisfying $\mu_{CZ}(P_z,\beta_{\rm disk})\geq 2$, and there is at least one $z^*\in\Gamma$ satisfying $\mu_{CZ}(P_{z^*},\beta_{\rm disk})\geq 3$, then $J \not\in \J_{\rm reg}$.
\end{proposition}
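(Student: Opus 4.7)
The plan is to prove this via a Sard-Smale argument applied to a universal moduli space. For each combinatorial type---a cardinality $k=\#\Gamma$ together with a tuple $\mathbf{P}=(P_1,\ldots,P_k)$ of closed Reeb orbits of $\lambda$ specifying the asymptotic limits---I would consider the universal moduli space
$$
\mathcal M^{\rm univ}(\mathbf{P}) = \{(J,\util) : J\in\J_+(\xi), \ \util \text{ solves \eqref{mixed_bnd_value_prob} with asymptotics } \mathbf{P}\},
$$
working in a $C^\ell$-completion of $\J_+(\xi)$ and passing back to $C^\infty$ at the end by the usual Taubes trick. Since $\lambda$ is nondegenerate there are only countably many such tuples, so it suffices to prove that for each combinatorial type the set of $J$ for which the corresponding fiber is empty is residual in $\J_+(\xi)$; a countable intersection of residual sets in the Baire space $\J_+(\xi)$ is dense, yielding $\J_{\rm reg}$.

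Next I would apply the Fredholm package from \cite{props3} to give $\mathcal M^{\rm univ}(\mathbf{P})$ the structure of a smooth Banach manifold. The key input is that the linearized Cauchy-Riemann operator at a solution $\util$, augmented by variations of $J$, is surjective; this is the standard transversality argument. Since $\util$ is an embedding and $\int u^*d\lambda>0$, the section $\pi\circ du$ does not vanish identically and, away from the finitely many periodic Reeb trajectories $x_0(\R)\cup P_1\cup\cdots\cup P_k$, one can find an open subset of $M\setminus F_0$ on which $\pi\circ du\neq 0$; a variation of $J$ compactly supported there can realize any prescribed perturbation of $\bar\partial_{\jtil}(\util)$. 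The projection $\pi_\J:(J,\util)\mapsto J$ is then a Fredholm map of index equal to the Fredholm index of the linearized Cauchy-Riemann operator on a fixed-$J$ fiber.

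The crux of the argument is the index computation. Using the asymptotic trivialization class $\beta_{\rm disk}$ (well-defined since $c_1(\xi)|_{\pi_2(M)}=0$), the boundary loop $u|_{\partial\D}$ wraps once around the positive elliptic singularity $e$, contributing a Maslov-type boundary term of $+2$, while each negative puncture contributes $-\mu_{CZ}(P_z,\beta_{\rm disk})$ plus Euler-characteristic corrections accounting for $\chi(\D\setminus\Gamma)=1-\#\Gamma$. After quotienting by the three-dimensional M\"obius action, the expected dimension of $\mathcal M^{\rm univ}(\mathbf{P})^J/\Mob$ obeys an inequality in which each puncture with $\mu_{CZ}(P_z,\beta_{\rm disk})\geq 2$ contributes $\leq 0$ and each puncture with $\mu_{CZ}\geq 3$ contributes $\leq -1$. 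Under the hypotheses of the proposition (every $\mu_{CZ}(P_z,\beta_{\rm disk})\geq 2$ and at least one $\mu_{CZ}(P_{z^*},\beta_{\rm disk})\geq 3$) the expected dimension is therefore strictly negative. Sard-Smale now gives that the set of regular values of $\pi_\J$ is residual; for such $J$ the fiber is a manifold of negative dimension and hence empty.

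The main obstacle is pinning down the Maslov contribution from the boundary: $u(\partial\D)$ encircles $e$ inside the immersed surface $F_0$, so the relevant Maslov index is not that of a totally real boundary in the classical sense but is computed from the winding of the restricted bundle $(\bar u|_{\partial\D})^*\xi$ relative to $\beta_{\rm disk}$. The normal form for $\lambda$ at $e$ provided by Lemma~\ref{good_char_fol}, together with the geometric description of $\beta_{\rm disk}$ in terms of the $p$-disk $u_0$, pins this contribution down; once done, the rest is a routine application of the standard transversality/Sard-Smale package.
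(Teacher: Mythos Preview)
Your proposal is correct and follows precisely the approach the paper has in mind. In fact the paper does not spell out a proof at all: it simply states that ``the non-trivial proof follows a well-known pattern, see~\cite{props3} for more details.'' Your outline---universal moduli space, transversality via variations of $J$ supported where $\pi\circ du\neq 0$, the index computation showing that the hypotheses force negative expected dimension after dividing by $\Mob$, and Sard--Smale to conclude---is exactly that pattern, and your identification of the boundary Maslov contribution as the only point requiring care is accurate.
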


The non-trivial proof follows a well-known pattern, see~\cite{props3} for more details.

\subsubsection{Limiting behavior of the Bishop family}

According to \S~\ref{def_bishop_family} one finds $J \in \J_+(\xi)$ and a disk $\util_* \in \M(J)$ arbitrarily $C^0$-close to the constant $(0,e)$. Every such $\util_*$ is automatically Fredholm regular for a Fredholm theory of pseudo-holomorphic disks with boundary in the embedded surface $\{0\}\times (F_0\setminus\{e\})$ which is totally real in $(\R\times M,\jtil)$; this is proved exactly as in~\cite{93}. If $\J_{\rm reg} \subset \J_+(\xi)$ is the set given by Proposition~\ref{prop_generic_J} and $J'\in\J_{\rm reg}$ is a $C^\infty$-small perturbation of $J$ then we can find a disk $\util'_* \in \M(J')$ as a small $C^\infty$-perturbation of $\util_*$. Reverting the notation back to $J$ and $\util_*$, it follows that we could have assumed $J\in\J_{\rm reg}$ from the beginning.

Write $\util_*=(a_*,u_*)$. Consider the connected component $\mathcal Y \subset \M(J)/\Mob$ containing $\Pi(\util_*)$ and choose $0<\bar\delta_0<{\rm dist}_M(u_*(\partial\D),e)$. We define
\begin{equation}
\bar \ell = \sup \{ \tau(\Pi(\util)) \mid \util \in \Pi^{-1}(\mathcal Y) \ \text{and} \ {\rm dist}_M(u(\partial\D),e)\geq\bar \delta_0 \ \text{where} \ \util=(a,u) \}.
\end{equation}
Here $\tau$ denotes the local diffeomorphism~\eqref{length_function}. The next step is to follow Hofer, Wysocki and Zehnder~\cite{char1,char2} closely and prove

\begin{proposition}\label{prop_limit_bishop_disks}
Under the hypotheses of Proposition~\ref{prop_existence_fast} the following holds. If $\util_k = (a_k,u_k) \in \Pi^{-1}(\mathcal Y)$ is a sequence satisfying $$ \begin{array}{cc} \tau(\util_k) \to \bar \ell, & \inf_k {\rm dist}_M(u_k(\partial\D),e)\geq \bar\delta_0 \end{array} $$ and the normalization conditions $\gamma_{\util_k}(\tau) \in L^{-1}(\tau)$ for $\tau\in\{0,1/4,1/2\}$ 
then, up to selection of subsequence still denoted $\util_k$, the set
\begin{equation}\label{bubb_off_pts_limit_bishop}
\Gamma := \{ z\in \D \mid \exists k_j \to \infty \ \text{and} \ z_j \to z \ \text{such that} \ |d\util_{k_j}(z_j)|\to\infty \}
\end{equation} consists of a single point in $\D\setminus \partial\D$. After composing with suitable biholomorphisms of $\D$ we may assume $\Gamma = \{0\}$ so that, up to a further subsequence, $\util_k$ converges to a finite-energy non-constant $\jtil$-holomorphic map $\util_\infty = (a_\infty,u_\infty) :\D\setminus\{0\}\to \R\times M$. This map satisfies $$ \int_{\D\setminus\{0\}} u_\infty^*d\lambda = 0 $$ and $\exists c\in\R$ such that $\util_\infty(e^{2\pi(s+it)}) = (T_0s,x_0(T_0t + c))$ $\forall(s,t) \in (-\infty,0]\times \R/\Z$. In particular we get $\bar\ell = \ell$, where $\ell$ is the length of $L^{-1}(0)$.
\end{proposition}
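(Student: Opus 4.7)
The plan is to combine the bubbling-off compactness framework of Section~\ref{section_bubb_off_analysis} with the maximality of $\bar\ell$ and the hypothesis on $\P^*$ to pin down the limit structure.

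By virtue of the three-point normalization on $\gamma_{\util_k}$ and the uniform bound ${\rm dist}_M(u_k(\partial\D),e)\geq\bar\delta_0$, Theorem~\ref{C1_estimates} provides $C^1$-estimates of $\util_k$ on a fixed collar of $\partial\D$, so $\Gamma\subset\D\setminus\partial\D$ is finite. Standard elliptic estimates together with the area bound~\eqref{crucial_energy_estimate} then yield, after passing to a subsequence, a $C^\infty_{\rm loc}$-limit $\util_\infty=(a_\infty,u_\infty)$ on $\D\setminus\Gamma$ with $E(\util_\infty)\leq C_0$; each $z\in\Gamma$ is a negative puncture asymptotic to a contractible closed Reeb orbit $P_z$. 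The case $\Gamma=\emptyset$ is ruled out because Lemma~\ref{lemma_intersections} would give $\util_\infty\in\M(J)$ with $\tau(\Pi(\util_\infty))=\bar\ell<\ell$, and the local embedding~\eqref{local_embedding_93} combined with the continuity of $u\mapsto{\rm dist}_M(u(\partial\D),e)$ would allow extending $\mathcal Y$ past $\Pi(\util_\infty)$ through disks still satisfying the distance condition (after slightly reducing $\bar\delta_0$), contradicting the maximality of $\bar\ell$.

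The main obstacle is to control the bubble trees and identify the orbits $P_z$. For each $z\in\Gamma$, a soft rescaling as in Section~\ref{secsoft} produces a germinating sequence with non-constant limit, and Proposition~\ref{propbubtree} assembles a bubbling-off tree rooted at $\util_\infty$. Theorem~\ref{bishop_disks_dont_intersect} together with Corollary~\ref{cor_int} ensure that $\util_\infty$ and every bubble in every tree are disjoint from $\R\times K$, so every finite-energy plane in the tree descends to a smooth capping disk in $M\setminus K$ for its asymptotic orbit. Applying Proposition~\ref{main_prop_compactness} iteratively to subtrees, any non-trivial subtree would produce a bottom-of-tree plane whose asymptotic orbit $P_*$ satisfies $\mu_{CZ}(P_*,\beta_{\rm disk})=2$ and $\int_{P_*}\lambda\leq C_0$; but then $P_*\in\P^*$ and the capping disk would contradict the hypothesis. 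Hence every subtree is trivial: each tree at $z$ is a single finite-energy plane $\util_z$ asymptotic to $P_z$. Combining Lemma~\ref{lemma_no_p'_disk} (applied to bubble planes whose asymptotic orbit lies on $K$, forcing multiplicity $\geq p$) with the action balance $\sum_z \int_{P_z}\lambda+\int_{\D\setminus\Gamma}u_\infty^*d\lambda\leq T_0=pT_{\rm min}$ (derived from Stokes' theorem and the limit of $\int_\D u_k^*d\lambda=\int_{\partial\D}u_k^*\lambda$), a case analysis using that each bubble $\util_z$ bounds a disk in $M\setminus K$ to control the homology class of $u_\infty|_{\partial\D}$ in $M\setminus K$ forces $|\Gamma|=1$, $P_z=P_0$, and $\int_{\D\setminus\Gamma}u_\infty^*d\lambda=0$.

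Finally, vanishing of the $d\lambda$-area of $\util_\infty$ implies $u_\infty$ is confined to $\R\times x_0(\R)$, and the asymptotic condition $P_z=P_0$ at the single puncture together with the three-point normalization of $\gamma_{\util_k}$ pin down $\util_\infty$ as the trivial cylinder $(s,t)\mapsto(T_0s,x_0(T_0t+c))$ for a single real constant $c$, after a M\"obius reparametrization placing $\Gamma=\{0\}$. The identity $\bar\ell=\ell$ then follows since the boundary convergence $u_k(\partial\D)\to u_\infty(\partial\D)=K$ forces $\gamma_{\util_k}\to\partial\D$, hence $\tau(\util_k)\to\ell$.
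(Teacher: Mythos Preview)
Your argument has a genuine gap: you never invoke the genericity assumption $J\in\J_{\rm reg}$ (Proposition~\ref{prop_generic_J}), and without it your application of Proposition~\ref{main_prop_compactness} is unjustified. When you write ``applying Proposition~\ref{main_prop_compactness} iteratively to subtrees'', recall that this proposition requires either (a) $\mu_{CZ}(P_\infty)\leq 2$ or (b) $\wind_\infty(\infty)\leq 1$ for the root. For the germinating sequence obtained by soft rescaling at a negative puncture $z$ of $\util_\infty$, the root is asymptotic to $P_z$, and Lemma~\ref{lemprinc} only gives $\mu_{CZ}(P_z)\geq 2$. Nothing you have said rules out $\mu_{CZ}(P_z)\geq 3$, in which case neither (a) nor (b) is available. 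Likewise, even if a subtree collapsed to a single plane $\util_z$ asymptotic to some $P_z\subset M\setminus K$ with $\mu_{CZ}(P_z)\geq 3$, this orbit is \emph{not} in $\P^*$ and the hypothesis of Proposition~\ref{prop_existence_fast} gives you no contradiction. Your ``case analysis'' and action balance do not close this hole, and in any case the bound you quote is $\leq C_0$, not $\leq T_0$, until you already know $u_\infty(\partial\D)=K$.

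The paper's route is different and hinges precisely on genericity. One first assumes $\int_{\D\setminus\Gamma} u_\infty^*d\lambda>0$ and shows (via Corollary~\ref{cor_int} and the results of~\cite{dusa}) that $\util_\infty$ is an embedded solution of~\eqref{mixed_bnd_value_prob}. All its negative asymptotic orbits satisfy $\mu_{CZ}\geq 2$ by Lemma~\ref{lemprinc}; since $J\in\J_{\rm reg}$, Proposition~\ref{prop_generic_J} forbids any of them from having $\mu_{CZ}\geq 3$, hence all have $\mu_{CZ}=2$. Now Proposition~\ref{main_prop_compactness}~(a) applies at each negative puncture and manufactures the forbidden plane in $\P^*$. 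This contradiction forces $\int u_\infty^*d\lambda=0$; then property~(c) of the special $p$-disk gives $u_\infty(\partial\D)=K$ and hence $\bar\ell=\ell$, the similarity principle yields the explicit form $\util_\infty=F\circ g$, and Lemma~\ref{lemma_no_p'_disk} is used to show $\#\Gamma=1$ (a proper iterate of $K$ cannot be contractible). You should reorganize your argument along these lines, inserting the genericity step before invoking Proposition~\ref{main_prop_compactness}.
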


We now turn to the proof of Proposition~\ref{prop_limit_bishop_disks} and consider a sequence $\util_k=(a_k,u_k)$ as in the above statement. According to Theorem~\ref{C1_estimates} the set $\Gamma$ defined in~\eqref{bubb_off_pts_limit_bishop} is contained in $\D\setminus\partial\D$. After a rescaling using Hofer's lemma and selection of a subsequence, each point in $\Gamma$ produces a finite-energy plane which takes a positive quantum of $d\lambda$-area depending on the minimal period among all closed Reeb orbits. Thus, up to a subsequence still denoted by $\util_k$, we may assume that $\Gamma$ is finite since, otherwise, we would obtain a contradiction to the fact that there is a uniform bound on the $d\lambda$-area of the disks in the Bishop family.

Thus, again up to a subsequence, we may assume that $\util_k$ converges in $C^\infty_{\rm loc}(\D\setminus\Gamma)$ to a finite-energy $\jtil$-holomorphic map
\begin{equation}\label{limit_bishop_disks}
\util_\infty = (a_\infty,u_\infty) :\D\setminus \Gamma \to \R\times M.
\end{equation}
We claim that $\util_\infty$ is not constant. In fact, if $\util_\infty$ is constant then, since $\Gamma\cap\partial\D = \emptyset$, we would be able to conclude that the loops $t\mapsto u_k(e^{i2\pi t})$ do not wind around $e$ for large values of $k$, and this is absurd.

Next we claim that $\Gamma\neq\emptyset$. In fact, if $\Gamma=\emptyset$ then $\util_\infty$ is a non-constant disk and, as such, it must necessarily satisfy $$ \int_\D u_\infty^*d\lambda>0. $$ If $x_0(\R) \cap u_\infty(\partial\D) \neq\emptyset$ then by Corollary~\ref{cor_int} we get intersections of $u_k(\D)$ with $x_0(\R)$, contradicting Theorem~\ref{bishop_disks_dont_intersect}. Hence $u_\infty(\partial\D) \subset F_0$. By Lemma~\ref{lemma_intersections} we must have $\util_\infty \in \mathcal Y$. Clearly ${\rm dist}_M(u_\infty(\partial\D),e)\geq \bar\delta_0$ and $\tau(\util_\infty) = \bar\ell$. Since~\eqref{local_embedding_93} is an embedding we get, using the implicit function theorem, a contradiction to the definition of $\bar\ell$ which shows that $\Gamma\neq\emptyset$.

\begin{lemma}\label{zero_dlambda_area}
$\int_{\D\setminus\Gamma} u_\infty^*d\lambda=0$.
\end{lemma}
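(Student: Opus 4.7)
The plan is to argue by contradiction: assume $\int_{\D\setminus\Gamma} u_\infty^*d\lambda > 0$ and apply the bubbling-off analysis of Section~\ref{section_bubb_off_analysis} to produce a finite-energy plane whose asymptotic orbit violates the hypothesis on $\P^*$ in Proposition~\ref{prop_existence_fast}.

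First I would collect basic asymptotic and topological data. By Theorem~\ref{thm_precise_asymptotics}, at each $z\in\Gamma$ the map $\util_\infty$ is asymptotic to a closed Reeb orbit $P_z=(x_z,T_z)$. Stokes' theorem applied to $\util_\infty$, combined with the uniform bound $E(\util_k)\leq C_0$ from~\eqref{crucial_energy_estimate}, gives
$$
\sum_{z\in\Gamma} T_z + \int_{\D\setminus\Gamma} u_\infty^*d\lambda \;\leq\; C_0,
$$
so every $T_z$ is strictly less than $C_0$. Moreover Theorem~\ref{bishop_disks_dont_intersect} implies $\util_k(\D)\cap(\R\times K)=\emptyset$ for all $k$, so each $P_z$ is either an iterate of $(x_0,T_{\rm min})$ or is entirely contained in $M\setminus K$.

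Next I would fix a puncture $z\in\Gamma$ and perform soft rescaling (\S\ref{secsoft}) of the sequence $\util_k$ at $z$, producing a germinating sequence $\wtil_k$ whose non-constant limit $\wtil$ is asymptotic at $\infty$ to $P_z$ (Proposition~\ref{proplimit2}). I then want to invoke Proposition~\ref{main_prop_compactness} to extract a finite-energy plane $\util_*=(a_*,u_*)$ asymptotic to a closed Reeb orbit $P_*$ with $\mu_{CZ}(P_*,\beta_{\rm disk})=2$ and $E(\util_*)\leq C_0$. When some $P_z$ has $\mu_{CZ}(P_z,\beta_{\rm disk})\leq 2$ (necessarily $=2$ by Lemma~\ref{lemprinc}), this is alternative~(a) of the proposition and is immediate. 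When every $P_z$ has $\mu_{CZ}(P_z,\beta_{\rm disk})\geq 3$, I would establish alternative~(b) by showing $\wind_\infty(\wtil,\infty)\leq 1$; since the soft rescaling preserves the asymptotic behavior, this reduces to a winding estimate for $\util_\infty$ at $z$, which I would obtain by combining $\wind_\pi(\util_\infty)\geq 0$, the asymptotic winding inequalities at the remaining negative punctures (forced by $\mu_{CZ}(P_{z'})\geq 3$), and the winding of $u_\infty|_{\partial\D}$ relative to the characteristic foliation on $F_0$ (which is controlled because $\gamma_{\util_\infty}$ winds once around $e$).

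Finally I would derive the contradiction. The last clause of Proposition~\ref{main_prop_compactness}, combined with $\util_k(\D)\cap(\R\times K)=\emptyset$, yields $u_*(\C)\cap K=\emptyset$. Hence $P_*\subset M\setminus K$ is contractible in $M$ and in $M\setminus K$ via the capping disk $u_*$, satisfies $\mu_{CZ}(P_*,\beta_{\rm disk})=2$ and $\int_{P_*}\lambda=E(\util_*)\leq C_0$. Thus $P_*\in\P^*$ is contractible in $M\setminus K$ with period at most $C_0$, directly violating the hypothesis of Proposition~\ref{prop_existence_fast}. The main obstacle is the case in which every $P_z$ has $\mu_{CZ}(P_z)\geq 3$: producing the winding bound $\wind_\infty(\wtil,\infty)\leq 1$ needed to invoke alternative~(b) requires a careful accounting of asymptotic windings at interior punctures combined with the boundary winding of $\util_\infty$ on $F_0$, since $\util_\infty$ is a holomorphic disk with boundary rather than a closed-surface configuration of the type handled by Lemma~\ref{lemma_wind_relation}.
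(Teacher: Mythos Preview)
Your overall strategy---assume positive $d\lambda$-area, soft-rescale at a negative puncture, extract an index-$2$ plane via Proposition~\ref{main_prop_compactness}, and derive a contradiction with the hypotheses on $\P^*$---is exactly the paper's strategy. The gap is in how you dispose of the case where some $P_z$ has $\mu_{CZ}(P_z,\beta_{\rm disk})\geq 3$.

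The paper does \emph{not} attempt a winding estimate in that case. Instead it first shows that, under the contradiction hypothesis $\int u_\infty^*d\lambda>0$, the limit $\util_\infty$ is an \emph{embedded} solution of the mixed boundary value problem~\eqref{mixed_bnd_value_prob}: one checks $u_\infty(\partial\D)\cap x_0(\R)=\emptyset$ using Corollary~\ref{cor_int}, that $\gamma_{\util_\infty}$ winds once around $e$, that $\util_\infty$ is an embedding near $\partial\D$ via the strong maximum principle, and then that it is globally embedded by McDuff's positivity results and the fact that the $\util_k$ are embeddings. At this point the genericity assumption $J\in\J_{\rm reg}$ and Proposition~\ref{prop_generic_J} apply: since all $\mu_{CZ}(P_z)\geq 2$ by Lemma~\ref{lemprinc}, the existence of a single $z^*$ with $\mu_{CZ}(P_{z^*})\geq 3$ would contradict $J\in\J_{\rm reg}$. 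Hence $\mu_{CZ}(P_z)=2$ for \emph{every} $z\in\Gamma$, and only alternative~(a) of Proposition~\ref{main_prop_compactness} is ever needed.

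Your proposed route through alternative~(b) faces a genuine obstacle that you correctly flag but do not resolve. The quantity $\wind_\infty(\wtil,\infty)$ is the winding of the asymptotic eigenvector governing the \emph{positive} end of the rescaled limit $\wtil$, whereas the information available from $\util_\infty$ at $z$ concerns the eigenvector at a \emph{negative} end; these need not coincide, and there is no mechanism in the soft-rescaling construction that forces $\wind_\infty(\wtil,\infty)\leq 1$. Moreover, a $\wind_\pi$-type identity for $\util_\infty$ (a disk with boundary and interior punctures) would give lower bounds on $\wind_\infty(\util_\infty,z)$, not the upper bound you need on $\wind_\infty(\wtil,\infty)$. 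Without the genericity input you are missing the key step.

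One smaller point: from $u_*(\C)\cap K=\emptyset$ you cannot directly conclude $P_*\subset M\setminus K$; the asymptotic orbit could still be an iterate $(x_0,p'T_{\rm min})$ with $p'<p$. The paper rules this out via Lemma~\ref{lemma_no_p'_disk}, using that $\mu_{CZ}(P_*)=2<3\leq\mu_{CZ}(P_0)$ forces $p'<p$ and then the plane $u_*$ would give a forbidden capping disk.
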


\begin{proof}
Assume, by contradiction, that $\pi \circ du_\infty$ does not vanish identically. The map $\util_\infty$ must be an embedding. To see this first note that it must be somewhere injective. In fact, as explained before, we must have $u_\infty(\partial\D) \cap x_0(\R)=\emptyset$ because, otherwise, Corollary~\ref{cor_int} would give intersections of $\util_k(\D)$ with $\R\times x_0(\R)$ for $k$ large, which is again absurd. Here the fact that $\pi \circ du_\infty$ does not vanish identically was strongly used. Thus $t\mapsto u_\infty(e^{i2\pi t})$ winds once around $e$ inside $F_0$. These facts and a strong maximum principle for $a_\infty$ imply that $\util_\infty$ is an embedding near $\partial\D$ and, in particular, $\util_\infty$ is somewhere injective. Now results of McDuff~\cite{dusa} will show that self-intersections of $\util_\infty$ must be isolated,   {and} moreover, self-intersections or critical points of $\util_\infty$ will force self-intersections of $\util_k$ for large values of $k$, which is impossible. Thus $\util_\infty$ is an embedding.

It is easy to conclude that every point in $\Gamma$ must be a negative puncture since, by the maximum principle, $\sup_k a_k(\D)\leq 0$. Fix $z\in \Gamma$ and let $P$ be the asymptotic limit of $\util_\infty$ at $z$. Arguing as in Section~\ref{section_bubb_off_analysis} and doing ``soft-rescaling'' at any $z\in\Gamma$ one obtains a germinating sequence having a limit $\vtil$. The punctured finite-energy sphere $\vtil$ is asymptotic at its unique positive puncture to $P$, and by Lemma~\ref{lemprinc} we must have $\mu_{CZ}(P,\beta_{\rm disk})\geq 2$. Since $J\in\J_{\rm reg}$ then by Proposition~\ref{prop_generic_J} we get $\mu_{CZ}(P,\beta_{\rm disk})=2$ for every such $P$. If $\vtil$ has a negative puncture then this germinating sequence having $\vtil$ as a limit satisfies the hypotheses of Proposition~\ref{main_prop_compactness} item (a). Hence we find a finite-energy plane $\util_*=(a_*,u_*)$ asymptotic to a closed Reeb orbit $P_*$ satisfying $\mu_{CZ}(P_*,\beta_{\rm disk})=2$ and $u_*(\C) \cap x_0(\R) = \emptyset$. If $\vtil$ has no negative punctures then we obtain the same conclusion setting $\util_*=\vtil$ and $P_*=P$: it only needs to be proved that $u_*(\C) \cap x_0(\R)=\emptyset$ which follows easily by positivity of intersections since the sets $u_k(\D)$ do not intersect $x_0(\R)$. If $P_*=(x_0,jT_{\rm min})$ then $1\leq j<p=T_0/T_{\rm min}$ because $\mu_{CZ}(P_0,\beta_{\rm disk})\geq 3$, see the proof of Lemma~\ref{lemindex}. Hence $u_*$ provides a disk for the $j$-th iterate of $(x_0,T_{\rm min})$ contradicting Lemma~\ref{lemma_no_p'_disk} since $j<p$. This shows that $P_* \subset M\setminus x_0(\R)$ and $P_*$ is contractible in $M\setminus x_0(\R)$, contradicting the assumptions of Proposition~\ref{prop_limit_bishop_disks}.
\end{proof}

With the help of Lemma~\ref{zero_dlambda_area} we conclude that $u_\infty(\partial\D)$ is a closed Reeb trajectory contained in $u_0(\D)$. Thus $u_\infty(\partial\D)=x_0(\R)$ in view of property (c) in Definition~\ref{def_special} of the $p$-disk $u_0$. In particular $\bar\ell=\ell$. By the similarity principle we find a holomorphic map $g:\D\to\D$ satisfying
\begin{itemize}
\item $g^{-1}(\partial\D)=\partial\D$
\item $g|_{\partial\D}: \partial\D\to \partial\D$ has degree $p$
\item $\Gamma = g^{-1}(0)$
\end{itemize}
such that
\begin{equation}\label{half_trivial_cyl}
\util_\infty = F\circ g
\end{equation}
where $F:\D\setminus\{0\} \to \R\times M$ is the map $$ F(s,t) = (T_{\rm min}s,x_0(T_{\rm min}t)). $$

\begin{lemma}\label{lemma_1_puncture_1}
$\#\Gamma=1$.
\end{lemma}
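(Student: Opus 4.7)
The plan is to proceed by contradiction and suppose $\#\Gamma \geq 2$. From~\eqref{half_trivial_cyl}, $g$ is holomorphic with $g^{-1}(\partial\D)=\partial\D$, has degree $p$ on $\partial\D$, and satisfies $\Gamma = g^{-1}(0)$; hence $g$ is essentially a Blaschke product of degree $p$, and its multiplicities $m_z \geq 1$ at the zeros $z \in \Gamma$ satisfy $\sum_{z\in\Gamma}m_z = p$. A local computation in cylindrical coordinates around each $z$ shows that $\util_\infty$ is asymptotic there to the iterate $(x_0, m_z T_{\rm min})$ of the prime orbit underlying $K$. Under the hypothesis $\#\Gamma \geq 2$, every puncture $z_* \in \Gamma$ then satisfies $m_{z_*} \leq p-1 < p$.

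I would then apply the soft-rescaling procedure of \S~\ref{secsoft} to the sequence $\util_k$ at such a puncture $z_*$. By Proposition~\ref{proplimit2} this produces a germinating sequence whose limit $\vtil$ is a non-constant finite-energy $\jtil$-holomorphic sphere, with positive puncture asymptotic to $(x_0, m_{z_*} T_{\rm min})$. Invoking Proposition~\ref{propbubtree} I construct the full bubbling-off tree rooted at $\vtil$. Its leaves are vertices with no negative punctures, i.e., finite-energy planes, and the asymptotic orbit of every such leaf is automatically contractible in $M$ because the plane itself provides a capping disk.

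The contradiction comes from tracking $\pi_1(M)$-classes down the tree. At every internal vertex $q$, viewing $\util_q \colon \C \setminus \Gamma_q \to \R \times M$ as a map from a sphere with small disks removed around its punctures shows that the positive asymptotic loop is freely homotopic in $M$ to the concatenation of its negative asymptotic loops; combined with the matching conditions between parents and children, induction down the tree yields
\[
[K^{m_{z_*}}] \;=\; [P^+_{\rm root}] \;=\; \prod_{\text{leaves } q} [P^+_q] \;=\; 1 \quad\text{in } \pi_1(M).
\]
This contradicts Lemma~\ref{lemma_no_p'_disk}, which asserts that $K^{m_{z_*}}$ is not contractible in $M$ for $m_{z_*} < p$. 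Hence $\#\Gamma = 1$.

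The main delicacy I anticipate is the $\pi_1$-bookkeeping in the third step: one must verify that the identity $[P^+_q] = \prod_i [P^-_{q,i}]$ at each vertex is correctly interpreted in $\pi_1(M)$ up to conjugation by basepoint paths, and that the chain of identifications down the tree respects these conjugations. This is routine once the images of the $u_q$ are seen to supply the required free homotopies in $M$ itself, but it is the only place the argument could go astray.
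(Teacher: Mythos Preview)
Your argument is correct, but it is considerably more elaborate than necessary and takes a different route from the paper. The paper's proof is essentially a one-liner: assuming $\#\Gamma\geq 2$ and fixing $z_*\in\Gamma$, one considers directly the loop $\gamma_\infty(t)=u_\infty(z_*+re^{i2\pi t})$ for small $r>0$, which by~\eqref{half_trivial_cyl} parametrizes $K^{p'}$ for some $1\leq p'<p$. The approximating loops $\gamma_k(t)=u_k(z_*+re^{i2\pi t})$ are manifestly contractible in $M$ because each bounds the image of a subdisk of the Bishop disk $u_k$; since $\gamma_k\to\gamma_\infty$ in $C^\infty$, the limit $\gamma_\infty$ is contractible too, contradicting Lemma~\ref{lemma_no_p'_disk}.

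Your approach reaches the same contradiction by soft-rescaling at $z_*$, building the full bubbling-off tree, and propagating $\pi_1$-triviality from the leaves (planes) back up to the root. This is valid, and the $\pi_1$-bookkeeping you flag is indeed routine, but it is unnecessary here: the contractibility of $K^{m_{z_*}}$ is already visible at the level of the original Bishop disks, without ever invoking the tree. The paper's shortcut exploits the fact that the $\util_k$ are defined on the full disk $\D$ (not just on a punctured domain), so small circles around $z_*$ already bound honest disks in $M$. Your tree argument would be the natural thing to do if that direct topological observation were unavailable.
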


\begin{proof}
Arguing indirectly, assume that $\#\Gamma\geq 2$ and let $z_*\in\Gamma$. If $r>0$ is fixed small enough then the loop $\gamma_\infty:\R/\Z\to M$ given by $\gamma_\infty(t)= u_\infty(z_*+re^{i2\pi t})$ is a reparametrization of the orbit $P' = (x_0,p'T_{\rm min})$ along the Reeb vector field, for some $1\leq p'<p$. This follows from~\eqref{half_trivial_cyl}. For each $k$ denote by $\gamma_k$ the loop $\gamma_k(t) = u_k(z_*+re^{i2\pi t})$. The loops $\gamma_k$ converge to $\gamma_\infty$ in $C^\infty$ since $\util_k$ converges to $\util_\infty$ in $C^\infty_{\rm loc}(\D\setminus \Gamma)$. Thus $\gamma_\infty$ is contractible since each $\gamma_k$ is. This contradicts Lemma~\ref{lemma_no_p'_disk} and concludes the proof. 
\end{proof}

By the above lemma we can compose $g$ with a M\"obius transformation and assume, without loss of generality, that $\Gamma=\{0\}$ and $g(z) = \tilde z z^p$ for some $\tilde z$ satisfying $|\tilde z|=1$. The proof of Proposition~\ref{prop_limit_bishop_disks} is complete.

\subsection{Obtaining the fast plane}

Let $\lambda$ be a defining contact form for $(M,\xi)$ satisfying the hypotheses of Proposition~\ref{prop_existence_fast}. More precisely, $\xi$ is tight, $c_1(\xi)$ vanishes on $\pi_2(M)$, $\lambda$ is nondegenerate and induces the {\it a priori} given co-orientation on $\xi$, and there exists a special closed Reeb orbit $P_0=(x_0,T_0)$ so that $x_0(\R)$ is an order $p$ rational unknot with self-linking number   {$\frac{-1}{p}$} with respect to some $p$-disk, which has monodromy relatively prime with $p$ by Lemma~\ref{lemma_top}. We always consider $x_0(\R)$ oriented by $\lambda$. Moreover, every closed Reeb orbit $P \subset M\setminus x_0(\R)$ which is contractible in $M$ and satisfies $\mu_{CZ}(P,\beta_{\rm disk})=2$, is not contractible in $M\setminus x_0(\R)$ or has action larger than $C_0$. Here $C_0$ is the constant~\eqref{constant_C_0}.

We recall some of the arguments so far. Applying Proposition~\ref{prop_nice_disk} we find a special oriented $p$-disk $u_0 : \D\to M$ for $P_0$ having a unique singular point $e$ for its characteristic distribution, which is nicely elliptic and serves as a starting point for a Bishop family of pseudo-holomorphic disks. Such a family of disks is non-empty for some $J \in \J_+(\xi)$, as explained in \S~\ref{filling_disks}. We may further assume that $J$ belongs to the special set $\J_{\rm reg}$ given by Proposition~\ref{prop_generic_J}, and that the Bishop family contains disks arbitrarily $C^0$-close to the constant $(0,e) \in \R\times M$. It follows from Theorem~\ref{bishop_disks_dont_intersect} that such Bishop disks never intersect $x_0(\R)$.

By Proposition~\ref{prop_limit_bishop_disks} we find a sequence of solutions of~\eqref{bvp_bishop} $\util_k$ satisfying  condition (C) described in \S~\ref{def_bishop_family}, and also satisfying $$ \util_k \to \util_\infty \ \text{ in } \ C^\infty_{\rm loc}(\D\setminus \{0\}). $$ Here $\util_\infty:\D\setminus\{0\}\to\R\times M$ is the map defined by $\util_\infty(e^{2\pi(s+it)}) = (T_0s,x_0(T_0t))$. Thus $0$ is a bubbling-off point for the sequence $\util_k$ with mass $T_0$.

Proceeding as in \S~\ref{secsoft} we can do ``soft-rescaling'' at the puncture $0$. This yields sequences $z_k\to0$, $c_k\in \R$, $R_k\to\infty$ and $\delta_k\to0^+$ such that $R_k\delta_k\to 0$ and, up to a subsequence, the sequence of maps
\begin{equation}\label{germ_seq_below_bishop}
\vtil_k = (b_k,v_k) : B_{R_k}(0) \to \R\times M
\end{equation}
defined by
\begin{equation*}
\begin{array}{ccc} b_k(z) = a_k(z_k+\delta_kz)-c_k & & v_k(z) = u_k(z_k+\delta_kz) \end{array}
\end{equation*}
converges in $C^\infty_{\rm loc}(\C\setminus \Gamma_0)$ to some non-constant finite-energy $\jtil$-holomorphic map
\begin{equation}\label{root_below_bishop}
\vtil = (b,v) : \C\setminus \Gamma_0 \to \R\times M.
\end{equation}
Here $\Gamma_0 \subset \D$ is a finite set which consists of negative punctures of $\vtil$, and $\Gamma_0\neq\emptyset \Rightarrow 0\in\Gamma_0$, see Remark~\ref{remper}. Moreover, $\vtil$ has a unique positive puncture at $\infty$ where it is asymptotic to $P_0$. As explained in Remark~\ref{remper}, the arguments from~\cite{fols} tell us that if $\Gamma_0 = \{0\}$ then $\pi \circ dv$ does not vanish identically, where $\pi$ is the projection~\eqref{proj_along_Reeb}.

\begin{lemma}\label{lemma_1_puncture_2}
$\int_{\C\setminus\Gamma_0} v^*d\lambda > 0$.
\end{lemma}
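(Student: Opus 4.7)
The plan is to argue by contradiction: assume that $\int_{\C\setminus\Gamma_0} v^*d\lambda=0$. Since $\vtil$ is the limit of the germinating sequence $\vtil_k$ obtained in~\eqref{germ_seq_below_bishop} by soft-rescaling at the puncture $0$ of the Bishop family, Remark~\ref{remper} immediately forces $\#\Gamma_0\geq 2$, because case~(i) of that remark is excluded by our contradiction hypothesis.

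From $\int v^*d\lambda=0$ and the non-constancy of $\vtil$, the component $\pi\circ dv$ vanishes identically. Consequently $\vtil$ factors as $\vtil=Z\circ Q$, where
\[
Z(e^{2\pi(s+it)})=(T_{\rm min}s,\,x_0(T_{\rm min}t))
\]
is the trivial cylinder over $x_0$ and $Q:\C\to\C\setminus\{0\}$ (where defined) is a non-constant entire holomorphic map with $Q^{-1}(0)=\Gamma_0$; the identification of $Z$ with the trivial cylinder over $x_0$ uses that the asymptotic limit of $\vtil$ at $\infty$ is $P_0=(x_0,pT_{\rm min})$. The same asymptotic behavior, combined with a degree count from the multiplicity of wrapping around $x_0$ near $\infty$, forces $Q$ to be a polynomial of degree exactly $p$.

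Pick $z_*\in\Gamma_0$ and let $p'\in\{1,\dots,p-1\}$ be the order of vanishing of $Q$ at $z_*$; here $p'<p$ because $\#\Gamma_0\geq 2$ and $\deg Q=p$. For $r>0$ small, the loop $t\in\R/\Z\mapsto v(z_*+re^{i2\pi t})$ is then (up to reparametrization) the $p'$-th iterate of the prime orbit $(x_0,T_{\rm min})$. By the $C^\infty_{\rm loc}$-convergence $\vtil_k\to\vtil$ on $\C\setminus\Gamma_0$, this loop is the $C^\infty$-limit of $\gamma_k(t):=v_k(z_*+re^{i2\pi t})=u_k(z_k+\delta_k(z_*+re^{i2\pi t}))$. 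For $k$ large, $z_*+re^{i 2\pi t}$ stays in a small disk inside $B_{R_k}(0)$ disjoint from $\partial\D$ after rescaling, so $\gamma_k$ is a smooth loop in the interior of the embedded Bishop disk $u_k(\D\setminus\partial\D)$ and hence contractible in $M$. Passing to the limit, the $p'$-th iterate of $x_0(\R)$ is contractible, contradicting Lemma~\ref{lemma_no_p'_disk} since $p'<p$. This mirrors the contractibility contradiction already used in the proof of Lemma~\ref{lemma_1_puncture_1}.

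The main hurdle is justifying the factorization $\vtil=Z\circ Q$ with $\deg Q=p$: it requires the similarity-principle argument (already invoked in the proof of Proposition~\ref{prop_limit_bishop_disks} right after Lemma~\ref{zero_dlambda_area}) applied in the non-compact setting $\C\setminus\Gamma_0$, together with the precise asymptotic formula from Theorem~\ref{thm_precise_asymptotics} to read off that the multiplicity of the asymptotic limit $P_0$ at $\infty$ equals $\deg Q$. Once this factorization is in hand, the remainder of the argument is elementary.
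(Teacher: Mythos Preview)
Your proof is correct and follows essentially the same route as the paper's own argument: assume vanishing $d\lambda$-area, use Remark~\ref{remper} to get $\#\Gamma_0\geq 2$, factor $\vtil$ through the trivial cylinder via the similarity principle, and derive a contradiction with Lemma~\ref{lemma_no_p'_disk} by noting that a small loop around some $z_*\in\Gamma_0$ is a contractible $p'$-th iterate of $x_0(\R)$ with $p'<p$. Your version is slightly more explicit than the paper in identifying $\deg Q=p$ and deducing $p'<p$ from the root-multiplicity count, but the logic is identical.
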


\begin{proof}
If $\pi\circ dv$ vanishes identically then, as observed above, we have $\#\Gamma_0\geq 2$. Moreover, by the similarity principle we find a polynomial $g(z)$ of degree at least $2$ such that $\Gamma_0 = g^{-1}(0)$ and $\vtil = F\circ g$ where $F:\C \setminus\{0\} \to \R\times M$ is the map $F(e^{2\pi(s+it)})=(T_{\rm min}s,x_0(T_{\rm min}t))$, and $T_{\rm min}>0$ is the minimal period of $x_0$. The proof now follows the same pattern as that of Lemma~\ref{lemma_1_puncture_1}. Fixing $z_* \in \Gamma_0$ and $r>0$ small enough, the loop $\gamma_\infty:\R/\Z \to M$ given by $\gamma_\infty(t) = v(z_*+re^{i2\pi t})$ is a reparametrization of $P'=(x_0,p'T_{\rm min})$ along the Reeb vector field, for some $1\leq p'<p$. The loops $\gamma_k(t) = v_k(z_*+re^{i2\pi t})$ converge in $C^\infty$ to $\gamma_\infty$ since $\vtil_k \to \vtil$ in $C^\infty_{\rm loc}(\C\setminus\Gamma_0)$. Since each $\gamma_k$ is contractible we conclude that so is $\gamma_\infty$, contradicting Lemma~\ref{lemma_no_p'_disk} because $p'<p$.
\end{proof}

Now we adapt arguments from~\cite{hryn,HS} to our present situation. We will denote by $F_0$ the embedded open disk $F_0 = u_0(\D\setminus \partial\D)$, oriented by requiring that the map $u_0|_{\D\setminus \partial\D}$ is orientation preserving when $\D$ is equipped with its usual orientation, and we will choose a non-vanishing section
\begin{equation}\label{non_vanish_section_p_disk}
Z : F_0 \to \xi|_{F_0}.
\end{equation}

\begin{lemma}\label{lemma_no_zeros_bishop}
The sections $\pi\circ du_k$ never vanish on $\D$ when $k$ is large enough.
\end{lemma}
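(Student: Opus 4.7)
The plan is to count the algebraic number $N_k$ of zeros of $\pi\circ du_k$ via a boundary winding formula, verify that for $k$ large the boundary data is nondegenerate enough for the formula to apply, and evaluate the winding by continuity along the Bishop family together with an explicit computation near the constant disk $(0,e)$.

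Since $\util_k$ is $\jtil$-holomorphic, $\pi\circ du_k\colon T\D\to u_k^*\xi$ is $(j,J)$-complex linear, so it is determined by the section $V_k:=\pi\circ\partial_s u_k$ of $u_k^*\xi$, which satisfies a Cauchy--Riemann-type equation; by the similarity principle its zeros are isolated and count positively. For $k$ large I would first check that $V_k|_{\partial\D}$ is nowhere vanishing: indeed, $V_k(z_0)=0$ at a boundary point would force $\partial_\theta u_k(z_0)\in\R R$, i.e.\ $R\in T_{u_k(z_0)}F_0$; but $u_k|_{\partial\D}\to u_\infty|_{\partial\D}$ parametrizes $x_0(\R)=\partial F_0$, so $u_k(\partial\D)$ eventually enters the annular neighborhood of $\partial F_0$ where $R\pitchfork F_0$ by Definition~\ref{def_special}(b), ruling out such a degeneracy.

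Next, I would use the global nonvanishing section $Z$ of $\xi|_{F_0}$ from \eqref{non_vanish_section_p_disk} to frame $u_k^*\xi|_{\partial\D}$, extending to a unitary trivialization of $u_k^*\xi$ over $\D$ (possible since the bundle is trivializable). Writing $V_k$ as a $\C$-valued function $v_k$ in this trivialization and using the CR identity $\pi\circ\partial_\theta u_k|_{\partial\D}=ie^{i\theta}V_k|_{\partial\D}$ gives the formula
\[
N_k\;=\;\wind\bigl(\pi\circ\partial_\theta u_k|_{\partial\D},\,Z\circ u_k|_{\partial\D}\bigr)-1,
\]
depending only on the parametrized boundary loop. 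In the Darboux coordinates $(x,y,z)$ near $e$ (with $\lambda=dz+x\,dy$, $F_0\cap V=\{z=-\tfrac12 xy\}$, $J\partial_x=\partial_y-x\partial_z$), the explicit initial Bishop disks $\util_\tau(s+it)=(\tfrac{\tau^2}{4}(s^2+t^2-1),\,\tau s,\,\tau t,\,-\tfrac{\tau^2}{2}st)$ satisfy $V_\tau=\tau\partial_x=\tau Z$, nowhere vanishing, so a direct computation yields boundary winding $+1$ and hence $N_\tau=0$. Since $\mathcal{Y}$ is connected and contains such small-$\tau$ disks, and the right-hand side of the displayed formula is an integer-valued continuous function along any piece of the family on which $V_s|_{\partial\D}$ is nonvanishing, transport along $\mathcal{Y}$ gives $N_k=0$ for the disks in our sequence.

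The main obstacle is the transport step, which requires the winding defined for small-$\tau$ disks (loops near $e$) to be identified with the winding for our large-$k$ sequence (loops near $\partial F_0$) without the formula breaking down at intermediate parameters, where $u_s(\partial\D)$ could conceivably hit points of $F_0$ at which $R$ is tangent to $F_0$, producing boundary zeros of $V_s$. Two strategies to deal with this: (i) exploit the fact that the Bishop disks foliate a region of $F_0$ to argue, after possibly shrinking $\epsilon$ in Definition~\ref{def_special}(b), that the entire swept region admits $R\pitchfork F_0$; or (ii) bypass the path argument by recognizing that the right-hand side of the winding formula is a topological invariant of the $\pi_2(M,x_0(\R))$-class of the disk (fixed by condition (C) defining $\M(J)$) together with the monodromy $\mon(K)=-q$, and compute it directly from the Martinet-tube asymptotic behavior of $u_k$ near $x_0(\R)$ granted by Theorem~\ref{thm_precise_asymptotics}, obtaining the value $+1$ independently of the path.
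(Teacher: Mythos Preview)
Your overall strategy coincides with the paper's: show boundary nonvanishing for large $k$, then count zeros of $\pi\circ du_k$ via a boundary winding against a section coming from $Z$ and condition~(C). Your identification of the obstacle is also exactly right: the transport argument along $\mathcal Y$ can fail at intermediate disks whose boundary meets the locus $\{R\in TF_0\}\subset F_0$, and neither of your proposed fixes closes this gap. Strategy~(i) fails because Definition~\ref{def_special} only controls $R\pitchfork F_0$ near $\partial F_0$, not on the whole region swept by the Bishop boundaries; there is no reason this tangency locus is empty in the interior of $F_0$. Strategy~(ii) invokes Theorem~\ref{thm_precise_asymptotics}, but that theorem concerns punctured finite-energy curves, not Bishop disks with totally real boundary, so it does not apply to $\util_k$.

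The paper circumvents the transport entirely by inserting the characteristic-foliation vector field $V$ of $F_0$ as an intermediary and computing the winding directly for large $k$. Two observations replace your continuity argument. First, $V$ viewed as a section of $\xi|_{F_0}$ has a single nondegenerate positive zero at $e$, so degree theory gives $\wind(V\circ u_k|_{\partial\D},\,Z\circ u_k|_{\partial\D})=+1$ for any loop in $F_0$ winding once around $e$. Second, for large $k$ the boundary $u_k(\partial\D)$ lies in the transverse strip $S_\epsilon$, and there one checks by an elementary linear-algebra argument (using the strong maximum principle to rule out $\partial_\theta u_k\in\R R$) that $V\circ u_k$ and $\pi\cdot\partial_\theta u_k$ are pointwise linearly independent along $\partial\D$, hence $\wind(\pi\cdot\partial_\theta u_k,\,V\circ u_k)=0$. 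Chaining these with $\wind(\pi\cdot\partial_\theta u_k,\,\pi\cdot\partial_x u_k)=+1$ yields $\wind(\pi\cdot\partial_x u_k,\,Z\circ u_k)=0$. Finally---and this is where condition~(C) enters, not mere trivializability of $u_k^*\xi$---the homotopy $u_k\simeq u_0|_D$ rel boundary shows that $Z\circ u_k|_{\partial\D}$ extends nonvanishingly over $\D$, so the zero count of $\pi\circ du_k$ vanishes. Your explicit small-$\tau$ computation is correct but unnecessary once you have $V$; the paper's argument works directly at large $k$.
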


\begin{proof}
We denote by $\partial_r$ and $\partial_\theta$ partial derivatives of maps defined on subdomains of $\D\setminus\{0\}$ with respect to standard polar coordinates $(r,\theta) \simeq re^{i\theta}$.

First note that $\pi\circ du_k$ never vanishes on $\partial\D$ when $k$ is large enough. In fact, by the hypotheses of Proposition~\ref{prop_existence_fast} there exists $\epsilon>0$ small enough such that the embedded strip $S_\epsilon := u_0(\{1-\epsilon\leq |z|<1\})$ is transverse to $R$. By Proposition~\ref{prop_limit_bishop_disks} the loops $t\mapsto u_k(e^{i2\pi t})$ $C^\infty$-converge to the orbit $P_0$, so $u_k(\partial\D)\subset S_\epsilon$ when $k$ is large enough. If there are arbitrarily large values of $k$ such that $\exists z$ satisfying $|z|=1$ and $\pi\circ du_k|_z = 0$ then $\partial_\theta u_k(z)$ is a multiple of the Reeb vector field $R|_{u_k(z)}$ which is also tangent to $S_\epsilon$. But the properties of $S_\epsilon$ tell us that this is possible only if $\partial_\theta u_k(z)=0$. However, the strong maximum principle will tell us that $\partial_ra_k(z) >0$, and then the Cauchy-Riemann equations will tell us that $\lambda \cdot \partial_\theta u_k(z) > 0$, a contradiction.

Now we choose a smooth vector field $V$ on $F_0 := u_0(\D\setminus \partial\D)$ parametrizing the characteristic distribution of $F_0$, which has a nondegenerate source at the singularity $e$ and points away from $e$. The next step is to show that $V\circ u_k(e^{i2\pi t})$ and $\pi \cdot \partial_\theta u_k(e^{i2\pi t})$ are linearly independent vectors in $\xi|_{u_k(e^{i2\pi t})}$, for every $t\in\R/\Z$. Arguing indirectly, let $t\in\R/\Z$ and $c_1,c_2\in\R$ be such that $$ c_1 V\circ u_k(e^{i2\pi t}) + c_2 \pi \cdot \partial_\theta u_k(e^{i2\pi t}) = 0. $$ If $c_2=0$ then $c_1 V\circ u_k(e^{i2\pi t})=0$ which implies $c_1=0$. Assuming that $c_2\neq 0$ then $$ c_1 V\circ u_k(e^{i2\pi t}) + c_2 \pi \cdot \partial_\theta u_k(e^{i2\pi t}) = \pi \cdot (c_1 V\circ u_k(e^{i2\pi t}) + c_2 \partial_\theta u_k(e^{i2\pi t})) = 0 $$ implying that, when $k$ is large enough, the vector $c_1 V\circ u_k(e^{i2\pi t}) + c_2 \partial_\theta u_k(e^{i2\pi t})$ is tangent to $S_\epsilon$ and parallel to the Reeb vector field. Consequently we obtain $$ \partial_\theta u_k(e^{i2\pi t}) = -\frac{c_1}{c_2} V\circ u_k(e^{i2\pi t}) $$ which proves that $\lambda \cdot \partial_\theta u_k(e^{i2\pi t}) =0$. As explained above this is a contradiction to the maximum principle.
 
The vector field $V$ is simultaneously a section of two vector bundles over $F_0$, namely, $TF_0$ and $\xi|_{F_0}$. Since $e$ is a nondegenerate zero of $V$ seen as section of $TF_0$, it is also a nondegenerate zero of $V$ seen as a section of $\xi|_{F_0}$. It contributes with $+1$ to the algebraic count of zeros of $V$ seen as a section of $\xi|_{F_0}$ since $e$ is a positive singular point of the characteristic distribution. These remarks and standard degree theory shows that
\begin{equation*}
\wind(t\mapsto V\circ u_k(e^{i2\pi t}), t\mapsto Z\circ u_k(e^{i2\pi t})) = +1
\end{equation*}
where the   {winding number} is computed orienting $\xi$ by $d\lambda$. Note here that the loop $t\mapsto u_k(e^{i2\pi t})$ bounds a closed disk in $F_0$ containing $e$ in its interior.

Let us equip $\D$ with complex coordinates $x+iy$. Clearly we have
\begin{equation*}
\wind(t\mapsto \pi \cdot \partial_\theta u_k(e^{i2\pi t}), t\mapsto \pi\cdot \partial_x u_k(e^{i2\pi t}) = +1
\end{equation*}
since $\pi\circ du_k$ does not vanish on $\partial\D$ and the vector field $\partial_\theta$ winds once relatively to $\partial_x$ along $\partial\D$.

Combining all these facts we arrive at
\[
\begin{aligned}
\wind( & \pi \cdot \partial_xu_k(e^{i2\pi t}), Z\circ u_k(e^{i2\pi t})) \\
& = \wind(\pi \cdot \partial_xu_k(e^{i2\pi t}),\pi \cdot \partial_\theta u_k(e^{i2\pi t})) \\
& + \wind(\pi \cdot \partial_\theta u_k(e^{i2\pi t}),V\circ u_k(e^{i2\pi t})) \\
& + \wind(V\circ u_k(e^{i2\pi t}),Z\circ u_k(e^{i2\pi t})) \\
& = -1 + 0 + 1 = 0
\end{aligned}
\]
when $k$ is large enough. Note that $\wind(\pi \cdot \partial_\theta u_k(e^{i2\pi t}),V\circ u_k(e^{i2\pi t}))=0$ because, as proved before, $\pi \cdot \partial_\theta u_k(e^{i2\pi t})$ and $V\circ u_k(e^{i2\pi t})$ are pointwise linearly independent when $k$ is large enough.

Now we conclude by noting that condition (C) described in \S~\ref{def_bishop_family}, which is part of the definition of the Bishop family, implies that the section $Z|_{u_k(\partial\D)}$ viewed as a section of $(u_k|_{\partial\D})^*\xi \to \partial\D$ can be extended to a non-vanishing section of $u_k^*\xi \to \D$. Thus the above calculation of winding numbers tells us that the algebraic count of zeros of $\pi\cdot \partial_x u_k$ vanishes, when $k$ is large enough. A Cauchy-Riemann type equation satisfied by $\pi \circ du_k$ implies all zeros count positively. Consequently $\pi\cdot \partial_xu_k$ has no zeros at all and, in particular, $\pi \circ du_k$ does not vanish when $k\gg1$.
\end{proof}

Consider $P_z=(x_z,T_z) \in \P(\lambda)$ the asymptotic limit of $\vtil$ at each negative puncture $z\in\Gamma_0$. All these orbits are contractible in $M$ because the resulting pieces of the bubbling-off tree given by applying Proposition~\ref{propbubtree} to the germinating sequence $\vtil_k$~\eqref{germ_seq_below_bishop} yield disks for each $P_z$. Thus for every $z\in\Gamma_0$ we can choose a smooth capping disk $D_z:\D\to M$ for $P_z$ satisfying $D_z(e^{i2\pi t}) = x_z(T_zt)$ $\forall t\in\R/\Z$. Choose also a smooth capping disk $D_\infty$ for $P_0$. As observed in \S~\ref{sec_index_estimates}, non-vanishing sections $\sigma_z$ of $D_z^*\xi$ and $\sigma_\infty$ of $D_\infty^*\xi$ extend to a non-vanishing section $\sigma_{\rm disks}$ of $v^*\xi$. This follows from the fact that $c_1(\xi)$ vanishes on $\pi_2(M)$.

\begin{lemma}\label{lem_wind_infty_below_bishop}
$\wind_\infty(\vtil,\infty,\sigma_{\rm disks}) = 1$.
\end{lemma}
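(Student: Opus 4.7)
The plan is to compute $\wind_\infty(\vtil,\infty,\sigma_{\rm disks})$ directly from the rescaled Bishop sequence $\vtil_k$ in~\eqref{germ_seq_below_bishop}, exploiting the nonvanishing of $\pi\circ du_k$ from Lemma~\ref{lemma_no_zeros_bishop}. By condition~(C) of \S\ref{def_bishop_family}, the section $Z|_{u_k(\partial\D)}$ extends to a nonvanishing section $\widetilde Z_k$ of $u_k^*\xi$ over the entire Bishop disk. Writing $\pi\partial_x u_k = \alpha_k\,\widetilde Z_k$ with $\alpha_k:\D\to\C\setminus\{0\}$ smooth (in the $J$-complex structure on $\xi$), the simple-connectedness of $\D$ forces $\alpha_k$ to have zero winding around every loop in $\D$.

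Pulling $\widetilde Z_k$ back to the rescaled picture by $\widetilde Z_k^v(z):=\widetilde Z_k(z_k+\delta_k z)$, a direct calculation from $v_k(z)=u_k(z_k+\delta_k z)$ and the Cauchy--Riemann identity $\pi\partial_y u_k = J\pi\partial_x u_k$ yields, along the circle $|z|=R$,
\[
\pi\partial_r v_k(Re^{i2\pi t}) \;=\; \delta_k\,(\cos 2\pi t + J\sin 2\pi t)\,\alpha_k(z_k+\delta_k R e^{i2\pi t})\,\widetilde Z_k^v(Re^{i2\pi t}).
\]
The prefactor $\cos 2\pi t + J\sin 2\pi t$ contributes winding $+1$ as $t$ varies over $\R/\Z$, while the contractible loop $t\mapsto z_k+\delta_k R e^{i2\pi t}$ forces the $\alpha_k$ factor to contribute $0$. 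Hence $\wind(\pi\partial_r v_k|_{|z|=R},\widetilde Z_k^v|_{|z|=R})=1$ for every fixed $R>1$ and every $k$ large. For $R$ chosen large so that $\pi\partial_r v$ is nonvanishing on $\{|z|=R\}$ (which is possible by Theorem~\ref{thm_precise_asymptotics}, since by Lemma~\ref{lemma_1_puncture_2} $\pi\circ dv$ is not identically zero), the $C^\infty_{\rm loc}$-convergence $\vtil_k\to\vtil$ on $\C\setminus\Gamma_0$ preserves this winding in the limit $k\to\infty$.

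It remains to compare $\widetilde Z_k^v$ with $\sigma_{\rm disks}$ along the circle $|z|=R$. The restriction $\widetilde Z_k|_{\partial\D}$ is a trivialization of $(u_k|_{\partial\D})^*\xi$ that extends over the Bishop disk $u_k$; as $k\to\infty$ the loop $u_k(\partial\D)$ converges in $C^\infty$ to a parametrization of $P_0$, so $u_k$ is ultimately a smooth capping of $P_0$ and, since $c_1(\xi)|_{\pi_2(M)}=0$, the induced trivialization class at $P_0$ is $\beta_{\rm disk}$. By construction, $\sigma_{\rm disks}$ also represents $\beta_{\rm disk}$ at $P_0$ because it extends over the capping disk $D_\infty$ chosen for $P_0$. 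Comparing the two nonvanishing sections of $v^*\xi$ along the loop $|z|=R$ for $R$ large (i.e., $C^\infty$-close to $P_0$) therefore gives $\wind(\widetilde Z_k^v|_{|z|=R},\sigma_{\rm disks}|_{|z|=R})\to 0$ as $k\to\infty$, and combining with the previous step we conclude
\[
\wind_\infty(\vtil,\infty,\sigma_{\rm disks}) \;=\; \wind(\pi\partial_r v|_{|z|=R},\sigma_{\rm disks}|_{|z|=R}) \;=\; 1+0 \;=\; 1,
\]
where we used that $\pi\partial_s v$ and $\pi\partial_r v$ differ by a positive scalar factor.

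The main obstacle is the comparison in the last paragraph: precisely justifying that the trivialization class at $P_0$ induced by pulling $\widetilde Z_k$ back through the Bishop capping agrees with $\beta_{\rm disk}$, and hence that the relative winding along $|z|=R$ vanishes in the limit $k\to\infty$. This is ultimately a bookkeeping argument built on the $C^\infty$-convergence $u_k(\partial\D)\to P_0$, condition~(C), and the vanishing of $c_1(\xi)$ on $\pi_2(M)$, which ensures that the class of a trivialization induced by any capping disk of $P_0$ is independent of the disk.
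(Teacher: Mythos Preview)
Your approach is essentially the same as the paper's. Both arguments decompose $\wind_\infty(\vtil,\infty,\sigma_{\rm disks})$ into a $+1$ coming from the polar-versus-cartesian comparison and a $0$ coming from Lemma~\ref{lemma_no_zeros_bishop} (your nowhere-zero $\alpha_k$ is exactly the paper's ``$\wind(\pi\partial_x u_k,Z_k)=0$''), and both invoke $c_1(\xi)|_{\pi_2(M)}=0$ to identify the disk-induced trivialization with $\sigma_{\rm disks}$.

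One imprecision to fix in your last paragraph: the relevant capping disk is not the whole Bishop disk $u_k$ with boundary $u_k(\partial\D)$, but rather the small subdisk $u_k|_{\overline{B_{R\delta_k}(z_k)}}$, which caps the loop $\gamma_k(t)=v_k(Re^{i2\pi t})$. Your $\widetilde Z_k^v|_{|z|=R}$ is a section over $\gamma_k$, while $\sigma_{\rm disks}|_{|z|=R}$ lives over $\gamma(t)=v(Re^{i2\pi t})$; these are different (nearby) loops, so the relative winding is not literally defined. The paper handles this by inserting a $C^\infty$-small homotopy $h_k$ from $\gamma_k$ to $\gamma$ and extending $\widetilde Z_k$ nonvanishingly across it, then using $c_1(\xi)|_{\pi_2(M)}=0$ to conclude the resulting section over $\gamma$ is homotopic to $\sigma_{\rm disks}|_\gamma$. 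Your closing paragraph identifies exactly these ingredients, so the fix is only notational.
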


\begin{proof}
In view of Lemma~\ref{lemma_1_puncture_2} and Theorem~\ref{thm_precise_asymptotics} that $\exists R\gg1$ such that $\pi\circ dv$ does not vanish on $\{z\in\C : |z|\geq R\}$ and $\wind_\infty(\vtil,\infty,\sigma_{\rm disks})$ is well-defined. In particular, from the definition of $\wind_\infty$ given in \S~\ref{sec_alg_invs} we have  
\[
\wind_\infty(\vtil,\infty,\sigma_{\rm disks}) = \wind(t\mapsto \pi\cdot \partial_r v(Re^{i2\pi t}), t\mapsto \sigma_{\rm disks}(Re^{i2\pi t})).
\]
Here $r$ denotes the radial coordinate associated to polar coordinates on $\C$ centered at the origin. As always winding numbers are computed orienting $\xi$ by $d\lambda$.

Consider the loop $\gamma:\R/\Z\to M$, $\gamma(t) = v(Re^{i2\pi t})$. Then the sequence of loops $\gamma_k(t) = v_k(Re^{i2\pi t})$ converges in $C^\infty$ to $\gamma$ because $\Gamma_0\subset \D$. By the same reason the vector field $t\mapsto \pi \cdot \partial_r v(Re^{i2\pi t})$ along $\gamma$ and the vector field $t\mapsto \pi\cdot \partial_rv_k(Re^{i2\pi t})$ along $\gamma_k$ are arbitrarily $C^\infty$-close when $k$ is large enough. Note that these vector fields define non-vanishing sections of $\gamma^*\xi$ and of $\gamma_k^*\xi$, respectively. Now consider a $C^\infty$-small smooth homotopy $h_{k,\tau}(t)$, $\tau\in[0,1]$ satisfying $h_{k,0}(t)=\gamma_k(t)$ and $h_{k,1}(t) = \gamma(t)$. Then, when $k$ is large enough, the vector fields $\pi \cdot \partial_r v(Re^{i2\pi t})$ and $\pi\cdot \partial_rv_k(Re^{i2\pi t})$ extend smoothly to a non-vanishing section of $h_k^*\xi$.

For every $k$ large enough, consider a non-vanishing section $Z_k$ of $u_k^*\xi$. Note that $\gamma_k(t) = u_k(z_k+R\delta_ke^{i2\pi t})$. The sections $Z_k|_{\overline B_k}$ of $(u_k|_{\overline B_k})^*\xi$, with $B_k = B_{R\delta_k}(z_k)$, extend to a non-vanishing section $\mathcal Z_k$ of $\xi$ over a piecewise smooth capping disk $\mathcal D_k$ for $\gamma$ defined by attaching $h_k$ to $u_k|_{\overline B_k}$. We can arrange $\mathcal Z_k$ to be smooth over $\gamma = \partial\mathcal D_k$. Note that $t\mapsto \sigma_{\rm disks}(Re^{i2\pi t})$ is homotopic to $\mathcal Z_k|_\gamma$ through non-vanishing sections of $\gamma^*\xi$ since both come from capping disks for $\gamma$ and $c_1(\xi)$ vanishes on $\pi_2(M)$. Now we can compute
\[
\begin{aligned}
& \wind(t\mapsto \pi\cdot \partial_r v(Re^{i2\pi t}), t\mapsto \sigma_{\rm disks}(Re^{i2\pi t})) \\
& = \wind(t\mapsto \pi\cdot \partial_r v(Re^{i2\pi t}), t\mapsto \mathcal Z_k|_{\gamma(t)}) \\
& = \wind(t\mapsto \pi\cdot \partial_r v_k(Re^{i2\pi t}), t\mapsto \mathcal Z_k|_{\gamma_k(t)}) \\
& = \wind(t\mapsto \pi\cdot \partial_\rho u_k(z_k+\delta_k Re^{i2\pi t}), t\mapsto  Z_k(z_k+\delta_k Re^{i2\pi t})) \\
& = \wind(t\mapsto \pi\cdot \partial_\rho u_k(z_k+\delta_k Re^{i2\pi t}), t\mapsto \pi\cdot \partial_x u_k(z_k+\delta_k Re^{i2\pi t})) \\
& + \wind(t\mapsto \pi\cdot \partial_x u_k(z_k+\delta_k Re^{i2\pi t}), t\mapsto Z_k(z_k+\delta_k Re^{i2\pi t})) = 1 + 0.
\end{aligned}
\]
Above $k$ is to be taken large enough, $\rho$ denotes the radial coordinate associated to polar coordinates centered at $z_k$, and $x+iy$ is a global complex coordinate on the domain $\D$ of $u_k$. Lemma~\ref{lemma_no_zeros_bishop} was strongly used in the last equality.
\end{proof}

\begin{lemma}\label{lemma_plane_below_bishop}
$\Gamma_0=\emptyset$.
\end{lemma}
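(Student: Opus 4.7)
The plan is to argue by contradiction, assuming $\Gamma_0\neq\emptyset$ and extracting from the bubbling-off tree of $\vtil$ a finite-energy plane whose asymptotic limit is precisely the type of orbit excluded by the hypotheses of Proposition~\ref{prop_existence_fast}.

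First, I would check that the soft-rescaled sequence $\vtil_k$ of~\eqref{germ_seq_below_bishop} is a germinating sequence with $\sup_k E(\vtil_k)\leq C_0$ (this follows from the energy bound~\eqref{crucial_energy_estimate} on the Bishop disks together with the soft-rescaling construction of \S~\ref{secsoft}), and that its non-constant limit $\vtil$ satisfies hypothesis~(b) of Proposition~\ref{main_prop_compactness}: it has a negative puncture by assumption, $\int v^*d\lambda>0$ by Lemma~\ref{lemma_1_puncture_2}, and $\wind_\infty(\vtil,\infty,\sigma_{\rm disks})=1$ by Lemma~\ref{lem_wind_infty_below_bishop}. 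Applying that proposition then yields a finite-energy $\jtil$-holomorphic plane $\util_*=(a_*,u_*)$ asymptotic to a closed Reeb orbit $P_*$ with $\mu_{CZ}(P_*,\beta_{\rm disk})=2$ and $\int_{P_*}\lambda=E(\util_*)\leq C_0$. Because the Bishop disks $\util_k$ miss $\R\times x_0(\R)$ by Theorem~\ref{bishop_disks_dont_intersect}, so do the maps $v_k$, and the ``moreover'' clause of Proposition~\ref{main_prop_compactness} delivers $u_*(\C)\cap x_0(\R)=\emptyset$.

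Next I would rule out $P_*$ by case analysis. If $P_*$ is geometrically $x_0(\R)$, write $P_*=(x_0,p'T_{\min})$; then $\overline{u_*(\C)}$ is a $p'$-disk for $K$. The case $p'<p$ contradicts Lemma~\ref{lemma_no_p'_disk}; the case $p'=p$ contradicts $\mu_{CZ}(P_*)=2\ne\mu_{CZ}(P_0)\geq 3$; and the case $p'>p$ is excluded by the iteration identity $\rho((x_0,p'T_{\min}))=(p'/p)\rho(P_0)>1$, which via~\eqref{mu_rho_rel} forces $\mu_{CZ}(P_*)\geq 3$. Otherwise $P_*\subset M\setminus K$; since $u_*$ is a capping disk for $P_*$ with image disjoint from $K$, $P_*$ is contractible in $M$ and also in $M\setminus K$. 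Thus $P_*\in\P^*$ is contractible in $M\setminus K$, and the standing hypothesis of Proposition~\ref{prop_existence_fast} forces $\int_{P_*}\lambda>C_0$, contradicting the energy bound above.

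The only nontrivial point is securing $u_*(\C)\cap x_0(\R)=\emptyset$, so that the case analysis can actually exclude ``Reeb orbits close to $K$''; this is precisely what Theorem~\ref{bishop_disks_dont_intersect} together with the ``moreover'' clause of Proposition~\ref{main_prop_compactness} provide. Everything else is index bookkeeping and the a priori energy estimate $E(\util_*)\leq C_0$ inherited from the Bishop family.
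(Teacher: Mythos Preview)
Your proposal is correct and follows essentially the same approach as the paper: assume $\Gamma_0\neq\emptyset$, apply Proposition~\ref{main_prop_compactness}(b) via Lemmas~\ref{lemma_1_puncture_2} and~\ref{lem_wind_infty_below_bishop} to extract a plane $\util_*$ asymptotic to $P_*$ with $\mu_{CZ}(P_*)=2$, $u_*(\C)\cap x_0(\R)=\emptyset$ and action $\leq C_0$, then derive a contradiction by the same case analysis (the paper compresses your $p'=p$ and $p'>p$ cases into the single observation $T_*<T_0$). One minor wording issue: when $P_*=(x_0,p'T_{\min})$ you only need that $\bar u_*$ is a capping disk for $K^{p'}$ (contractibility), not a $p'$-disk in the technical sense of Definition~\ref{p_unknot_def}; this is all Lemma~\ref{lemma_no_p'_disk} requires, and it is exactly what the paper uses.
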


\begin{proof}
By lemmas~\ref{lemma_1_puncture_2} and~\ref{lem_wind_infty_below_bishop}, if $\Gamma_0\neq\emptyset$ then the germinating sequence $\vtil_k$~\eqref{germ_seq_below_bishop} and its limit $\vtil$~\eqref{root_below_bishop} satisfy the hypotheses of Proposition~\ref{main_prop_compactness}. As a consequence we find a finite-energy $\jtil$-holomorphic plane $\util_*=(a_*,u_*)$ asymptotic to a closed Reeb orbit $P_*=(x_*,T_*)$ satisfying $\mu_{CZ}(P_*,\beta_{\rm disk})=2$ and $u_*(\C) \cap x_0(\R) = \emptyset$. If $x_*=x_0$ then $T_*<T_0$ because $\mu_{CZ}(P_0,\beta_{\rm disk})\geq 3$, and we find $1\leq p'<p$ such that $T_* = p'T_{\rm min}<pT_{\rm min}= T_0$. This means that $u_*$ gives a disk for the $p'$-th iterate of $x_0(\R)$, contradicting Lemma~\ref{lemma_no_p'_disk}. We proved that $P_*$ and $P_0$ are geometrically distinct. Consequently, $P_*$ is contractible in $M\setminus x_0(\R)$. In view of~\eqref{crucial_energy_estimate} and the definition of $\vtil_k$ in~\eqref{germ_seq_below_bishop} we have $E(\vtil_k)\leq C_0$, $\forall k$, where $C_0$ is the positive constant~\eqref{constant_C_0}. Proposition~\ref{main_prop_compactness} implies that $T_*=\int_{P_*}\lambda \leq C_0$. This contradicts the hypotheses of Proposition~\ref{prop_existence_fast}. Thus the assumption $\Gamma_0\neq\emptyset$ is false.
\end{proof}

Combining Lemma~\ref{lem_wind_infty_below_bishop} with Lemma~\ref{lemma_plane_below_bishop} we conclude that $\vtil$ is a finite-energy plane asymptotic to $P_0$ and satisfying $\wind_\pi(\vtil)=0$. It follows that $\vtil$ is an immersion. To show that it is an embedding consider the set $$ E = \{ (z_1,z_2) \in \C\times \C \mid z_1\neq z_2 \ \text{and} \ \vtil(z_1)=\vtil(z_2) \}. $$ If $E$ has a limit point away from the diagonal then, using the similarity principle, one finds a $\jtil$-holomorphic map $\wtil:\C\to \R\times M$ and a polynomial $Q:\C\to\C$ of degree at least $2$ such that $\vtil = \wtil \circ Q$. But the zeros of $Q'$ will force zeros of $d\vtil$, contradicting the fact that $\vtil$ is an immersion. We showed that $E$ is discrete in the complement of the diagonal in $\C\times \C$. But if $E$ is not empty then stability and positivity of intersections of pseudo-holomorphic immersions will force self-intersections of the maps $\vtil_k$, and consequently also of $\util_k$, which is impossible since the Bishop disks are embeddings. Thus $E=\emptyset$ and $\vtil$ is an embedding.

To complete the proof of Proposition~\ref{prop_existence_fast} it remains only to show that $P_0$ is $p$-unknotted in the homotopy class of $v$. This follows from the fact that extending $v$ continuously to the compactification $\C\sqcup S^1$ of $\C$ given by adding a circle at $\infty$, we obtain a capping disk for $P_0$ which is homotopic to the $p$-disk $u_0$ modulo boundary.

\section{Constructing global surfaces of section}\label{sec_const_global_sections}

Here we prove the following statement.

\begin{proposition}\label{proposition_global_sections2}
Let $\lambda$ be a defining contact form for a tight closed connected contact $3$-manifold $(M,\xi)$ satisfying $c_1(\xi)|_{\pi_2(M)}=0$, and let $K\subset M$ be a $p$-unknotted prime closed $\lambda$-Reeb orbit satisfying $\sl(K)=  {\frac{-1}{p}}$ and $\mu_{CZ}(K^p)\geq 3$. Consider $\P^*\subset\P(\lambda)$ the set of contractible closed Reeb orbits $P'\subset M\setminus K$ satisfying $\rho(P')=1$, and let $u_0$ be a $p$-disk which is special robust for $(\lambda,K)$. By definition, $u_0$ has precisely one singular point $e$ for its characteristic foliation. Consider also a sequence of smooth functions $f_n:M\to (0,+\infty)$ satisfying $f_n|_K\equiv1$, $df_n|_K \equiv0$, $f_n\to 1$ in $C^\infty$, $f_n|_V\equiv1$ on an open neighborhood $V$ of $e$, and such that $\lambda_n:= f_n\lambda$ is nondegenerate $\forall n$. If every orbit $P'\in \P^*$ satisfies one of 
\begin{itemize}
\item[a)] $P'$ is not contractible in $M\setminus K$, or
\item[b)] $\int_{P'}\lambda > C(\lambda,K,u_0) := 1+\int_\D |u_0^*d\lambda|$,
\end{itemize}
then one finds $n_0$ such that for every $n\geq n_0$ there exists   {a rational} open book decomposition $(K,\pi_n)$ with disk-like pages of order $p$ adapted to~$\lambda_n$.
\end{proposition}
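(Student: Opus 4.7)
The proof proceeds in three stages: (i) transferring the dynamical hypotheses on $\P^*(\lambda)$ into analogous hypotheses for each $\lambda_n$, (ii) using Proposition~\ref{prop_existence_fast} to produce a single fast plane for $\lambda_n$ when $n$ is large, and (iii) propagating that plane to a one-parameter family whose images realize the pages of the open book adapted to $\lambda_n$.

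\textbf{Stage 1 (transfer of hypotheses).} First I would apply Proposition~\ref{prop_perturb_special_robust} to obtain an arbitrarily small $C^\infty$-perturbation $u_0'$ of $u_0$, still agreeing with $u_0$ on a neighbourhood of $\partial\D\cup\{0\}$, which is special for $(\lambda_n,K)$ for every $n\geq n_1$ and which satisfies $\int_\D|(u_0')^*d\lambda_n|\leq C(\lambda,K,u_0)$. I would then verify that, for $n$ large enough, every $\lambda_n$-Reeb orbit $P'_n\subset M\setminus K$ which is contractible in $M$ and satisfies $\rho(P'_n)=1$ obeys either $P'_n$ is not contractible in $M\setminus K$ or $\int_{P'_n}\lambda_n>\int_\D|(u_0')^*d\lambda_n|$. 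The argument is by contradiction: a sequence of counterexamples $P'_{n_k}$ would have uniformly bounded action, so $C^\infty$-convergence $\lambda_n\to\lambda$ and Arzel\`a--Ascoli would yield a limiting closed $\lambda$-Reeb orbit $P'_*$ lying in $\P^*$ (continuity of the transverse rotation number and nondegeneracy of $\lambda$ would even force $P'_*$ to have $\rho=1$); continuity of the action and the fact that free homotopy in $M\setminus K$ is an open condition on $C^\infty$-close loops then transport the failure to $P'_*$, contradicting the hypothesis. The slack $+1$ in the definition of $C(\lambda,K,u_0)$ is exactly what accommodates the perturbation from $(\lambda,u_0)$ to $(\lambda_n,u_0')$.

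\textbf{Stage 2 (existence of a fast plane).} With the hypotheses of Proposition~\ref{prop_existence_fast} now verified for each $\lambda_n$ with $n\geq n_0$, I would invoke that proposition directly with $u_0'$ in the role of the special $p$-disk. It furnishes some $J_n\in\J_+(\xi)$ and an embedded fast $\widetilde J_n$-holomorphic plane $\util_n=(a_n,u_n):\C\to\R\times M$ asymptotic at $\infty$ to $K^p$, with $K$ being $p$-unknotted in the homotopy class of $u_n$. Theorem~\ref{thm_fast_embedded} then ensures that $u_n$ embeds $\C$ into $M\setminus K$ and that its continuous extension to $\C\sqcup S^1$ is a $p$-disk for $K$.

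\textbf{Stage 3 (from one page to an open book).} After fixing a normalisation as in the definition of $\Lambda(H,K^p,\lambda_n,J_n)$ in~\eqref{set_of_fast_planes}, automatic transversality (as developed in~\cite{char1,char2,hryn,HS}) shows that $\util_n$ is Fredholm regular and sits in a one-parameter connected component $\M_n$ of the moduli of fast planes modulo $\R$-translation. The implicit function theorem then makes the subset of $M\setminus K$ filled by images $u(\C)$ of elements $\util=(a,u)\in\M_n$ an open subset; closedness is furnished by the compactness result Theorem~\ref{thm_comp_fast}, whose hypotheses are those that we established in Stage 1. Positivity and stability of intersections force the images of distinct elements of $\M_n$ to be either equal as sets or disjoint, and the boundary behaviour of the fast planes near $K$ (together with the asymptotic formula of Theorem~\ref{thm_precise_asymptotics}) identifies the $S^1$-parameter of pages with a meridional circle of $K$. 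The resulting smooth $S^1$-family of embedded $p$-disks is the set of pages of an open book decomposition $(K,\pi_n)$ of $M$. Because each plane is fast with $\wind_\pi=0$, the Reeb vector field of $\lambda_n$ is transverse to every page and positively tangent to $K$, so $(K,\pi_n)$ is adapted to $\lambda_n$ in the sense of Definition~\ref{def_ob_adapted}.

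\textbf{Main obstacle.} The hardest step is Stage~3: one must show that the moduli space of fast planes, once nonempty, assembles into a genuine global open book rather than a merely partial foliation of $M\setminus K$. This requires combining Theorem~\ref{thm_comp_fast} with the dynamical hypotheses to exclude bubbling off onto orbits in $\P^*$ that would leave holes in the foliation; this is precisely the input for which the assumption on orbits of $\P^*$ (either noncontractibility in $M\setminus K$ or the action bound $>C(\lambda,K,u_0)$) is designed.
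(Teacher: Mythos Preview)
Your approach mirrors the paper's: Stage~1 corresponds to Lemma~\ref{seqlambdak2} (together with Proposition~\ref{prop_perturb_special_robust}), and Stages~2--3 are exactly the content of Lemma~\ref{lemma_aux} applied to each $\lambda_n$.

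There is, however, a real gap in Stage~1. You assert that an Arzel\`a--Ascoli limit $P'_*$ of the counterexamples $P'_{n_k}$ lies in $\P^*$, but $\P^*$ by definition contains only orbits in $M\setminus K$, and nothing prevents the $P'_{n_k}$ from accumulating onto $K$ itself. The paper handles this case explicitly: if $P'_*=K^{p_0}$ then $\rho(K^{p_0})=1$, while $\mu_{CZ}(K^p)\geq 3$ gives $\rho(K^p)>1$ and hence $\rho(K^{pp_0})>p_0$; combined with $\rho(K^{pp_0})=p$ this forces $p_0<p$. But the capping disks for $P'_{n_k}$ in $M\setminus K$ can then be pushed to produce a disk for $K^{p_0}$ with interior in $M\setminus K$, contradicting Lemma~\ref{lemma_no_p'_disk}. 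You need this argument; without it the contradiction in Stage~1 does not close. (Also, your parenthetical ``nondegeneracy of $\lambda$'' is misplaced: $\lambda$ is not assumed nondegenerate here, only the $\lambda_n$ are; continuity of $\rho$ alone gives $\rho(P'_*)=1$.)

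In Stage~3 you should also be more careful about why the pages are \emph{global} surfaces of section rather than merely transverse to the Reeb flow: a trajectory whose $\omega$-limit meets $K$ might conceivably spiral indefinitely without crossing a given page. The paper rules this out by using $\rho(K^p,\beta_{\rm disk})>1$ (equivalently $\mu_{CZ}(K^p)\geq 3$) together with Corollary~\ref{cor_rel_windings} to see that the linearized flow along $K$ rotates strictly faster than the normal to any page, forcing infinitely many returns.
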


\begin{remark}\label{seqlambdak}
Sequences $f_n$ as in the above statement always exist if $V$ is taken small enough. This is kind of standard, see~\cite{convex} for instance. When $\lambda$ is nondegenerate we may take $f_n\equiv1$, $\forall n$.
\end{remark}

Throughout this section we will be occupied with the proof of Proposition~\ref{proposition_global_sections2}, and we will fix a closed connected co-oriented tight contact $3$-manifold $(M,\xi)$ such that $c_1(\xi)$ vanishes on $\pi_2(M)$, and a defining contact form $\lambda$ for $\xi$ inducing the {\it a priori} given co-orientation.

As explained in \S~\ref{sssec_CZ_orbits}, associated to every contractible closed Reeb orbit $P = (x,T) \in \P(\lambda)$ there is a distinguished homotopy class $\beta_{\rm disk}$ of oriented trivializations of  $(x_T)^*\xi$ associated to a capping disk for $P$. Namely, if $g:\D\to M$ is a smooth map satisfying $g(e^{i2\pi t}) = x_T(t)$ and $Z$ is a non-vanishing section of $g^*\xi$ then $Z$ can be completed to a oriented trivialization representing $\beta_{\rm disk}$. Here $x_T$ denotes the map~\eqref{map_x_T} and $\xi$ is always oriented by $d\lambda$. The transverse rotation number $\rho(P) = \rho(P,\beta_{\rm disk})$ is defined as in \S~\ref{sssec_transv_rot_number}.

Let $K \subset M$ be a knot as in Proposition~\ref{proposition_global_sections2}, and denote by
\begin{equation}\label{special_set_P_*}
\P^* \subset \P(\lambda)
\end{equation}
the set of the closed orbits contained in $M\setminus K$ which are contractible in $M$ and satisfy $\rho(P^*,\beta_{\rm disk}) = 1$. We will denote by $$ x_0 : \R\to M $$ a Reeb trajectory satisfying $x_0(\R) = K$ and by $T_{\rm min}>0$ its minimal period. Of course, the map $x_0$ is only defined up to translation by a real constant. The period of its $p$-th iterate is denoted by $T_0 = pT_{\rm min}$.

\subsection{An auxiliary lemma: the nondegenerate case}\label{foliations_nondegenerate}

\begin{lemma}\label{lemma_aux}
Assume that $\lambda$ is nondegenerate, and let $C_0>0$ be the constant provided by Proposition~\ref{prop_existence_fast} applied to $\lambda$, $P_0=(x_0,T_0)$ and a fixed $p$-disk which is special for $(\lambda,K)$. Consider $\P^*\subset\P(\lambda)$ the set of contractible closed Reeb orbits $P'\subset M\setminus K$ satisfying $\rho(P')=1$. If every orbit $P'\in \P^*$ satisfies 
\begin{itemize}
\item $P'$ is not contractible in $M\setminus K$, or
\item $\int_{P'} \lambda > C_0$,
\end{itemize}
then there exists   {a rational} open book decomposition $(K,\pi)$ of disk-like pages of order $p$ adapted to $\lambda$.
\end{lemma}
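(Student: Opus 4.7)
The plan is to produce a one-parameter family of embedded fast finite-energy planes whose images foliate $M \setminus K$ and to use this family as the pages of the desired open book decomposition. First I would apply Proposition~\ref{prop_existence_fast} to the data $(\lambda, K, u_0)$ to produce, for some $J \in \J_+(\xi)$, an embedded fast $\jtil$-holomorphic plane $\util_* = (a_*, u_*) : \C \to \R\times M$ asymptotic to $P_0 = (x_0, T_0)$. By Theorem~\ref{thm_fast_embedded}, $u_*(\C) \subset M\setminus K$ is an embedded open disk, and the continuous extension $\bar u_*$ is a $p$-disk for $K$ in the same homotopy class as $u_0$. The constant $C_0$ provided by Proposition~\ref{prop_existence_fast} is bounded by the constant $C(\lambda,K,u_0)$ in our hypothesis, so the required exclusion of contractible orbits in $\P^*$ with small action is in force.

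Next I would introduce the moduli space $\Lambda = \Lambda(H,P_0,\lambda,J)$ from~\eqref{set_of_fast_planes}, where $H\subset \R\times (M\setminus K)$ is a compact set meeting the image of $\util_*$, and pass to the component $\Lambda_0$ containing $\util_*$. Since $\mu_{CZ}(P_0,\beta_{\rm disk})\geq 3$ and $\wind_\pi(\util)=0$ for every $\util\in\Lambda$, automatic transversality (perhaps after a small perturbation of $J$ within $\J_{\rm reg}$ as in Proposition~\ref{prop_generic_J}, justified because the perturbation preserves the fast-plane property) makes $\Lambda_0$ a smooth finite-dimensional manifold of the expected dimension. After quotienting by the residual reparametrization group preserving the normalization~\eqref{normalization_fast_planes} and by $\R$-translation, one obtains a one-dimensional manifold $\widehat\Lambda_0$, and by Theorem~\ref{thm_comp_fast} it is compact, hence a disjoint union of circles.

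The component of $\widehat\Lambda_0$ containing (the class of) $\util_*$ is diffeomorphic to $S^1$. Using positivity and stability of isolated intersections of pseudo-holomorphic curves, together with the fact that each $\util\in \Lambda_0$ is embedded and asymptotic to the same orbit $P_0$, one shows that distinct elements of $\widehat\Lambda_0$ project to disjoint disks in $M\setminus K$. The evaluation map therefore assembles into a smooth injective map $\Phi:\widehat\Lambda_0 \times (\D\setminus\partial\D) \to M\setminus K$, which is open (by the implicit function theorem at each regular disk) and proper (by the compactness of $\widehat\Lambda_0$ and the uniform asymptotic behavior of Theorem~\ref{thm_precise_asymptotics}). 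Since $M\setminus K$ is connected, $\Phi$ is a diffeomorphism onto $M\setminus K$. The projection onto the first factor then defines a smooth fibration $\pi : M\setminus K \to S^1$ whose fibers are the interiors of $p$-disks for $K$, yielding an open book decomposition $(K,\pi)$ with disk-like pages of order $p$. Since the Reeb vector field is tangent to $K$ and, by $\wind_\pi=0$, transverse to the interior of each page, the decomposition is adapted to $\lambda$ in the sense of Definition~\ref{def_ob_adapted}.

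\textbf{Main obstacle.} The principal technical difficulty is the interplay between the automatic transversality argument that makes $\widehat\Lambda_0$ a smooth manifold and the compactness from Theorem~\ref{thm_comp_fast}: one must ensure that no sequence in $\Lambda_0$ degenerates in a way not covered by that theorem, which is precisely why the hypotheses on $\P^*$ need to be invoked not just for the existence step but also to rule out break-offs along the entire family. A secondary subtle point is verifying that $\widehat\Lambda_0$ is a single $S^1$ (rather than a longer cover of it): this follows from the fact that the asymptotic loops of planes in $\Lambda_0$ parametrize $K$ exactly $p$ times in the homotopy class of $u_0$, combined with the invariant $\mon(K)$ being coprime to $p$ (Lemma~\ref{lemma_top}), which fixes the geometric monodromy of the family.
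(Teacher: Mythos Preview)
Your overall strategy---produce a family of embedded fast planes asymptotic to $P_0$ and show they foliate $M\setminus K$---is the paper's strategy too. But two steps in your outline are genuine gaps, not just shorthand.

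\textbf{The compact set $H$ and the circle structure.} You choose $H$ to be an arbitrary compact set meeting $\util_*(\C)$ and then invoke Theorem~\ref{thm_comp_fast} to conclude that the quotient moduli space is compact, hence a circle. This is circular. Theorem~\ref{thm_comp_fast} only gives compactness of the set of fast planes \emph{whose normalized marked point lies in $H$}; it does not tell you that every plane in your putative $S^1$-family passes through $H$, so you cannot conclude compactness of the full one-dimensional quotient from an arbitrary $H$. The paper resolves this by a concrete construction: using that $\wind_\pi(\vtil_0)=0$ forces the Reeb vector field to be transverse to $v_0(\C)$, and using Poincar\'e recurrence, one picks $v_0(0)$ recurrent, sets $T_*$ to be the first return time of $\phi_\tau(v_0(0))$ to $v_0(\C)$, and takes $H=\{(0,\phi_\tau(v_0(0))):\tau\in[0,T_*]\}$. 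Then an open--closed argument along $[0,T_*]$ (using Theorem~\ref{thm_fred_theory} for openness and Theorem~\ref{thm_comp_fast} for closedness) produces a smooth family $\{\vtil_\tau\}_{\tau\in[0,T_*]}$ with $\vtil_{T_*}$ and $\vtil_0$ projecting to the same disk; this is what gives the $S^1$-parametrization and, via an intersection-number argument against the closed-up loop $\bar\gamma$, the surjectivity onto $M\setminus K$. Your monodromy remark in the last paragraph does not substitute for this.

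\textbf{The global-section property.} Definition~\ref{def_ob_adapted} requires each page to be a \emph{global surface of section}: every Reeb trajectory in $M\setminus K$ must hit it in forward and backward time. You only check transversality of $R$ to the pages, which is strictly weaker. The missing argument is the one the paper gives at the end of \S\ref{foliations_nondegenerate}: for trajectories whose $\omega$-limit set meets $K$, one must use $\mu_{CZ}(K^p)\geq 3\Leftrightarrow \rho(P_0,\beta_{\rm disk})>1$ together with $\sl(K)=-p$ and Corollary~\ref{cor_rel_windings} to conclude $\rho(P_0,\beta_{v_\tau})>0$, so that near $K$ the linearized flow rotates strictly faster than the normal direction to the page and forces intersections. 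Without this, you have an open book supported by $\lambda$ but not one adapted to $\lambda$.

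Two minor points: the remark about $C_0\leq C(\lambda,K,u_0)$ is irrelevant here, since the hypothesis of Lemma~\ref{lemma_aux} is stated directly in terms of $C_0$; and no perturbation of $J$ into $\J_{\rm reg}$ is needed at this stage, since Theorem~\ref{thm_fred_theory} gives automatic transversality for embedded fast planes with $\mu_{CZ}\geq 3$.
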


Let us prove Lemma~\ref{lemma_aux}. Since $\lambda$ is nondegenerate then $\rho(P,\beta_{\rm disk})=1$ if, and only if, $\mu_{CZ}(P,\beta_{\rm disk}) = 2$, as one easily shows. Let $C_0>0$ be the constant provided by Proposition~\ref{prop_existence_fast}, in particular, $C_0\geq T_0$.


Suppose that every $P\in \P^*$ has period larger than $C_0$ or is non-contractible in $M\setminus K$. Thus, by Proposition~\ref{prop_existence_fast} we find $J\in\J_+(\xi)$ and an embedded fast finite-energy plane
\begin{equation}\label{initial_plane_v_0}
\vtil_0 = (b_0,v_0) :\C \to \R\times M
\end{equation}
asymptotic to $P_0$. Moreover, $P_0$ is $p$-unknotted in the homotopy class of $v_0$, which implies that $\vtil_0(\C) \subset \R\times (M\setminus K)$ and $v_0:\C\to M\setminus K$ is a proper embedding; see Theorem~\ref{thm_fast_embedded}.

By Theorem~\ref{thm_fred_theory} $\vtil_0$ belongs to a $2$-dimensional family that fills up an open set in the symplectization. Identity $\wind_\pi(\vtil_0)=0$ implies that the Reeb vector field is transverse to the embedding $v_0$. Combining these two facts and Poincar\'e's recurrence, there is no loss of generality to assume that $v_0(0)$ is a recurrent point of the Reeb flow.   {Since $v_0(0)$ is a recurrent point and $v_0(\C)$ is transverse to the Reeb vector field} we find a minimal positive time $T_*$ such that $\phi_{T_*}(v_0(0)) \in v_0(\C)$. Define
\begin{equation}\label{path_gamma}
\begin{array}{ccc} \gamma:[0,T_*] \to M\setminus K & \text{by} & \gamma(\tau) = \phi_\tau(v_0(0)) \end{array}
\end{equation}
and a compact set
\begin{equation*}
H := \{ (0,\gamma(\tau)) \in \R\times M \mid \tau \in[0,T_*] \} \subset \{0\}\times M\setminus K
\end{equation*}
Theorem~\ref{thm_comp_fast} gives $C^\infty_{\rm loc}$-compactness of the special set $\Lambda(H,P_0,\lambda,J)$ of embedded fast planes~\eqref{set_of_fast_planes}. After a reparametrization we obtain $\vtil_0 \in \Lambda(H,P_0,\lambda,J)$.


\begin{lemma}\label{lemma_dichotomy_fast_planes}
Let $\util=(a,u)$ and $\vtil=(b,v)$ be planes in $\Lambda(H,P_0,\lambda,J)$. Then either $u(\C) \cap v(\C) = \emptyset$ or $u(\C) = v(\C)$. In the latter we find $c\in\R$, $A,B\in\C$, $A\neq0$, such that $a(Az+B)+c=b(z)$ and $u(Az+B)=v(z)$ for every $z\in\C$.
\end{lemma}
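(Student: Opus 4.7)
My plan is to exploit the $\R$-invariance of $\jtil$ together with positivity of pseudo-holomorphic intersections, following the pattern of~\cite{93, char1, char2}. For $c \in \R$ set $\util_c := (a+c, u)$; since $\jtil$ is $\R$-invariant, each $\util_c$ is a $\jtil$-holomorphic embedded plane asymptotic to $P_0$ with the same $M$-projection as $\util$. Assume there exists $p = u(z_1) = v(z_2) \in u(\C)\cap v(\C)$. Then $\util_{c_*}(z_1) = \vtil(z_2)$ in $\R\times M$ for $c_* = b(z_2) - a(z_1)$. Positivity of intersections for somewhere injective $\jtil$-holomorphic maps (Micallef--White) then yields the standard dichotomy: either the images of $\util_{c_*}$ and $\vtil$ coincide locally near the intersection -- in which case unique continuation forces $\util_{c_*}(\C) = \vtil(\C)$ in $\R\times M$, so in particular $u(\C) = v(\C)$ -- or the intersection is isolated with positive local contribution.

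The main obstacle lies in ruling out the isolated-intersection scenario. I would consider the set $S := \{c \in \R : \util_c(\C)\cap \vtil(\C)\neq\emptyset\}$. Positivity of intersections makes $S$ open. Closedness under a bounded sequence $c_n \to c$ uses Theorem~\ref{thm_precise_asymptotics}: near the common puncture at $P_0$, the $\R$-components of $\util_{c_n}$ and $\vtil$ differ by $c_n + O(1)$, so any sequence of intersections $(z_1^n, z_2^n)$ with $|z_i^n|\to\infty$ would satisfy $s_2^n - s_1^n = c_n/T_0 + O(1)$, and matching the transverse Martinet-tube components $e^{\nu_u s_1^n}(e_u(t_1^n)+o(1)) = e^{\nu_v s_2^n}(e_v(t_2^n)+o(1))$ -- with $\nu_u, \nu_v < 0$ of winding one by fastness and Lemma~\ref{lemma_wind_relation} -- can be shown to force boundedness of $(s_1^n, s_2^n)$, hence a convergent subsequence and an intersection at $c$. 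Thus $S$ is clopen in the connected space $\R$, so $S = \emptyset$ or $S = \R$. The same asymptotic analysis, now applied with $|c| \to \infty$, rules out $S = \R$: when $|c|$ is sufficiently large, the mismatch between the two sides of the transverse equation forbids intersections near the puncture, while the $\R$-shift by $c$ forbids them on any fixed compact subset of $M$. Hence $S = \emptyset$, contradicting the assumed intersection; only the coincidence alternative $u(\C) = v(\C)$ survives.

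Granted $u(\C) = v(\C)$, define $\phi := u^{-1}\circ v : \C \to \C$; this is a smooth diffeomorphism since $u$ and $v$ are embeddings. Fastness ($\wind_\pi(\util) = 0$) implies that $\pi\circ du : T\C \to u^*\xi$ is a fiberwise $\R$-linear isomorphism, so comparing the Cauchy--Riemann conditions $\pi\circ du\circ j = J\circ \pi\circ du$ with its analogue for $v$ forces $d\phi\circ j = j\circ d\phi$. Thus $\phi$ is a biholomorphism of $\C$, necessarily affine: $\phi(z) = Az + B$ with $A \in \C^*$ and $B \in \C$. For the $\R$-components, $\jtil$-holomorphicity gives $da = u^*\lambda\circ j$ and $db = v^*\lambda\circ j$; since $\phi$ is holomorphic and $v = u\circ\phi$, we get $d(a\circ\phi) = \phi^*(u^*\lambda\circ j) = v^*\lambda\circ j = db$, so $b - a\circ\phi$ is a constant $c\in\R$. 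This yields the identities $u(Az+B) = v(z)$ and $a(Az+B) + c = b(z)$ claimed in the lemma.
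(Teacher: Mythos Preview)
Your approach is the same as the paper's in substance: both rest on the $\R$-translation trick combined with positivity of intersections, which is exactly the content of \cite[Theorem~4.11]{props2}. The paper's own proof is much terser --- it simply cites that theorem as a black box, observes via Theorem~\ref{thm_fast_embedded} that both planes avoid $\R\times x_0(\R)$, and notes that the hypothesis $\mu_{CZ}\le 3$ in \cite{props2} is used only to force $\wind_\pi=0$, which here holds by definition of $\Lambda(H,P_0,\lambda,J)$. You instead unpack the argument, introducing the set $S$ of translation parameters and arguing it is clopen.

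Your asymptotic analysis for the closedness of $S$ and for ruling out $S=\R$ is where the sketch is thinnest. The claim that matching the transverse Martinet-tube components ``forces boundedness of $(s_1^n,s_2^n)$'' is not automatic: from the $\R$-components you only get that $s_2^n - s_1^n$ stays bounded, not that each $s_i^n$ does, and when the asymptotic eigenvalues and eigenvectors of $\util$ and $\vtil$ happen to coincide the transverse matching is compatible with both $s_i^n\to\infty$ to leading order. Handling this case cleanly requires either the finer relative asymptotic formulas (as in Siefring's work, or implicitly in \cite{props2}) or a direct intersection-number argument showing $\util_c\cdot\vtil=0$ for $|c|\gg 0$ and invoking homotopy invariance. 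This is not a fatal gap --- the machinery exists --- but it is the one place your write-up would need to be expanded before standing on its own.

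Your treatment of the reparametrization step is actually more explicit than the paper's: where the paper just writes ``by the similarity principle,'' you give a clean direct argument using that $\pi\circ du$ is a fiberwise isomorphism (from $\wind_\pi=0$) to show $\phi=u^{-1}\circ v$ is holomorphic, and then compare $da=u^*\lambda\circ j$ with $db=v^*\lambda\circ j$ to obtain the constant $c$. This is a nice self-contained alternative.
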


\begin{proof}
The dichotomy $u(\C) \cap v(\C) = \emptyset$ or $u(\C) = v(\C)$ follows from the proof of Theorem~4.11 from~\cite{props2}. In fact, $(\util(\C) \cup \vtil(\C)) \cap (\R\times x_0(\R)) = \emptyset$ by Theorem~\ref{thm_fast_embedded}. Since $c_1(\xi)$ is assumed to vanish on $\pi_2(M)$, all assumptions of~\cite[Theorem~4.11]{props2} are satisfied, except for the assumption $\mu(\util)=\mu(\vtil)\leq 3$. Inspecting its proof this is only used to guarantee that $\wind_\pi(\util)=\wind_\pi(\vtil)=0$, which is true by assumption in our situation. Thus the same argument goes through. The constants $A,B$ are obtained by the similarity principle, since $u(\C)=v(\C)$ implies that $\vtil$ is a holomorphic reparametrization of $\util$, modulo translation in the $\R$-coordinate.
\end{proof}

  {The above proof is a piece of the intersection theory for pseudo-holomorphic curves in symplectizations initiated in~\cite{props2} and further developed in~\cite{siefring}.}

Consider the set $S\subset [0,T_*]$ consisting of numbers $T$ for which there exists a smooth map $(\tau,z) \in [0,T]\times \C \mapsto \vtil_\tau(z) \in \R\times M$ such that $\vtil_\tau = (b_\tau,v_\tau) \in \Lambda(H,P_0,\lambda,J)$, $\vtil_\tau(0) = (0,\gamma(\tau))$ $\forall \tau$, and $\vtil_0$ is the plane~\eqref{initial_plane_v_0}.

\begin{lemma}\label{lemma_maximal_family}
$S = [0,T_*]$.
\end{lemma}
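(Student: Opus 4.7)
\textbf{Proof proposal for Lemma~\ref{lemma_maximal_family}.} The plan is the standard continuation argument: show that $S \subset [0,T_*]$ is non-empty, open, and closed, and invoke connectedness of $[0,T_*]$. Non-emptiness is trivial since $0 \in S$, witnessed by the constant family $\tilde v_\tau = \tilde v_0$ for $\tau \in \{0\}$.

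For openness, fix $T \in S$ with $T < T_*$ and let $\tilde v_T = (b_T,v_T)$ be the endpoint of the existing family. Because $\tilde v_T$ is fast, $\wind_\pi(\tilde v_T) = 0$, so $\pi \circ dv_T$ never vanishes and $v_T$ is everywhere transverse to the Reeb vector field. In particular, $\gamma'(T) = R(v_T(0))$ is transverse to $v_T(\mathbb C)$ at $v_T(0)$. By the Fredholm theory cited as Theorem~\ref{thm_fred_theory}, $\tilde v_T$ sits inside a smooth 2-parameter family of embedded fast planes in $\R \times M$ locally foliating an open set of the symplectization; the 2 parameters correspond to an $\R$-shift and motion through a transverse section to $v_T(\mathbb{C})$. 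The normalizations built into $\Lambda(H,P_0,\lambda,J)$ (the $\R$-normalization $\tilde v(0) \in \{0\}\times M$ and the area condition $\int_{\C \setminus \D}v^*d\lambda = \sigma(T_0)$) together with a choice of rotation at $0 \in \C$ cut this down to a 1-parameter family which, by the implicit function theorem, can be parametrized smoothly by $\tau$ near $T$ so that $\tilde v_\tau(0)=(0,\gamma(\tau))$. After rotating each new plane to match the phase of $\tilde v_T$ at $\infty$, this family agrees smoothly with the existing one at $\tau=T$, giving $T+\epsilon \in S$ for small $\epsilon > 0$.

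For closedness, let $T_n \in S$ with $T_n \nearrow T_\infty \leq T_*$ and write $\tilde v^n := \tilde v_{T_n}$. Since $\tilde v^n \in \Lambda(H,P_0,\lambda,J)$ and this set is $C^\infty_{\rm loc}$-compact (Theorem~\ref{thm_comp_fast} applies because the hypotheses of Lemma~\ref{lemma_aux} imply its assumption on orbits in $\P^*$), a subsequence converges to some $\tilde v_\infty \in \Lambda(H,P_0,\lambda,J)$ satisfying $\tilde v_\infty(0)=(0,\gamma(T_\infty))$. Applying the openness argument to $\tilde v_\infty$ yields a smooth local family $\{\tilde w_\tau\}_{|\tau-T_\infty|<\delta} \subset \Lambda(H,P_0,\lambda,J)$ with $\tilde w_\tau(0)=(0,\gamma(\tau))$. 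Choose $n$ large so that $|T_n - T_\infty| < \delta$; by Lemma~\ref{lemma_dichotomy_fast_planes}, the planes $\tilde w_{T_n}$ and $\tilde v^n$ either have disjoint images or agree up to a biholomorphism of $\C$. Since both map $0$ to $(0,\gamma(T_n))$, their images intersect, so $w_{T_n}(\C) = v^n(\C)$; the constants in Lemma~\ref{lemma_dichotomy_fast_planes} then coincide with the identity modulo a rotation about $0$ once the area and evaluation normalizations are imposed, so after a rotation at $0$ the local family $\tilde w$ glues smoothly to the family on $[0,T_n]$ produced by the definition of $S$. This concatenated family realizes $T_\infty \in S$.

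Combining the three steps, $S$ is a non-empty clopen subset of the connected space $[0,T_*]$, hence $S=[0,T_*]$. The main technical subtlety will be the smooth patching in the closedness step: one must match the rotational gauge of the local family around $\tilde v_\infty$ with the already-constructed family on $[0,T_n]$, which is exactly what Lemma~\ref{lemma_dichotomy_fast_planes} makes possible by pinning down the biholomorphism reparametrizing one plane into another.
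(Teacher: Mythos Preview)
Your proposal is correct and follows essentially the same approach as the paper: both arguments use the $C^\infty_{\rm loc}$-compactness of $\Lambda(H,P_0,\lambda,J)$ from Theorem~\ref{thm_comp_fast} to obtain a limiting plane, the Fredholm theory of Theorem~\ref{thm_fred_theory} to produce a local smooth family around it, and Lemma~\ref{lemma_dichotomy_fast_planes} to reparametrize and glue with the already-constructed family. The only organizational difference is that the paper handles openness and closedness simultaneously by showing that any limit point of a sequence in $S$ is in fact an interior point of $S$, whereas you separate the two steps; also, in the gluing you should (as the paper does) reparametrize the new local family $\tilde w_\tau$ over the whole overlap interval $(T_\infty-\delta,T_n]$ rather than just match at the single value $\tau=T_n$, to ensure the concatenated family is genuinely smooth in $\tau$.
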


\begin{proof}
Let $\{T_n\} \subset S$ be any sequence. We will show that a limit point $T_\infty$ of $\{T_n\}$ is an interior point of $S$, thus proving that $S$ is open and closed. Up to a subsequence, assume $T_n \to T_\infty$. Abbreviating $\Lambda = \Lambda(H,P_0,\lambda,J)$, we find $\vtil_n \in \Lambda$ such that $\vtil_n(0)=(0,\gamma(T_n))$. By $C^\infty_{\rm loc}$-compactness of $\Lambda$, up to a subsequence, there exists $\vtil_\infty\in\Lambda$ such that $\vtil_n\to\vtil_\infty$. By Theorem~\ref{thm_fred_theory} we find $\epsilon>0$ and a smooth map $$ (\tau,z) \in (T_\infty-\epsilon,T_\infty+\epsilon)\times \C \mapsto \wtil_\tau(z) \in \R\times M $$ satisfying $\wtil_\tau \in \Lambda$, $\wtil_\tau(0)=(0,\gamma(\tau))$, $\forall \tau \in (T_\infty-\epsilon,T_\infty+\epsilon)$ and $\wtil_{T_\infty} = \vtil_\infty$. If $n$ is large then $|T_n-T_\infty|<\epsilon$ and we find a smooth map $(\tau,z)  \mapsto \vtil_\tau(z) \in\R\times M$ defined on $[0,T_n]\times \C$ such that $\vtil_0$ is the plane~\eqref{initial_plane_v_0}, $\vtil_\tau(0) = (0,\gamma(\tau))$ and $\vtil_\tau\in\Lambda$, $\forall \tau\in[0,T_n]$; this follows from the definition of $S$. Using Lemma~\ref{lemma_dichotomy_fast_planes} and smoothness of the embedding given by Theorem~\ref{thm_fred_theory} we can suitably reparametrize the map $(\tau,z)\mapsto \wtil_\tau(z)$ in the $z$-variable in such way that it agrees with the map $(\tau,z)\mapsto \vtil_\tau(z)$ on $(T_\infty-\epsilon,T_n)\times \C$. This allows us to extend the map $\vtil_\tau(z)$ to $[0,T_\infty+\epsilon)\times \C$ with the desired properties, proving that $T_\infty$ is an interior point of $S$.
\end{proof}

By Lemma~\ref{lemma_maximal_family} we get a smooth map
\begin{equation}\label{maximal_family}
(\tau,z) \in [0,T_*]\times \C \mapsto \vtil_\tau(z) \in \R\times M
\end{equation}
such that $\vtil_0$ is the plane~\eqref{initial_plane_v_0}, $\vtil_\tau(0)=(0,\gamma(\tau))$, and $\vtil_\tau \in \Lambda(H,P_0,\lambda,J), \ \forall \tau$.

\begin{lemma}
Denote by $\vtil_\tau = (b_\tau,v_\tau)$ the components of the map~\eqref{maximal_family}. Then $v_\tau(\C) \subset M\setminus K \ \forall \tau$ and
\begin{equation}\label{pre_open_book}
(\tau,z) \in (0,T_*)\times \C \mapsto v_\tau(z) \in M\setminus (v_0(\C)\cup K)
\end{equation}
is a diffeomorphism.
\end{lemma}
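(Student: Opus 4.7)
The containment $v_\tau(\C)\subset M\setminus K$ for every $\tau\in[0,T_*]$ follows at once from Theorem~\ref{thm_fast_embedded}, since each plane in $\Lambda(H,P_0,\lambda,J)$ misses $\R\times K$. For the diffeomorphism statement I will verify, in turn, that $\Phi:(\tau,z)\in(0,T_*)\times\C\mapsto v_\tau(z)\in M$ is a local diffeomorphism with image in $M\setminus(v_0(\C)\cup K)$, that $\Phi$ is injective, and that its image exhausts $M\setminus(v_0(\C)\cup K)$.

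\textbf{Local diffeomorphism and image.} Fastness ($\wind_\pi(\vtil_\tau)=0$) combined with Theorem~\ref{thm_fast_embedded} implies that each $v_\tau$ is an embedding of $\C$ into $M\setminus K$ everywhere transverse to the Reeb vector field $R$. The infinitesimal variation $\partial_\tau\vtil_\tau$ lies in the kernel of the linearized Cauchy--Riemann operator at $\vtil_\tau$ and equals $(0,R(v_\tau(0)))$ at $z=0$, which is transverse to $v_\tau(\C)$. Its projection onto the normal line bundle $v_\tau^*TM/dv_\tau(T\C)$ has winding zero at the puncture (matching $\wind_\pi(\vtil_\tau)=0$), and is therefore nowhere vanishing by the automatic transversality argument familiar from~\cite{char2,hryn,HS}. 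Hence $d\Phi$ has rank three at every point, so $\Phi$ is a local diffeomorphism. Moreover, for $\tau\in(0,T_*)$ one has $v_\tau(0)=\gamma(\tau)\notin v_0(\C)$ by the minimality defining $T_*$; thus $v_\tau(\C)\neq v_0(\C)$, and Lemma~\ref{lemma_dichotomy_fast_planes} forces $v_\tau(\C)\cap v_0(\C)=\emptyset$, so the image of $\Phi$ lies in $M\setminus(v_0(\C)\cup K)$.

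\textbf{Injectivity.} Suppose $v_{\tau_1}(z_1)=v_{\tau_2}(z_2)$ with $(\tau_i,z_i)\in(0,T_*)\times\C$. The case $\tau_1=\tau_2$ is excluded by injectivity of $v_{\tau_1}$. If $\tau_1\neq\tau_2$, Lemma~\ref{lemma_dichotomy_fast_planes} forces $v_{\tau_1}(\C)=v_{\tau_2}(\C)$, so injectivity reduces to showing that the leaf map $\tau\mapsto v_\tau(\C)$ is injective on $[0,T_*)$. Since $\Phi$ is a local diffeomorphism onto its connected image, the integer $k(\tau_0):=\#\{\tau\in(0,T_*):v_\tau(\C)=v_{\tau_0}(\C)\}$ is the local degree of $\Phi$ at $\gamma(\tau_0)$, hence constant on $(0,T_*)$; call this constant~$k$. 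If $k\geq 2$, the boundary identification $0\leftrightarrow T_*$ together with the local diffeomorphism structure near each leaf yields a continuous fixed-point-free self-map $\sigma:(0,T_*)\to(0,T_*)$ pairing preimages, satisfying $\sigma(\tau)\to T_*$ as $\tau\to 0^+$ and $\sigma(\tau)\to 0$ as $\tau\to T_*^-$. The intermediate value theorem applied to $\tau\mapsto\sigma(\tau)-\tau$ then produces a fixed point, a contradiction. Hence $k=1$ and $\Phi$ is injective. The principal technical obstacle is precisely this construction of~$\sigma$, which requires combining the Fredholm uniqueness underlying Lemma~\ref{lemma_maximal_family} with the M\"obius identification provided by Lemma~\ref{lemma_dichotomy_fast_planes}, in the spirit of the analogous analysis for~$p=1$ carried out in~\cite{HS}.

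\textbf{Surjectivity.} The image of $\Phi$ is open by the local diffeomorphism, and closed in $M\setminus(v_0(\C)\cup K)$: if $p_n=v_{\tau_n}(z_n)\to p\in M\setminus(v_0(\C)\cup K)$, the $C^\infty_{\rm loc}$-compactness of $\Lambda(H,P_0,\lambda,J)$ (Theorem~\ref{thm_comp_fast}) yields subsequences with $\tau_n\to\tau_\infty\in[0,T_*]$ and $\vtil_{\tau_n}\to\vtil_{\tau_\infty}\in\Lambda(H,P_0,\lambda,J)$; the sequence $\{z_n\}$ stays bounded in $\C$ (else $p\in K$), so $z_n\to z_\infty$, $p=v_{\tau_\infty}(z_\infty)$, and $\tau_\infty\in(0,T_*)$ since $p\notin v_0(\C)$. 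The target $M\setminus(v_0(\C)\cup K)$ is connected: a regular neighborhood of $v_0(\D)$ in $M$ is $L(p,q)\setminus B^3$ for some $q$ (proof of Lemma~\ref{lemma_no_p'_disk}), so the complement is an open ball glued along $S^2$ to $M'\setminus B^3$ where $M=L(p,q)\# M'$, and this is connected. Therefore the image equals the target and $\Phi$ is the desired diffeomorphism.
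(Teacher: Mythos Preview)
Your overall architecture---containment via Theorem~\ref{thm_fast_embedded}, then local diffeomorphism, injectivity, and surjectivity via open--closed--connected---is the same as the paper's, and your local-diffeomorphism and surjectivity arguments are essentially equivalent to (and in places cleaner than) the paper's.

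The genuine divergence is in the injectivity step, and there your argument has a gap. You assert that the pairing map $\sigma$ satisfies $\sigma(\tau)\to T_*$ as $\tau\to 0^+$ and then invoke the intermediate value theorem. This boundary behavior is not correct. Once you know $\Phi$ is a proper local diffeomorphism onto a connected target with simply-connected domain, any nontrivial deck transformation $g$ induces a fixed-point-free homeomorphism $\sigma_g$ of $(0,T_*)$, and your immersion step (positivity of the Reeb-component of $\partial_\tau v_\tau$) forces $\sigma_g$ to be \emph{orientation-preserving}: writing $g(\tau,z)=(\sigma_g(\tau),h_\tau(z))$ and differentiating $v_{\sigma_g(\tau)}\circ h_\tau=v_\tau$ in $\tau$, the normal (Reeb) component gives $\sigma_g'(\tau)>0$. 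Hence $\sigma_g(\tau)\to 0$ as $\tau\to 0^+$, not $T_*$, and your IVT step does not produce a fixed point. The covering-space route can still be completed, but differently: $\sigma_g$ orientation-preserving and fixed-point-free gives $\sigma_g(\tau)-\tau$ of constant sign, so $\sigma_g^n\neq\mathrm{id}$ for all $n\geq 1$, contradicting the finite order of $g$ in the deck group. With this correction your approach goes through.

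The paper avoids the covering-space machinery altogether. It closes the Reeb arc $\gamma|_{[0,T_*]}$ by a path inside $v_0(\C)$ to obtain a loop $\bar\gamma$ in $M\setminus K$, observes that $\bar\gamma\cdot v_0=1$, and uses that $v_{\tau_0}$ is homotopic to $v_0$ through proper embeddings in $M\setminus K$ to get $\bar\gamma\cdot v_{\tau_0}=1$. Since fastness makes every intersection of $\gamma$ with $v_{\tau_0}(\C)$ transverse and positive, there is exactly one such intersection, so the leaf map $\tau\mapsto v_\tau(\C)$ is injective on $(0,T_*)$. This intersection-number argument is shorter and uses only the first-return property of $T_*$ together with positivity of intersections.
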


\begin{proof}
The first step is to show that~\eqref{pre_open_book} is injective. By Lemma~\ref{lemma_dichotomy_fast_planes} we get $v_0(\C) = v_{T_*}(\C)$. For a fixed $\tau_0 \in (0,T_*)$ consider $\mathcal C_{\tau_0} = \{ \tau'\in (0,T_*) \mid \phi_{\tau'}(v_0(0)) \in v_{\tau_0}(\C) \}$. Clearly $1\leq \#\mathcal C_\tau <\infty \ \forall \tau$. We claim that $\#\mathcal C_{\tau_0}=1$. To prove this, assume by contradiction that $\#\mathcal C_{\tau_0}\geq 2$. Consider a closed loop $\bar\gamma$ in $M\setminus K$ obtained by following $\tau\mapsto \gamma(\tau) = \phi_\tau(v_0(0))$ for $\tau\in[0,T_*]$ and then following a path from $\phi_{T_*}(v_0(0))$ to $v_0(0)$ inside $v_0(\C)$. Clearly, by construction, the algebraic intersection number of the map $v_0:\C\to M$ with $\bar\gamma$ is $1$. Since $\wind_{\pi}(\vtil_{\tau_0})=0$, every intersection point of the loop $\bar\gamma$ with the embedded disk $v_{\tau_0}(\C)$ is transverse and positive. Consequently the algebraic count of intersections of $\bar\gamma$ with $v_{\tau_0}$ is equal to $\#\mathcal C_{\tau_0}\geq 2$. Since $v_{\tau_0}$ is homotopic to  $v_0$ through proper embeddings into $M\setminus K$, we conclude that the algebraic intersection number of $\bar\gamma$ with $v_0$ is $\geq 2$, a contradiction. Having proved that $\#\mathcal C_\tau=1$ for every $\tau\in(0,T_*)$, we proceed to show that~\eqref{pre_open_book} is injective. Assume that $(\tau_0,z_0), (\tau_1,z_1) \in (0,T_*)\times \C$ satisfy $v_{\tau_0}(z_0)=v_{\tau_1}(z_1)$. If $\tau_0=\tau_1$ then by Theorem~\ref{thm_fast_embedded} we get $z_0=z_1$. If $\tau_0\neq\tau_1$ then $\#\mathcal C_{\tau_0}\geq 2$ since by Lemma~\ref{lemma_dichotomy_fast_planes} we have $v_{\tau_0}(\C)=v_{\tau_1}(\C)$, an absurd. Thus the map~\eqref{pre_open_book} in injective.

If $\tau\in (0,T_*)$ then $v_\tau(\C) \cap v_0(\C) = \emptyset$ since, otherwise, with the help of Theorem~\ref{thm_fast_embedded} we would get a contradiction with the definition of the number $T_*$. Thus the map $(\tau,z)\in (0,T_*)\times \C\mapsto v_\tau(z)$ indeed takes values in $M\setminus (v_0(\C)\cup K)$. 

Next we show that the image of the map~\eqref{pre_open_book} is closed in $M\setminus (v_0(\C)\cup K)$. Fix a point $q \in M\setminus (u_0(\C)\cup K)$ and assume that $(\tau_n,z_n) \in (0,T_*)\times \C$ satisfies $v_{\tau_n}(z_n)\to q$. Let $\mathcal N$ be a small open tubular neighborhood of $K$ such that $q\not\in \mathcal N$. Using Lemma~\ref{lemlong} and the normalization conditions~\eqref{normalization_fast_planes} we find $R_0>1$ such that $|z|\geq R_0 \Rightarrow v_\tau(z) \in \mathcal N$, $\forall \tau\in[0,T_*]$. Hence $\limsup_n |z_n|<\infty$ and, up to a subsequence, we may assume that $z_n \to z_*$ for some $z_*\in\C$. Up to a further subsequence we may also assume that $\tau_n\to \tau_* \in [0,T_*]$. Hence $q = v_{\tau_*}(z_*)$. We must have $\tau_* \not\in \{0,T_*\}$ since $q\not\in v_0(\C)$, concluding the argument.

Finally we claim that~\eqref{pre_open_book} is an immersion. If not we find $\tau_0 \in (0,T_*)$, $z_0\in\C$ and $\zeta_0 \in T_{z_0}\C$ such that $\dot v_{\tau_0}(z_0) = dv_{\tau_0}(z_0) \zeta_0$, where the dot denotes the derivative with respect to $\tau$. We used that $z\mapsto v_{\tau_0}(z)$ is an immersion. Consider a smooth curve $\tau \mapsto c_\tau \in \R$, defined for $\tau$ near $\tau_0$, satisfying $c_{\tau_0} = 0$ and $\dot c_{\tau_0} = db_{\tau_0}(z_0) \zeta_0 - \dot b_{\tau_0}(z_0)$. Define $\wtil_\tau = c_\tau * \vtil_{\tau}$, where $*$ denotes the $\R$-action. Note that $\wtil_{\tau_0}=\vtil_{\tau_0}$. We compute
\begin{equation}\label{singular_derivative}
\dot{\wtil}_{\tau_0}(z_0) = (db_{\tau_0}(z_0) \zeta_0, dv_{\tau_0}(z_0)  \zeta_0)  = d\wtil_{\tau_0}(z_0)  \zeta_0.
\end{equation}
Applying Theorem~\ref{thm_fred_theory} to $\wtil_{\tau_0}$ we find a smooth embedding $F:B\times \C\to \R\times M$, where $B\subset \C$ is the open unit ball, satisfying $F(0,z)=\wtil_{\tau_0}(z)$ and all the other conclusions of Theorem~\ref{thm_fred_theory}. Then there exists a smooth curve $\tau\mapsto \tilde z(\tau) \in B$, defined for $|\tau-\tau_0|$ small and satisfying $\tilde z(\tau_0)=0$, and $\wtil_\tau(z) = F(\tilde z(\tau),A_\tau z+B_\tau)$ for suitable constants $A_\tau,B_\tau$. Since $\dot v_{\tau}(0)$ is a non-trivial multiple of the Reeb vector we conclude that $\dot{\tilde z}(\tau_0)\neq 0$, but this is a contradiction to~\eqref{singular_derivative} since $F$ is an embedding.

Thus the image of the injective immersion~\eqref{pre_open_book} is closed and open in the connected set $M\setminus (v_0(\C)\cup K)$.
\end{proof}

It follows from the lemma above that $\{v_\tau(\C) \mid \tau\in (0,T_*) \}$ is a smooth foliation of $M\setminus (v_0(\R)\cup K)$. Using the embedding $F$ obtained by applying Theorem~\ref{thm_fred_theory} to $\vtil_0$ we conclude that $$ \mathcal L = \{v_\tau(\C) \mid \tau\in [0,T_*) \} $$ is a smooth foliation of $M\setminus K$. Let $q\in M\setminus K$ be any point, denote by $\omega(q)$ its omega limit set with respect to the Reeb flow. Clearly, by compactness and transversality of the Reeb vector field with the leafs of $\mathcal L$, if $\omega(q)\cap K = \emptyset$ then the trajectory through $q$ will hit every leaf infinitely many times in the future. We claim that the same is also true even if $\omega(q)\cap K \neq\emptyset$. This is proved as in~\cite[Section~5]{convex}. The idea is that if $\omega(q)\cap K \neq\emptyset$ then $y(t) := \phi_t(q)$ gets arbitrarily close to $K$ for arbitrarily large values of $t$. Consequently, since $K$ is a periodic Reeb orbit, for every tubular neighborhood $\mathcal N$ of $K$ and every integer $k\geq 1$ we find $t_n>k$ such that $y([t_n,t_n+k]) \subset \mathcal N$, i.e., $y(t)$ spends arbitrarily long intervals of time in any arbitrarily small neighborhood of $K$. Hence, in these long intervals of time the corresponding piece of the trajectory $y$ can be well approximated using the linearized flow along $\xi|_K$. Since $\mu_{CZ}(P_0,\beta_{\rm disk})\geq 3 \Leftrightarrow \rho(P_0,\beta_{\rm disk}) >1$ and $\sl(P_0)=  {\frac{-1}{p}}$, we conclude from Lemma~\ref{lemma_self_link_props} and Corollary~\ref{cor_rel_windings} that $\rho(P_0,\beta_{v_\tau})>0$ where $\beta_{v_\tau}$ is the homotopy class of $d\lambda$-positive trivializations of $({x_0}_{T_0})^*\xi$ induced by the outward normal derivative of the disk $\bar v_\tau$ along its boundary. Here $\bar v_\tau$ is the smooth capping disk for $P_0$ obtained from $v_\tau$ by attaching a circle at $\infty$ and $\tau$ is arbitrary. It follows from $\rho(P_0,\beta_{v_\tau})>0$ that over a long interval of time the linearized flow along $K$ rotates more than the normal of the disk $\bar v_\tau$, forcing the trajectory $y$ to hit $v_\tau(\C)$ infinitely often in the future, as desired. For more details in the case $p=1$ see~\cite[Lemma~6.9]{hryn} .

All the corresponding assertions for times close to $-\infty$ are proved analogously replacing $\omega(q)$ by the $\alpha$-limit set $\alpha(q)$. It follows that all leaves of $\mathcal L$ are global surfaces of section for the Reeb dynamics, concluding the proof of Lemma~\ref{lemma_aux}.

\subsection{Proof of Proposition~\ref{proposition_global_sections2}}\label{deg_case_subsection}


Recall that $(M,\xi=\ker \lambda)$ is a co-oriented tight contact manifold with first Chern class $c_1(\xi)$ vanishing on $\pi_2(M)$. Every contractible closed orbit $P\in \P(\lambda)$ has a well-defined transverse rotation number $\rho(P)$ computed with respect to a capping disk $\dcal$ for $P$. The assumption $c_1(\xi)|_{\pi_2(M)} \equiv 0$ implies that $\rho(P)$ does not depend on the choice of $\dcal$. We shall make use of the following standard facts about transverse rotation numbers
\begin{eqnarray}  
\label{rot1} \rho(P)=1 \Rightarrow \rho(P^l)=l,\forall l\in \N^*, \\
\label{rot2} \rho(P)>1 \Rightarrow \rho(P^l)>l,\forall l\in \N^*.
\end{eqnarray} 
As before $K\subset M$ denotes an order $p$ rational unknot, which corresponds to a simple closed orbit for the Reeb flow of $\lambda$. It satisfies $\mu_{CZ}(K^p) \geq 3$, $\sl(K) =  {\frac{-1}{p}}$ and $\mon(K) = -q \mod p$, where the integers $q,p$ are relatively prime and $1\leq q \leq p$. Again, the Conley-Zehnder index $\mu_{CZ}(K^p)$ is computed with respect to a capping disk $\dcal$ for $K^p$ and does not depend on the choice of $\dcal$. It is well-known that
\begin{equation}\label{muczrho} 
\mu_{CZ}(K^p) \geq 3 \Leftrightarrow \rho(K^p)>1.\end{equation}

Consider the $p$-disk $u_0$ for $K=x_0(\R)$ which is special robust for $(\lambda,K)$ given in Proposition~\ref{proposition_global_sections2}. Recall the set $\P^*$~\eqref{special_set_P_*} defined in the beginning of section~\ref{sec_const_global_sections}, and assume that every orbit $P'\in\P^*$ is not contractible inside $M\setminus K$, or $\int_{P'}\lambda>C(\lambda,K,u_0)$. Using Proposition~\ref{prop_perturb_special_robust} we can perturb $u_0$ into a new $p$-disk $u_0'$ for $K$ which is special for $(\lambda_n,K)$, for all $n$ large enough. In particular
\begin{equation}
\int_\D |(u_0')^*d\lambda_n| \leq C(\lambda,K,u_0)
\end{equation}
for all $n$ large enough.

\begin{lemma}\label{seqlambdak2}
Denote by $\P^*_n \subset \P(\lambda_n)$ the set of closed orbits of $\lambda_n$ in $M\setminus K$, which are contractible in $M$ and have transverse rotation number equal to $1$. There exists $n_1 \in \N$ such that if $n>n_1$ and $P\in \P^*_n$ then $P$ is non-contractible in $M \setminus K$ or $\int_P \lambda_n > C=C(\lambda,K,u_0)$.  
\end{lemma}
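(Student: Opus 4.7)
The plan is a contradiction-and-compactness argument in the style of the rest of the paper. Suppose the conclusion fails: then after passing to a subsequence, for each $n$ one finds an orbit $P_n = (x_n,T_n) \in \P^*_n$ that is contractible in $M\setminus K$ and satisfies $T_n = \int_{P_n}\lambda_n \leq C$. Since $\lambda_n\to\lambda$ in $C^\infty$ on the compact manifold $M$, the Reeb vector fields $R_n$ converge in $C^\infty$ to $R$; a standard local-coordinate argument based on nowhere-vanishing of $R$ then yields a uniform lower bound $T_n \geq \delta > 0$ on the periods of closed orbits of $\lambda_n$ for $n$ large. Applying Arzel\`a--Ascoli together with continuous dependence of ODE solutions, we may extract a subsequence with $T_n \to T_\infty \in [\delta,C]$ and $x_n \to x_\infty$ in $C^\infty_{\rm loc}(\R,M)$, where $P_\infty := (x_\infty,T_\infty) \in \P(\lambda)$.

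Because $\lambda_n \to \lambda$ and $x_n \to x_\infty$ both in $C^\infty$, the linearized flows along $P_n$ converge in $C^\infty$ to the linearized flow along $P_\infty$; using the fact that $c_1(\xi)|_{\pi_2(M)}=0$ makes $\rho$ independent of the chosen capping disk, a routine comparison of trivializations gives $\rho(P_n) \to \rho(P_\infty)$. Similarly, the loops $t \mapsto x_n(T_n t)$ converge in $C^0(\R/\Z,M)$ to $t \mapsto x_\infty(T_\infty t)$, and contractibility in $M$ persists in this limit, so $P_\infty$ is contractible in $M$.

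Now perform the key dichotomy. In the first case, $x_\infty(\R) \cap K = \emptyset$, so $P_\infty \subset M\setminus K$. Then $\rho(P_\infty) = 1$ and $P_\infty \in \P^*$; moreover, for $n$ large $P_n$ lies in a tubular neighborhood of $P_\infty$ contained in $M\setminus K$, and since $P_n$ is contractible in $M\setminus K$ the same must hold for $P_\infty$. But then $P_\infty$ is an element of $\P^*$ that is contractible in $M\setminus K$ and satisfies $\int_{P_\infty}\lambda = T_\infty \leq C = C(\lambda,K,u_0)$, directly contradicting the hypothesis on $\P^*$ in Proposition~\ref{proposition_global_sections2}. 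In the remaining case $x_\infty(\R) \cap K \neq \emptyset$, so $x_\infty$ and $x_0$ are two $\lambda$-Reeb trajectories sharing a point, forcing $x_\infty(\R) = K$ and hence $P_\infty = K^m$ for some $m\geq 1$. Since $P_\infty$ is contractible in $M$, Lemma~\ref{lemma_no_p'_disk} implies $p \mid m$; write $m = kp$ with $k \geq 1$. The hypothesis $\mu_{CZ}(K^p) \geq 3$ together with \eqref{muczrho} gives $\rho(K^p) > 1$, and then \eqref{rot2} applied with $l=k$ yields $\rho(P_\infty) = \rho(K^{kp}) > k \geq 1$, contradicting $\rho(P_n) = 1$ for all $n$ in view of $\rho(P_n)\to \rho(P_\infty)$.

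The only nontrivial point is the continuity statement $\rho(P_n)\to\rho(P_\infty)$ across the change of contact form. This is essentially bookkeeping: represent the linearized Reeb flow of $\lambda_n$ along $P_n$ in a $d\lambda_n$-symplectic trivialization coming from a capping disk for $P_n$, compare with an analogous trivialization for $P_\infty$ using a small homotopy, and observe that both the paths of symplectic matrices and their $d\lambda$-compatible trivializations vary continuously in $C^\infty$ with $n$. Everything else is formal application of results already established in the paper.
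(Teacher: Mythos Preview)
Your argument is correct and follows the same contradiction-plus-Arzel\`a--Ascoli strategy as the paper, with the same dichotomy on whether the limit orbit lies on $K$ or in $M\setminus K$. Your treatment of the second case is in fact a bit cleaner: you use Lemma~\ref{lemma_no_p'_disk} directly to force $p\mid m$ and then get a contradiction from $\rho(K^{kp})>k\geq 1$, whereas the paper first proves $p_0<p$ via \eqref{rot1}--\eqref{rot2} and then uses the contracting disks $\dcal_j\subset M\setminus K$ to build a capping disk for $K^{p_0}$ before invoking Lemma~\ref{lemma_no_p'_disk}; both routes lead to the same contradiction, and your version avoids the extra disk-construction step.
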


\begin{proof}
Arguing indirectly assume the existence of a subsequence $n_j \to +\infty$ as $j\to \infty$ such that $\P^*_{n_j}$ contains a closed orbit $P_j\subset M\setminus K,$ satisfying both conditions 
\begin{itemize}
\item[(i)] $P_j$ is contractible in $M\setminus K,\forall j.$  
\item[(ii)] $\int_{P_j} \lambda_{n_j} \leq C,\ \forall j$.
\end{itemize} 
It follows from  (ii) and Arzel\`a-Ascoli's theorem that, up to extraction of a subsequence, we can assume that 
\begin{equation}\label{propp3} 
P_j \stackrel{C^\infty}{\longrightarrow} P\in \P(\lambda) \ \mbox{ as } \ j\to \infty.
\end{equation} 
We must have $\rho(P)=1$ since $\rho(P_j)=1$, $\forall j$. If $P\subset M \setminus K$, then (i), (ii) and~\eqref{propp3} imply that $P\in\P^*$, $P$ is contractible in $M\setminus K$ and $\int_P\lambda\leq C$, a contradiction.  So we can assume that $P = K^{p_0}$ for some $p_0\in \N^*$. Since $P_j$ is contractible in $M$ it follows from \eqref{propp3} that $K^{p_0}=P$ is contractible in $M$ and $\rho(K^{p_0})=1$ as well. This last equality and \eqref{rot1} imply that 
\begin{equation}\label{pp00} 
\rho(K^{p_0p}) =p.
\end{equation} 
However, from the assumption $\mu_{CZ}(K^p)\geq 3$, we have from \eqref{muczrho} that $\rho(K^p) >1$, which, in view of \eqref{rot2}, implies that \begin{equation}\label{pp0} \rho(K^ {p p_0}) >p_0.\end{equation}  From \eqref{pp00} and \eqref{pp0} we get 
\begin{equation}
\label{pp01}p_0<p.
\end{equation} 
We concluded that $K^{p_0}$ is contractible, with $p_0<p$, contradicting Lemma~\ref{lemma_no_p'_disk}.
\end{proof}

The proof of Proposition~\ref{proposition_global_sections2} ends with a combined application of Lemma~\ref{seqlambdak2} and Lemma~\ref{lemma_aux} for the nondegenerate contact forms $\lambda_n$, with $n\gg1$.


\appendix

\section{Fredholm theory for fast planes \\ with higher covering number}

\begin{theorem}\label{thm_fred_theory}
Let $\lambda$ be a defining nondegenerate contact form on a contact $3$-manifold $(M,\xi)$, $J \in \J_+(\xi)$ be arbitrary, and $\util=(a,u): \C\to \R\times M$ be an embedded fast finite-energy $\jtil$-holomorphic plane asymptotic to a closed Reeb orbit $P_0=(x_0,T_0)$. If $\mu_{CZ}(\util)\geq 3$ then, denoting by $B\subset \C$ the open unit ball, there exists a smooth embedding
\begin{equation}\label{map_implicit_function_thm}
F : B \times \C \to \R\times M
\end{equation}
satisfying:
\begin{itemize}
\item[(a)] $F(0,z) = \util(z) \ \forall z\in\C$.
\item[(b)] $z\mapsto F(z',z)$ is an embedded fast finite-energy $\jtil$-holomorphic plane asymptotic to $P_0$, $\forall z' \in B$.
\item[(c)] If $\util_k$ is a sequence of embedded fast finite-energy planes asymptotic to $P_0$ satisfying $\mu_{CZ}(\util_k)=\mu_{CZ}(\util)$ and $\util_k\to\util$ in $C^\infty_{\rm loc}$, then there exist sequences $z'_k\to 0$, $A_k\to 1$ and $B_k \to0$ such that $\util_k(z) = F(z'_k,A_kz+B_k) \ \forall z\in\C$.
\end{itemize}   
\end{theorem}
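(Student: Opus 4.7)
The plan is to combine the Fredholm theory for finite-energy planes in exponentially weighted Sobolev spaces, developed in~\cite{char1,char2,props2,props3}, with the automatic transversality available for fast planes in the symplectization of a three-dimensional contact manifold. First I would fix positive cylindrical coordinates $(s,t)$ at $\infty$ and invoke Theorem~\ref{thm_precise_asymptotics} to locate the asymptotic eigenvalue $\nu^\ast \in \sigma(A_{P_0}) \setminus \{0\}$ governing the exponential decay of $\util$. Because $\wind_\infty(\util)=1$ and $\mu_{CZ}(P_0) = \mu_{CZ}(\util) \geq 3$, this eigenvalue satisfies $\wind(\nu^\ast)=1$ and $\nu^\ast < 0$. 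Choose a weight $0 < \delta < |\nu^\ast|$ that avoids the spectrum of $A_{P_0}$, and consider exponentially weighted Sobolev spaces $W^{1,p}_\delta, L^p_\delta$ of sections of $\util^\ast T(\R \times M)$. The linearized Cauchy-Riemann operator $D_{\util}: W^{1,p}_\delta \to L^p_\delta$ is then Fredholm, with index computed as in~\cite{props3}.

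The next step is to invoke automatic transversality from~\cite{props2}: since $\wind_\pi(\util) = 0$ the section $\pi \circ du$ is nowhere vanishing, and this combined with the somewhere-injectivity of $\util$ (it is embedded) forces $D_{\util}$ to be surjective, even though the asymptotic orbit $P_0$ may be multiply covered. Therefore the moduli space $\mathcal M$ of embedded fast $\jtil$-holomorphic planes asymptotic to $P_0$ with $\mu_{CZ}$ equal to $\mu_{CZ}(\util)$ is a smooth finite-dimensional manifold near $\util$, on which the reparametrization group $\mathrm{Aut}(\C) = \{z \mapsto A z + B \mid A \in \C^\ast, B \in \C\}$ acts freely (any nontrivial symmetry of an embedded plane is immediately ruled out). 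The quotient $\mathcal M / \mathrm{Aut}(\C)$ is a smooth real $2$-manifold near $[\util]$, since the index computation together with automatic transversality yields exactly two genuinely transverse deformation directions. Pick a smooth chart $B \ni z' \mapsto [\util_{z'}] \in \mathcal M / \mathrm{Aut}(\C)$ with $[\util_0] = [\util]$ and a smooth local section $z' \mapsto \util_{z'}$ with $\util_0 = \util$, and define
\[
F(z', z) := \util_{z'}(z).
\]
Properties (a) and (b) follow at once.

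To verify that $F$ is an embedding, Lemma~\ref{lemma_dichotomy_fast_planes} (whose proof uses only the embedded fast property and goes through verbatim in this setting) shows that any two planes $\util_{z'}, \util_{w'}$ with $z' \neq w'$ are either equal up to reparametrization or disjoint; freeness of the $\mathrm{Aut}(\C)$-action on the slice rules out the first possibility after shrinking $B$. Since each plane is transverse to the Reeb vector field (because $\wind_\pi = 0$), the differential of $F$ is injective at each $(z', z)$: the plane-tangent directions span two real dimensions, and the transverse moduli directions $\partial_{z'} F, \partial_{\bar{z}'} F$ span the remaining two. Hence $F$ is a local diffeomorphism between open sets of equal dimension, and its global injectivity on the slice makes it an embedding. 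For property (c), given a sequence $\util_k \to \util$ in $C^\infty_{\mathrm{loc}}$ of embedded fast planes asymptotic to $P_0$ with $\mu_{CZ}(\util_k) = \mu_{CZ}(\util)$, continuity of the chart yields $z'_k \to 0$ with $[\util_k] = [\util_{z'_k}]$, so there exist unique $\phi_k \in \mathrm{Aut}(\C)$ with $\util_k = \util_{z'_k} \circ \phi_k$. Writing $\phi_k(z) = A_k z + B_k$, uniqueness of the $\mathrm{Aut}(\C)$-factorization together with $\util_k \to \util = \util_0 \circ \mathrm{id}$ forces $\phi_k \to \mathrm{id}$, i.e., $A_k \to 1$ and $B_k \to 0$.

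The main obstacle is the automatic transversality step: one must verify carefully that the surjectivity criterion of~\cite{props2} applies in the exponentially weighted setup with an asymptotic limit allowed to be multiply covered, and that the Fredholm index computation is performed in a trivialization adapted to the fast-plane condition $\wind_\infty = 1$. A secondary technical point is the freeness of the $\mathrm{Aut}(\C)$-action near $\util$, which follows from somewhere-injectivity but requires a short argument confirming that no nontrivial affine reparametrization preserves an embedded finite-energy plane with isolated critical points.
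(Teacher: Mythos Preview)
Your outline differs from the paper's argument in a substantive way: you linearize on the full pullback bundle $\util^\ast T(\R\times M)$ and then quotient by $\mathrm{Aut}(\C)$, whereas the paper linearizes directly on a $\jtil$-invariant \emph{normal} bundle $N_{\util}\subset \util^\ast T(\R\times M)$ which agrees with $u^\ast\xi$ near infinity and extends over the compactified disk $D=\C\sqcup\infty S^1$. This is not merely cosmetic. The paper trivializes $\overline N_{\util}$ by a global section $W$ and computes $\wind(W,Z)=\chi(D)=1$ where $Z$ trivializes $\bar u^\ast\xi$; the weight $\delta<0$ is then placed in the spectral gap of $A_{P_0}$ between eigenvalues of $W$-winding $0$ and $W$-winding $1$ (equivalently $Z$-winding $1$ and $2$), which is exactly where the hypothesis $\mu_{CZ}(\util)\geq 3$ guarantees a gap in the negative spectrum. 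This yields a weighted index $\mu_{CZ}^\delta(P_0)=3$ in the $Z$-frame and hence $\mathrm{ind}\,L=3-1=2$ with no quotient needed. Automatic transversality is then immediate: every section in $\ker L$ has asymptotic $W$-winding $\leq 0$, so by Carleman's similarity principle it is either identically zero or nowhere vanishing; since $\mathrm{rank}_\R N_{\util}=2$ this forces $\dim\ker L\leq 2$, whence $L$ is onto. The same nonvanishing shows that distinct nearby planes are disjoint in $\R\times M$, giving the embedding property of $F$ directly.

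Your proposal has two genuine gaps relative to this. First, your weight choice ``$0<\delta<|\nu^\ast|$ avoiding the spectrum'' is not sharp enough: the asymptotic eigenvalue $\nu^\ast$ has $Z$-winding $1$, but there are \emph{two} eigenvalues of winding $1$ and possibly further negative eigenvalues of higher winding when $\mu_{CZ}(\util)\geq 4$; which gap you land in changes the Fredholm index, and you never compute it. Second, your automatic transversality step is asserted but not argued: the criterion you cite from~\cite{props2} is formulated for the normal operator, and in the tangent-bundle setup the kernel contains infinitesimal reparametrizations and the $\R$-translation (some of which lie outside the weighted space and must be added back by hand), so the ``nonvanishing section'' mechanism does not apply verbatim. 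Your appeal to Lemma~\ref{lemma_dichotomy_fast_planes} for disjointness concerns the $M$-projections $u(\C)$, not the images in $\R\times M$, so it does not by itself give injectivity of $F$; in the paper this comes for free from the normal-bundle picture.
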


Here the integer $\mu_{CZ}(\util)$ denotes the Conley-Zehnder index of $P_0$ computed with respect to a $d\lambda$-symplectic frame of $\xi$ along $P_0$ which extends to a frame of $u^*\xi$.

\begin{proof}[Sketch of the proof of Theorem~\ref{thm_fred_theory}]
Let us quickly sketch the reason why it is possible to set-up a Fredholm theory for embedded fast planes asymptotic to $P_0$, with exponential weights carefully chosen in such a way that 
\begin{itemize}
\item[i)] the Fredholm index equal to $2$, and
\item[ii)] there is automatic transversality.
\end{itemize}
This was already carefully done in~\cite{hryn} for the case $P_0$ is prime, see also~\cite{wendl} for the general procedure for higher genus curves.

Let us write the components of $\util$ in $\R\times M$ as $\util=(a,u)$. The complex plane $\C$ can be compactified to a closed disk $D = \C \sqcup \infty S^1$ by the addition of a circle $\infty S^1$ at radius equal to infinity, in a standard fashion. In view of the asymptotic behavior described in Theorem~\ref{thm_precise_asymptotics}, the map $u : Re^{i2\pi t} \in \C \mapsto u(Re^{i2\pi t}) \in M$ extends smoothly to a capping disk $\bar u:D\to M$ for $P_0$ by the formula $$ \bar u(\infty e^{i2\pi t}) = x_0(T_0t), $$ up to a rotation in the domain. Let us denote by $\xi^{P_0}$ the bundle over $S^1$ with fiber $\xi|_{x_0(T_0t)}$ over $e^{i2\pi t}$. Again by Theorem~\ref{thm_precise_asymptotics}, the bundle $u^*\xi$ extends smoothly to a bundle $\bar u^*\xi \to D$ by setting $\bar u^*\xi|_{\infty S^1} =\xi^{P_0}$.

Consider the projection $\pi_M:\R\times M \to  M$ onto the second factor. In view of Theorem~\ref{thm_precise_asymptotics} the $\jtil$-invariant bundle $\util^*\pi_M^*\xi$ is complementary to $T\util$ over points $z\in \C$ satisfying $|z|\gg1$. Then there exists a $\jtil$-invariant subbundle $N_\util \subset \util^*T(\R\times M)$ complementary to $T\util$ and coinciding with $\util^*\pi_M^*\xi = (\pi_M\circ \util)^*\xi = u^*\xi$ over points $z\in \C$ satisfying $|z|\gg1$. Again by Theorem~\ref{thm_precise_asymptotics} this normal bundle $N_\util$ extends to a bundle $\overline N_\util$ over $D$ by attaching the fibers of $\xi^{P_0}$ over the circle at infinity.

Let $W$ be a non-vanishing section of $\overline N_\util$ and let $Z$ be a non-vanishing section of $\bar u^*\xi$. Then we can compare them by calculating the relative winding $\wind(W,Z)$ at the circle at infinity. The answer is $\wind(W,Z) = \chi(D) = 1$, a calculation that is done in~\cite{props3}.

The section $W$ can be completed to a frame by adding the section $\jtil W$. This frame trivializes the normal bundle $N_\util \simeq \C\times \C$, and embedded finite-energy curves asymptotic to $P_0$ near $\util$ are represented by sections of $N_\util$, which are seen as graphs $z\mapsto (z,v(z))$ of $\C$-valued functions $v(z)$ with the help of our trivialization. These functions satisfy a certain (non-linear) Cauchy-Riemann equation.

Any number $\delta<0$ can be used to define certain Banach spaces of H\"older-continuous sections of $N_\util$ that decay, together with their derivatives, asymptotically like $e^{\delta s}$ as $z=e^{2\pi(s+it)} \to \infty$. This exponential decay can be described in a precise way that we will not explain here, see~\cite{props3} for more details. The Cauchy-Riemann operator defines a smooth Fredholm map between these Banach spaces, and linearizing at the zero section we obtain a Fredholm operator $L$. The analytical set-up outlined here depends on the choice of $\delta$, in particular, the Fredholm index of $L$ depends on $\delta$.

The first crucial remark is that the inequality $\mu_{CZ}(\util)\geq 3$ allows us to place the negative number $\delta$ in the spectral gap between the eigenvalues of the asymptotic operator $A_{P_0}$ at $P_0$ which have winding $0$ and winding $1$ with respect to a frame aligned with $W$. This carefully placed exponential weight $\delta$ defines a weighted Conley-Zehnder index $\mu^\delta_{CZ}(P_0)$ which equals $3$ when computed with respect to a frame aligned with $Z$. It turns out that the index formula yields $$ {\rm ind}\ L= \mu^\delta_{CZ}(P_0)-1 = 3-1 = 2, $$ that is, we obtain i).

The second crucial remark is that the algebraic count of zeros of sections in $\ker L$ is equal to winding number of the eigenvalue of $A_{P_0}$ immediately below $\delta$, computed with respect to a trivialization that does not wind with respect to $W$. This extremal winding number is zero, as observed above. Hence, sections in $\ker L$ either never vanish or vanish identically, in view of Carleman's similarity principle. Since $L$ has index $2$ and the normal bundle has real rank $2$, it follows that $\dim\ker L\leq 2$. This easily implies that $$ \begin{array}{ccc} \dim {\rm coker}\ L=0 & \text{and} & \dim \ker L=2 \end{array} $$ or, equivalently, that $L$ is surjetive. We obtained ii).

With the help of the implicit function theorem, a neighborhood of $0$ in $\ker L$ can be used to parametrize the embedded curves nearby $\util$ which are detected by this weighted Fredholm theory. At this point the $\delta$-weighted exponential decay implies two important facts. Firstly, two nearby distinct curves do not intersect each other since, using Carleman's similarity principle as above, an intersection implies that the curves coincide. Secondly, the asymptotic approach of such a nearby curve to $P_0$ follows an asymptotic eigenvector with the same winding of the asymptotic eigenvector of $\util$, and we get that all nearby planes are fast, that is, they satisfy $\wind_\pi=0$. As a final step, following the appendix of~\cite{props3}, we can reparametrize the nearby curves in such a way to obtain a map $F$ with the desired properties. See~\cite{hryn} for all these details in the case $P_0$ is a prime closed Reeb orbit.
\end{proof}

\section{Proof of Lemma~\ref{lemma_int}}\label{app_lemma}

Let $v^k_2$ and $v=(v_1,v_2)$ be as in the statement. The hypotheses on $j$ give
\[
j(z_1,z_2) = \begin{pmatrix} i & 0 \\ 0 & i \end{pmatrix} + \Delta(z_1,z_2) z_2 \ \ \text{where} \ \ \Delta(z_1,z_2)=\int_0^1 D_2j(z_1,\tau z_2)d\tau
\]
and consequently $\partial_sv_2 + i\partial_tv_2 + \beta v_2 = 0$ for some smooth $\beta: B_r^+ \to \mathcal L_\R(\C)$, where $s+it$ is the coordinate on $B^+_r$. Our hypotheses give $v_2(0)=0$, $\nabla v_2(0)=0$. By a version of Carleman's similarity principle after shrinking $r$ we can assume that there is a continuous function $\Phi:B^+_r \to \mathcal L_\R(\C)$ such that $\Phi(z)$ is non-singular $\forall z\in B^+_r$,
\[
f(z) := \Phi(z)v_2(z) \ \text{is holomorphic},
\]
$\Phi(s)$ is diagonal $\forall s\in(-r,r)$, and $\Phi(0)=id$. In particular, $f(s) \in [0,+\infty)$ for $s\in(-r,r)$. Expanding in a power series we find $k_0\geq 2$ such that 
\[
f(z) = \sum_{k\geq k_0} c_kz^k \ \ \text{and} \ \ c_{k_0}\neq 0
\]
with $c_k\in\R$. After shrinking $r$ we can assume that $v,\Phi,f$ and all $v^k_2$ are defined on $D^+_r = \overline{B^+_r}$, $\Phi$ is close to $id$ and
\begin{equation}\label{not_zero_v_2}
v_2(z)\neq0, \ \forall z\in D^+_r \setminus\{0\}.
\end{equation}
Following~\cite{char1}, it is crucial to observe that $k_0$ is even. In fact, if $k_0$ is odd then $f(s)$ changes signs at $0$ when $s$ varies from $-r$ to $r$. But this is impossible since $v_2([-r,r]) \subset [0,+\infty)$.

We claim that $\exists \rho_0>0$ such that $0<\rho<\rho_0 \Rightarrow -\rho \not \in v_2(\partial D^+_r), \ -\rho\not\in f(\partial D^+_r)$ and there is a homotopy from $v_2$ to $f$ that does not attain the value $-\rho$ on $\partial D^+_r$. In fact, if $\rho_0$ is small then $-\rho \not\in v_2(\partial D^+_r)\cup f(\partial D^+_r) \ \forall \rho \in (0,\rho_0)$ in view of~\eqref{not_zero_v_2} and since $v_2,f$ are non-negative on $[-r,r]$. Since $\Phi$ is close to $id$ we find a homotopy $\{\Phi_\tau\}_{\tau\in[0,1]}$ satisfying $\Phi_0=\Phi$, $\Phi_1=id$ and $\Phi_\tau(s) \in \mathcal L^+_\R(\R) \ \forall \tau,s$. Hence $$ \{\Phi_\tau(z) v_2(z) \mid \tau\in[0,1], \ |z|=r, \ \Im z\geq 0 \} $$ misses a neighborhood of $0$ and $\Phi_\tau(s) v_2(s) \geq 0, \ \forall (\tau,s) \in [0,1]\times[-r,r]$. We obtain the desired conclusion.

The Brouwer degree $\deg(v_2,D^+_r,-\rho)$ is well-defined for $\rho\in(0,\rho_0)$. To compute it we use the fact that $f$ is real on $[-r,r]$ to apply the reflection principle and assume that $f$ is defined in $D_r = \overline{B_r} = \{z\in\C \mid |z|\leq r\}$. Then $$ k_0 = \deg(f,D_r,-\rho) = \deg(f,D^+_r,-\rho) + \deg(f,D^-_r,-\rho) = 2\deg(f,D^+_r,-\rho) $$ perhaps after shrinking $\rho_0$, where we denoted $D^-_r = D_r \cap \{z\in D_r \mid \Im z \leq 0 \}$. The deformation described before gives $$ k_0/2 = \deg(v_2,D^+_r,-\rho)\geq 1 $$  by homotopy invariance of the degree.

Now, proceeding indirectly, assume that $0\not\in v^k_2(D_r^+) \ \forall k$. We know that $v_2$ misses a neighborhood of $0$ on $\partial D_r^+\setminus (-r,r)$. Thus so does $v_2^k$ for $k$ large. Taking $\rho^k>0$, $\rho^k\to0$ then the entire homotopy $h_\lambda := \lambda(v_2+\rho^k)+(1-\lambda)v_2^k$, $\lambda\in[0,1]$, misses a neighborhood of $0$ on $\partial D_r^+\setminus (-r,r)$ when $k$ is large enough. Clearly $h_\lambda$ misses $0$ on $[-r,r]$ since $v_2([-r,r])\subset [0,+\infty)$, $v_2^k([-r,r])\subset (0,+\infty)$ and $\rho^k>0$. By homotopy invariance of the degree
\[
\deg(v_2^k,D^+_r,0) = \deg(v_2+\rho^k,D^+_r,0) = \deg(v_2,D^+_r,-\rho^k) \geq 1 \ \text{if} \ k\gg1.
\]
This contradiction concludes the proof.

\end{document}